\numberwithin{equation}{section}
\newtheorem{theorem}{Theorem}[section]
\newtheorem*{theorem*}{Theorem}
\newtheorem{lemma}[theorem]{Lemma}
\newtheorem{corollary}[theorem]{Corollary}
\newtheorem{example}[theorem]{Example}
\theoremstyle{definition}
\newtheorem{definition}[theorem]{Definition}
\newtheorem{remark}[theorem]{Remark}
\newcommand{\abs}[1]{\left\vert#1\right\vert} 
\newcommand{\norm}[1]{\left\|#1\right\|} 
\newcommand{\pare}[1]{\left(#1\right)} 
\newcommand{\braces}[1]{\left\{#1\right\}} 
\newcommand{\set}[2]{\left\{#1 \; :\; #2\right\}} 
\newcommand{\V}{B^{\tiny{\X_n}}}
\newcommand{\Om}{\Omega}
\DeclareMathOperator*{\diam}{diam} 
\DeclareMathOperator*{\dist}{dist} 
\DeclareMathOperator*{\trace}{Tr} 
\def\vint_#1{\mathchoice%
          {\mathop{\kern 0.2em\vrule width 0.6em height 0.69678ex depth -0.58065ex
                  \kern -0.8em \intop}\nolimits_{\kern -0.4em#1}}%
          {\mathop{\kern 0.1em\vrule width 0.5em height 0.69678ex depth -0.60387ex
                  \kern -0.6em \intop}\nolimits_{#1}}%
          {\mathop{\kern 0.1em\vrule width 0.5em height 0.69678ex
              depth -0.60387ex
                  \kern -0.6em \intop}\nolimits_{#1}}%
          {\mathop{\kern 0.1em\vrule width 0.5em height 0.69678ex depth -0.60387ex
                  \kern -0.6em \intop}\nolimits_{#1}}}
                  \newcommand{\aveint}[2]{\mathchoice%
          {\mathop{\kern 0.2em\vrule width 0.6em height 0.69678ex depth -0.58065ex
                  \kern -0.8em \intop}\nolimits_{\kern -0.45em#1}^{#2}}%
          {\mathop{\kern 0.1em\vrule width 0.5em height 0.69678ex depth -0.60387ex
                  \kern -0.6em \intop}\nolimits_{#1}^{#2}}%
          {\mathop{\kern 0.1em\vrule width 0.5em height 0.69678ex depth -0.60387ex
                  \kern -0.6em \intop}\nolimits_{#1}^{#2}}%
          {\mathop{\kern 0.1em\vrule width 0.5em height 0.69678ex depth -0.60387ex
                  \kern -0.6em \intop}\nolimits_{#1}^{#2}}}
\newcommand{\R}{\mathbb R}
\newcommand{\N}{\mathbb N}
\newcommand{\Z}{\mathbb Z}
\newcommand{\M}{\mathcal{M}}
\newcommand{\eps}{\varepsilon}
\renewcommand{\O}{\mathcal{O}}
\renewcommand{\L}{\mathcal{L}}
\newcommand{\A}{\mathcal{A}}
\newcommand{\U}{\mathcal{U}}
\newcommand{\X}{\mathcal{X}}
\renewcommand{\P}{\mathbb{P}}
\newcommand{\E}{\mathbb{E}}
\renewcommand{\geq}{\geqslant}
\renewcommand{\leq}{\leqslant}
\def\Xint#1{\mathchoice
   {\XXint\displaystyle\textstyle{#1}}%
   {\XXint\textstyle\scriptstyle{#1}}%
   {\XXint\scriptstyle\scriptscriptstyle{#1}}%
   {\XXint\scriptscriptstyle\scriptscriptstyle{#1}}%
   \!\int}
\def\XXint#1#2#3{{\setbox0=\hbox{$#1{#2#3}{\int}$}
     \vcenter{\hbox{$#2#3$}}\kern-.49\wd0}}
\def\dashint{\ \Xint{\raisebox{-5.2pt}{$-$}}}
\DeclareSymbolFont{bbold}{U}{bbold}{m}{n}
\DeclareSymbolFontAlphabet{\mathbbold}{bbold}
\newcommand{\ind}{\mathbbold{1}}
\newcommand{\dens}{\phi}
\newcommand{\Omr}{{\Omega}} 
\newcommand{\card}[1]{\mathrm{card}(#1)}
\begin{document}

\title[Krylov-Safonov theory for random data clouds]{Krylov-Safonov theory for Pucci-type extremal inequalities on random data clouds}
\date{\today}
\author[Arroyo]{\'Angel Arroyo}
\address{Departamento de Matem\'aticas, Universidad de Alicante, 03690 San Vicente del Raspeig, Alicante, Spain}
\email{angelrene.arroyo@ua.es}
\author[Blanc]{Pablo Blanc}
\address{Departamento de Matem\'atica, FCEyN, Universidad de Buenos Aires, Pabell\'on I, Ciudad Universitaria (1428), Buenos Aires, Argentina.}
\email{pblanc@dm.uba.ar}
\author[Parviainen]{Mikko Parviainen}
\address{Department of Mathematics and Statistics, University of Jyv\"askyl\"a, PO~Box~35, FI-40014 Jyv\"askyl\"a, Finland}
\email{mikko.j.parviainen@jyu.fi}

\subjclass[2020]{35J15, 35B65, 35J92, 68T05, 91A50}

\keywords{Graph-based learning, H\"older regularity,  $p$-Laplace,  Pucci extremal operators, stochastic games, tug-of-war with noise}

\thanks{\'AA  is supported  by the MICIN/AEI through the Grant PID2021-123151NB-I00 and MP by Research Council of Finland, project 360184. Part of this research was done during a visit of \'AA to the University of Jyv\"askyl\"a in 2022 under the JYU Visiting Fellow Programme.}

\maketitle

\begin{abstract}\sloppy 
We establish Krylov-Safonov type H\"older regularity theory for solutions to quite general discrete dynamic programming equations or equivalently discrete stochastic processes on random geometric graphs. Such graphs arise for example from data clouds in graph-based machine learning. The results actually hold to functions satisfying Pucci-type extremal inequalities, and thus we cover many examples including tug-of-war games on random geometric graphs. As an application we show that under suitable assumptions when the number of data points increases, the graph functions converge to a solution of a partial differential equation.
\end{abstract}

\tableofcontents

\section{Introduction}\label{sec:intro}

\sloppy

In this paper, we develop a new method to establish H\"older type regularity theory on random geometric graphs.
These graphs commonly emerge for example from data clouds in graph-based machine learning.
It is shown that a function satisfying a certain dynamic programming principle  on such a graph is approximately H\"older regular.
These functions can arise as the expectation of a discrete stochastic process or as a result of a machine learning algorithm.
 The results are actually derived to functions satisfying Pucci-type extremal operators on random geometric graphs
 \begin{align*}
\L_{\X_n, \eps}^+ u\geq -\rho,\quad \L_{\X_n, \eps}^- u\leq  \rho,
\end{align*}
described in more detail in \Cref{sec:roadmap,sec:examples}.
Our main result is the H\"older estimate obtained in \Cref{thm:main-graph-holder-regularity}
 \begin{align*}
|u(Z_i)-u(Z_j)|
	\leq
	C(|Z_i-Z_j|^\gamma+\eps^\gamma).
\end{align*} 
As an application, such regularity results allow us to pass to a limit  (Theorems~\ref{thm:visc-sol} and \ref{thm:visc-sol-p}) when the number of data points increases, and if the functions $u$ are solutions to suitable graph problems, then the limit is a solution to a PDE. 

Such general results have turned out to be fruitful in the theory of partial differential equations as well as in the theory of nonlocal operators.  In the context of elliptic PDEs with merely bounded and measurable coefficients, the closest counterpart would be the original Krylov-Safonov method \cite{krylovs79} using a probabilistic proof and  Trudinger's proof \cite{trudinger80} using analytic techniques. In the context of viscosity solutions to fully nonlinear uniformly elliptic equations, the corresponding result was obtained by Caffarelli \cite{caffarelli89}. The results of this paper establish the Krylov-Safonov regularity theory in the context of random geometric graphs and stochastic processes, dynamic programming principles or equivalently with operators that can be compared to fully nonlinear uniformly elliptic PDE operators.  
Observe however that the regularity techniques in PDEs or in the nonlocal setting of for example Caffarelli and Silvestre \cite{caffarellis09} utilize, heuristically speaking, the fact that there is information available in all scales.  In our case, the step size and the graph sets a natural limit for the scale, and this limitation has some crucial effects.

Another inconvenience on a random geometric graph is its lack of symmetry. The symmetry is a crucial ingredient for the regularity estimates obtained for example to orthogonal grids in Kuo and Trudinger \cite{kuot90}, or in the nonlocal setting of \cite{caffarellis09}. Also more recently in \cite{arroyobp23} and \cite{arroyobp22} in $\R^N$, the symmetry was utilized. On the other hand, Kuo and Trudinger \cite{kuot96} consider a discrete graph not having the mentioned symmetry property, but this is done for a linear operator. Inspecting the proof in this paper one can see that also here the linear part of the operator does not require symmetry considerations.

Next we discuss our results in the machine learning context in the spirit of \cite{chapellesz06, elmoatazdt17, calder19b, caldergl22}.  In a typical machine learning problem, there is a large collection of data such as medical images, some of which is labeled by an expert. The task is to extend the labels to a larger data set in a reasonable meaningful way. In fully supervised learning, the algorithms make use of only the labeled data. Thus fully supervised algorithms usually need sufficiently diverse collection of labeled data to learn from.
In many applications, labeling data requires human annotation and this can be  expensive to obtain. On the other hand, unlabeled data is often of low cost and increasingly available. As a result, there is now significant interest in graph-based semi-supervised learning algorithms for example utilizing discretized partial differential equations that take advantage of both the labeled data as well as the structure of the unlabeled data to achieve improved performance. 

It is natural that in different applications in semi-supervised learning, different operators give the best practical results and thus developing methods applying to a general class is well motivated.  For example, it has  recently been discovered in certain cases that the game theoretic $p$-Laplacian on a graph gives advantage over commonly used methods in semi-supervised learning  with large amounts of unlabeled data. In particular, the problem is well-posed unlike in some more traditional approaches \cite{calder19b}. Thus developing methods that have wide applicability is well motivated.
In this context, the results of this paper can be interpreted to convey information on how the learning algorithm regularizes the problem and they also allow passing to a large data limit. For Lipschitz learning, convergence results and convergence rate results have been obtained for example in \cite{calder19}, \cite{bungertr23} and  \cite{bungertcr23}. In the case of Laplacian learning, Calder, Garc\'{\i}a Trillos and Lewicka, \cite{caldergl22}, utilize the idea of using nonlocal operator in the intermediate steps. In their setting, already in the case of the Laplacian, the proof of the regularity result requires a considerable amount of work. 

Since the results of this paper hold for Pucci-type extremal operators, the results immediately cover many examples such as tug-of-war with noise type stochastic games as described in \Cref{sec:examples}. Tug-of-war games in different contexts have been an object of a considerable recent interest, see for example \cite{peress08, peresssw09} and monographs \cite{blancr19b,  lewicka20, parviainen24}.

\subsection{Plan of the proof}
\label{sec:roadmap}

In this section, we present the plan of the proof of the Krylov-Safonov type H\"older regularity result, \Cref{thm:main-graph-holder-regularity}. We consider a random data cloud $\X_n=\{Z_1,\ldots,Z_n\}\subset\Omega\subset \R^N$ where each $Z_i$ is an independent and identically distributed random variable, $\V_\eps(x)=\X_n\cap B_\eps(x)$, and denote Pucci-type maximal operators (\Cref{def:L-Xn}) as
\begin{align*}
\L_{\X_n,\eps}^+u(Z_i)
	=
	\frac{1}{\eps^2}\bigg[&\alpha\max_{Z_j\in\V_{\Lambda\eps}(Z_i)}\max_{Z_k\in\V_{\tau\eps^2}(2Z_i-Z_j)}\frac{u(Z_j)+u(Z_k)}{2}
	\\
	~&
	+
	\beta\frac{1}{\card{\V_\eps(Z_i)}}\sum_{Z_j\in\V_\eps(Z_i)}u(Z_j)-u(Z_i)\bigg]
\end{align*}
defined on the data cloud, where $\Lambda,\tau\geq 1$ are fixed constants. Details will be given in \Cref{part1} but let us mention that the second maximum above gives a robust way to write down the extremal operator on nonsymmetric graphs. In the Euclidean space this part of the operator would simply contain a reflected point. Our task is then to establish a regularity result in \Cref{thm:main-graph-holder-regularity} for solutions or in general for functions that satisfy Pucci-type extremal inequalities on the data cloud. 

First we extend the function only defined on the data cloud to the whole domain in $\R^N$. Then we show that the extension approximately satisfies the Pucci-type extremal inequalities for the $\eps$-nonlocal operator (\Cref{def:L-Omega})
\begin{equation*}
	\L_{\Omega,\eps}^+v(x)
	=
	\frac{1}{\eps^2}\bigg[\alpha\sup_{|z|<\Lambda\eps}\sup_{|h|<\tau\eps^2}\frac{v(x+z)+v(x-z+h)}{2}
	+
	\beta\dashint_{B_\eps(x)}v\ d\mu(y)-v(x)\bigg].
\end{equation*}
Here the measure $\mu$ depends on the probability distribution of the data points.

Then the aim is to utilize the general Krylov-Safonov type regularity theory we have developed for $\eps$-nonlocal operators/stochastic processes/dynamic programming principles in \cite{arroyobp22, arroyobp23}. Observe however that the data cloud is random and also the limiting probability density is nonuniform and nonsymmetric. We need to take this into account and this introduces a kind of  a drift to the problem: since this is of independent interest, we present the Krylov-Safonov type H\"older regularity theory for $\eps$-nonlocal operators with nonsymmetric kernels in \Cref{part2} (the main theorems in that part are $\eps$-ABP estimate with nonsymmetric density, \Cref{eps-ABP-thm-2}, and H\"older regularity result \Cref{Holder}). To emphasize that the ABP estimate and regularity results obtained in \Cref{part2} are quite robust and that $\eps$-nonlocal definitions originally tailored for treating the loss of symmetry due to the graph allow extra flexibility, we present a further example at the end, \Cref{ex:drift}.

Regardless, a key is to pass from discrete  to nonlocal  as already suggested above. To this end, we need to extend the function to the whole domain. Moreover, as we want to cover not only solutions but also functions that satisfy the extremal inequalities, the extension should not depend on the operator.  The precise definition of the extension is given in \Cref{def:transport} but the idea is to construct a cover which corresponds to the data points and to extend the function as a constant from a data point to the related set in the cover. This will be denoted in terms of a transport map that transports a point to the related data point: so for a discrete function $u$ we define the extension as  $(u\circ T_\eps)(x)=u(T_\eps(x))$, where $x\in \R^n$ and $T_\eps$ denotes the transport map. Using this notation, the desired discrete  to $\eps$-nonlocal estimate roughly reads as
\begin{align}
\label{eq:roadmap-discrete-to-nonlocal}
	\L_{\X_n,\eps}^+u(T_\eps(x))
	\leq
	\L_{\Omega,\eps}^+[u\circ T_\eps](x)
	+
	C\|u\|_{L^\infty(\X_n)}.
\end{align}
Since data points are arriving at random, there is no hope that this would always hold but it is sufficient if it holds with high enough probability. 

The estimate is given in  \Cref{thm:discrete-to-nonlocal}, and the idea to prove it is as follows. A suitable estimation in \eqref{eq:max-from-local-to-nonlocal} for the nonlinear part in the discrete operator gives
\begin{align*}
	\max_{Z_i\in\V_{\Lambda\eps}(T_\eps(x))}&\max_{Z_j\in\V_{\tau\eps^2}(2T_\eps(x)-Z_i)}\frac{u(Z_i)+u(Z_j)}{2}
	\\
	&\leq \sup_{|z|<(\Lambda+\eps^2)\eps}\sup_{|h|<(\tau+2\eps)\eps^2}\frac{(u\circ T_\eps)(x+z)+(u\circ T_\eps)(x-z+h)}{2}.
\end{align*}
This estimate allows us to deal with the lack of symmetry in a graph but then we need to deal with the right hand side. By the previous estimate and the definitions of the operators, adjusting the parameters, we have
\begin{align}
\label{eq:roadmap-key-averages}
	\L_{\X_n,\eps}^+u(T_\eps(x))
	\leq
\L_{\Omega,\eps}^+[u\circ T_\eps](x)
	+
	\frac{\beta}{\eps^2}\bigg(\frac{1}{\card{\V_\eps(T_\eps(x))}}\sum_{Z_j\in\V_\eps(T_\eps(x))}u(Z_i)\nonumber
	\\
	-
	\dashint_{B_\eps(x)}(u\circ T_\eps)\ d\mu\bigg),
\end{align}
so it only remains to estimate the difference of the $\beta$-terms but this is the most delicate part of the proof given later as \Cref{thm:discrete-to-nonlocal-averages}. 
Let us also point out as explained in more detail in \Cref{sec:discrete-to-nonlocal}, that unlike in simpler cases, with our operator, we cannot use directly Berstein's concentration inequality to state that the discrete sum approximates the integral when the number of points increases. This is because the extended function is not independent of random data points.

\subsection{Examples of the operators}
\label{sec:examples}
The  main result \Cref{thm:main-graph-holder-regularity} gives the H\"older estimate for $u\in L^\infty(\X_n)$ satisfying the Pucci bounds
\begin{equation}\label{eq:LXn+f}
	\L_{\X_n,\eps}^+u\geq-\rho,
	\qquad
	\L_{\X_n,\eps}^-u\leq\rho,
\end{equation}
with $\rho>0$. Next we enlist some examples of discrete operators $\L_{\X_n,\eps}:L^\infty(\X_n)\to L^\infty(\X_n)$ covered by our work.
In both of them, if $u\in L^\infty(\X_n)$ is such that $\L_{\X_n,\eps}u=f$ in $\X_n$, then $u$ satisfies \eqref{eq:LXn+f} with $\rho=\|f\|_{L^\infty(\Omega)}$.

\begin{example}\label{example:discrete-operator}
Let $\beta=1-\alpha\in(0,1]$. Given any $Z_i\in\X_n$ and $x\in\Omega$, we define $\overline{Z_i}^x$ as the closest point in $\X_n$ to the reflection of $Z_i$ with respect to $x$. 
We consider
$\L_{\X_n,\eps}:L^\infty(\X_n)\to L^\infty(\X_n)$ defined by
\begin{equation*}
\begin{split}
	\L_{\X_n,\eps} u(Z_i)
	=
	~&
	\alpha\sum_{Z_j\in \V_{\Lambda\eps}(Z_i)}\nu_{Z_i}(Z_j)\frac{u(Z_j)+u(\overline{Z_j}^{Z_i})-2u(Z_i)}{2\eps^2}
	\\
	~&
	+
	\beta\,\frac{1}{\card{\V_\eps(Z_i)}}\sum_{Z_j\in \V_\eps(Z_i)}\frac{u(Z_j)-u(Z_i)}{\eps^2}
\end{split}
\end{equation*}
for each $i=1,\ldots,n$, where $\alpha = 1 - \beta \in [0, 1)$ and $\nu_{Z_i}:\V_{\Lambda\eps}(Z_i)\to[0,1]$ is such that
\begin{equation*}
	\sum_{Z_j\in \V_{\Lambda\eps}(Z_i)}\nu_{Z_i}(Z_j)
	=
	1.
\end{equation*}
Then
\begin{align*}
	\L_{\X_n,\eps}^-u(Z_i)
	\leq
	\L_{\X_n,\eps}u(Z_i)
	\leq
	\L_{\X_n,\eps}^+u(Z_i)
\end{align*}
for each $Z_i\in\X_n$.
\end{example}

\begin{example}[Tug-of-war type graph operators]\label{example:tug-of-war}
Let $p\geq 2$ and set $\beta=1-\alpha=\frac{N+2}{N+p}\in(0,1]$. Consider the graph operator $\L_{\X_n,\eps}:L^\infty(\X_n)\to L^\infty(\X_n)$ defined by
\begin{align*}
	\L_{\X_n,\eps}&u(Z_i)\eps^2+u(Z_i)
	\\
	=
	~&	
	\frac{\alpha}{2}\Bigg\{\inf_{Z_j\in \V_\eps(Z_i)}u(Z_j)+\sup_{Z_j\in \V_\eps(Z_i)}u(Z_j)\Bigg\}+
	\frac{\beta}{\card{\V_\eps(Z_i)}}\sum_{Z_j\in \V_\eps(Z_i)}u(Z_j)
\end{align*}
for each $u\in L^\infty(\X_n)$ and $Z_i\in\X_n$. This is related to the $p$-Laplace equation and tug-of-war games. In the Euclidean case, the counterpart of this was considered in \cite{manfredipr12} and from local regularity perspective, apart from references already mentioned, in \cite{luirops13} and \cite{luirop18}. On data clouds with $p>n$ in the context of semi-supervised learning such operators were considered in \cite{calder19b}.
\end{example}

\part{Operators on random geometric graphs}\label{part1}

\section{The random data cloud}

Let $\Omega\subset\R^N$ be an open bounded domain and $n\in\N$. We say that a finite set $\X_n=\{Z_1,\ldots,Z_n\}\subset \Omega$ is a \emph{random data cloud} in $\Omega$ if each $Z_i$ is an independent and identically distributed (i.i.d.) random variable according to a probability measure $\mu$ supported in $\Omega$. We denote by  $\dens:\Omega\to[\dens_0,\dens_1]$ the Lipschitz density function of $\mu$, where $0<\dens_0\leq\dens_1<\infty$. For simplicity, we assume that $\mu$ is a measure in the whole $\R^N$ by defining $\mu(A)=\mu(A\cap \Omega)$ for every measurable set $A\subset\R^N$, and thus $\dens\equiv 0$ in $\R^N\setminus \Omega$. Hence
\begin{equation*}
	\mu(A)=\int_A\dens(x)\ dx
\end{equation*}
for each measurable set $A$.

Observe that $\mu$ is absolutely continuous with respect to the Lebesgue measure. Moreover, $\mu$ is comparable to the Lebesgue measure, in the sense that
\begin{equation}\label{mu-bounds}
	\dens_0|A|\leq\mu(A)\leq\dens_1|A|
\end{equation}
for every measurable set $A\subset \Omega$. Furthermore, for any integrable function $f\geq 0$ in $A$ it holds that
\begin{align}
\label{eq:comparable}
	\frac{\dens_0}{\dens_1}\dashint_Af(x)\ dx
	\leq
	\dashint_Af(x)\ d\mu(x)
	\leq
	\frac{\dens_1}{\dens_0}\dashint_Af(x)\ dx.
\end{align}

Given a set $A\subset\Omega$, we denote by $A^{\X_n}$ the set of vertices $Z_i\in\X_n$ contained in $A$, ie.
\begin{equation*}
	A^{\X_n}
	=
	\X_n\cap A.
\end{equation*}
Moreover, we denote by
\begin{equation*}
	\card{A^{\X_n}}
	\geq
	0
\end{equation*}
the number of points in $A^{\X_n}$. In that way, $\V_r(x)=\X_n\cap B_r(x)$ with $r>0$. Observe that $\V_r(Z_i)\neq\emptyset$ for every $Z_i\in\X_n$ and any $r>0$.

We say that a constant $C>0$ is \emph{universal} if it only depends on $N$, $\beta$, $\Lambda$, $\tau$, $\dens_0$, $\dens_1$ and the Lipschitz constant of $\dens$. Additional dependencies are specified in parenthesis, for example $C(\diam\Omega)$ denotes a constant which depends on the universal constants and on $\diam\Omega$. For the scale parameter we will assume that $\eps\in(0,\eps_0)$, with
\begin{equation*}
	\eps_0
	\in
	(0,\tfrac{1}{2})
\end{equation*}
a universal constant fixed in such a way that it satisfies \eqref{eps0-bound-a}, \eqref{eps0-bound-b}, \eqref{eps0-bound-c}, \eqref{eps0-bound-1}, \eqref{eps-bound-r-R}, see \Cref{remarkeps0}. In addition, we assume that $\eps_0$ is such that $\Omega_{-\eps_0}:= \{x\in \Om\,:\,\dist(x,\R^N\setminus \Omega)>\eps_0\}\neq\emptyset$.

We say that a pairwise disjoint collection of sets $\U_1,\ldots,\U_n\subset\Omega$ with positive measure such that $\bigcup_{i=1}^n\U_i=\Omega$ is a \emph{partition} of $\Omega$.

Next we state the result that enables us to precisely describe the event of the points being roughly \textit{evenly} distributed according to the measure $\mu$. The approximate density $\dens_{\eps}$ appearing in the statement will simply be the histogram density estimator. For this, we assume a kind of an interior measure density condition for $\Omega$ which reads as follows (see \Cref{sec:histogram} for a more detailed description): there exists $c>0$ such that for every $\delta>0$ there exists $B_1,\dots,B_M$ a partition of $\Omega$ such that $B_i\subset B_\delta(x_i)$ for some $x_i\in B_i$ and $|B_i|\geq c\delta^N$. 
For simplicity, when no confusion arises, we denote by $\|\cdot\|_\infty$ the $L^\infty$-norm.

\begin{lemma}\label{lem:dens-eps}
Let $\dens$ be a Lipschitz continuous probability density in $\overline\Omega$ and $\eps>0$. For a random data cloud $\X_n=\{Z_1,\ldots,Z_n\}\subset\Omega$ with density $\dens$, there exists a probability measure $\mu_\eps$ with density $\dens_\eps\in L^\infty(\Omega)$ and a partition $\U_1,\ldots,\U_n$ of $\Omega$ such that
\begin{equation}\label{partition}
	Z_i
	\in
	\U_i\subset B_{\eps^3}(Z_i)
	\quad\text{and}\quad
	\mu_\eps(\U_i)=\int_{\U_i}\dens_\eps(y)\ dy=\frac{1}{n}
\end{equation}
and
\begin{equation}\label{event}
	\|\dens_\eps-\dens\|_\infty
	\leq
	C_0\eps^2
\end{equation}
with probability at least
\begin{equation}\label{probability}
	1-2n\exp\{-c_0n\eps^{3N+4}\}.
\end{equation}
where $C_0>0$ and $c_0=c_0(\Omega)>0$ are universal. 
\end{lemma}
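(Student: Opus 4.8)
The plan is to construct the partition and the approximate density simultaneously via a histogram (binning) argument, and then to control the deviation of the empirical distribution from $\mu$ on these bins using a union bound over Bernstein-type concentration estimates. First I would fix $\delta$ (to be chosen as a power of $\eps$, namely $\delta\sim\eps^3$) and invoke the interior measure density condition to obtain a partition $B_1,\dots,B_M$ of $\Omega$ with $B_j\subset B_\delta(x_j)$ and $|B_j|\ge c\delta^N$, so that by \eqref{mu-bounds} each bin satisfies $\mu(B_j)\ge \dens_0 c\delta^N$. Note $M\lesssim|\Omega|\delta^{-N}$. On each bin $B_j$ the expected number of data points is $n\mu(B_j)$, and Bernstein's inequality gives that the empirical count $\card{B_j^{\X_n}}=\sum_{i=1}^n\ind_{\{Z_i\in B_j\}}$ deviates from $n\mu(B_j)$ by more than $\eta\, n\mu(B_j)$ with probability at most $2\exp\{-c\,\eta^2 n\mu(B_j)\}$ for $\eta\le 1$. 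Choosing $\eta\sim\eps^2$ and using $\mu(B_j)\gtrsim\eps^{3N}$ makes this exponent $\lesssim -c\eps^4\cdot n\eps^{3N}= -c\,n\eps^{3N+4}$, and the union bound over the $M\lesssim\eps^{-3N}$ bins absorbs the polynomial factor into the stated $2n\exp\{-c_0 n\eps^{3N+4}\}$ (here one is generous: $M\le n$ on the relevant event, or one simply bounds $M$ by $n$ after noting that empty bins are harmless, which is where the factor $n$ in \eqref{probability} comes from).

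On the event that all bins are well-populated, I would \emph{define} $\dens_\eps$ to be the histogram estimator, constant on each $B_j$ equal to $\frac{\card{B_j^{\X_n}}}{n\,|B_j|}$; this automatically makes $\mu_\eps$ a probability measure with $\mu_\eps(B_j)=\card{B_j^{\X_n}}/n$. The estimate \eqref{event}, $\|\dens_\eps-\dens\|_\infty\le C_0\eps^2$, then follows by a triangle inequality splitting into (i) the concentration error $\big|\frac{\card{B_j^{\X_n}}}{n}-\mu(B_j)\big|\big/|B_j|\le \eta\,\mu(B_j)/|B_j|\le (\dens_1/\dens_0)\cdot C\eps^2$ and (ii) the discretization error $\big|\mu(B_j)/|B_j|-\dens(y)\big|\le \mathrm{osc}_{B_j}\dens\le \mathrm{Lip}(\dens)\,\delta\lesssim\eps^3\le\eps^2$ coming from Lipschitz continuity of $\dens$ and $\mathrm{diam}(B_j)\le 2\delta$. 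Both terms are $O(\eps^2)$ with a universal constant, giving \eqref{event}.

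The remaining task is to upgrade the coarse partition $\{B_j\}$ into the fine partition $\{\U_i\}$ indexed by the data points with $Z_i\in\U_i\subset B_{\eps^3}(Z_i)$ and exact mass $\mu_\eps(\U_i)=1/n$. Within a single bin $B_j$ containing $k_j:=\card{B_j^{\X_n}}$ data points, I would subdivide $B_j$ into $k_j$ measurable pieces, one containing each data point, each of $\dens_\eps$-mass exactly $1/n$; since $\dens_\eps$ is constant on $B_j$ and $\mu_\eps(B_j)=k_j/n$, this is just partitioning $B_j$ by Lebesgue measure into $k_j$ equal pieces around the given points, which is elementary (e.g. Voronoi-type cells adjusted to equalize volume, or a direct construction using that $\dens_\eps$ has no atoms). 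The inclusion $\U_i\subset B_{\eps^3}(Z_i)$ follows since $\U_i\subset B_j\subset B_\delta(x_j)$ and, with $\delta$ chosen $\le\eps^3/2$ and $Z_i\in B_j$, we get $\mathrm{diam}(B_j)\le\eps^3$ hence $B_j\subset B_{\eps^3}(Z_i)$ (one picks the constant $c$ and $\delta$ compatibly; if the bins are slightly too large one refines them further, which only improves the concentration bound).

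The main obstacle I anticipate is bookkeeping the dependence of constants and making sure the exponent in \eqref{probability} comes out with exactly the power $\eps^{3N+4}$: this forces the bin scale to be $\delta\sim\eps^3$ (so that the discretization error $\mathrm{Lip}(\dens)\delta$ is dominated by the target $\eps^2$ \emph{and} simultaneously $\U_i\subset B_{\eps^3}(Z_i)$), while the concentration tolerance must be $\eta\sim\eps^2$ to hit \eqref{event}, and then $\eta^2\cdot n\,\delta^N\sim n\eps^{3N+4}$ is exactly the claimed rate — so there is essentially no slack, and one must check that the union-bound factor $M$ and all geometric constants genuinely fold into the universal constant $c_0$ and into the prefactor $2n$ without degrading the power. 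A secondary technical point is the measure-theoretic lemma that a finite nonatomic measure on a set can be split into pieces of prescribed equal mass each surrounding a prescribed point; this is standard but should be stated cleanly.
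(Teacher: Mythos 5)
Your proposal is correct and follows essentially the same route as the paper's proof (which is imported from \cite{calderg22} via the appendix): a histogram density estimator on an $\eps^3$-scale partition supplied by the interior measure density condition, a multiplicative Chernoff/Bernstein bound with relative tolerance $\sim\eps^2$ on each bin plus a union bound yielding exactly the exponent $n\eps^{3N+4}$, and an equal-Lebesgue-mass subdivision of each bin among the data points it contains (the paper then adjusts each $\U_i$ by a null set so that $Z_i\in\U_i$). The one loose phrase is that empty bins are \emph{not} harmless --- the concentration event itself forces $n_i\geq(1-\eta)n\mu(B_i)>0$ in every bin, which is precisely what makes the subdivision possible and what justifies replacing the union-bound prefactor $M$ by $n$ --- but this is the same level of informality as in the paper's own argument.
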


\begin{proof} 
This result is taken from 
 \cite[Proposition 2.12]{calderg22}. For the convenience of the reader, we give the proof in \Cref{sec:histogram}. We take $\delta=\eps^3$ and $\lambda=\eps^2$ in \Cref{lem:apendix} and redefine each set $\U_i$ in the partition as $(\U_i\setminus\X_n)\cup\{Z_i\}$. 
\end{proof}

We remark that the previous result is only relevant when \eqref{probability} is strictly positive, and this gives a condition between $n$ and $\eps$. Moreover, $\U_i\cap\X_n=\{Z_i\}$ for each $i$.

As a direct consequence, assuming the event \eqref{event}, if $A$ is a measurable set in $\Omega$, then
\begin{equation}\label{mu-eps-mu}
	|\mu_\eps(A)-\mu(A)|
	\leq
	\int_A|\dens_\eps(y)-\dens(y)|\ dy
	\leq
	C_0|A|\eps^2.
\end{equation}
Since $\dens_0|A|\leq\mu(A)\leq\dens_1|A|$ by \eqref{mu-bounds}, this implies
\begin{equation*}
	(\dens_0-C_0\eps^2)|A|
	\leq
	\mu_\eps(A)
	\leq
	(\dens_1+C_0\eps^2)|A|.
\end{equation*}
We assume that $\eps>0$ is small enough so that the left hand side is positive. More precisely, $\eps\in(0,\eps_0)$ with
\begin{equation}\label{eps0-bound-a}
	\eps_0
	\leq
	\pare{\frac{\dens_0}{2C_0}}^{1/2},
\end{equation}
in which case
\begin{equation*}
	\frac{\dens_0}{2}|A|
	\leq
	\mu_\eps(A)
	\leq
	\frac{3\dens_1}{2}|A|.
\end{equation*}
As a consequence,
\begin{equation*}
	\bigg|\frac{\mu(A)}{\mu_\eps(A)}-1\bigg|
	=
	\frac{|\mu_\eps(A)-\mu(A)|}{\mu_\eps(A)}
	\leq
	\frac{2C_0}{\dens_0}\eps^2.
\end{equation*}
Moreover, if $A_1$ and $A_2$ are two measurable sets in $\Omega$ and $\eps$ satisfies \eqref{eps0-bound-a}, then
\begin{equation}\label{mu-eps-quotient}
	\frac{\mu_\eps(A_1)}{\mu_\eps(A_2)}
	\leq
	\frac{\dens_1+C_0\eps^2}{\dens_0-C_0\eps^2}\cdot\frac{|A_1|}{|A_2|}
	\leq
	3\frac{\dens_1}{\dens_0}\cdot\frac{|A_1|}{|A_2|}.
\end{equation}

Given a bounded function $u$ defined in the random data cloud $\X_n\subset\Omega$, the partition $\{\U_1,\ldots,\U_n\}$ of $\Omega$ from \Cref{lem:dens-eps} satisfying \eqref{partition} allows us to define a projection from $\Omega$ to $\X_n$.

\begin{definition}
\label{def:transport}
Let $\X_n=\{Z_1,\ldots,Z_n\}\subset\Omega$ and a partition $\U_1,\ldots,\U_n$ of $\Omega$. We define the \emph{transport map} 
\begin{equation*}
	T_\eps:\Omega\to\X_n
\end{equation*}
as the projection map satisfying that 
\begin{equation*}
	T_\eps(x)=Z_i
	\quad\text{ if and only if }\quad
	x\in\U_i.
\end{equation*}
\end{definition}

By means of the transport map, one can construct an extension of a given function in $\X_n$ by noting that $u\circ T_\eps\in L^\infty(\Omega)$ satisfies $(u\circ T_\eps)\big|_{\X_n}=u$ for each $u\in L^\infty(\X_n)$. Furthermore, the discrete average over $\X_n$ of a given function $u\in L^\infty(\X_n)$ can be related with the $\mu_\eps$-average over $\Omega$ of its extension $(u\circ T_\eps)\in L^\infty(\Omega)$ using \eqref{partition} and the fact that $\{Z_i\}=T_\eps(\U_i)$ as follows,
\begin{equation}\label{transport-integral}
\begin{split}
	\frac{1}{n}\sum_{Z_i\in\X_n}u(Z_i)
	=
	~&
	\sum_{Z_i\in\X_n}\mu_\eps(\U_i)u(Z_i)
	\\
	=
	~&
	\sum_{Z_i\in\X_n}\int_{\U_i}(u\circ T_\eps)\ d\mu_\eps
	\\
	=
	~&
	\int_\Omega(u\circ T_\eps)\ d\mu_\eps.
\end{split}
\end{equation}

We also make the following observation about the image of a set $A\subset\Omega$ under the transport map. If $r>0$ and we define
\begin{align}
\label{eq:fattening}
	A_r
	=
	\{x\in\R^N\,:\,\dist(x,A)<r\}
\end{align}
and
\begin{equation*}
	A_{-r}
	=
	\{x\in A\,:\,\dist(x,\R^N\setminus A)>r\},
\end{equation*}
then the inclusions
\begin{equation}\label{inclusions}
	A_{-\eps^3}
	\subset
	T_\eps^{-1}(A^{\X_n})
	\subset
	A_{\eps^3}
\end{equation}
hold, where $A^{\X_n}=\X_n\cap A$. To see this, let $x\in A_{-\eps^3}$. Since the sets $\U_1,\ldots,\U_n$ form a partition of $\Omega$, there exists some $i$ such that $x\in\U_i$ and by the definition of the transport map this is equivalent to $T_\eps(x)=Z_i$. Moreover, recalling \eqref{partition} we have that $\U_i\subset B_{\eps^3}(Z_i)$, so $Z_i$ is at a distance no greater than $\eps^3$ to $x$. Thus $Z_i\in A$, and in particular $Z_i\in A^{\X_n}$. Composing with the inverse map we get that $x\in T_{\eps}^{-1}(Z_i)\subset T_{\eps}^{-1}(A^{\X_n})$, from which the first inclusion in \eqref{inclusions} follows. Similarly for the other inclusion, let $x\in T_\eps^{-1}(A^{\X_n})$. Then $T_\eps(x)=Z_i$ for some $Z_i\in A^{\X_n}$. Again by the definition of the transport map and \eqref{partition}, it turns out that $x\in T_\eps^{-1}(Z_i)=\U_i\subset B_{\eps^3}(Z_i)$, then $x\in A_{\eps^3}$.

Furthermore, for convenience we write the second inclusion in \eqref{inclusions} in terms of indicator functions. More precisely, it holds that
\begin{equation}\label{indicator-A}
	\ind_A(T_\eps(y))
	=
	\ind_{T_\eps^{-1}(A^{\X_n})}(y)
	\leq
	\ind_{A_{\eps^3}}(y)
\end{equation}
for every $y\in\Omega$.

It is worth to mention that if $A\subset\Omega$ then it is not true in general that $A_{\eps^3}\subset\Omega$. However, this does not pose any problem.

\section{Discrete--to--$\varepsilon$-nonlocal estimates for linear averages}
\label{sec:discrete-to-nonlocal}

We explained the plan of the proof in \Cref{sec:roadmap}: the key idea is to pass from discrete Pucci-type extremal inequalities to $\eps$-nonlocal Pucci-type extremal inequalities. We will deal with the nonlinear part in the operators in \Cref{sec:nonlinear-part}, and apart from that as pointed out in \eqref{eq:roadmap-key-averages}, the key is to estimate the difference of the averages
\begin{align*}
\frac{1}{\card{\V_\eps(T_\eps(x))}}\sum_{Z_i\in\V_\eps(T_\eps(x))}u(Z_i)
	-
	\dashint_{B_\eps(x)}(u\circ T_\eps)\ d\mu.
\end{align*}
This is the task of this section, and the actual estimate will be given in \Cref{thm:discrete-to-nonlocal-averages}. Here is a rough sketch of the proof: adding and subtracting terms conveniently we can split the difference into the following parts
\begin{equation*}
\begin{split}
	\textbf{[\,A\,]}
	=
	~&
	\bigg|\frac{1}{\card{\V_\eps(x)}}\sum_{Z_j\in\V_\eps(x)}u(Z_j)
	-
	\frac{1}{\card{\V_\eps(T_\eps(x))}}\sum_{Z_j\in\V_\eps(T_\eps(x))}u(Z_j)\bigg|,
	\\
	\textbf{[\,B\,]}
	=
	~&
	\bigg|\frac{1}{\card{\V_\eps(x)}}\sum_{Z_j\in\V_\eps(x)}u(Z_j)
	-
	\dashint_{B_\eps(x)}(u\circ T_\eps)\ d\mu_\eps\bigg|,
	\\
	\textbf{[\,C\,]}
	=
	~&
	\bigg|\dashint_{B_\eps(x)}(u\circ T_\eps)\ d\mu
	-
	\dashint_{B_\eps(x)}(u\circ T_\eps)\ d\mu_\eps\bigg|.
\end{split}
\end{equation*}
In order to establish the discrete to nonlocal estimate in \eqref{eq:roadmap-discrete-to-nonlocal}, we estimate \textbf{A}, \textbf{B} and \textbf{C} above. The estimate for \textbf{A} is in  \Cref{lem:A}. 
The key estimate for \textbf{B} is given in \Cref{lem:B}. 
Finally, the estimate for \textbf{C} is stated in  \Cref{lem:C}. These estimates are based on different geometric observations and concentration of measure type stochastic approximations that utilize \Cref{lem:dens-eps}, and allow us to prove that such estimates hold with a high probability that tends to $1$ as $n\to \infty$ with fixed $\eps$.

But first, let us stress that, given $v\in L^\infty(\Omega)$, one can use Bernstein's inequality (see eq. (2.10) from \cite{boucheronlm13}) to obtain the same control of the difference between the arithmetic mean of $v\big|_{\X_n}$ in $\V_\eps(x)$ and the $\mu$-average of $v$ with high probability. However, this argument cannot be applied to $v=u\circ T_\eps$, because in that case $v$ depends on the extension map $T_\eps$, and thus on the random graph $\X_n$, so the independence of the random variables involved in the derivation of Bernstein's inequality fails. It is worth to mention that in the linear case in \cite{caldergl22}, this problem is solved by introducing the double convolution kernel, which allows to untie the function $v$ from the probability estimates. However, this approach does not seem to work in our setting, since the operators considered in this paper are nonlinear. 

We start with some auxiliary lemmas.  The first estimate allows us to employ \Cref{lem:dens-eps} to approximate the portion of points from $\X_n$ that fall inside a given set $A\subset\Omega$ by its measure $\mu(A)$ with high probability. This will be useful when approximating the difference between the discrete graph operator and $\eps$-nonlocal operator.

\begin{lemma}\label{lem:card/n-mu(A)}
Let $\eps\in(0,\eps_0)$ and suppose that the events \eqref{partition} and \eqref{event} hold and let $A\subset\Omega$ be any measurable set. There exists a universal constant $C>0$ such that
\begin{equation}\label{estimate-card/n-mu}
	\bigg|\frac{\card{A^{\X_n}}}{n}-\mu(A)\bigg|
	\leq
	C\big(|A_{\eps^3}\setminus A_{-\eps^3}|+|A_{\eps^3}|\eps^2\big),
\end{equation}
where $A^{\X_n}=\X_n\cap A$ and $A_{\eps^3}$ was defined in \eqref{eq:fattening}.
\end{lemma}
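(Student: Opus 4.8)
The plan is to rewrite the empirical proportion $\card{A^{\X_n}}/n$ as a $\mu_\eps$-measure of a preimage under the transport map $T_\eps$ of \Cref{def:transport}, and then to trap both that quantity and $\mu(A)$ between the measures of the fattened and shrunk sets $A_{\eps^3}$ and $A_{-\eps^3}$ from \eqref{eq:fattening}.

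First, since $T_\eps^{-1}(A^{\X_n})=\bigcup_{Z_i\in A^{\X_n}}\U_i$ is a disjoint union of sets each of $\mu_\eps$-measure $1/n$ by \eqref{partition}, one obtains the exact identity
\[
	\frac{\card{A^{\X_n}}}{n}
	=
	\sum_{Z_i\in A^{\X_n}}\mu_\eps(\U_i)
	=
	\mu_\eps\big(T_\eps^{-1}(A^{\X_n})\big).
\]
By the inclusions \eqref{inclusions}, $A_{-\eps^3}\subset T_\eps^{-1}(A^{\X_n})\subset A_{\eps^3}$, so monotonicity of $\mu_\eps$ gives $\mu_\eps(A_{-\eps^3})\leq\card{A^{\X_n}}/n\leq\mu_\eps(A_{\eps^3})$, while $A_{-\eps^3}\subset A\subset A_{\eps^3}$ yields $\mu(A_{-\eps^3})\leq\mu(A)\leq\mu(A_{\eps^3})$.

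From these two chains of inequalities,
\[
	\frac{\card{A^{\X_n}}}{n}-\mu(A)
	\leq
	\mu_\eps(A_{\eps^3})-\mu(A_{-\eps^3})
	=
	\big(\mu_\eps(A_{\eps^3})-\mu(A_{\eps^3})\big)+\mu\big(A_{\eps^3}\setminus A_{-\eps^3}\big),
\]
and symmetrically
\[
	\mu(A)-\frac{\card{A^{\X_n}}}{n}
	\leq
	\mu(A_{\eps^3})-\mu_\eps(A_{-\eps^3})
	=
	\mu\big(A_{\eps^3}\setminus A_{-\eps^3}\big)+\big(\mu(A_{-\eps^3})-\mu_\eps(A_{-\eps^3})\big).
\]
Under the event \eqref{event}, estimate \eqref{mu-eps-mu} controls $|\mu_\eps(S)-\mu(S)|\leq C_0|S|\eps^2$ for measurable $S$ — note $A_{\eps^3}$ need not lie in $\Omega$, but since $\mu$ and $\mu_\eps$ are supported in $\Omega$ we may replace $A_{\eps^3}$ by $A_{\eps^3}\cap\Omega$ with $|A_{\eps^3}\cap\Omega|\leq|A_{\eps^3}|$ — while \eqref{mu-bounds} gives $\mu(A_{\eps^3}\setminus A_{-\eps^3})\leq\dens_1|A_{\eps^3}\setminus A_{-\eps^3}|$. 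Bounding also $|A_{-\eps^3}|\leq|A_{\eps^3}|$ and combining, we reach
\[
	\bigg|\frac{\card{A^{\X_n}}}{n}-\mu(A)\bigg|
	\leq
	\dens_1\,|A_{\eps^3}\setminus A_{-\eps^3}|+C_0\,|A_{\eps^3}|\,\eps^2,
\]
which is the claim with $C=\max\{\dens_1,C_0\}$, a universal constant.

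The argument is essentially bookkeeping: once the identification $\card{A^{\X_n}}/n=\mu_\eps(T_\eps^{-1}(A^{\X_n}))$ is in place, the rest is monotonicity of measures together with \eqref{mu-eps-mu} and \eqref{mu-bounds}. The only subtlety worth flagging is the handling of the portion of $A_{\eps^3}$ that may stick out of $\Omega$, which is harmless because of the convention $\mu(S)=\mu(S\cap\Omega)$, $\mu_\eps(S)=\mu_\eps(S\cap\Omega)$; I do not expect any genuine obstacle here.
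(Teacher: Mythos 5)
Your proof is correct and follows essentially the same route as the paper's: both rest on the identity $\card{A^{\X_n}}/n=\mu_\eps(T_\eps^{-1}(A^{\X_n}))$ (the paper derives it via \eqref{transport-integral} and \eqref{indicator-A}, you directly from the partition property \eqref{partition}), the inclusions \eqref{inclusions}, the density comparison \eqref{mu-eps-mu}, and the bound \eqref{mu-bounds}. The only difference is bookkeeping — the paper adds and subtracts $\mu(T_\eps^{-1}(A^{\X_n}))$ while you trap both quantities between the fattened and shrunk sets — and your remark about the part of $A_{\eps^3}$ outside $\Omega$ is handled correctly by the stated convention.
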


\begin{proof}
We start by adding and subtracting $\mu(T_\eps^{-1}(A^{\X_n}))$ so we can study the following differences separately,
\begin{equation}\label{card/n-mu}
\begin{split}
	\bigg|\frac{\card{A^{\X_n}}}{n}-\mu(A)\bigg|
	\leq
	~&
	\bigg|\frac{\card{A^{\X_n}}}{n}-\mu(T_\eps^{-1}(A^{\X_n}))\bigg|
	\\
	~&
	+
	\big|\mu(T_\eps^{-1}(A^{\X_n}))-\mu(A)\big|.
\end{split}
\end{equation}

For the first term in the right hand side, using \eqref{transport-integral} and \eqref{indicator-A} we can write
\begin{equation}\label{card(A)/n-identity}
\begin{split}
	\frac{\card{A^{\X_n}}}{n}
	=
	~&
	\frac{1}{n}\sum_{Z_i\in\X_n}\ind_A(Z_i)
	\\
	=
	~&
	\int_\Omega\ind_A(T_\eps(y))\ d\mu_\eps(y)
	\\
	=
	~&
	\int_\Omega\ind_{T_\eps^{-1}(A^{\X_n})}(y)\dens_\eps(y)\ dy,
\end{split}
\end{equation}
where $\dens_\eps$ is the density function of $\mu_\eps$ from \Cref{lem:dens-eps}. Then, using \eqref{event} and the inequality in \eqref{indicator-A} we have
\begin{align*}
	\bigg|\frac{\card{A^{\X_n}}}{n}-\mu(T_\eps^{-1}(A^{\X_n}))\bigg|
	\leq
	~&
	\int_\Omega\ind_{T_\eps^{-1}(A^{\X_n})}(y)|\dens_\eps(y)-\dens(y)|\ dy
	\\
	\leq
	~&
	C_0|A_{\eps^3}|\eps^2.
\end{align*}

For the remaining term in \eqref{card/n-mu}, recalling the inclusions in \eqref{inclusions} together with the fact that $\mu$ is a measure with density $\dens(y)\leq\dens_1$, we obtain
\begin{align*}
	\big|\mu(T_\eps^{-1}(A^{\X_n}))-\mu(A)\big|
	\leq
	~&
	\mu(A_{\eps^3})-\mu(A_{-\eps^3})
	\\
	=
	~&
	\mu(A_{\eps^3}\setminus A_{-\eps^3})
	\\
	\leq
	~&
	\dens_1|A_{\eps^3}\setminus A_{-\eps^3}|.
\end{align*}
Finally, \eqref{estimate-card/n-mu} is obtained after inserting these estimates in \eqref{card/n-mu}.
\end{proof}

In this paper we are interested in the particular case in which the set $A$ is a ball of radius $\eps$, so the following result follows as a corollary.

\begin{lemma}\label{lem:card/n-mu(B)}
Let $\eps\in(0,\eps_0)$ and suppose that the events \eqref{partition} and \eqref{event} hold. There exists a universal constant $C>0$ such that
\begin{equation}\label{card/n-mu(B)-eq1}
	\bigg|\frac{\card{\V_\eps(x)}}{n}-\mu(B_\eps(x))\bigg|
	\leq
	C\eps^{N+2},
\end{equation}
for every $x\in\Omega_{-\eps}$. In particular,
\begin{equation}\label{card/n-mu(B)-eq2}
	\bigg|\frac{\card{\V_\eps(x)}}{n\,\mu(B_\eps(x))}-1\bigg|
	\leq
	C\eps^2
\end{equation}
and
\begin{equation}\label{card/n-mu(B)-eq3}
	\frac{\card{\V_\eps(x)}}{n}
	\geq
	C\eps^N
\end{equation}
for every $x\in\Omega_{-\eps}$.
\end{lemma}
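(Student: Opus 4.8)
The plan is to obtain the three estimates as immediate consequences of \Cref{lem:card/n-mu(A)} applied to the ball $A=B_\eps(x)$, together with the comparability \eqref{mu-bounds} of $\mu$ with Lebesgue measure. Throughout we keep the standing hypothesis that the events \eqref{partition} and \eqref{event} hold, since this is exactly what \Cref{lem:card/n-mu(A)} requires.

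First I would specialize the geometry. For $A=B_\eps(x)$ the fattened and shrunk sets in \eqref{eq:fattening} are again concentric balls: $A_{\eps^3}=B_{\eps+\eps^3}(x)$ and, using $\eps^3<\eps$ (which holds since $\eps<\eps_0<\tfrac12$), $A_{-\eps^3}=B_{\eps-\eps^3}(x)$. Writing $\omega_N=|B_1(0)|$ for the volume of the unit ball, this gives
\[
	|A_{\eps^3}\setminus A_{-\eps^3}|
	=
	\omega_N\left[(\eps+\eps^3)^N-(\eps-\eps^3)^N\right]
	=
	\omega_N\eps^N\left[(1+\eps^2)^N-(1-\eps^2)^N\right],
\]
and since $(1+t)^N-(1-t)^N\leq C(N)\,t$ for $t=\eps^2\in(0,1)$, the right-hand side is at most $C\eps^{N+2}$; likewise $|A_{\eps^3}|\,\eps^2=\omega_N\eps^N(1+\eps^2)^N\eps^2\leq C\eps^{N+2}$. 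Inserting these two bounds into \eqref{estimate-card/n-mu} yields \eqref{card/n-mu(B)-eq1}. For \eqref{card/n-mu(B)-eq2} I would divide \eqref{card/n-mu(B)-eq1} by $\mu(B_\eps(x))$: because $x\in\Omega_{-\eps}$ means $\dist(x,\R^N\setminus\Omega)>\eps$, the ball $B_\eps(x)$ is contained in $\Omega$, so \eqref{mu-bounds} gives $\mu(B_\eps(x))\geq\dens_0\omega_N\eps^N$ and hence
\[
	\left|\frac{\card{\V_\eps(x)}}{n\,\mu(B_\eps(x))}-1\right|
	\leq
	\frac{C\eps^{N+2}}{\dens_0\omega_N\eps^N}
	\leq
	C\eps^2 .
\]
Finally, choosing $\eps_0$ small enough that this last constant times $\eps_0^2$ does not exceed $\tfrac12$ (one of the defining conditions on $\eps_0$), \eqref{card/n-mu(B)-eq2} forces $\card{\V_\eps(x)}\geq\tfrac12\, n\,\mu(B_\eps(x))\geq\tfrac12\dens_0\omega_N\, n\,\eps^N$, which is \eqref{card/n-mu(B)-eq3}.

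There is no genuine difficulty here; the statement is a routine corollary of \Cref{lem:card/n-mu(A)}. The only two points requiring attention are: identifying $A_{\eps^3}$ and $A_{-\eps^3}$ with balls so that the geometric terms in \eqref{estimate-card/n-mu} can be computed explicitly and seen to be of order $\eps^{N+2}$; and using the hypothesis $x\in\Omega_{-\eps}$ precisely to guarantee $B_\eps(x)\subset\Omega$, so that the lower bound $\mu(B_\eps(x))\geq\dens_0\omega_N\eps^N$ from \eqref{mu-bounds} is available — without it, dividing to pass from the absolute error $\eps^{N+2}$ to the relative error $\eps^2$ (and then to the lower bound in \eqref{card/n-mu(B)-eq3}) would fail for $x$ close to $\partial\Omega$.
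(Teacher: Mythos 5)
Your proposal is correct and follows essentially the same route as the paper: apply \Cref{lem:card/n-mu(A)} with $A=B_\eps(x)$, bound the annulus volume $|B_{\eps+\eps^3}\setminus B_{\eps-\eps^3}|$ and $|B_{\eps+\eps^3}|\eps^2$ by $C\eps^{N+2}$, divide by $\mu(B_\eps(x))\geq\dens_0|B_1|\eps^N$ for \eqref{card/n-mu(B)-eq2}, and impose a smallness condition on $\eps_0$ (the paper's \eqref{eps0-bound-b}) for \eqref{card/n-mu(B)-eq3}. The only cosmetic difference is that you derive \eqref{card/n-mu(B)-eq3} via \eqref{card/n-mu(B)-eq2}, whereas the paper gets it directly from \eqref{card/n-mu(B)-eq1}; both are fine.
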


\begin{proof}
Apply \Cref{lem:card/n-mu(A)} with $A=B_\eps(x)$ to get
\begin{equation*}
	\bigg|\frac{\card{\V_\eps(x)}}{n}-\mu(B_\eps(x))\bigg|
	\leq
	C\big(|B_{\eps+\eps^3}\setminus B_{\eps-\eps^3}|+|B_{\eps+\eps^3}|\eps^2\big).
\end{equation*}
Let us focus on the terms in the right hand side separately. First, using the Mean Value Theorem and the convexity of $t\mapsto t^N$,
\begin{equation}\label{ring-estimate}
\begin{split}
	|B_{\eps+\eps^3}\setminus B_{\eps-\eps^3}|
	=
	~&
	|B_1|\big((1+\eps^2)^N-(1-\eps^2)^N\big)\eps^N
	\\
	\leq
	~&
	|B_1|\big(N(1+\eps^2)^{N-1}2\eps^2\big)\eps^N
	\\
	\leq
	~&
	|B_1|2^NN\eps^{N+2}
\end{split}
\end{equation}
for $\eps\in(0,\eps_0)$. Second,
\begin{align*}
	|B_{\eps+\eps^3}|
	=
	|B_1|(1+\eps^2)^N\eps^N
	\leq
	|B_1|2^N\eps^N.
\end{align*}
Replacing these two inequalities above we obtain \eqref{card/n-mu(B)-eq1}. Furthermore, since $\mu(B_\eps(x))\geq\dens_0|B_1|\eps^N$, this immediately implies \eqref{card/n-mu(B)-eq2}. On the other hand, from \eqref{card/n-mu(B)-eq1} we have
\begin{equation*}
	\frac{\card{\V_\eps(x)}}{n}
	\geq
	\mu(B_\eps(x))-C\eps^{N+2}
	\geq
	(\dens_0|B_1|-C\eps^2)\eps^N
\end{equation*}
for $x\in\Omega_{-\eps}$. The right hand side is uniformly bounded away from $0$ for any sufficiently small $\eps>0$. That is, if we select
\begin{equation}\label{eps0-bound-b}
	\eps_0
	\leq
	\pare{\frac{\dens_0|B_1|}{2C}}^{1/2}
\end{equation}
then \eqref{card/n-mu(B)-eq3} follows for any $\eps\in(0,\eps_0)$.
\end{proof}

Another instance for which \Cref{lem:card/n-mu(A)} is needed is the case in which $A$ is an annular set, that is, the symmetric difference of concentric balls. This is also of great help for obtaining an upper bound of the portion of points in $\X_n$ that fall in the difference of two non-concentric $\eps$-balls whose centers are at an $\eps^3$-distance, as we state in the following lemma.

\begin{lemma}\label{lem:card(B-B)/n}
Let $\eps\in(0,\eps_0)$ and suppose that the events \eqref{partition} and \eqref{event} hold. There exists a universal constant $C>0$ such that
\begin{equation*}
	\frac{\card{\V_\eps(x)\triangle\V_\eps(\widetilde x)}}{n}
	\leq
	C\eps^{N+2}
\end{equation*}
for every $x\in\Omega$ and $\widetilde x\in \Omega$ such that $|x-\widetilde x|\leq\eps^3$.
\end{lemma}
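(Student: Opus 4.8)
The plan is to apply \Cref{lem:card/n-mu(A)} with $A=\V_\eps(x)\triangle\V_\eps(\widetilde x)$, but one has to be slightly careful because \Cref{lem:card/n-mu(A)} is stated for subsets of $\Omega$ while the symmetric difference here is a subset of $\Omega$ by construction (it is $\X_n\cap(B_\eps(x)\triangle B_\eps(\widetilde x))$, and $\X_n\subset\Omega$). So first I would set $A=(B_\eps(x)\triangle B_\eps(\widetilde x))\cap\Omega$ and note $A^{\X_n}=\V_\eps(x)\triangle\V_\eps(\widetilde x)$. Applying \eqref{estimate-card/n-mu} gives
\begin{equation*}
	\frac{\card{\V_\eps(x)\triangle\V_\eps(\widetilde x)}}{n}
	\leq
	\mu(A)+C\big(|A_{\eps^3}\setminus A_{-\eps^3}|+|A_{\eps^3}|\eps^2\big),
\end{equation*}
using also that $\card{A^{\X_n}}/n\leq\mu(A)+|\,\cdot\,|$ from the triangle inequality. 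Hence it suffices to bound $|A|$, $|A_{\eps^3}|$ and $|A_{\eps^3}\setminus A_{-\eps^3}|$ all by a universal constant times $\eps^{N+2}$, since $\mu(A)\leq\dens_1|A|$ by \eqref{mu-bounds}.

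The geometric heart of the matter is the elementary estimate for the volume of the symmetric difference of two $\eps$-balls whose centers are $\eps^3$-close. I would argue that $B_\eps(x)\triangle B_\eps(\widetilde x)\subset B_{\eps+\eps^3}(x)\setminus B_{\eps-\eps^3}(x)$: indeed if $y\in B_\eps(x)\setminus B_\eps(\widetilde x)$ then $|y-x|<\eps$ and $|y-x|\geq|y-\widetilde x|-|x-\widetilde x|>\eps-\eps^3$, and symmetrically for the other piece. Therefore
\begin{equation*}
	|B_\eps(x)\triangle B_\eps(\widetilde x)|
	\leq
	|B_{\eps+\eps^3}\setminus B_{\eps-\eps^3}|
	\leq
	|B_1|2^NN\eps^{N+2},
\end{equation*}
exactly the computation already carried out in \eqref{ring-estimate}. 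This controls $|A|$ directly.

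For the fattened versions, note $A\subset B_\eps(x)\triangle B_\eps(\widetilde x)$ sits inside the thin annulus $B_{\eps+\eps^3}(x)\setminus B_{\eps-\eps^3}(x)$, so $A_{\eps^3}\subset B_{\eps+2\eps^3}(x)\setminus B_{\eps-2\eps^3}(x)$ (any point within $\eps^3$ of the annulus lies in the slightly wider annulus — here using $\eps<\eps_0<1/2$ so that $\eps-2\eps^3>0$). The same Mean Value Theorem / convexity computation as in \eqref{ring-estimate}, now with $1\pm 2\eps^2$ in place of $1\pm\eps^2$, yields $|A_{\eps^3}|\leq|B_{\eps+2\eps^3}\setminus B_{\eps-2\eps^3}|\leq C\eps^{N+2}$ with $C$ universal, and a fortiori $|A_{\eps^3}\setminus A_{-\eps^3}|\leq|A_{\eps^3}|\leq C\eps^{N+2}$ and $|A_{\eps^3}|\eps^2\leq C\eps^{N+4}\leq C\eps^{N+2}$. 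Collecting everything into the displayed bound above gives the claim. I do not expect a genuine obstacle here — the only point requiring mild care is the bookkeeping of which annulus contains which, and ensuring all the radii stay positive for $\eps\in(0,\eps_0)$; possibly one needs to enlarge $\eps_0$ (it is universal and several such constraints are already collected via \eqref{eps0-bound-a}, \eqref{eps0-bound-b}), but $\eps_0<1/2$ already suffices for $\eps-2\eps^3>0$.
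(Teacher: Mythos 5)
Your proposal is correct and follows essentially the same route as the paper: the paper applies \Cref{lem:card/n-mu(A)} to the containing annulus $(B_{\eps+\eps^3}(x)\setminus B_{\eps-\eps^3}(x))\cap\Omega$ and then uses the inclusion $\V_\eps(x)\triangle\V_\eps(\widetilde x)\subset\V_{\eps+\eps^3}(x)\setminus\V_{\eps-\eps^3}(x)$, whereas you apply the lemma directly to the symmetric difference and bound its fattening via the same annulus and the same ring estimate \eqref{ring-estimate} — a purely cosmetic difference. All your intermediate bounds ($|A|$, $|A_{\eps^3}|$, $|A_{\eps^3}\setminus A_{-\eps^3}|\leq|A_{\eps^3}|$) check out, and $\eps_0<\tfrac12$ indeed suffices to keep $\eps-2\eps^3>0$.
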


\begin{proof}
Taking into account that $|x-\widetilde x|\leq\eps^3$, by a basic geometric observation we have that
\begin{equation*}
	B_\eps(x)\triangle B_\eps(\widetilde x)
	\subset
	\big(B_{\eps+\eps^3}(x)\setminus B_{\eps-\eps^3}(x)\big)
	\cap\Omega.
\end{equation*}
Let us denote by $A$ the set in the right hand side. Then 
$$
	A_{\eps^3}\subset
	B_{\eps+2\eps^3}(x)\setminus \overline B_{\eps-2\eps^3}(x),
$$
and using the elementary inequality $|(t+h)^N-(t-h)^N|\leq2N(t+h)^{N-1}h$ for $0<h<t$ we estimate
\begin{align*}
	|A_{\eps^3}|
	\leq
	~&
	|B_{\eps+2\eps^3}\setminus \overline B_{\eps-2\eps^3}|
	\\
	=
	~&
	|B_1|\big((\eps+2\eps^3)^N-(\eps-2\eps^3)^N\big)
	\\
	\leq
	~&
	2N|B_1|(\eps+2\eps^2)^{N-1}\eps^3
	\\
	\leq
	~&
	C\eps^{N+2}
\end{align*}
for any $\eps\in(0,\eps_0)$. On the other hand, $|A_{-\eps^3}|=|\overline B_\eps \setminus B_\eps|=0$ so we have
\begin{align*}
	|A_{\eps^3}\setminus A_{-\eps^3}|
	\leq
	C\eps^{N+2}.
\end{align*}
Thus, \Cref{lem:card/n-mu(A)} yields
\begin{equation*}
	\bigg|\frac{\card{\V_{\eps+\eps^3}(x)\setminus\V_{\eps-\eps^3}(x)}}{n}-\mu(B_{\eps+\eps^3}(x)\setminus B_{\eps-\eps^3}(x))\bigg|
	\leq
	C\eps^{N+2}
\end{equation*}
for any $\eps\in(0,\eps_0)$. Hence we can estimate
\begin{align*}
	\frac{\card{\V_\eps(x)\triangle \V_\eps(\widetilde x)}}{n}
	\leq
	~&
	\frac{\card{\V_{\eps+\eps^3}(x)\setminus\V_{\eps-\eps^3}(x)}}{n}
	\\
	\leq
	~&
	\mu(B_{\eps+\eps^3}(x)\setminus B_{\eps-\eps^3}(x))
	+
	C\eps^{N+2}
	\\
	\leq
	~&
	C\big(|B_{\eps+\eps^3}\setminus B_{\eps-\eps^3}|+\eps^{N+2}\big)
	\\
	\leq
	~&
	C\eps^{N+2},
\end{align*}
so the proof is finished.
\end{proof}

Let us recall the identity \eqref{transport-integral} stating that for any function $u\in L^\infty(\X_n)$ it holds that
\begin{equation*}
	\frac{1}{n}\sum_{Z_i\in\X_n}u(Z_i)
	=
	\int_\Omega(u\circ T_\eps)\ d\mu_\eps.
\end{equation*}
Therefore, in view of \Cref{lem:card/n-mu(B)}, which relates the portion of points in $\X_n$ that fall inside a ball $B_\eps(x)$ with $\mu(B_\eps(x))$, and \eqref{event} (and more precisely \eqref{mu-eps-mu}), that bounds the difference between the measures $\mu$ and $\mu_\eps$, it is reasonable to expect that one can relate the discrete average over $\V_\eps(x)$ of any function $u\in L^\infty(\X_n)$ with the $\mu$-average of its extension $u\circ T_\eps\in L^\infty(\Omega)$. This is indeed the statement of the main result of this section.

\begin{theorem}\label{thm:discrete-to-nonlocal-averages}
Let $\X_n=\{Z_1,\ldots,Z_n\}$ be a random graph in $\Omega$ and $u\in L^\infty(\X_n)$. Let $\eps\in(0,\eps_0)$ and assume that the events \eqref{partition} and \eqref{event} are satisfied. There exists a universal constant $C>0$ such that
\begin{equation}\label{eq:discrete-to-nonlocal-averages}
	\bigg|\frac{1}{\card{\V_\eps(T_\eps(x))}}\sum_{Z_i\in\V_\eps(T_\eps(x))}u(Z_i)
	-
	\dashint_{B_\eps(x)}(u\circ T_\eps)\ d\mu\bigg|
	\leq
	C\|u\|_{L^\infty(\X_n)}\eps^2,
\end{equation}
for every $x\in \Omega_{-\eps}$.
\end{theorem}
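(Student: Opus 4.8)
The strategy is exactly the one sketched before the statement: split the difference via the triangle inequality into the three terms $\textbf{[\,A\,]}$, $\textbf{[\,B\,]}$, $\textbf{[\,C\,]}$ introduced above (with $u$ in place of $v$), and bound each by $C\|u\|_{L^\infty(\X_n)}\eps^2$. First I would invoke \Cref{lem:card/n-mu(A)}, and more specifically its corollaries \Cref{lem:card/n-mu(B)} and \Cref{lem:card(B-B)/n}, under the standing assumption that the events \eqref{partition} and \eqref{event} hold; these give, for $x\in\Omega_{-\eps}$, both $\card{\V_\eps(x)}\approx n\,\mu(B_\eps(x))$ with relative error $O(\eps^2)$ (via \eqref{card/n-mu(B)-eq2}) and $\card{\V_\eps(x)}\gtrsim n\eps^N$ (via \eqref{card/n-mu(B)-eq3}), which will be the denominators we need to control throughout.

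For $\textbf{[\,A\,]}$, since $T_\eps(x)=Z_i$ lies in $\U_i\subset B_{\eps^3}(Z_i)$, we have $|x-T_\eps(x)|<\eps^3$, so \Cref{lem:card(B-B)/n} bounds $\card{\V_\eps(x)\triangle\V_\eps(T_\eps(x))}\leq Cn\eps^{N+2}$. Writing the two discrete averages over a common denominator and estimating the numerator difference by $\|u\|_\infty\card{\V_\eps(x)\triangle\V_\eps(T_\eps(x))}$ plus a term coming from the mismatch of the two cardinalities (itself controlled by the same symmetric-difference bound), and dividing by $\card{\V_\eps(x)}\gtrsim n\eps^N$, yields $\textbf{[\,A\,]}\leq C\|u\|_\infty\eps^2$. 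For $\textbf{[\,C\,]}$, we bound
\[
	\textbf{[\,C\,]}
	\leq
	\frac{1}{\mu(B_\eps(x))}\int_{B_\eps(x)}|u\circ T_\eps|\,\big|\dens-\dens_\eps\big|\,dy
	\leq
	\|u\|_\infty\,\frac{C_0\eps^2|B_\eps(x)|}{\mu(B_\eps(x))},
\]
using \eqref{event} in the numerator and $\mu(B_\eps(x))\geq\dens_0|B_\eps(x)|$ in the denominator, giving $\textbf{[\,C\,]}\leq C\|u\|_\infty\eps^2$ at once.

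The term $\textbf{[\,B\,]}$ is the substantive one and I expect it to be the main obstacle. The point is to relate the discrete arithmetic mean $\frac{1}{\card{\V_\eps(x)}}\sum_{Z_j\in\V_\eps(x)}u(Z_j)$ with the $\mu_\eps$-average $\dashint_{B_\eps(x)}(u\circ T_\eps)\,d\mu_\eps$. Using the identity \eqref{transport-integral} localized to $B_\eps(x)$ — more precisely, writing $\sum_{Z_j\in\V_\eps(x)}u(Z_j)=\sum_i u(Z_i)\ind_{B_\eps(x)}(Z_i)=n\int_\Omega \ind_{B_\eps(x)}(T_\eps(y))(u\circ T_\eps)(y)\,d\mu_\eps(y)=n\int_{T_\eps^{-1}(\V_\eps(x))}(u\circ T_\eps)\,d\mu_\eps$ — the numerator becomes an integral of $u\circ T_\eps$ over $T_\eps^{-1}(\V_\eps(x))$ rather than over $B_\eps(x)$. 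By the inclusions \eqref{inclusions}, these two sets differ only inside the thin annulus $B_{\eps+\eps^3}(x)\setminus B_{\eps-\eps^3}(x)$, whose $\mu_\eps$-measure is $O(\eps^{N+2})$ by \eqref{ring-estimate} and \eqref{mu-eps-quotient}; bounding the integrand there by $\|u\|_\infty$ and dividing by $\mu_\eps(B_\eps(x))\gtrsim\eps^N$ controls that discrepancy. What remains is to reconcile the two normalizing constants, $\frac{1}{\card{\V_\eps(x)}}$ from the discrete side and $\frac{1}{n\,\mu_\eps(B_\eps(x))}$ from the $\mu_\eps$-average side: their relative difference is $O(\eps^2)$ by combining \eqref{card/n-mu(B)-eq2} with \eqref{mu-eps-mu}, and this costs another factor $\|u\|_\infty$ after multiplying by the $\mu_\eps$-average, which is bounded by $\|u\|_\infty$. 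Assembling $\textbf{[\,A\,]}+\textbf{[\,B\,]}+\textbf{[\,C\,]}$ gives \eqref{eq:discrete-to-nonlocal-averages}; the delicate bookkeeping is making sure every stray $\eps^{N+2}$ is divided only by something of size $\gtrsim\eps^N$ and never worse, which is exactly why the lower bound \eqref{card/n-mu(B)-eq3} and the restriction $x\in\Omega_{-\eps}$ are essential.
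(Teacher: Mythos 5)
Your decomposition into $\textbf{[\,A\,]}$, $\textbf{[\,B\,]}$, $\textbf{[\,C\,]}$ and the way you bound each term is essentially identical to the paper's proof (\Cref{lem:A}, \Cref{lem:B}, \Cref{lem:C}), including the use of \Cref{lem:card(B-B)/n} and \eqref{card/n-mu(B)-eq3} for $\textbf{[\,A\,]}$ and the localized version of \eqref{transport-integral} together with \eqref{inclusions}, \eqref{ring-estimate} and \eqref{card/n-mu(B)-eq2} for $\textbf{[\,B\,]}$. One small slip: your displayed bound for $\textbf{[\,C\,]}$ is not a valid inequality as written, since the two averages carry different normalizations $\mu(B_\eps(x))^{-1}$ and $\mu_\eps(B_\eps(x))^{-1}$; you must also account for the term $\dens_\eps(y)\,\bigl|\mu(B_\eps(x))^{-1}-\mu_\eps(B_\eps(x))^{-1}\bigr|$, which is however $O(\eps^2)$ by \eqref{mu-eps-mu} and \eqref{mu-bounds}, exactly as the paper does by adding and subtracting $\dens_\eps(y)/\mu(B_\eps(x))$.
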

The idea of the proof  was explained at the beginning of the section, and is split into several lemmas.

\begin{lemma}\label{lem:A}
Let $\eps\in(0,\eps_0)$ and assume that the events \eqref{partition} and \eqref{event} are satisfied. There exists a universal constant $C>0$ such that
\begin{equation*}
	\bigg|\frac{1}{\card{\V_\eps(x)}}\sum_{Z_i\in\V_\eps(x)}u(Z_i)
	-
	\frac{1}{\card{\V_\eps(T_\eps(x))}}\sum_{Z_i\in\V_\eps(T_\eps(x))}u(Z_i)\bigg|
	\leq
	C\|u\|_{L^\infty(\X_n)}\eps^2
\end{equation*}
for every $x\in\Omega_{-\eps}$.
\end{lemma}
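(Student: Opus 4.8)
The plan is to reduce the statement entirely to the two counting estimates already at hand: the bound on the symmetric difference of two nearby $\eps$-balls from \Cref{lem:card(B-B)/n}, and the lower bound $\card{\V_\eps(x)}\gtrsim n\eps^N$ from \eqref{card/n-mu(B)-eq3}. First I would record that, since $x\in\Omega_{-\eps}\subset\Omega$ and $T_\eps(x)=Z_j$ is characterized by $x\in\U_j\subset B_{\eps^3}(Z_j)$ by \eqref{partition}, we have $T_\eps(x)\in\X_n\subset\Omega$ and $|x-T_\eps(x)|<\eps^3$. Hence \Cref{lem:card(B-B)/n} applies with $\widetilde x=T_\eps(x)$ and yields
\begin{equation*}
	\card{\V_\eps(x)\triangle\V_\eps(T_\eps(x))}
	\leq
	Cn\eps^{N+2}.
\end{equation*}

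Next I would set $N_1=\card{\V_\eps(x)}$, $N_2=\card{\V_\eps(T_\eps(x))}$, $S_1=\sum_{Z_i\in\V_\eps(x)}u(Z_i)$ and $S_2=\sum_{Z_i\in\V_\eps(T_\eps(x))}u(Z_i)$, and use the elementary identity
\begin{equation*}
	\frac{S_1}{N_1}-\frac{S_2}{N_2}
	=
	\frac{S_1-S_2}{N_1}
	+
	\frac{S_2}{N_2}\cdot\frac{N_2-N_1}{N_1}.
\end{equation*}
Since $S_1-S_2$ only involves the points in the symmetric difference, $|S_1-S_2|\leq\|u\|_{L^\infty(\X_n)}\card{\V_\eps(x)\triangle\V_\eps(T_\eps(x))}$; likewise $|N_1-N_2|\leq\card{\V_\eps(x)\triangle\V_\eps(T_\eps(x))}$; and $|S_2/N_2|\leq\|u\|_{L^\infty(\X_n)}$ because it is an average of values of $u$. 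Combining these,
\begin{equation*}
	\bigg|\frac{S_1}{N_1}-\frac{S_2}{N_2}\bigg|
	\leq
	2\|u\|_{L^\infty(\X_n)}\,\frac{\card{\V_\eps(x)\triangle\V_\eps(T_\eps(x))}}{N_1}.
\end{equation*}

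Finally I would invoke \eqref{card/n-mu(B)-eq3} from \Cref{lem:card/n-mu(B)}, which is available precisely because $x\in\Omega_{-\eps}$, to get $N_1\geq Cn\eps^N$; together with $\card{\V_\eps(x)\triangle\V_\eps(T_\eps(x))}\leq Cn\eps^{N+2}$ this gives the ratio $\leq C\eps^2$ and hence the claimed bound.

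I expect the only delicate point — and I would flag it explicitly — to be that $T_\eps(x)$ need not belong to $\Omega_{-\eps}$, so the quantitative lower bound \eqref{card/n-mu(B)-eq3} is guaranteed only for $\V_\eps(x)$ and not for $\V_\eps(T_\eps(x))$. The decomposition above is arranged so that only $N_1$ ever sits in a denominator requiring such a bound, while the potentially problematic factor $S_2/N_2$ is handled trivially as an average bounded by $\|u\|_{L^\infty(\X_n)}$. Everything else is routine.
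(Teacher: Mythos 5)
Your proposal is correct and follows essentially the same route as the paper: the identity you write is exactly the paper's add-and-subtract of $S_2/N_1$, both error terms are bounded by $\|u\|_{L^\infty(\X_n)}\card{\V_\eps(x)\triangle\V_\eps(T_\eps(x))}/\card{\V_\eps(x)}$, and the conclusion uses \Cref{lem:card(B-B)/n} together with \eqref{card/n-mu(B)-eq3} just as in the paper. Your remark that only $\card{\V_\eps(x)}$ (and never $\card{\V_\eps(T_\eps(x))}$) needs a quantitative lower bound is a correct and worthwhile observation that the paper's arrangement also implicitly relies on.
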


\begin{proof}
For simplicity, we denote $\widetilde x=T_\eps(x)\in\Omega$, so $|x-\widetilde x|\leq\eps^3$. Adding and subtracting $\frac{1}{\card{\V_\eps(x)}}\sum_{Z_i\in\V_\eps(\widetilde x)}u(Z_i)$ we can write
\begin{multline}\label{ineq-Aa}
	\bigg|\frac{1}{\card{\V_\eps(x)}}\sum_{Z_i\in\V_\eps(x)}u(Z_i)
	-
	\frac{1}{\card{\V_\eps(\widetilde x)}}\sum_{Z_i\in\V_\eps(\widetilde x)}u(Z_i)\bigg|
	\\
	\leq
	\frac{1}{\card{\V_\eps(x)}}\bigg|\sum_{Z_i\in\V_\eps(x)}u(Z_i)-\sum_{Z_i\in\V_\eps(\widetilde x)}u(Z_i)\bigg|
	\\
	+
	\bigg|\frac{1}{\card{\V_\eps(x)}}-\frac{1}{\card{\V_\eps(\widetilde x)}}\bigg|\sum_{Z_i\in \V_\eps(\widetilde x)}|u(Z_i)|.
\end{multline}
For the first term in \eqref{ineq-Aa}, recalling that $(A_1\setminus A_2)\cup(A_2\setminus A_1)=A_1\triangle A_2$,
\begin{multline*}
	\bigg|\sum_{Z_i\in\V_\eps(x)}u(Z_i)-\sum_{Z_i\in\V_\eps(\widetilde x)}u(Z_i)\bigg|
	\\
\begin{split}
	=
	~&
	\bigg|\sum_{Z_i\in\V_\eps(x)\setminus\V_\eps(\widetilde x)}u(Z_i)-\sum_{Z_i\in\V_\eps(\widetilde x)\setminus\V_\eps(x)}u(Z_i)\bigg|
	\\
	\leq
	~&
	\sum_{Z_i\in\V_\eps(x)\triangle\V_\eps(\widetilde x)}|u(Z_i)|
	\\
	\leq
	~&
	\card{\V_\eps(x)\triangle\V_\eps(\widetilde x)}\|u\|_{L^\infty(\X_n)}.
\end{split}
\end{multline*}
For the other term, using that $|\card{A_1}-\card{A_2}|\leq\card{A_1\triangle A_2}$,
\begin{equation*}
	\bigg|\frac{1}{\card{\V_\eps(x)}}-\frac{1}{\card{\V_\eps(\widetilde x)}}\bigg|
	\leq
	\frac{\card{\V_\eps(x)\triangle\V_\eps(\widetilde x)}}{\card{\V_\eps(x)}\,\card{\V_\eps(\widetilde x)}},
\end{equation*}
and since
\begin{equation*}
	\sum_{Z_i\in \V_\eps(\widetilde x)}|u(Z_i)|
	\leq
	\card{\V_\eps(\widetilde x)}\|u\|_{L^\infty(\X_n)},
\end{equation*}
we also have
\begin{multline*}
	\bigg|\frac{1}{\card{\V_\eps(x)}}-\frac{1}{\card{\V_\eps(\widetilde x)}}\bigg|\sum_{Z_i\in \V_\eps(\widetilde x)}|u(Z_i)|
	\\
	\leq
	\frac{\card{\V_\eps(x)\triangle\V_\eps(\widetilde x)}}{\card{\V_\eps(x)}}\|u\|_{L^\infty(\X_n)}.
\end{multline*}
Inserting these estimates in \eqref{ineq-Aa} we obtain
\begin{multline*}
	\bigg|\frac{1}{\card{\V_\eps(x)}}\sum_{Z_i\in\V_\eps(x)}u(Z_i)
	-
	\frac{1}{\card{\V_\eps(\widetilde x)}}\sum_{Z_i\in\V_\eps(\widetilde x)}u(Z_i)\bigg|
	\\
	\leq
	2\frac{\card{\V_\eps(x)\triangle\V_\eps(\widetilde x)}}{\card{\V_\eps(x)}}\|u\|_{L^\infty(\X_n)}.
\end{multline*}
Finally, we conclude the proof by noting that, by \Cref{lem:card/n-mu(B),lem:card(B-B)/n},
\begin{equation*}
	\frac{\card{\V_\eps(x)\triangle\V_\eps(\widetilde x)}}{\card{\V_\eps(x)}}
	\leq
	C\eps^2,
\end{equation*}
for any  $\eps\in(0,\eps_0)$.
\end{proof}

\begin{lemma}\label{lem:B}
Let $\eps\in(0,\eps_0)$ and assume that the events \eqref{partition} and \eqref{event} are satisfied. There exists a universal constant $C>0$ such that
\begin{equation*}
	\bigg|\frac{1}{\card{\V_\eps(x)}}\sum_{Z_i\in\V_\eps(x)}u(Z_i)
	-
	\dashint_{B_\eps(x)}(u\circ T_\eps)\ d\mu_\eps\bigg|
	\leq
	C\|u\|_{L^\infty(\X_n)}\eps^2
\end{equation*}
for every $x\in\Omega_{-\eps}$.
\end{lemma}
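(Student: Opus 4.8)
\textbf{Proof proposal for Lemma~\ref{lem:B}.}
The plan is to route both averages through the localized form of the transport identity \eqref{transport-integral}. Since $\mu_\eps(\U_i)=\tfrac1n$, $(u\circ T_\eps)\big|_{\U_i}\equiv u(Z_i)$, and $T_\eps^{-1}(\V_\eps(x))=\bigcup_{Z_i\in\V_\eps(x)}\U_i$ is a disjoint union, we get exactly as in \eqref{transport-integral}
\begin{equation*}
	\int_{T_\eps^{-1}(\V_\eps(x))}(u\circ T_\eps)\ d\mu_\eps
	=
	\sum_{Z_i\in\V_\eps(x)}u(Z_i)\,\mu_\eps(\U_i)
	=
	\frac1n\sum_{Z_i\in\V_\eps(x)}u(Z_i).
\end{equation*}
So it suffices to compare $\dashint_{B_\eps(x)}(u\circ T_\eps)\,d\mu_\eps$ first with $\frac{1}{n\,\mu_\eps(B_\eps(x))}\int_{T_\eps^{-1}(\V_\eps(x))}(u\circ T_\eps)\,d\mu_\eps$ and then with $\frac{1}{\card{\V_\eps(x)}}\sum_{Z_i\in\V_\eps(x)}u(Z_i)$.

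For the first comparison I would swap the domain of integration. Since $x\in\Omega_{-\eps}$ we have $B_\eps(x)\subset\Omega$, and \eqref{inclusions} with $A=B_\eps(x)$ (so $A_{\pm\eps^3}=B_{\eps\pm\eps^3}(x)$) gives $B_{\eps-\eps^3}(x)\subset T_\eps^{-1}(\V_\eps(x))\subset B_{\eps+\eps^3}(x)$; since also $B_{\eps-\eps^3}(x)\subset B_\eps(x)\subset B_{\eps+\eps^3}(x)$, the symmetric difference of $B_\eps(x)$ and $T_\eps^{-1}(\V_\eps(x))$ is contained in the thin shell $B_{\eps+\eps^3}(x)\setminus B_{\eps-\eps^3}(x)$. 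Bounding $|u\circ T_\eps|$ by $\|u\|_{L^\infty(\X_n)}$ on this shell and using $\mu_\eps(A)\leq\tfrac{3\dens_1}{2}|A|$ together with the ring estimate \eqref{ring-estimate},
\begin{equation*}
	\bigg|\int_{B_\eps(x)}(u\circ T_\eps)\ d\mu_\eps-\frac1n\sum_{Z_i\in\V_\eps(x)}u(Z_i)\bigg|
	\leq
	\|u\|_{L^\infty(\X_n)}\,\mu_\eps\big(B_{\eps+\eps^3}(x)\setminus B_{\eps-\eps^3}(x)\big)
	\leq
	C\|u\|_{L^\infty(\X_n)}\eps^{N+2}.
\end{equation*}
Dividing by $\mu_\eps(B_\eps(x))\geq\tfrac{\dens_0}{2}|B_1|\eps^N$ turns the right-hand side into $C\|u\|_{L^\infty(\X_n)}\eps^2$.

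For the second comparison I would estimate $\big|\tfrac{1}{n\mu_\eps(B_\eps(x))}-\tfrac{1}{\card{\V_\eps(x)}}\big|=\tfrac{1}{\card{\V_\eps(x)}}\big|\tfrac{\card{\V_\eps(x)}}{n\mu_\eps(B_\eps(x))}-1\big|$. Writing $\tfrac{\card{\V_\eps(x)}}{n\mu_\eps(B_\eps(x))}=\tfrac{\card{\V_\eps(x)}}{n\mu(B_\eps(x))}\cdot\tfrac{\mu(B_\eps(x))}{\mu_\eps(B_\eps(x))}$ and invoking \eqref{card/n-mu(B)-eq2} for the first factor and the bound $|\mu(B_\eps(x))/\mu_\eps(B_\eps(x))-1|\leq\tfrac{2C_0}{\dens_0}\eps^2$ from the estimates following \eqref{eps0-bound-a} for the second, this product is within $C\eps^2$ of $1$ (note $\card{\V_\eps(x)}>0$ by \Cref{lem:card/n-mu(B)}). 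Since $\big|\sum_{Z_i\in\V_\eps(x)}u(Z_i)\big|\leq\card{\V_\eps(x)}\|u\|_{L^\infty(\X_n)}$, the triangle inequality
\begin{equation*}
\begin{split}
	\bigg|\dashint_{B_\eps(x)}(u\circ T_\eps)\ d\mu_\eps-\frac{1}{\card{\V_\eps(x)}}\sum_{Z_i\in\V_\eps(x)}u(Z_i)\bigg|
	\leq
	~&
	\frac{1}{\mu_\eps(B_\eps(x))}\bigg|\int_{B_\eps(x)}(u\circ T_\eps)\ d\mu_\eps-\frac1n\sum_{Z_i\in\V_\eps(x)}u(Z_i)\bigg|
	\\
	~&
	+
	\bigg|\frac{1}{n\mu_\eps(B_\eps(x))}-\frac{1}{\card{\V_\eps(x)}}\bigg|\,\bigg|\sum_{Z_i\in\V_\eps(x)}u(Z_i)\bigg|
\end{split}
\end{equation*}
then bounds the left-hand side by $C\|u\|_{L^\infty(\X_n)}\eps^2$, which is the claim.

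The argument is deterministic once the events \eqref{partition} and \eqref{event} are granted; the stochastic input enters only through \Cref{lem:dens-eps} (via \Cref{lem:card/n-mu(B)}) and no concentration inequality is applied directly to $u\circ T_\eps$. The only point requiring care is the boundary-layer bookkeeping in the first comparison: controlling exactly which $\U_i$ straddle $\partial B_\eps(x)$. This is precisely where the partition scale $\eps^3$ in \eqref{partition}, rather than $\eps$, and the inclusions \eqref{inclusions} are essential, so that the stray mass is $O(\eps^{N+2})$ and survives division by $\mu_\eps(B_\eps(x))\sim\eps^N$.
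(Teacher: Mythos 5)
Your proposal is correct and follows essentially the same route as the paper: the same splitting via the intermediate quantity $\frac{1}{n\,\mu_\eps(B_\eps(x))}\sum_{Z_i\in\V_\eps(x)}u(Z_i)$, the same use of the transport identity \eqref{transport-integral} restricted to $T_\eps^{-1}(\V_\eps(x))$, the same symmetric-difference bound via \eqref{inclusions} and \eqref{ring-estimate}, and the same treatment of the normalizing factors via \eqref{card/n-mu(B)-eq2} and \eqref{mu-eps-mu}. The only cosmetic difference is that you bound $\mu_\eps$ of the thin shell directly by its Lebesgue measure, where the paper passes through the quotient estimate \eqref{mu-eps-quotient}; these are equivalent.
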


\begin{proof}
We add and subtract $\frac{1}{n\,\mu_\eps(B_\eps(x))}\sum_{Z_i\in\V_\eps(x)}u(Z_i)$ to estimate
\begin{multline}\label{ineq-B}
	\bigg|\frac{1}{\card{\V_\eps(x)}}\sum_{Z_i\in\V_\eps(x)}u(Z_i)
	-
	\dashint_{B_\eps(x)}(u\circ T_\eps)\ d\mu_\eps\bigg|
	\\
\begin{split}
	\leq
	~&
	\frac{1}{\mu_\eps(B_\eps(x))}\bigg|\frac{1}{n}\sum_{Z_i\in\V_\eps(x)}u(Z_i)
	-\int_{B_\eps(x)}(u\circ T_\eps)\ d\mu_\eps\bigg|
	\\
	~&
	+
	\bigg|\frac{1}{n\,\mu_\eps(B_\eps(x))}-\frac{1}{\card{\V_\eps(x)}}\bigg|\sum_{Z_i\in\V_\eps(x)}|u(Z_i)|.
\end{split}
\end{multline}

We focus first on the first term in the right hand side in \eqref{ineq-B}. Similarly to \eqref{card(A)/n-identity}, using \eqref{transport-integral} and \eqref{indicator-A}, we deduce
\begin{align*}
	\frac{1}{n}\sum_{Z_i\in\V_\eps(x)}u(Z_i)
	=
	~&
	\frac{1}{n}\sum_{Z_i\in\X_n}\ind_{B_\eps(x)}(Z_i)u(Z_i)
	\\
	=
	~&
	\int_\Omega\ind_{B_\eps(x)}(T_\eps(y))(u\circ T_\eps)(y)\ d\mu_\eps(y)
	\\
	=
	~&
	\int_\Omega\ind_{T_\eps^{-1}(\V_\eps(x))}(y)(u\circ T_\eps)(y)\ d\mu_\eps(y),
\end{align*}
so
\begin{multline*}
	\frac{1}{\mu_\eps(B_\eps(x))}\bigg|\frac{1}{n}\sum_{Z_i\in\V_\eps(x)}u(Z_i)
	-\int_{B_\eps(x)}(u\circ T_\eps)\ d\mu_\eps\bigg|
	\\
\begin{split}
	=
	~&
	\frac{1}{\mu_\eps(B_\eps(x))}\bigg|
	\int_\Omega\big(\ind_{T_\eps^{-1}(\V_\eps(x))}(y)-\ind_{B_\eps(x)}(y)\big)(u\circ T_\eps)(y)\ d\mu_\eps(y)
	\bigg|
	\\
	\leq
	~&
	\|u\|_{L^\infty(\X_n)}\frac{\mu_\eps\big(T_\eps^{-1}(\V_\eps(x))\triangle B_\eps(x)\big)}{\mu_\eps(B_\eps(x))}
	\\
	\leq
	~&
	C\|u\|_{L^\infty(\X_n)}\frac{\big|T_\eps^{-1}(\V_\eps(x))\triangle B_\eps(x)\big|}{|B_\eps(x)|},
\end{split}
\end{multline*}
where we have used \eqref{mu-eps-quotient} in the last inequality. Note that, by \eqref{inclusions} with $A=B_\eps(x)$ and by \eqref{ring-estimate},
\begin{align*}
	\big|T_\eps^{-1}(\V_\eps(x))\triangle B_\eps(x)\big|
	\leq
	|B_{\eps+\eps^3}(x)\setminus B_{\eps-\eps^3}(x)|
	\leq
	C\eps^{N+2},
\end{align*}
so we immediately have
\begin{equation*}
	\frac{\big|T_\eps^{-1}(\V_\eps(x))\triangle B_\eps(x)\big|}{|B_\eps(x)|}
	\leq
	C\eps^2.
\end{equation*}
Thus the first term in the right hand side of \eqref{ineq-B} is bounded by $C\|u\|_{L^\infty(\X_n)}\eps^2$ as desired.

For the remaining term, we recall \eqref{card/n-mu(B)-eq2} together with \eqref{mu-eps-mu}, so we obtain the same bound, and putting all these estimates together we finish the proof.
\end{proof}

\begin{lemma}\label{lem:C}
Let $\eps\in(0,\eps_0)$ and assume that the events \eqref{partition} and \eqref{event} are satisfied. There exists a universal constant $C>0$ such that
\begin{equation*}
	\bigg|\dashint_{B_\eps(x)}(u\circ T_\eps)\ d\mu-\dashint_{B_\eps(x)}(u\circ T_\eps)\ d\mu_\eps\bigg|
	\leq
	C\|u\|_{L^\infty(\X_n)}\eps^2
\end{equation*}
for every $x\in\Omega_{-\eps}$.
\end{lemma}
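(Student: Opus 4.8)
The plan is to reduce everything to the single fact that $u\circ T_\eps$ is bounded, with $\|u\circ T_\eps\|_{L^\infty(\Omega)}=\|u\|_{L^\infty(\X_n)}$, together with the sup-norm density estimate \eqref{event}; in contrast to the proofs of Lemmas~\ref{lem:A} and \ref{lem:B}, no geometric information about the extension is needed here, only that it is bounded. Write $v=u\circ T_\eps$ and abbreviate $B=B_\eps(x)$. The two averages differ both in the weight of the integrand ($\dens$ versus $\dens_\eps$) and in the normalizing factor ($\mu(B)$ versus $\mu_\eps(B)$), so I would separate these two effects by inserting the mixed term $\frac{1}{\mu(B)}\int_B v\,d\mu_\eps$:
\begin{equation*}
\begin{split}
	\bigg|\dashint_B v\,d\mu-\dashint_B v\,d\mu_\eps\bigg|
	\leq
	~&
	\frac{1}{\mu(B)}\bigg|\int_B v\,d\mu-\int_B v\,d\mu_\eps\bigg|
	\\
	~&
	+
	\bigg|\frac{1}{\mu(B)}-\frac{1}{\mu_\eps(B)}\bigg|\bigg|\int_B v\,d\mu_\eps\bigg|.
\end{split}
\end{equation*}

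For the first term, since $d\mu=\dens\,dy$ and $d\mu_\eps=\dens_\eps\,dy$, the hypothesis \eqref{event} gives $\big|\int_B v\,d\mu-\int_B v\,d\mu_\eps\big|\leq\|v\|_\infty\int_B|\dens-\dens_\eps|\,dy\leq C_0\|v\|_\infty|B|\eps^2$, and dividing by $\mu(B)\geq\dens_0|B|$ (which holds by \eqref{mu-bounds} because $x\in\Omega_{-\eps}$ forces $B\subset\Omega$) we obtain the bound $C\|v\|_\infty\eps^2$. For the second term I would use $\big|\int_B v\,d\mu_\eps\big|\leq\|v\|_\infty\mu_\eps(B)$ together with
\begin{equation*}
	\bigg|\frac{1}{\mu(B)}-\frac{1}{\mu_\eps(B)}\bigg|\mu_\eps(B)
	=
	\frac{|\mu_\eps(B)-\mu(B)|}{\mu(B)}
	\leq
	\frac{C_0|B|\eps^2}{\dens_0|B|},
\end{equation*}
where the numerator is estimated by \eqref{mu-eps-mu}; this again yields $C\|v\|_\infty\eps^2$. (Equivalently, one may invoke the bound $|\mu(B)/\mu_\eps(B)-1|\leq\frac{2C_0}{\dens_0}\eps^2$ already recorded after \eqref{mu-eps-mu}.) Summing the two contributions and recalling $\|v\|_\infty=\|u\|_{L^\infty(\X_n)}$ completes the argument.

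There is no serious obstacle in this lemma. Unlike in the estimates for \textbf{A} and \textbf{B}, which hinge on controlling symmetric differences of $\eps$-balls through the measure-density bounds of Lemmas~\ref{lem:card/n-mu(B)} and \ref{lem:card(B-B)/n}, the input here is purely the pointwise closeness $\|\dens_\eps-\dens\|_\infty\leq C_0\eps^2$ from \eqref{event}, and the proof is a short triangle-inequality manipulation. The only point to watch is that all the denominators stay comparable to $|B|=|B_1|\eps^N$; this is guaranteed by $x\in\Omega_{-\eps}$, which is where that hypothesis enters.
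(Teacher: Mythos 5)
Your proof is correct and follows essentially the same route as the paper: both split the difference into a density-change part controlled by \eqref{event} and a normalization-change part controlled by \eqref{mu-eps-mu}, the only cosmetic difference being that you insert the mixed term $\frac{1}{\mu(B)}\int_B v\,d\mu_\eps$ at the level of integrals while the paper adds and subtracts $\frac{\dens_\eps(y)}{\mu(B)}$ inside the integrand. Your closing remarks on where $x\in\Omega_{-\eps}$ is used are also accurate.
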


\begin{proof}
Using that $\dens$ and $\dens_\eps$ are the densities of $\mu$ and $\mu_\eps$, respectively, we can write
\begin{multline*}
	\bigg|\dashint_{B_\eps(x)}(u\circ T_\eps)\ d\mu-\dashint_{B_\eps(x)}(u\circ T_\eps)\ d\mu_\eps\bigg|
	\\
\begin{split}
	=
	~&
	\bigg|\int_{B_\eps(x)}(u\circ T_\eps)(y)\bigg(\frac{\dens(y)}{\mu(B_\eps(x))}-\frac{\dens_\eps(y)}{\mu_\eps(B_\eps(x))}\bigg)\ dy\bigg|
	\\
	\leq
	~&
	\|u\|_{L^\infty(\X_n)}\int_{B_\eps(x)}\bigg|\frac{\dens(y)}{\mu(B_\eps(x))}-\frac{\dens_\eps(y)}{\mu_\eps(B_\eps(x))}\bigg|\ dy.
\end{split}
\end{multline*}

Let us focus on the functions inside the integral. If we add and subtract $\frac{\dens_\eps(y)}{\mu(B_\eps(x))}$, then
\begin{multline*}
	\bigg|\frac{\dens(y)}{\mu(B_\eps(x))}-\frac{\dens_\eps(y)}{\mu_\eps(B_\eps(x))}\bigg|
	\\
\begin{split}
	\leq
	~&
	\frac{|\dens(y)-\dens_\eps(y)|}{\mu(B_\eps(x))}
	+
	\dens_\eps(y)\bigg|\frac{1}{\mu(B_\eps(x))}-\frac{1}{\mu_\eps(B_\eps(x))}\bigg|
	\\
	=
	~&
	\frac{1}{\mu(B_\eps(x))}\bigg(|\dens(y)-\dens_\eps(y)|
	+
	\frac{\dens_\eps(y)}{\mu_\eps(B_\eps(x))}\big|\mu_\eps(B_\eps(x))-\mu(B_\eps(x))\big|\bigg).
\end{split}
\end{multline*}
Recalling \eqref{mu-bounds}, \eqref{event} and \eqref{mu-eps-mu} we get
\begin{equation*}
	\bigg|\frac{\dens(y)}{\mu(B_\eps(x))}-\frac{\dens_\eps(y)}{\mu_\eps(B_\eps(x))}\bigg|
	\leq
	\frac{C_0}{\dens_0|B_\eps|}\bigg(1+\frac{\dens_\eps(y)|B_\eps|}{\mu_\eps(B_\eps(x))}\bigg)\eps^2.
\end{equation*}
Hence, integrating with respect to $y$ over $B_\eps(x)$,
\begin{equation*}
	\int_{B_\eps(x)}\bigg|\frac{\dens(y)}{\mu(B_\eps(x))}-\frac{\dens_\eps(y)}{\mu_\eps(B_\eps(x))}\bigg|\ dy
	\leq
	\frac{2C_0}{\dens_0}\eps^2,
\end{equation*}
so the desired inequality follows for any $\eps\in(0,\eps_0)$.
\end{proof}

Together the previous lemmas prove \Cref{thm:discrete-to-nonlocal-averages}.
\begin{proof}[Proof of \Cref{thm:discrete-to-nonlocal-averages}]
First we add and subtract suitable terms
\begin{align*}
	\bigg|&\frac{1}{\card{\V_\eps(T_\eps(x))}}\sum_{Z_i\in\V_\eps(T_\eps(x))}u(Z_i)
	-
	\dashint_{B_\eps(x)}(u\circ T_\eps)\ d\mu\bigg|
	\\ &\leq
	\bigg|\frac{1}{\card{\V_\eps(x)}}\sum_{Z_j\in\V_\eps(x)}u(Z_j)
	-
	\frac{1}{\card{\V_\eps(T_\eps(x))}}\sum_{Z_j\in\V_\eps(T_\eps(x))}u(Z_j)\bigg|
	\\
	&\hspace{1 em}+
	\bigg|\frac{1}{\card{\V_\eps(x)}}\sum_{Z_j\in\V_\eps(x)}u(Z_j)
	-
	\dashint_{B_\eps(x)}(u\circ T_\eps)\ d\mu_\eps\bigg|,
	\\
	&\hspace{1 em}+\bigg|\dashint_{B_\eps(x)}(u\circ T_\eps)\ d\mu
	-
	\dashint_{B_\eps(x)}(u\circ T_\eps)\ d\mu_\eps\bigg|.
\end{align*}
The estimate for the first term on the right is in  \Cref{lem:A}, for the second in \Cref{lem:B} and for the third in  \Cref{lem:C}. 
\end{proof}

\section{Discrete--to--$\varepsilon$-nonlocal inequalities for nonlinear extremal operators}
\label{sec:nonlinear-part}

\subsection{Discrete and $\varepsilon$-nonlocal extremal operators}
One of the inconveniences of the random graph $\X_n$ is its lack of symmetry. The symmetry is a crucial ingredient for the regularity estimates obtained for example to orthogonal grids in  \cite{kuot90}, or in the nonlocal setting of \cite{caffarellis09}. Also more recently with stochastic processes or dynamic programming principles with a fixed step size in \cite{arroyobp23} and \cite{arroyobp22} the symmetry was utilized. On the other hand, Kuo and Trudinger \cite{kuot96} consider a discrete graph not having the mentioned symmetry property, but this is done for a linear operator. One can immediately  observe that also here the $\beta$-term of the operator, which is its linear part, does not require the symmetry considerations.

It can be deduced from \Cref{lem:dens-eps} that, under the event \eqref{partition}, it holds that
\begin{equation*}
	\dist(x,\X_n)
	<
	\eps^3
\end{equation*}
for any $x\in\Omega$. With this in mind, if given any $x,z\in\Omega$, we define $\overline z^x$ as the closest point in $\X_n$ to the reflection of $z$ with respect to $x$, that is, $2x-z$, then
\begin{equation*}
	|\overline z^x-(2x-z)|
	=
	\dist(2x-z,\X_n)
	<
	\eps^3
	<
	\tau\eps^2,
\end{equation*}
where $\tau\geq 1$. In fact, we can approximate $2x-z$ with any vertex in a larger ball $\V_{\tau\eps^2}(2x-z)$.
Using this relaxed notion of symmetry, we can define the second difference of $u$ in $\X_n$ centered at $Z_i$ as
\begin{equation*}
	\delta_{\X_n}u(Z_i,Z_j)
	=
	u(Z_j)+u(\overline Z_j^{Z_i})-2u(Z_i)
\end{equation*}
for every $Z_j\in\X_n$, where $\overline Z_j^{Z_i}$ is as explained above. Moreover, given $\tau\geq 1$, we define the \emph{extremal (maximal and minimal) second differences in $\X_n$} of $u\in L^\infty(\X_n)$ centered at $Z_i$ as
\begin{equation}\label{extremal-delta-Xn}
\begin{split}
	\delta_{\X_n,r}^+u(Z_i,Z_j)
	=
	~&
	u(Z_j)+\max_{Z_k\in\V_{r}(2Z_i-Z_j)}u(Z_k)-2u(Z_i),
	\\
	\delta_{\X_n,r}^-u(Z_i,Z_j)
	=
	~&
	u(Z_j)+\min_{Z_k\in\V_{r}(2Z_i-Z_j)}u(Z_k)-2u(Z_i).
\end{split}
\end{equation}
Then
\begin{equation*}
	\delta_{\X_n,\tau\eps^2}^-u(Z_i,Z_j)
	\leq
	\delta_{\X_n}u(Z_i,Z_j)
	\leq
	\delta_{\X_n,\tau\eps^2}^+u(Z_i,Z_j)
\end{equation*}
in the case that \eqref{partition} holds.

Now we define the extremal operators both in the discrete and the $\varepsilon$-nonlocal settings.

\begin{definition}[Discrete extremal operators]
\label{def:L-Xn}
Let $\Lambda,\tau\geq 1$. We define the \emph{discrete maximal operator} $\L_{\X_n,\Lambda,\tau,\eps}^+$ by
\begin{multline}\label{L-Xn}
	\L_{\X_n,\Lambda,\tau,\eps}^+u(Z_i)
	=
	\frac{1}{\eps^2}\bigg[\alpha\max_{Z_j\in\V_{\Lambda\eps}(Z_i)}\max_{Z_k\in\V_{\tau\eps^2}(2Z_i-Z_j)}\frac{u(Z_j)+u(Z_k)}{2}
	\\
	\hfill
	+
	\beta\frac{1}{\card{\V_\eps(Z_i)}}\sum_{Z_j\in\V_\eps(Z_i)}u(Z_j)-u(Z_i)\bigg]
	\\
	=
	\alpha\max_{Z_j\in\V_{\Lambda\eps}(Z_i)}\frac{\delta_{\X_n,\tau\eps^2}^+u(Z_i,Z_j)}{2\eps^2}
	+
	\frac{\beta}{\card{\V_\eps(Z_i)}}\sum_{Z_j\in\V_\eps(Z_i)}\frac{u(Z_j)-u(Z_i)}{\eps^2}
\end{multline}
for each $u\in L^\infty(\X_n)$ and every $Z_i\in\X_n$. When no confusion arises we use the shorthand $		\L_{\X_n,\eps}^+=\L_{\X_n,\Lambda,\tau,\eps}^+$. The \emph{discrete minimal operator} $\L_{\X_n,\eps}^-=\L_{\X_n,\Lambda,\tau,\eps}^-$ is defined in an analogous way replacing the $\max$ by $\min$.
\end{definition}

Analogously, extremal operators can be defined in the $\eps$-nonlocal setting. To that end, we introduce the following notation: for $r>0$, the \emph{extremal (maximal and minimal) second differences in $\Omega$} of $v\in L^\infty(\Omega)$ centered at $x$ are
\begin{equation}\label{extremal-delta-Omega}
\begin{split}
	\delta_{\Omega,r}^+v(x,z) 
	=
	~&
	v(x+z)+\sup_{|h|<r}v(x-z+h)-2v(x),
	\\
	\delta_{\Omega,r}^-v(x,z)
	=
	~&
	v(x+z)+\inf_{|h|<r}v(x-z+h)-2v(x).
\end{split}
\end{equation}

\begin{definition}[$\eps$-nonlocal extremal operators]
\label{def:L-Omega}
Let $\Lambda,\tau\geq 1$. We define the \emph{maximal $\eps$-nonlocal operator} $\L_{\Omega,\Lambda,\tau,\eps}^+$ by
\begin{multline}\label{L-Omega}
	\L_{\Omega,\Lambda,\tau,\eps}^+v(x)
	=
	\frac{1}{\eps^2}\bigg[\alpha\sup_{|z|<\Lambda\eps}\sup_{|h|<\tau\eps^2}\frac{v(x+z)+v(x-z+h)}{2}
	\\
	\hfill
	+
	\beta\dashint_{B_\eps(x)}v\ d\mu-v(x)\bigg]
	\\
	=
	\alpha\sup_{|z|<\Lambda\eps}\frac{\delta_{\Omega,\tau\eps^2}^+v(x,z)}{2\eps^2}
	+
	\beta\dashint_{B_\eps(x)}\frac{v(y)-v(x)}{\eps^2}\ d\mu(y)
\end{multline}
for each $v\in L^\infty(\Omega)$ and every $x\in\Omega$. When no confusion arises we use the shorthand $		\L_{\Omega,\eps}^+=\L_{\Omega,\Lambda,\tau,\eps}^+$. The \emph{minimal $\eps$-nonlocal operator} $\L_{\Omega,\eps}^-=\L_{\Omega,\Lambda,\tau,\eps}^-$ is defined in an analogous way replacing the $\sup$ by $\inf$.
\end{definition}

\subsection{Discrete--to--$\varepsilon$-nonlocal inequalities for extremal operators and the regularity}

In the next result we show the connection between the maximal discrete and $\varepsilon$-nonlocal operators, which is the core of this work.

\begin{theorem}[Discrete to nonlocal]\label{thm:discrete-to-nonlocal}
Let $\eps\in(0,\eps_0)$. There exists a universal constant $C>0$ such that
\begin{equation*}
	\L_{\X_n,\Lambda,\tau,\eps}^+u(T_\eps(x))
	\leq
	\L_{\Omega,\Lambda+\eps^2,\tau+2\eps,\eps}^+[u\circ T_\eps](x)
	+
	C\|u\|_{L^\infty(\X_n)}
\end{equation*}
for every $x\in\Omega_{-\eps}$ and each $u\in L^\infty(\X_n)$
with probability at least
\begin{equation*}
	1-2n\exp\{-c_0n\eps^{3N+4}\},
\end{equation*}
where $c_0=c_0(\diam\Omega)>0$.
\end{theorem}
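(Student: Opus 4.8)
The plan is to split the maximal operator into its nonlinear $\alpha$-part and its linear $\beta$-part, handle each separately, and then combine. For the linear part, \Cref{thm:discrete-to-nonlocal-averages} already gives exactly what we need: under the events \eqref{partition} and \eqref{event},
\begin{equation*}
	\frac{\beta}{\card{\V_\eps(T_\eps(x))}}\sum_{Z_i\in\V_\eps(T_\eps(x))}u(Z_i)
	\leq
	\beta\dashint_{B_\eps(x)}(u\circ T_\eps)\ d\mu
	+
	C\|u\|_{L^\infty(\X_n)}\eps^2,
\end{equation*}
so after dividing by $\eps^2$ this contributes the term $\beta\dashint_{B_\eps(x)}\tfrac{(u\circ T_\eps)(y)-(u\circ T_\eps)(x)}{\eps^2}\,d\mu(y)$ from $\L_{\Omega,\eps}^+[u\circ T_\eps](x)$ plus an error $C\|u\|_{L^\infty(\X_n)}$ (here one uses that $(u\circ T_\eps)(x)=u(T_\eps(x))$, so the $-u(Z_i)$ / $-v(x)$ terms match exactly). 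The probability with which \eqref{partition} and \eqref{event} hold is precisely \eqref{probability}, which matches the probability stated in the theorem, so the probabilistic bookkeeping is automatic once we have conditioned on those two events.

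For the nonlinear part, the key is the pointwise geometric estimate already sketched in \eqref{eq:max-from-local-to-nonlocal} in the roadmap. Write $\widetilde x = T_\eps(x)$ and recall that, under \eqref{partition}, $\dist(y,\X_n)<\eps^3$ for every $y\in\Omega$ and in particular $|\widetilde x - x|\leq\eps^3$. Given any $Z_j\in\V_{\Lambda\eps}(\widetilde x)$, set $z = Z_j - x$; then $|z|\leq|Z_j-\widetilde x|+|\widetilde x - x| < \Lambda\eps+\eps^3 = (\Lambda+\eps^2)\eps$, and moreover $u(Z_j) = (u\circ T_\eps)(Z_j) = (u\circ T_\eps)(x+z)$ since $Z_j\in\X_n$ is a fixed point of $T_\eps$. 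For the reflected point, given any $Z_k\in\V_{\tau\eps^2}(2\widetilde x - Z_j)$, write $Z_k = (x-z)+h$ where $h = Z_k - (2\widetilde x - Z_j) + 2(\widetilde x - x)$; then $|h|\leq \tau\eps^2 + 2\eps^3 = (\tau+2\eps)\eps^2$, and again $u(Z_k) = (u\circ T_\eps)(x-z+h)$. Taking maxima, the discrete double maximum is bounded above by $\sup_{|z|<(\Lambda+\eps^2)\eps}\sup_{|h|<(\tau+2\eps)\eps^2}\tfrac{(u\circ T_\eps)(x+z)+(u\circ T_\eps)(x-z+h)}{2}$, which is exactly the $\alpha$-part of $\L_{\Omega,\Lambda+\eps^2,\tau+2\eps,\eps}^+[u\circ T_\eps](x)$ — this is why the parameters in the nonlocal operator are enlarged to $\Lambda+\eps^2$ and $\tau+2\eps$. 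Note this step is purely deterministic given \eqref{partition}.

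Combining, under \eqref{partition} and \eqref{event} we get
\begin{align*}
	\L_{\X_n,\Lambda,\tau,\eps}^+u(T_\eps(x))
	&=
	\frac{\alpha}{\eps^2}\max_{Z_j\in\V_{\Lambda\eps}(\widetilde x)}\max_{Z_k\in\V_{\tau\eps^2}(2\widetilde x - Z_j)}\frac{u(Z_j)+u(Z_k)}{2}
	-
	\frac{\alpha}{\eps^2}u(\widetilde x)
	\\
	&\quad+
	\frac{\beta}{\eps^2}\bigg(\frac{1}{\card{\V_\eps(\widetilde x)}}\sum_{Z_i\in\V_\eps(\widetilde x)}u(Z_i) - u(\widetilde x)\bigg)
	\\
	&\leq
	\L_{\Omega,\Lambda+\eps^2,\tau+2\eps,\eps}^+[u\circ T_\eps](x)
	+
	C\|u\|_{L^\infty(\X_n)},
\end{align*}
where the $-\tfrac{\alpha}{\eps^2}u(\widetilde x) = -\tfrac{\alpha}{\eps^2}(u\circ T_\eps)(x)$ and $-\tfrac{\beta}{\eps^2}u(\widetilde x) = -\tfrac{\beta}{\eps^2}(u\circ T_\eps)(x)$ terms combine to $-\tfrac{1}{\eps^2}(u\circ T_\eps)(x)$, matching the corresponding subtraction in $\L_{\Omega,\eps}^+$; the $\beta$-error $C\|u\|_{L^\infty(\X_n)}\eps^2/\eps^2 = C\|u\|_{L^\infty(\X_n)}$ is the source of the additive constant. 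The main obstacle is the linear $\beta$-term, but that difficulty has already been isolated and resolved in \Cref{thm:discrete-to-nonlocal-averages} (via Lemmas~\ref{lem:A}, \ref{lem:B}, \ref{lem:C}); the nonlinear part, despite looking formidable, reduces to the elementary triangle-inequality bookkeeping above once one exploits that points of $\X_n$ are fixed by $T_\eps$ and that $\dist(\cdot,\X_n)<\eps^3$ on $\Omega$. A minor point to check is that all auxiliary lemmas were applied at points in $\Omega_{-\eps}$ (or $\Omega$), which is why the statement is restricted to $x\in\Omega_{-\eps}$.
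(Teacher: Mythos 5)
Your proposal is correct and follows essentially the same route as the paper: the $\beta$-term is handled by invoking \Cref{thm:discrete-to-nonlocal-averages} under the events \eqref{partition} and \eqref{event} (whose probability is given by \Cref{lem:dens-eps}), and the $\alpha$-term is handled by the inclusions $\V_{\Lambda\eps}(T_\eps(x))\subset B_{(\Lambda+\eps^2)\eps}(x)$ and $\V_{\tau\eps^2}(2T_\eps(x)-Z_j)\subset B_{(\tau+2\eps)\eps^2}(2x-Z_j)$, which your explicit change of variables $z=Z_j-x$, $h=Z_k-(2\widetilde x-Z_j)+2(\widetilde x-x)$ makes precise. No gaps.
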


\begin{proof}
We assume \eqref{partition} and \eqref{event}.
By \Cref{lem:dens-eps} we have that those events hold with at least the probability in the statement.

By the definition of the discrete maximal operator \eqref{L-Xn} and the estimate for the difference between the arithmetic mean of $u\in L^\infty(\X_n)$ over $\V_\eps(T_\eps(x))$ and the $\mu$-average of its extension $u\circ T_\eps\in L^\infty(\Omega)$ from \Cref{thm:discrete-to-nonlocal-averages}, we immediately have
\begin{align}
	\notag
	\L_{\X_n,\Lambda,\tau,\eps}^+u(T_\eps(x))
	\leq
	~&
	\frac{\alpha}{2}\max_{Z_j\in\V_{\Lambda\eps}(T_\eps(x))}\frac{\delta_{\X_n,\tau\eps^2}^+u(T_\eps(x),Z_j)}{2\eps^2}
	\\
	\label{LXn-ineq1}
	~&
	+
	\beta\dashint_{B_\eps(x)}\frac{(u\circ T_\eps)(y)-(u\circ T_\eps)(x)}{\eps^2}\ d\mu(y)
	+
	C\|u\|_{L^\infty(\X_n)}.
\end{align}

We focus our attention on the $\alpha$-term above. That is, we have to show that 
\begin{equation*}
	\max_{Z_j\in\V_{\Lambda\eps}(T_\eps(x))}\frac{\delta_{\X_n,\tau\eps^2}^+u(T_\eps(x),Z_j)}{2\eps^2}
	\leq
	\sup_{|z|<\Lambda\eps}\frac{\delta_{\Omega,\tau\eps^2}^+(u\circ T_\eps)(x,z)}{2\eps^2}.
\end{equation*}
Since $|T_\eps(x)-x|\leq\eps^3$, we have the inclusion
\begin{equation*}
	\V_{\Lambda\eps}(T_\eps(x))
	\subset
	B_{\Lambda\eps+\eps^3}(x)
	=B_{(\Lambda+\eps^2)\eps}(x)
\end{equation*}
hold for any $\eps\in(0,\eps_0)$. In addition, 
\begin{align*}
	\V_{\tau\eps^2}(2T_\eps(x)-z)
	\subset
	B_{\tau\eps^2+2\eps^3}(2x-z)
	=B_{(\tau+2\eps)\eps^2}(2x-z)
\end{align*}
for any $\eps\in(0,\eps_0)$. Hence, recalling \eqref{extremal-delta-Xn} and \eqref{extremal-delta-Omega} with the fact that $u(Z_j)=(u\circ T_\eps)(Z_j)$ for every $Z_j\in\X_n$ we obtain
\begin{align}
\label{eq:max-from-local-to-nonlocal}
	\nonumber \max_{Z_i\in\V_{\Lambda\eps}(T_\eps(x))}&\max_{Z_j\in\V_{\tau\eps^2}(2T_\eps(x)-Z_i)}\frac{u(Z_i)+u(Z_j)}{2}
	\\
	&\leq
	\sup_{z\in B_{(\Lambda+\eps^2)\eps}(x)}\sup_{\widetilde z\in B_{(\tau+2\eps)\eps^2}(2x-z)}
	\frac{(u\circ T_\eps)(z)+(u\circ T_\eps)(\widetilde z)}{2}
	\\
	&=
	\sup_{|z|<(\Lambda+\eps^2)\eps}\sup_{|h|<(\tau+2\eps)\eps^2}
	\frac{(u\circ T_\eps)(x+z)+(u\circ T_\eps)(x-z+h)}{2}. \nonumber
\end{align}
We finish the proof by inserting this in \eqref{LXn-ineq1} and recalling the definition of the maximal $\eps$-nonlocal operator \eqref{L-Omega}.
\end{proof}

Next we state and prove the main regularity result for the graph functions satisfying the extremal inequalities. The idea is to pass from discrete to $\eps$-nonlocal extremal inequalities by the previous theorem and then use the regularity result for $\eps$-nonlocal extremal inequalities obtained in \Cref{part2}. The given probability arises from the fact that we are dealing with a random data cloud. 

\begin{theorem}[Asymptotic H\"older regularity for $u$ in $\X_n$]\label{thm:main-graph-holder-regularity}
Let $R\in(0,1]$ and fix any $\eps\in(0,R\eps_0)$. If $u\in L^\infty(\V_{2R}(0))$ is a function satisfying
\begin{equation*}
	\L^{+}_{\X_n,\eps}u\geq -\rho,\quad \L^{-}_{\X_n,\eps}u\leq \rho
\end{equation*}
in $\V_{2R}(0)$ for some $\rho>0$, then there exists $\gamma\in(0,1]$ such that
\begin{equation*}
	|u(Z_i)-u(Z_j)|
	\leq
	C(|Z_i-Z_j|^\gamma+\eps^\gamma)
\end{equation*}
for every $Z_i,Z_j\in \V_{R}(0)$ with probability at least
\begin{equation*}
	1-2n\exp\{-c_0n\eps^{3N+4}\},
\end{equation*}
where $C=C(\rho,\|u\|_{L^\infty(\V_{2R}(0)}))>0$ and $c_0=c_0(R)>0$.
\end{theorem}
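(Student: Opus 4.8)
The plan is to reduce the discrete estimate to the $\eps$-nonlocal regularity theory developed in \Cref{part2} via the transport-map extension and the discrete-to-nonlocal comparison of \Cref{thm:discrete-to-nonlocal}. First, I would set $v = u\circ T_\eps \in L^\infty(\Omega)$, the canonical extension of $u$ to the whole domain furnished by \Cref{def:transport}; recall $v|_{\X_n} = u$ and $\|v\|_{L^\infty(\Omega)} = \|u\|_{L^\infty(\X_n)}$. Assuming the events \eqref{partition} and \eqref{event} (which hold with the stated probability by \Cref{lem:dens-eps}), \Cref{thm:discrete-to-nonlocal} gives, for every $x$ in the appropriate interior region,
\begin{equation*}
	\L_{\Omega,\Lambda+\eps^2,\tau+2\eps,\eps}^+ v(x)
	\geq
	\L_{\X_n,\Lambda,\tau,\eps}^+ u(T_\eps(x))
	-
	C\|u\|_{L^\infty(\X_n)}
	\geq
	-\rho - C\|u\|_{L^\infty(\X_n)},
\end{equation*}
and the analogous lower bound for $\L_{\Omega,\eps}^- v$ (obtained by applying the min-version of \Cref{thm:discrete-to-nonlocal}, or equivalently by replacing $u$ with $-u$). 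Thus $v$ satisfies $\eps$-nonlocal Pucci-type extremal inequalities with a right-hand side of size $C(\rho + \|u\|_{L^\infty})$, at the cost of slightly enlarged parameters $\Lambda+\eps^2$ and $\tau+2\eps$, which are still universal (bounded by $2\Lambda$ and $2\tau$ for $\eps<\eps_0$).

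Next, I would invoke the H\"older regularity result \Cref{Holder} for $\eps$-nonlocal extremal operators with nonsymmetric kernels (the main output of \Cref{part2}), applied to $v$ on the ball $B_{2R}(0)$ — note one needs the discrete hypothesis on $\V_{2R}(0)$ to translate, through $T_\eps$ and the inclusions \eqref{inclusions}, into the nonlocal hypothesis on a ball slightly smaller than $B_{2R}(0)$ but still comfortably containing $B_{3R/2}(0)$, say, which suffices since regularity is claimed only on $\V_R(0)$. This yields a universal exponent $\gamma \in (0,1]$ and a constant $C=C(\rho,\|u\|_{L^\infty})$ such that
\begin{equation*}
	|v(x) - v(y)|
	\leq
	C\big(|x-y|^\gamma + \eps^\gamma\big)
\end{equation*}
for all $x, y$ in $B_R(0)$, possibly after rescaling by $R$ (this is where $\eps \in (0, R\eps_0)$ and $c_0 = c_0(R)$ enter). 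Finally, specializing to $x = Z_i$, $y = Z_j$ with $Z_i, Z_j \in \V_R(0)$ and using $v(Z_i) = u(Z_i)$, $v(Z_j) = u(Z_j)$ gives exactly the claimed estimate, and the probability is inherited verbatim from \Cref{thm:discrete-to-nonlocal}/\Cref{lem:dens-eps}.

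The main obstacle — and the reason this theorem is not immediate — is entirely upstream, in establishing \Cref{thm:discrete-to-nonlocal} (and hence \Cref{thm:discrete-to-nonlocal-averages}, already proved above) and in verifying that the $\eps$-nonlocal regularity machinery of \Cref{part2} genuinely applies to the nonsymmetric, drift-type operator $\L_{\Omega,\Lambda+\eps^2,\tau+2\eps,\eps}^+$ arising from a nonuniform density $\dens$. Once those two pillars are in place, the proof of the final statement itself is a short bookkeeping argument: extend, compare, apply, restrict. The only mild technical care needed here is the domain bookkeeping — tracking the $\eps^3$-fattenings from the transport map and the parameter enlargements $\Lambda \rightsquigarrow \Lambda+\eps^2$, $\tau \rightsquigarrow \tau+2\eps$ through the chain of inclusions so that the hypotheses of \Cref{Holder} are met on a ball sandwiched between $B_R(0)$ and $B_{2R}(0)$ — but for $\eps<\eps_0<\tfrac12$ these perturbations are harmless and absorbed into universal constants.
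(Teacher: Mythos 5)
Your proposal is correct and follows essentially the same route as the paper: extend $u$ via the transport map, apply \Cref{thm:discrete-to-nonlocal} to obtain the $\eps$-nonlocal Pucci bounds for $u\circ T_\eps$ with right-hand side $\rho+C\|u\|_{L^\infty(\X_n)}$ (the paper simply dominates the enlarged operator by $\L_{\Omega,\Lambda+1,\tau+1,\eps}^+$ to fix universal parameters, matching your remark), then invoke \Cref{Holder} and restrict to the data points, with the probability inherited from \Cref{lem:dens-eps}.
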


\begin{proof}
Using \Cref{thm:discrete-to-nonlocal} with $\L_{\Omega,\Lambda+\eps^2,\tau+2\eps,\eps}^+\leq\L_{\Omega,\Lambda+1,\tau+1,\eps}^+$ and the assumptions of the theorem, we have
\begin{equation*}
	-\rho\leq\L_{\X_n,\Lambda,\tau,\eps}^+u(T_\eps(x))
	\leq
	\L_{\Omega,\Lambda+1,\tau+1,\eps}^+[u\circ T_\eps](x)
	+
	C\|u\|_{L^\infty(\X_n)},
\end{equation*}
and similarly for the minimal operator. Thus $u\circ T_\eps$ satisfies the $\eps$-nonlocal extremal inequalities, and thus we may use the H\"older regularity result of  $\eps$-nonlocal operators below in \Cref{Holder}. Since $(u\circ T_\eps)(Z_i)=u(Z_i)$ for $Z_i\in \X_n$, the result immediately follows.
\end{proof}

\part{Asymptotic regularity for functions satisfying the Pucci bounds}\label{part2}

The key point in the previous section was to pass from a graph dependent operator to a graph independent operator with the aid of \Cref{thm:discrete-to-nonlocal}. Now, our goal is to prove asymptotic H\"older regularity for functions satisfying the extremal inequalities
\begin{equation*}
	\L_{\Omega,\eps}^+v\geq-\rho
	\qquad \text{and} \qquad
	\L_{\Omega,\eps}^-v\leq\rho
\end{equation*}
with $\rho>0$.

We follow the same path as in \cite{arroyobp22,arroyobp23}, where a similar result was obtained for a less general operator. Here our maximal and minimal operators have the supremum and the infimum in $h$, respectively, allowing to control the lack of symmetry in the random graph in the previous part.  Also observe that the operators in this paper are with respect to measure $\mu$ in the $\beta$-term instead of the Lebesgue measure compared to the earlier papers. This introduces a drift, and is taken care of in the proof of \Cref{eps-ABP-thm-2} at the end of \Cref{sec:abp}.

A key intermediate result towards
H\"older regularity (\Cref{Holder})
is a decay estimate for the distribution function of $v$, which is obtained as an immediate consequence of \Cref{lem:main}. The proof is based on the measure
estimates \Cref{first} and \Cref{second}, as well as a discrete version of the Calder\'on-Zygmund decomposition, \Cref{CZ} below.

In this part of the paper, our standing assumption is that $v$ is a bounded measurable function. Eventually we apply these results on $v=u \circ T_\eps$, which satisfies the extremal inequalities in a smaller domain as shown in \Cref{thm:discrete-to-nonlocal}. However, for the sake of simplicity and since the results in \Cref{part2} are stated for general functions, we assume directly that $v$ is a bounded measurable function in a bounded domain $\Omega\subset\R^N$.

\section{An $\eps$-ABP estimate for $\L_{\Omega,\eps}^\pm$}
\label{sec:abp}

Let $v$ be a Borel measurable bounded function. 
First we perform an extension of $\Omega$ to a bigger domain $\Omega_{\Lambda\eps+\tau\eps^2}=\{x\in\R^N\,:\,\dist(x,\Omega)<\Lambda\eps+\tau\eps^2\}$ containing $x\in\Omega$ together with both $x+z$ and $x-z+h$ for any $|z|<\Lambda\eps$ and $|h|<\tau\eps^2$. Hence we assume that $v$ is also defined in $\R^N$ instead of $\Omega$, even though it is enough to assume that is defined in $\Omega_{\Lambda\eps+\tau\eps^2}$.

Next we define $\Gamma:\R^N\to\R$, the \emph{concave envelope} of $\hat v:\,=\max\{v,\sup_{\R^N\setminus\Om}v\}$ in $\Omega_{\Lambda\eps+\tau\eps^2}$. To be more precise, 
\begin{equation*}
		\Gamma(x)
		:\,=
		\left\{
		\begin{array}{ll}
			\inf\{p(x)\,:\,\mbox{ for all hyperplanes } p\geq \hat v \mbox{ in } \Omega_{\Lambda\eps+\tau\eps^2}\}
			&
			\mbox{ if } x\in\Omega_{\Lambda\eps+\tau\eps^2},
			\\
			\sup_{\R^N\setminus\Omega}v
			&
			\mbox{ if } x\notin\Omega_{\Lambda\eps+\tau\eps^2}.
		\end{array}
		\right.
\end{equation*}
We call the set
\begin{equation*}
	\nabla\Gamma(x)
	:\,=
	\set{ \xi\in\R^N }{\Gamma(z)\leq\Gamma(x)+\xi\cdot(z-x) \ \mbox{ for all } z\in\Omr}.
\end{equation*}
as the \emph{superdifferential} of $\Gamma$ at $x\in\Omr$. In addition, if $S\subset\Omr$, then we denote $\nabla\Gamma(S)=\bigcup_{x\in S}\nabla\Gamma(x)$. 
Observe that the concavity of $\Gamma$ yields that $\nabla\Gamma(x)\neq\emptyset$ for every $x\in\Omega$.

We define the \emph{contact set} of $v$ as
\begin{equation*}
		K_v
		:\,=
		\Big\{x\in\overline\Omega\,:\,\limsup_{y\to x}\hat v(y)=\Gamma(x)\Big\},
\end{equation*}
which is a non-empty compact subset of $\overline\Omega$. If $v$ is upper semicontinuous then the $\limsup$ can be replaced by the direct evaluation of $\hat v$ at $x$, in which case $K_v$ represents the set of points where $\Gamma$ touches $v$ from above. However $v$ is not necessarily upper semicontinuous and there might not be an actual contact between the graphs of $v$ and $\Gamma$.

Since we are dealing with the $\eps$-nonlocal operators, we use covering arguments instead of infinitesimal ones. For a more detailed explanation, we refer to the beginning of Section 4 in \cite{arroyobp23}. By the compactness of $K_v$, there exist a finite collection of points $\{x_1,\ldots,x_m\}\subset K_v$ such that the family of balls $\{B_{\eps/4}(x_i)\}_{i=1}^m$ covers $K_v$ with finite overlap:
\begin{equation}\label{covering}
	K_v\subset\bigcup_{i=1}^mB_{\eps/4}(x_i).
\end{equation}
Indeed, by Besicovitch's Covering Theorem \cite[Theorem 1.27]{evansg15}, such family can be taken with finite maximum overlap $\ell=\ell(N)\in\N$. Note that $m=m(\eps)\to\infty$ as $\eps\to 0$, but the maximum overlap remains constant, meaning that every $x\in K_v$ is contained in at most $\ell$ balls of the form $B_{\eps/4}(x_i)$, independently of $\eps$. As a direct consequence of this, and since $B_{\eps/4}(x_i)\subset\Omega_{\eps/4}$ for each $i=1,\ldots,m$, we can estimate
\begin{equation}\label{measure-balls}
	\sum_{i=1}^m|B_{\eps/4}(x_i)|
	\leq
	\ell\bigg|\bigcup_{i=1}^mB_{\eps/4}(x_i)\bigg|
	\leq
	\ell|\Omega_{\eps_0}|
	\leq
	C(\diam\Omega+\eps_0)^N
\end{equation}
for any $\eps\in(0,\eps_0)$, where $C>0$ depends only on $N$.
Using this covering, we can state the main result of this section.

\begin{theorem}[$\eps$-ABP estimate with nonsymmetric density]\label{eps-ABP-thm-2}
Let $\eps\in(0,\eps_0)$. For $v:\R^N\to\R$ a Borel measurable bounded function, there exists a finite collection of points $\{x_1,\ldots,x_m\}\subset K_v$ satisfying \eqref{covering} and \eqref{measure-balls} such that
\begin{equation}\label{eps-ABP-2}
	\sup_{\Om} v
	\leq
	\sup_{\R^N\setminus\Om} v
	+
	C 
	\bigg(\sum_{i=1}^m\Big(\sup_{B_{\eps/4}(x_i)}(-\L^+_{\Omega,\eps}v)^+\Big)^N\,|B_{\eps/4}|\bigg)^{1/N},
\end{equation}
where $C=C(\diam\Omega)>0$ is universal.
\end{theorem}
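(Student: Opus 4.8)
The plan is to mimic the classical Alexandrov--Bakelman--Pucci argument, but adapted to the $\eps$-nonlocal setting where infinitesimal second derivatives are replaced by covering-scale oscillation estimates, and where the $\beta$-term carries a drift coming from the nonsymmetric measure $\mu$. The starting point is that $\Gamma$, the concave envelope of $\hat v$ in $\Omega_{\Lambda\eps+\tau\eps^2}$, satisfies $\sup_\Omega v\le \sup_\Omega\Gamma$ and, since $\Gamma$ is concave, a gradient-bound estimate of the form
\begin{equation*}
	\sup_\Omega v - \sup_{\R^N\setminus\Omega}v
	\le
	\Big(\diam\Omega\Big)\,\sup_{x\in K_v}\dist(\nabla\Gamma(x),0)\cdot(\text{something}),
\end{equation*}
or more precisely a volume estimate: if $\Gamma$ attains its maximum excess $M:=\sup_\Omega v-\sup_{\R^N\setminus\Omega}v$ over the diameter, then the image $\nabla\Gamma(K_v)$ contains a ball of radius comparable to $M/\diam\Omega$, so
\begin{equation*}
	\Big(\tfrac{M}{C\diam\Omega}\Big)^N
	\le
	|\nabla\Gamma(K_v)|
	\le
	\sum_{i=1}^m |\nabla\Gamma(K_v\cap B_{\eps/4}(x_i))|.
\end{equation*}
This reduces everything to bounding $|\nabla\Gamma(K_v\cap B_{\eps/4}(x_i))|$ by $\big(\sup_{B_{\eps/4}(x_i)}(-\L^+_{\Omega,\eps}v)^+\big)^N |B_{\eps/4}|$ for each $i$.

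The core step, then, is the per-ball superdifferential estimate. Fix $x_i\in K_v$ and let $\xi\in\nabla\Gamma(x)$ for some $x\in K_v\cap B_{\eps/4}(x_i)$. Concavity of $\Gamma$ forces $\Gamma$ to lie below the hyperplane $z\mapsto\Gamma(x)+\xi\cdot(z-x)$, and since $\hat v\le\Gamma$ everywhere while $\hat v(x)$ is arbitrarily close to $\Gamma(x)$ (by definition of $K_v$ via $\limsup$), one gets that $v(x\pm z)$ is controlled from above by the affine function up to an error that vanishes as one approaches $x$. Feeding this into the definition of $\L^+_{\Omega,\eps}v(x)$ — using that $\alpha$ times the symmetrized second difference $\delta^+_{\Omega,\tau\eps^2}v(x,z)$ plus $\beta$ times the $\mu$-average deviation equals $\eps^2\L^+_{\Omega,\eps}v(x)$ — the affine majorization makes the second-difference term $\le 0$ and the $\mu$-average term $\le \dashint_{B_\eps(x)}\xi\cdot(y-x)\,d\mu(y)$, which is where the drift appears: this is not zero because $\mu$ is not symmetric about $x$. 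Since $\dens$ is Lipschitz, $\big|\dashint_{B_\eps(x)}(y-x)\,d\mu(y)\big|\le C\eps^2$, so this drift term contributes $C\eps^2|\xi|$, and rearranging yields a bound of the form $|\xi|^2 \le C\big(\eps^2(-\L^+_{\Omega,\eps}v(x))^+ + \eps^2|\xi| + \text{osc}\big)$. The oscillation of $\Gamma$ over $B_{\eps/4}(x_i)$ is itself $\lesssim \eps\,\sup|\xi|$, so one closes the loop and concludes $\nabla\Gamma(K_v\cap B_{\eps/4}(x_i))$ is contained in a ball of radius $\lesssim \eps\,\sup_{B_{\eps/4}(x_i)}(-\L^+_{\Omega,\eps}v)^+$ — which has volume $\lesssim \big(\sup(-\L^+_{\Omega,\eps}v)^+\big)^N\eps^N \sim \big(\sup(-\L^+_{\Omega,\eps}v)^+\big)^N|B_{\eps/4}|$.

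Summing the per-ball estimates over $i$, using the finite-overlap bound \eqref{measure-balls} to control $\sum_i|B_{\eps/4}(x_i)|$ and hence absorb the combinatorial constant $\ell=\ell(N)$, and combining with the volume-vs-$M$ inequality from the first paragraph gives exactly \eqref{eps-ABP-2}. The analogous bound for $\L^-_{\Omega,\eps}$ and $\inf$ follows by applying the result to $-v$. The main obstacle I anticipate is the careful handling of the drift term: in the symmetric setting of \cite{arroyobp22,arroyobp23} the $\mu$-average term vanishes against the affine function, but here one must quantify $\big|\int_{B_\eps(x)}(y-x)\,d\mu(y)\big|$ using the Lipschitz regularity of $\dens$ and then verify that the resulting $C\eps^2|\xi|$ term can be absorbed into the left-hand side (which needs $\eps$ small, consistent with the standing assumption $\eps<\eps_0$) rather than spoiling the estimate — and similarly checking that it does not introduce an additive $\eps$-error that would weaken the ABP inequality beyond what is stated. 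A secondary technical point is the non-upper-semicontinuity of $v$, handled throughout via the $\limsup$ in the definition of $K_v$ and approximation.
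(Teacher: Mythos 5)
Your overall architecture (volume of $\nabla\Gamma(K_v)$ bounded below by $M^N$ via \Cref{lem:inclusion}, covered by the balls $B_{\eps/4}(x_i)$, with a per-ball estimate extracted from the extremal inequality at contact points) matches the paper, and your identification of the drift $\dashint_{B_\eps(x)}\xi\cdot(y-x)\,d\mu(y)$ as the new difficulty is correct. But there is a genuine gap at the central step: the per-ball bound $|\nabla\Gamma(K_v\cap B_{\eps/4}(x_i))|\lesssim\big(\sup_{B_{\eps/4}(x_i)}(-\L^+_{\Omega,\eps}v)^+\big)^N|B_{\eps/4}|$ is false. What the contact-point analysis actually yields (\Cref{lemma-Gamma}, \Cref{lemma-Gamma-2}) is $|\nabla\Gamma(B_{\eps/4}(x_i))|\leq C\big[\big(\sup_{B_{\eps/4}(x_i)}(-\L^+_{\Omega,\eps}v)^+\big)^N+|\zeta_i|^N\big]|B_{\eps/4}|$, with $\zeta_i$ a superdifferential vector on the ball. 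The extra $|\zeta_i|^N$ is unavoidable: the affine majorization does \emph{not} make the second-difference term nonpositive, because $\sup_{|h|<\tau\eps^2}\xi\cdot h=\tau\eps^2|\xi|$, and together with the drift one only gets $\liminf_{x\to x_0}\L^+_{\Omega,\eps}v(x)\leq(\tau+\|\nabla\dens\|_\infty/\dens_0)|\xi|$ at a contact point, not $\leq 0$. Concretely, for $v(x)=c-\tfrac{\delta}{2}|x|^2$ and a contact point $x_0$ with $|x_0|$ large enough that $\alpha\tau|\nabla v(x_0)|/2$ dominates the negative $\beta$-contributions, one has $\L^+_{\Omega,\eps}v>0$ on $B_{\eps/4}(x_0)$, so your right-hand side vanishes, while $|\nabla\Gamma(B_{\eps/4}(x_0))|/|B_{\eps/4}|\sim\delta^N>0$.

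Once the $|\zeta_i|^N$ term is present, the naive summation no longer closes: $\sum_i|\zeta_i|^N|B_{\eps/4}|$ can be comparable to $M^N$ with a constant that is not small, so you would only reach $M^N\leq C(\cdots)+CM^N$, which is vacuous. This is exactly why the paper replaces the direct volume comparison by the weighted integral $\log\big(\tfrac{M^N+\eta^N}{\eta^N}\big)=\tfrac{1}{|B_1|}\int_{B_M}\tfrac{dz}{|z|^N+\eta^N}$: choosing $\zeta_i$ of minimal norm in the closure of $\nabla\Gamma(B_{\eps/4}(x_i))$, each ball contributes at most $C\,\tfrac{|\zeta_i|^N+(\sup(-\L^+_{\Omega,\eps}v)^+)^N}{|\zeta_i|^N+\eta^N}\,|B_{\eps/4}|$; the $|\zeta_i|^N$ part is then bounded by $|B_{\eps/4}|$ and summed using \eqref{measure-balls}, and choosing $\eta$ as in \eqref{mu} makes the whole sum a universal constant, whence $M\leq(e^C-1)^{1/N}\eta$, which is \eqref{eps-ABP-2}. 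You need this logarithmic device (or an equivalent way of neutralizing the $|\zeta_i|^N$ terms) for the argument to go through; the rest of your outline, including the treatment of the non-upper-semicontinuity via the $\limsup$ definition of $K_v$, is sound.
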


Observe that if $\sup_\Omega v \leq\sup_{\R^N\setminus\Omega}v$ then \eqref{eps-ABP-2} holds trivially. Therefore, without loss of generality by subtracting a constant in what follows we may assume that 
\begin{equation*}
	\sup_{\R^N\setminus\Omega}v=0
	\qquad\text{ and }\qquad
	\sup_\Omega v>0.
\end{equation*}
Then $\hat v=v^+=\max\{v,0\}$.

\subsection{An estimate for $\L^+_{\Omega,\eps}v$ at a contact point}

The next lemma bounds the $\alpha$-term.
It estimates the error caused originally by the fact that points in the data cloud do not necessarily lie symmetrically. In the symmetric case (that is, when $h=0$) one would recover the second differences centered at a point and use the concavity properties to discard that term. 

\begin{lemma}[Estimate for nonsymmetry]\label{lem:contact-point-estimate}
If $x_0\in K_v$, then
\begin{equation*}
	\liminf_{x\to x_0}
	\sup_{|z|<\Lambda\eps}\frac{\delta^+_{\Omega,\tau\eps^2} v(x,z)}{\eps^2}
	\leq
	\tau|\xi|
\end{equation*}
for any $\xi\in\nabla\Gamma(x_0)$, where $\delta_{\Omega,\tau\eps^2}^+$ stands for the maximal second differences defined in \eqref{extremal-delta-Omega}.
\end{lemma}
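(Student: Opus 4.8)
### Proof plan

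The plan is to exploit the fact that $x_0$ is a contact point, so that near $x_0$ the concave envelope $\Gamma$ touches $\hat v = v^+$ from above, and then to compare the supremum defining the $\alpha$-term against the values of $\Gamma$. Fix $\xi \in \nabla\Gamma(x_0)$; by definition of the superdifferential, $\Gamma(z) \le \Gamma(x_0) + \xi\cdot(z - x_0)$ for all $z \in \Omega$. First I would take a sequence $x \to x_0$ along which $\hat v(x) \to \Gamma(x_0)$ (such a sequence exists because $x_0 \in K_v$, i.e. $\limsup_{y\to x_0}\hat v(y) = \Gamma(x_0)$). For such $x$, and for any admissible shift $|z| < \Lambda\eps$, $|h| < \tau\eps^2$, I would bound
\begin{equation*}
	v(x+z) + v(x-z+h) - 2v(x)
	\le
	\hat v(x+z) + \hat v(x-z+h) - 2v(x)
	\le
	\Gamma(x+z) + \Gamma(x-z+h) - 2v(x),
\end{equation*}
using $v \le \hat v \le \Gamma$ everywhere.

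Next I would apply the supergradient inequality at $x_0$ to each of $\Gamma(x+z)$ and $\Gamma(x-z+h)$: their sum is at most $2\Gamma(x_0) + \xi\cdot(x+z - x_0) + \xi\cdot(x - z + h - x_0) = 2\Gamma(x_0) + 2\xi\cdot(x-x_0) + \xi\cdot h$. The $\pm z$ terms cancel exactly — this is the key algebraic point, the reflection symmetry of the second-difference construction survives the perturbation $h$. Hence
\begin{equation*}
	\delta^+_{\Omega,\tau\eps^2}v(x,z)
	\le
	2\big(\Gamma(x_0) - v(x)\big) + 2\xi\cdot(x - x_0) + \sup_{|h|<\tau\eps^2}\xi\cdot h
	\le
	2\big(\Gamma(x_0) - v(x)\big) + 2|\xi|\,|x - x_0| + \tau\eps^2|\xi|.
\end{equation*}
Taking the supremum over $|z| < \Lambda\eps$ (the right side no longer depends on $z$), dividing by $\eps^2$, and then taking $\liminf$ as $x \to x_0$ along the chosen sequence: the term $2(\Gamma(x_0) - v(x))/\eps^2 \to 0$ because $v(x) \to \Gamma(x_0)$ along that sequence (and $\Gamma(x_0) - v(x) \ge \Gamma(x_0) - \hat v(x)$, which can be controlled — one should check the sign, but since $\hat v(x)\le\Gamma(x)\to\Gamma(x_0)$ by continuity of $\Gamma$ and $\hat v(x)\to\Gamma(x_0)$, the difference $\Gamma(x_0)-v(x)$ tends to a nonpositive-or-zero limit along the sequence, so its positive part vanishes), and $2|\xi|\,|x-x_0|/\eps^2 \to 0$ since $x \to x_0$ with $\eps$ fixed. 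What remains is exactly $\tau|\xi|$, giving the claim.

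The main obstacle I anticipate is handling the lack of upper semicontinuity of $v$ carefully: the quantity $\delta^+_{\Omega,\tau\eps^2}v(x,z)$ involves $-2v(x)$, not $-2\hat v(x)$, so I must make sure that along the approximating sequence $v(x) \to \Gamma(x_0)$ and not merely $\hat v(x) \to \Gamma(x_0)$. Since $\hat v = \max\{v, 0\}$ and (after the normalization $\sup_{\R^N\setminus\Omega}v = 0$, $\sup_\Omega v > 0$) one expects the contact values $\Gamma(x_0) > 0$ at relevant points, near such $x_0$ one has $\hat v = v$, so the $\limsup$ condition transfers directly to $v$. A secondary subtlety is that $x+z$ or $x-z+h$ may lie outside $\Omega_{\Lambda\eps+\tau\eps^2}$; there $\Gamma$ equals the constant $\sup_{\R^N\setminus\Omega}v = 0$, but the supergradient inequality $\Gamma(z)\le\Gamma(x_0)+\xi\cdot(z-x_0)$ only quantifies over $z\in\Omega$ in the definition of $\nabla\Gamma$ — one has to either enlarge the definition to $\Omega_{\Lambda\eps+\tau\eps^2}$ or invoke that $\Gamma$ is already the concave envelope over that enlarged set, so that the supergradient inequality in fact holds on all of $\Omega_{\Lambda\eps+\tau\eps^2}$. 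This is a bookkeeping matter consistent with the setup preceding the lemma, and does not affect the cancellation of the $\pm z$ terms, which is the heart of the argument.
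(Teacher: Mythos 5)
Your proposal is correct and follows essentially the same route as the paper's proof: bound $v$ by $\Gamma$, apply the supergradient inequality at $x_0$ so that the $\pm z$ terms cancel and only $\sup_{|h|<\tau\eps^2}\xi\cdot h\le\tau|\xi|\eps^2$ survives, then let $x\to x_0$ using the contact-set condition to kill the $\Gamma(x_0)-v(x)$ and $|\xi|\,|x-x_0|$ terms. The extra bookkeeping you flag (that the $\limsup$ condition on $\hat v$ transfers to $v$, and that the supergradient inequality is used on the enlarged domain $\Omega_{\Lambda\eps+\tau\eps^2}$) is handled implicitly in the paper and does not change the argument.
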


\begin{proof}
Let any $|z|<\Lambda\eps$. For any $x\in\Om$, since $v\leq\Gamma$ and $\xi\in\nabla\Gamma(x_0)$, we have
\begin{align*}
	\delta_{\Omega,\tau\eps^2}^+ v(x,z)
	=
	~&
	v(x+z)+\sup_{|h|<\tau\eps^2}v(x-z+h)-2v(x)
	\\
	\leq
	~&
	\Gamma(x+z)+\sup_{|h|<\tau\eps^2}\Gamma(x-z+h)-2v(x)
	\\
	\leq
	~&
	\xi\cdot(x+z-x_0)+\sup_{|h|<\tau\eps^2}\left(\xi\cdot(x-z+h-x_0)\right)+2(\Gamma(x_0)-v(x))
	\\
	=
	~&
	\sup_{|h|<\tau\eps^2}\xi\cdot h+2\xi\cdot(x-x_0)+2(\Gamma(x_0)-v(x))
	\\
	\leq
	~&
	\tau|\xi|\eps^2+2\left(|\xi|\,|x-x_0|+\Gamma(x_0)-v(x)\right).
\end{align*}
The proof is concluded after taking the $\liminf$ as $x\to x_0$ and recalling that, since $x_0\in K_v$, then $\limsup_{x\to x_0}v(x)=\Gamma(x_0)$.
\end{proof}

Next, we estimate the $\beta$-term analogously to the previous lemma.

\begin{lemma}\label{integral-estimate}
If $x_0\in K_v$, then
\begin{equation}\label{eq:key-abp-2a}
	\liminf_{x\to x_0}\dashint_{B_\eps(x)}\frac{v(y)-v(x)}{\eps^2}\ d\mu(y)
	=
	\dashint_{B_\eps(x_0)}\frac{v(y)-\Gamma(x_0)}{\eps^2}\ d\mu(y).
\end{equation}
Moreover,
\begin{equation}\label{eq:key-abp-2b}
	\liminf_{x\to x_0}\dashint_{B_\eps(x)}\frac{v(y)-v(x)}{\eps^2}\ d\mu(y)
	\leq
	\frac{\|\nabla\dens\|_\infty}{\dens_0}\,|\xi|
\end{equation}
for any $\xi\in\nabla\Gamma(x_0)$.
\end{lemma}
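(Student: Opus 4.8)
The statement splits into two parts, and I would handle the first part \eqref{eq:key-abp-2a} directly and then derive \eqref{eq:key-abp-2b} from it.

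\smallskip

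\emph{Step 1: the limit identity \eqref{eq:key-abp-2a}.} I would write
\begin{equation*}
	\dashint_{B_\eps(x)}\frac{v(y)-v(x)}{\eps^2}\ d\mu(y)
	=
	\frac{1}{\eps^2}\dashint_{B_\eps(x)}v(y)\ d\mu(y)
	-
	\frac{v(x)}{\eps^2},
\end{equation*}
and treat the two terms separately as $x\to x_0$. For the averaged term, I would like to say that $x\mapsto \dashint_{B_\eps(x)} v\, d\mu$ is continuous, but $v$ is only Borel measurable and bounded, so continuity is not automatic for a fixed measure $\mu$ --- however, since $\mu$ has a bounded density $\dens$ with respect to Lebesgue measure, the map $x\mapsto \int_{B_\eps(x)} v(y)\dens(y)\,dy$ is continuous by dominated convergence (the symmetric difference $|B_\eps(x)\triangle B_\eps(x_0)|\to 0$ and $v\dens$ is bounded), and $\mu(B_\eps(x))=\int_{B_\eps(x)}\dens$ is continuous and bounded below by $\dens_0|B_\eps|>0$; hence the quotient $\dashint_{B_\eps(x)} v\, d\mu$ is continuous in $x$ and converges to $\dashint_{B_\eps(x_0)} v\, d\mu$. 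For the second term, since $x_0\in K_v$ we have $\limsup_{x\to x_0} v(x)=\Gamma(x_0)$, so $\liminf_{x\to x_0}(-v(x))=-\Gamma(x_0)$. Combining, the $\liminf$ of the sum equals $\frac{1}{\eps^2}(\dashint_{B_\eps(x_0)} v\, d\mu - \Gamma(x_0))$, which is \eqref{eq:key-abp-2a} after rewriting $\Gamma(x_0)$ inside the average (using $\mu(B_\eps(x_0))/\mu(B_\eps(x_0))=1$).

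\smallskip

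\emph{Step 2: the quantitative bound \eqref{eq:key-abp-2b}.} Starting from the right-hand side of \eqref{eq:key-abp-2a}, I would use $v\leq\Gamma$ on $\R^N$ and then the superdifferential inequality $\Gamma(y)\leq\Gamma(x_0)+\xi\cdot(y-x_0)$ valid for $y\in\Omega$ (and also on all of $\R^N$ up to the extension conventions, since $\Gamma$ is the concave envelope; on $B_\eps(x_0)\subset\Omega_{\eps/4}$ the relevant inequality holds for $\xi\in\nabla\Gamma(x_0)$). This gives
\begin{equation*}
	\dashint_{B_\eps(x_0)}\frac{v(y)-\Gamma(x_0)}{\eps^2}\ d\mu(y)
	\leq
	\dashint_{B_\eps(x_0)}\frac{\xi\cdot(y-x_0)}{\eps^2}\ d\mu(y)
	=
	\frac{1}{\eps^2}\,\xi\cdot\bigg(\dashint_{B_\eps(x_0)}(y-x_0)\ d\mu(y)\bigg).
\end{equation*}
The key point is that the barycenter $\dashint_{B_\eps(x_0)}(y-x_0)\,d\mu(y)$ would vanish if $\mu$ were symmetric about $x_0$ (as for Lebesgue measure), and the nonsymmetry of $\dens$ is precisely what produces the ``drift''. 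I would estimate this barycenter by writing $\dens(y)=\dens(x_0)+(\dens(y)-\dens(x_0))$: the constant part $\dens(x_0)$ contributes $\dens(x_0)\int_{B_\eps(x_0)}(y-x_0)\,dy=0$ by symmetry of the ball, so
\begin{equation*}
	\dashint_{B_\eps(x_0)}(y-x_0)\ d\mu(y)
	=
	\frac{1}{\mu(B_\eps(x_0))}\int_{B_\eps(x_0)}(y-x_0)\big(\dens(y)-\dens(x_0)\big)\ dy.
\end{equation*}
Now $|\dens(y)-\dens(x_0)|\leq\|\nabla\dens\|_\infty|y-x_0|\leq\|\nabla\dens\|_\infty\eps$ on $B_\eps(x_0)$, so the integral is bounded by $\|\nabla\dens\|_\infty\eps\int_{B_\eps(x_0)}|y-x_0|\,dy\leq\|\nabla\dens\|_\infty\eps\cdot\eps|B_\eps(x_0)|=\|\nabla\dens\|_\infty\eps^2|B_\eps|$, while $\mu(B_\eps(x_0))\geq\dens_0|B_\eps|$. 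Therefore
\begin{equation*}
	\bigg|\dashint_{B_\eps(x_0)}(y-x_0)\ d\mu(y)\bigg|
	\leq
	\frac{\|\nabla\dens\|_\infty}{\dens_0}\,\eps^2,
\end{equation*}
and plugging into the displayed bound above, together with Cauchy--Schwarz $\xi\cdot w\leq|\xi||w|$, yields \eqref{eq:key-abp-2b}.

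\smallskip

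\emph{Main obstacle.} The delicate point is not the drift estimate (which is a clean first-order Taylor argument on $\dens$), but rather justifying the interchange of $\liminf$ with the averaged integral in Step 1 given that $v$ is merely Borel measurable --- one must lean on the absolute continuity of $\mu$ and the uniform lower bound $\dens\geq\dens_0$ to get genuine continuity of $x\mapsto\dashint_{B_\eps(x)}v\,d\mu$, and then carefully combine this with the one-sided $\limsup$ control of $v(x)$ at the contact point $x_0$. One should also be slightly careful that $B_\eps(x_0)$ and its perturbations stay inside the enlarged domain $\Omega_{\Lambda\eps+\tau\eps^2}$ where $\Gamma$ is the honest concave envelope, so that the superdifferential inequality $\Gamma(y)\le\Gamma(x_0)+\xi\cdot(y-x_0)$ is available for all $y\in B_\eps(x_0)$; since $x_0\in K_v\subset\overline\Omega$ and $\eps<\Lambda\eps+\tau\eps^2$, this is immediate.
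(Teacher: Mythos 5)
Your proposal is correct and follows essentially the same route as the paper: you split off the $v(x)$ term, use the contact-set characterization $\limsup_{x\to x_0}v(x)=\Gamma(x_0)$ together with continuity of $x\mapsto\dashint_{B_\eps(x)}v\,d\mu$ (via absolute continuity of $\mu$) to get \eqref{eq:key-abp-2a}, and then derive \eqref{eq:key-abp-2b} from $v\leq\Gamma$, the superdifferential inequality, and the first-order Taylor expansion of $\dens$ combined with the oddness of $y\mapsto\xi\cdot y$ on the ball. Your packaging of the drift term as a bound on the barycenter $\dashint_{B_\eps(x_0)}(y-x_0)\,d\mu$ is just a cosmetic variant of the paper's change of variables to $B_1$.
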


\begin{proof}
Adding and subtracting terms,
\begin{align*}
\dashint_{B_\eps(x)}\left(v(y)-v(x)\right)\ d\mu(y)
	=
	~&
	\dashint_{B_\eps(x_0)}\left(v(y)-\Gamma(x_0)\right)\ d\mu(y)
	+
	\Gamma(x_0)-v(x)
	\\
	~&
	+
	\dashint_{B_\eps(x)}v(y)\ d\mu(y)-\dashint_{B_\eps(x_0)}v(y)\ d\mu(y).
\end{align*}
Since $x_0 \in K_v$, it holds that $-\limsup_{x\to x_0}v^+(y)=-\Gamma(x_0)$, and thus if we take the $\liminf_{x\to x_0}$ on both sides above, then the term $\Gamma(x_0)-v(x)$ will vanish. Moreover, since $\mu$ is absolutely continuous with respect to Lebesgue measure, the last difference converges to zero as $x\to x_0$, so \eqref{eq:key-abp-2a} follows.

Next, let any $\xi\in\nabla\Gamma(x_0)$. To see \eqref{eq:key-abp-2b}, we use that $v\leq\Gamma$, recall \eqref{eq:comparable}, and that $\dens$ (the density function of $\mu$) is Lipschitz and for simplicity we denote by $\|\nabla\dens\|_\infty$ its Lipschitz constant. It follows that 
\begin{equation}
\label{eq:int-est-proof}
\begin{split}
	\dashint_{B_\eps(x_0)}\frac{v(y)-\Gamma(x_0)}{\eps^2}\ d\mu(y)
	\leq
	~&
	\dashint_{B_\eps(x_0)}\frac{\Gamma(y)-\Gamma(x_0)}{\eps^2}\ d\mu(y)
	\\
	\leq
	~&
	\dashint_{B_\eps(x_0)}\frac{\xi\cdot(y-x_0)}{\eps^2}\ d\mu(y)
	\\
	\leq
	~&
	\frac{1}{\dens_0}\left|\dashint_{B_\eps(x_0)}\frac{\xi\cdot(y-x_0)}{\eps^2}\,\dens(y)\ dy\right|
	\\
	=
	~&
	\frac{1}{\dens_0}\left|\dashint_{B_1}(\xi\cdot y)\,\frac{\dens(x_0+\eps y)-\dens(x_0)}{\eps}\ dy\right|
	\\
	\leq
	~&
	\frac{|\xi|}{\dens_0}\dashint_{B_1}\left|\frac{\dens(x_0+\eps y)-\dens(x_0)}{\eps}\right|\ dy
	\\
	\leq
	~&
	\frac{\|\nabla\dens\|_\infty}{\dens_0}\,|\xi|,
\end{split}
\end{equation}
where in the last equality we added a term integrating to zero, since $B_\eps$ is symmetric and $y\mapsto\xi\cdot y$ is an odd function. Then the proof is finished.
\end{proof}

Combining the previous lemmas we get the following estimate. It tells us that in the contact set we have additional information in terms of concavity. This is quite natural since, roughly speaking, $v$ is `touched' from above by the concave envelope and thus by a supporting hyperplane. However, we had to take into account that $v$ is not necessarily continuous, real touching does not take place, and the equation is not infinitesimal.

\begin{lemma}\label{liminf-estimate}
If $x_0\in K_v$ then
\begin{equation*}
	\liminf_{x\to x_0}\L^+_{\Omega,\eps}v(x)
	\leq
	\Big(\tau+\frac{\|\nabla\dens\|_\infty}{\dens_0}\Big)\,|\xi|
\end{equation*}
for any $\xi\in\nabla\Gamma(x_0)$.
\end{lemma}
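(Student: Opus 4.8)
The plan is to combine the two previous lemmas, namely \Cref{lem:contact-point-estimate} and \Cref{integral-estimate}, with the definition of the maximal $\eps$-nonlocal operator $\L^+_{\Omega,\eps}$ in \eqref{L-Omega}. Recall that
\begin{equation*}
	\L^+_{\Omega,\eps}v(x)
	=
	\alpha\sup_{|z|<\Lambda\eps}\frac{\delta^+_{\Omega,\tau\eps^2}v(x,z)}{2\eps^2}
	+
	\beta\dashint_{B_\eps(x)}\frac{v(y)-v(x)}{\eps^2}\ d\mu(y).
\end{equation*}
First I would take the $\liminf$ as $x\to x_0$ of both sides of this identity and use subadditivity of the $\liminf$, that is, $\liminf(f+g)\leq\liminf f+\liminf g$, to split it into the $\alpha$-term and the $\beta$-term. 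Here one must be slightly careful that the $\liminf$ of each individual term is finite (which follows from $v$ being bounded and $\mu$ being comparable to Lebesgue measure), so that the splitting is legitimate and no indeterminate $\infty-\infty$ arises.

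Next I would apply \Cref{lem:contact-point-estimate} to the $\alpha$-term, which gives
\begin{equation*}
	\liminf_{x\to x_0}\alpha\sup_{|z|<\Lambda\eps}\frac{\delta^+_{\Omega,\tau\eps^2}v(x,z)}{2\eps^2}
	\leq
	\frac{\alpha}{2}\liminf_{x\to x_0}\sup_{|z|<\Lambda\eps}\frac{\delta^+_{\Omega,\tau\eps^2}v(x,z)}{\eps^2}
	\leq
	\frac{\alpha}{2}\,\tau|\xi|
	\leq
	\frac{\tau}{2}\,|\xi|,
\end{equation*}
using $\alpha\in[0,1)$ in the last step. For the $\beta$-term I would apply \eqref{eq:key-abp-2b} from \Cref{integral-estimate}, which yields
\begin{equation*}
	\liminf_{x\to x_0}\beta\dashint_{B_\eps(x)}\frac{v(y)-v(x)}{\eps^2}\ d\mu(y)
	\leq
	\beta\,\frac{\|\nabla\dens\|_\infty}{\dens_0}\,|\xi|
	\leq
	\frac{\|\nabla\dens\|_\infty}{\dens_0}\,|\xi|,
\end{equation*}
again using $\beta\in(0,1]$. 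Adding the two bounds gives $\big(\frac{\tau}{2}+\frac{\|\nabla\dens\|_\infty}{\dens_0}\big)|\xi|\leq\big(\tau+\frac{\|\nabla\dens\|_\infty}{\dens_0}\big)|\xi|$ since $\tau\geq 1>0$, which is the claimed estimate.

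I do not anticipate a serious obstacle here, since this lemma is essentially a bookkeeping step that packages the two previous estimates. The only point requiring mild care is the interchange of $\liminf$ and the sum: one should justify that $\liminf(f+g)\le\liminf f+\liminf g$ and that both right-hand terms are genuinely finite (not $+\infty$), which is where boundedness of $v$ enters. One could alternatively avoid even this subtlety by observing that along a sequence $x_k\to x_0$ realizing the $\liminf$ of $\L^+_{\Omega,\eps}v$, one may pass to a further subsequence along which both the $\alpha$-term and the $\beta$-term converge, and then estimate each limit separately by the previous lemmas; this makes the argument fully rigorous without any inequality for $\liminf$ of sums. Either way the constants $\alpha,\beta\in[0,1]$ are discarded by crude bounds, and the factor $\frac12$ in the $\alpha$-term only helps, so the stated constant $\tau+\frac{\|\nabla\dens\|_\infty}{\dens_0}$ is comfortably attained.
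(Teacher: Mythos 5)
Your plan — combine \Cref{lem:contact-point-estimate} and \Cref{integral-estimate} and discard the constants $\alpha,\beta$ — is exactly what the paper intends (it offers no written proof beyond ``combining the previous lemmas''), but the step where you split the $\liminf$ of the sum is not justified, and this is a genuine gap. The inequality $\liminf(f+g)\leq\liminf f+\liminf g$ is false; the true direction is $\liminf(f+g)\geq\liminf f+\liminf g$ (take $f=(-1)^k$, $g=(-1)^{k+1}$: the left-hand side is $0$, the right-hand side is $-2$). Boundedness of $v$ does not rescue it. Your fallback also does not close the gap: if $x_k\to x_0$ realizes $\liminf\L^+_{\Omega,\eps}v$ and along a further subsequence the $\alpha$- and $\beta$-terms converge to $A$ and $B$, then \Cref{lem:contact-point-estimate,integral-estimate} only control the $\liminf$ over \emph{all} $x\to x_0$ of each term, which is a \emph{lower} bound for $A$ and $B$; along your particular subsequence either term may well exceed its $\liminf$, so the lemmas give no upper bound on $A+B$.

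The correct combination uses the fact, visible in the proofs of the two lemmas rather than in their statements, that both terms admit a pointwise upper bound whose error is governed by the \emph{same} quantity $\Gamma(x_0)-v(x)$: for $x$ near $x_0$ one has
\begin{equation*}
	\sup_{|z|<\Lambda\eps}\frac{\delta^+_{\Omega,\tau\eps^2}v(x,z)}{2\eps^2}
	\leq
	\frac{\tau|\xi|}{2}+\frac{|\xi|\,|x-x_0|+\Gamma(x_0)-v(x)}{\eps^2}
\end{equation*}
and
\begin{equation*}
	\dashint_{B_\eps(x)}\frac{v(y)-v(x)}{\eps^2}\ d\mu(y)
	=
	\dashint_{B_\eps(x_0)}\frac{v(y)-\Gamma(x_0)}{\eps^2}\ d\mu(y)+\frac{\Gamma(x_0)-v(x)}{\eps^2}+o(1),
\end{equation*}
where the integral on the right is bounded by $\frac{\|\nabla\dens\|_\infty}{\dens_0}|\xi|$ as in \eqref{eq:int-est-proof}. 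Multiplying by $\alpha$ and $\beta$ and adding, the error collapses to the single term $\frac{\Gamma(x_0)-v(x)}{\eps^2}$ plus quantities vanishing as $x\to x_0$, and since $x_0\in K_v$ gives $\limsup_{x\to x_0}v(x)=\Gamma(x_0)$, this single error term has $\liminf$ equal to zero. Equivalently, one may pick a sequence $x_k\to x_0$ with $v(x_k)\to\Gamma(x_0)$ (which exists by the definition of $K_v$) and check that \emph{both} terms are controlled simultaneously along that one sequence; the sequence must be chosen from the contact condition, not from the $\liminf$ of $\L^+_{\Omega,\eps}v$ itself.
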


\subsection{An estimate for the difference between $v$ and its concave envelope}

Now we employ the previous results to give a quantitative estimate on the the portion of the points in a neighborhood of a contact point where the difference between $v$ and its concave envelope is large.

\begin{lemma}\label{measure-estimate}
If $x_0\in K_v$ and $\xi\in\nabla\Gamma(x_0)$, then
\begin{equation*}
	\frac{\abs{B_\eps(x_0)\cap\braces{\Gamma-v>t\eps^2}}}{|B_\eps(x_0)|}
	\leq
	\frac{1}{t}\cdot\frac{\dens_1}{\beta\dens_0}\left[-\liminf_{x\to x_0}\L^+_{\Omega,\eps}v(x)+\left(\tau+\frac{\|\nabla\dens\|_\infty}{\dens_0}\right)|\xi|\right]
\end{equation*}
for $t>0$.
\end{lemma}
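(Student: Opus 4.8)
The strategy is a standard Chebyshev-type (weak $L^1$) argument combined with the pointwise lower bound on $-\L^+_{\Omega,\eps}v$ that we obtain from the concavity of $\Gamma$. First I would recall the decomposition of $\L^+_{\Omega,\eps}v$ into its $\alpha$-term and $\beta$-term as in \eqref{L-Omega}. The $\beta$-term, namely $\dashint_{B_\eps(x)}\frac{v(y)-v(x)}{\eps^2}\,d\mu(y)$, carries the averaged information about how far $v$ lies below the supporting hyperplane at $x_0$; by \Cref{integral-estimate}, its $\liminf$ as $x\to x_0$ equals $\dashint_{B_\eps(x_0)}\frac{v(y)-\Gamma(x_0)}{\eps^2}\,d\mu(y)$. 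Since the $\alpha$-term contributes a quantity bounded by $\frac{\alpha}{2}\tau|\xi|$ (from \Cref{lem:contact-point-estimate}) and $\alpha\leq 1$, rearranging gives a lower bound
\begin{equation*}
	\dashint_{B_\eps(x_0)}\frac{\Gamma(x_0)-v(y)}{\eps^2}\ d\mu(y)
	\leq
	\frac{1}{\beta}\bigg[-\liminf_{x\to x_0}\L^+_{\Omega,\eps}v(x)+\Big(\tau+\frac{\|\nabla\dens\|_\infty}{\dens_0}\Big)|\xi|\bigg],
\end{equation*}
where the right-hand side is exactly the bracketed quantity in the statement (up to the factor $\beta^{-1}$), using \Cref{liminf-estimate} to absorb the $\alpha$-term.

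\textbf{From the averaged estimate to the measure estimate.} Next I would observe that $\Gamma-v\geq 0$ on $\Om$ (since $v\leq\hat v\leq\Gamma$), and that $\mu$ is comparable to Lebesgue measure via \eqref{mu-bounds}. Then, for $t>0$, a Chebyshev inequality with respect to $\mu$ gives
\begin{equation*}
	\mu\big(B_\eps(x_0)\cap\{\Gamma-v>t\eps^2\}\big)
	\leq
	\frac{1}{t\eps^2}\int_{B_\eps(x_0)}(\Gamma-v)\ d\mu
	=
	\frac{\mu(B_\eps(x_0))}{t}\dashint_{B_\eps(x_0)}\frac{\Gamma-v}{\eps^2}\ d\mu.
\end{equation*}
Dividing by $|B_\eps(x_0)|$ and using $\mu(B_\eps(x_0))\leq\dens_1|B_\eps(x_0)|$ on the right and $\mu\geq\dens_0\,|\cdot|$ on the left (to pass from $\mu$-measure of the superlevel set back to Lebesgue measure) yields the factor $\dens_1/\dens_0$ and the claimed bound. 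Combining with the averaged estimate from the first paragraph produces exactly
\begin{equation*}
	\frac{|B_\eps(x_0)\cap\{\Gamma-v>t\eps^2\}|}{|B_\eps(x_0)|}
	\leq
	\frac{1}{t}\cdot\frac{\dens_1}{\beta\dens_0}\bigg[-\liminf_{x\to x_0}\L^+_{\Omega,\eps}v(x)+\Big(\tau+\frac{\|\nabla\dens\|_\infty}{\dens_0}\Big)|\xi|\bigg].
\end{equation*}

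\textbf{Main obstacle.} The delicate point is not the Chebyshev step itself but the passage from the pointwise-at-$x_0$ inequality, which is phrased via $\liminf_{x\to x_0}$, to a genuine statement about the ball $B_\eps(x_0)$: one must be careful that $\Gamma(x_0)$ (rather than $\limsup_{x\to x_0}v(x)$, which coincides with it by $x_0\in K_v$) is the right reference value in the superlevel set, and that the identity in \eqref{eq:key-abp-2a} legitimately replaces the $\liminf$ of the $\beta$-term by an honest integral over $B_\eps(x_0)$ centered exactly at $x_0$. Once that identification is in place — which is precisely the content of \Cref{integral-estimate} — the rest is a routine rearrangement together with the measure-comparison constants, so no further estimates beyond those already established are needed.
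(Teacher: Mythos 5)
Your overall route --- Markov's inequality on the ball combined with the contact-point estimates --- is the same as the paper's, but there is a genuine gap where the two halves of your argument are joined. Your first paragraph produces a bound on $\dashint_{B_\eps(x_0)}\frac{\Gamma(x_0)-v(y)}{\eps^2}\,d\mu(y)$, with $\Gamma$ frozen at $x_0$; the superlevel set in the statement, however, is $\{\Gamma-v>t\eps^2\}$ with $\Gamma$ evaluated at the variable point. These are not interchangeable: wherever $\xi\cdot(y-x_0)>0$ one may have $\Gamma(y)>\Gamma(x_0)$, so a bound on the first integral does not control the measure of $\{\Gamma-v>t\eps^2\}$ without an extra step (note also that $\Gamma(x_0)-v(y)$ need not be nonnegative on $B_\eps(x_0)$, so Chebyshev cannot be applied to it directly; only $\Gamma(y)-v(y)\geq 0$ holds). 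The missing step is the supporting-hyperplane substitution $\Gamma(y)-v(y)\leq\Gamma(x_0)-v(y)+\xi\cdot(y-x_0)$, after which Markov's inequality is applied to this nonnegative majorant and one must additionally estimate the drift integral $\dashint_{B_\eps(x_0)}\frac{\xi\cdot(y-x_0)}{\eps^2}\,d\mu(y)$. The crude bound $|\xi\cdot(y-x_0)|\leq|\xi|\eps$ gives $|\xi|/\eps$, which is useless; the correct estimate exploits the cancellation of the odd function $y\mapsto\xi\cdot(y-x_0)$ over the symmetric ball against the Lipschitz density, as in \eqref{eq:int-est-proof}, and yields $\frac{\|\nabla\dens\|_\infty}{\dens_0}\,|\xi|$. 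This is precisely where that term in the statement actually originates; your proposal never performs this step.

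Relatedly, you attribute the $\frac{\|\nabla\dens\|_\infty}{\dens_0}|\xi|$ contribution to ``absorbing the $\alpha$-term'' via \Cref{liminf-estimate}. In fact the $\alpha$-term only contributes $\tau|\xi|$ (via \Cref{lem:contact-point-estimate}), and the correct intermediate bound is $-\dashint_{B_\eps(x_0)}\frac{v(y)-\Gamma(x_0)}{\eps^2}\,d\mu\leq\frac{1}{\beta}\bigl(-\liminf_{x\to x_0}\L^+_{\Omega,\eps}v(x)+\tau|\xi|\bigr)$. Padding this with the extra $\frac{\|\nabla\dens\|_\infty}{\dens_0}|\xi|$ makes the final constants coincidentally match, but the logical chain is incomplete: the gradient-of-density term belongs to the drift integral in the Chebyshev step, not to the contact-point estimate. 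Once you insert the concavity substitution and the estimate \eqref{eq:int-est-proof}, together with the comparison \eqref{eq:comparable} to pass between $\mu$ and Lebesgue measure (which you do have), the argument closes exactly as in the paper.
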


\begin{proof}
Let $t>0$. By concavity we have $\Gamma(y)-v(y)\leq\Gamma(x_0)-v(y)+\xi\cdot(y-x_0)$ and thus
\begin{multline}
\label{eq:int-estimate-proof}
	\frac{\abs{\braces{y\in B_\eps(x_0)\,:\,\Gamma(y)-v(y)>t\eps^2}}}{|B_\eps(x_0)|}
	\\
\begin{split}
	\leq
	~&
	\frac{\abs{\braces{y\in B_\eps(x_0)\,:\,\Gamma(x_0)-v(y)+\xi\cdot(y-x_0)>t\eps^2}}}{|B_\eps(x_0)|}
	\\
	\leq
	~&
	\frac{1}{t\eps^2}\dashint_{B_\eps(x_0)}\pare{\Gamma(x_0)-v(y)+\xi\cdot(y-x_0)}\ dy
	\\
	\leq
	~&
	\frac{1}{t}\cdot\frac{\dens_1}{\dens_0}\dashint_{B_\eps(x_0)}\frac{\Gamma(x_0)-v(y)+\xi\cdot(y-x_0)}{\eps^2}\ d\mu(y)
	\\
	=
	~&
	\frac{1}{t}\cdot\frac{\dens_1}{\dens_0}\braces{-\dashint_{B_\eps(x_0)}\frac{v(y)-\Gamma(x_0)}{\eps^2}\ d\mu(y)+\dashint_{B_\eps(x_0)}\frac{\xi\cdot(y-x_0)}{\eps^2}\ d\mu(y)},
\end{split}
\end{multline}
where in the third inequality we used that the integrals with respect to these two measures are comparable as stated in \eqref{eq:comparable}.

We focus next on the first integral in the right hand side above. By \eqref{eq:key-abp-2a} from \Cref{integral-estimate} and \Cref{lem:contact-point-estimate} we get
\begin{align*}
	-\dashint_{B_\eps(x_0)}\frac{v(y)-\Gamma(x_0)}{\eps^2}\ d\mu(y)
	=
	~&
	-\liminf_{x\to x_0}\dashint_{B_\eps(x)}\frac{v(y)-v(x)}{\eps^2}\ d\mu(y)
	\\
	\leq
	~&
	-\liminf_{x\to x_0}\dashint_{B_\eps(x)}\frac{v(y)-v(x)}{\eps^2}\ d\mu(y)
	\\
	~&
	-\frac{\alpha}{\beta}\bigg[\liminf_{x\to x_0}\sup_{|z|<\Lambda\eps}\frac{\delta_{\Omega,\tau\eps^2}^+ v(x,z)}{2\eps^2}
	-\tau|\xi|\bigg]
	\\
	=
	~& 
	\frac{1}{\beta}\Big(-\liminf_{x\to x_0}\L^+_{\Omega,\eps}v(x)+\tau|\xi|\Big),
\end{align*}
where we used the definition of $\L^+_{\Omega,\eps}v$ in the last line. For the second integral on the last line of \eqref{eq:int-estimate-proof} we use \eqref{eq:int-est-proof}, which gives 
\begin{equation*}
\begin{split}
	\dashint_{B_\eps(x_0)}\frac{\xi\cdot(y-x_0)}{\eps^2}\ d\mu(y)
	\leq
	~&
	\frac{\norm{\nabla\dens}_{L^\infty}}{\dens_0}\,|\xi|.
\end{split}
\end{equation*}
Putting all these together we finish the proof.
\end{proof}

As an immediate corollary we get the following result saying that the difference between $v$ and its concave envelope $\Gamma$ on $B_{\eps/4}$ is sufficiently small in at least half of the points in terms of measure. The choice of the radius $\eps/4$ is justified later and it is due to a geometric argument using the concavity of $\Gamma$.

\begin{corollary}\label{estimate Q}
There exist universal constants $C_1,C_2>0$ such that
\begin{align*}
	\abs{B_{\eps/4}(x_0)\cap\braces{\Gamma-v\leq-C_2\eps^2\liminf_{x\to x_0}\L^+_{\Omega,\eps}v(x)+C_1\eps\sup_\Omega v}}
	\geq
	\frac{1}{2}|B_{\eps/4}(x_0)|
\end{align*}
for each $x_0\in K_v$.
\end{corollary}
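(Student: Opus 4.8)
The plan is to derive Corollary~\ref{estimate Q} from the measure estimate in \Cref{measure-estimate} by choosing the threshold $t$ appropriately and then controlling the gradient term $|\xi|$ by $\sup_\Omega v$ using the geometry of the concave envelope. First I would recall that, as noted after the statement of \Cref{eps-ABP-thm-2}, without loss of generality $\sup_{\R^N\setminus\Omega}v=0$ and $\hat v=v^+$, so that $\Gamma\geq 0$ everywhere and $\sup_\Omega\Gamma=\sup_\Omega v>0$. The key geometric fact is that if $x_0\in K_v$ and $\xi\in\nabla\Gamma(x_0)$, then the supporting hyperplane $y\mapsto\Gamma(x_0)+\xi\cdot(y-x_0)$ lies above $\Gamma\geq 0$ on the whole extended domain $\Omega_{\Lambda\eps+\tau\eps^2}$. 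Evaluating this at a point at distance of order $\diam\Omega$ from $x_0$ in the direction of $-\xi$ (which stays inside the extended domain since $\Omega$ is bounded), nonnegativity of $\Gamma$ forces $\Gamma(x_0)-c\,\diam\Omega\,|\xi|\geq 0$, hence $|\xi|\leq C(\diam\Omega)\,\Gamma(x_0)\leq C\sup_\Omega v$. This is the standard ABP-type bound on the slope of a supporting plane in terms of the height of the envelope.

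Next I would apply \Cref{measure-estimate} with the specific choice $t=2\ell\,\frac{\dens_1}{\beta\dens_0}\big[-\liminf_{x\to x_0}\L^+_{\Omega,\eps}v(x)+(\tau+\tfrac{\|\nabla\dens\|_\infty}{\dens_0})|\xi|\big]\cdot(4)^N$ --- more precisely, a $t$ large enough (by a universal multiplicative constant) that the right-hand side of \Cref{measure-estimate} becomes at most $\tfrac12$ once we pass from $B_\eps(x_0)$ to $B_{\eps/4}(x_0)$. Concretely, since $|B_\eps(x_0)|=4^N|B_{\eps/4}(x_0)|$, the inclusion
\begin{equation*}
	B_{\eps/4}(x_0)\cap\{\Gamma-v>t\eps^2\}
	\subset
	B_\eps(x_0)\cap\{\Gamma-v>t\eps^2\}
\end{equation*}
gives
\begin{equation*}
	\frac{|B_{\eps/4}(x_0)\cap\{\Gamma-v>t\eps^2\}|}{|B_{\eps/4}(x_0)|}
	\leq
	4^N\cdot\frac1t\cdot\frac{\dens_1}{\beta\dens_0}\Big[-\liminf_{x\to x_0}\L^+_{\Omega,\eps}v(x)+\Big(\tau+\tfrac{\|\nabla\dens\|_\infty}{\dens_0}\Big)|\xi|\Big],
\end{equation*}
and choosing $t$ to be twice the bracketed quantity times $4^N\frac{\dens_1}{\beta\dens_0}$ makes this at most $\tfrac12$. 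Passing to the complement,
\begin{equation*}
	\big|B_{\eps/4}(x_0)\cap\{\Gamma-v\leq t\eps^2\}\big|\geq\tfrac12|B_{\eps/4}(x_0)|.
\end{equation*}

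Finally I would rewrite $t\eps^2$ in the form appearing in the statement. Substituting the chosen $t$, and using $|\xi|\leq C\sup_\Omega v$ from the first paragraph together with the harmless replacement of $-\liminf_{x\to x_0}\L^+_{\Omega,\eps}v(x)$ by $(-\liminf_{x\to x_0}\L^+_{\Omega,\eps}v(x))^+$ (enlarging the set), we get
\begin{equation*}
	t\eps^2
	\leq
	C_2'\eps^2\Big(-\liminf_{x\to x_0}\L^+_{\Omega,\eps}v(x)\Big)
	+
	C_1'\eps^2\sup_\Omega v.
\end{equation*}
Here I must absorb the factor $\eps^2\sup_\Omega v$ into $\eps\sup_\Omega v$ as claimed in the statement, which is legitimate since $\eps<\eps_0<\tfrac12<1$, so $\eps^2\le\eps$; this is precisely where the exponent $1$ on $\eps$ in the corollary (rather than $2$) comes from and why the corollary is stated with $\eps\sup_\Omega v$. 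Relabeling constants yields $C_1,C_2$, completing the proof. I expect the main obstacle to be the bookkeeping in the first paragraph: pinning down that a point at controlled distance in the $-\xi/|\xi|$ direction genuinely lies in the region where $\Gamma\geq 0$ is available --- one must use that this point lies either in $\Omega_{\Lambda\eps+\tau\eps^2}$ (where $\Gamma\geq\hat v\geq 0$) or outside it (where $\Gamma=\sup_{\R^N\setminus\Omega}v=0$), so in all cases $\Gamma\geq 0$ there, which makes the slope bound $|\xi|\leq C(\diam\Omega)\sup_\Omega v$ go through cleanly.
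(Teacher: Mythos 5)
Your overall strategy is the paper's: apply \Cref{measure-estimate} with a suitable threshold $t$, bound the slope $|\xi|$ of a supporting hyperplane by the height of the envelope, and pass to the complement using $|B_\eps|=4^N|B_{\eps/4}|$. That part of the argument is fine. But the slope bound in your first paragraph is wrong, and it is exactly the point you flagged as the "main obstacle". The inequality $|\xi|\leq C(\diam\Omega)\sup_\Omega v$ does not hold: the supporting hyperplane $p(y)=\Gamma(x_0)+\xi\cdot(y-x_0)$ is only constrained to dominate $\hat v\geq 0$ on $\Omega_{\Lambda\eps+\tau\eps^2}$, and if $x_0\in\partial\Omega$ with $-\xi/|\xi|$ pointing away from $\Omega$, the ray $x_0-s\xi/|\xi|$ leaves $\Omega_{\Lambda\eps+\tau\eps^2}$ already at $s=\Lambda\eps+\tau\eps^2$. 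Your fallback --- that outside $\Omega_{\Lambda\eps+\tau\eps^2}$ one still has $\Gamma=0$ and hence $p\geq\Gamma$ --- is not available: $\Gamma$ is set to $0$ there by definition, but nothing forces the supporting hyperplane to dominate $\Gamma$ outside $\Omega_{\Lambda\eps+\tau\eps^2}$ (the superdifferential inequality in the paper is only required for $z\in\Omega$), and indeed $p$ does become negative along that ray. The correct, and sharp, bound is the one the paper uses,
\begin{equation*}
	|\xi|
	\leq
	\frac{1}{\Lambda\eps+\tau\eps^2}\sup_\Omega v
	\leq
	\frac{1}{\eps}\sup_\Omega v,
\end{equation*}
obtained by travelling only the guaranteed distance $\Lambda\eps+\tau\eps^2$. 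This is also why the corollary carries the term $C_1\eps\sup_\Omega v$ rather than $C_1\eps^2\sup_\Omega v$: it is $\eps^2\cdot\frac{1}{\eps}\sup_\Omega v$, not an artifact of the crude estimate $\eps^2\leq\eps$ as you suggest. With $\frac{1}{\eps}\sup_\Omega v$ in place of $C(\diam\Omega)\sup_\Omega v$ your computation goes through verbatim and yields the stated corollary.

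One smaller point: \Cref{measure-estimate} requires $t>0$, and your choice of $t$ (a multiple of $-\liminf_{x\to x_0}\L^+_{\Omega,\eps}v(x)+(\tau+\|\nabla\dens\|_\infty/\dens_0)|\xi|$) is only guaranteed to be nonnegative by \Cref{liminf-estimate}; it could vanish. The paper sidesteps this by padding the coefficient of $\frac{1}{\eps}\sup_\Omega v$ so that $t\geq 2\cdot 4^N\frac{\dens_1}{\beta\dens_0}\frac{1}{\eps}\sup_\Omega v>0$ (recall $\sup_\Omega v>0$ after the normalization). You should do the same, or handle $t=0$ separately.
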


\begin{proof}
Let $x_0\in K_v$ and any $\xi\in\nabla\Gamma(x_0)$. Since $\Gamma$ is the concave envelope of $v$ in $\Omega_{\Lambda\eps+\tau\eps^2}$, with $v\leq0$ outside $\Omega$, then the slope of any supporting hyperplane of $v$ in $\Omega_{\Lambda\eps+\tau\eps^2}$ is controlled from above by
\begin{align*}
	|\xi|
	\leq
	\frac{1}{\Lambda\eps+\tau\eps^2}\sup_\Omega v
	\leq
	\frac{1}{\eps}\sup_\Omega v,
\end{align*}
where $\sup_\Omega v>0$ by assumption.
Moreover, if we set 
\begin{align*}
	t
	=
	2\cdot4^N\frac{\dens_1}{\beta\dens_0}\left(-\liminf_{x\to x_0}\L^+_{\Omega,\eps}v(x)+\left(\tau+\frac{\|\nabla\dens\|_\infty}{\dens_0}+1\right)\frac{1}{\eps}\sup_\Omega v\right)
	\\
	\geq
	2\cdot4^N\frac{\dens_1}{\beta\dens_0}\,\frac{1}{\eps}\sup_\Omega v
	>0,
\end{align*}
where in the last line we have used \Cref{liminf-estimate}, then \Cref{measure-estimate} yields
\begin{align*}
	\abs{B_{\eps/4}(x_0)\cap\braces{\Gamma-v>t\eps^2}}
	\leq
	~&
	\abs{B_\eps(x_0)\cap\braces{\Gamma-v>t\eps^2}}
	\\
	\leq
	~&
	\frac{1}{2}\frac{|B_\eps(x_0)|}{4^N}
	=
	\frac{1}{2}|B_{\eps/4}(x_0)|.
\end{align*}
That is,
\begin{align*}
	\abs{B_{\eps/4}(x_0)\cap\braces{\Gamma-v\leq t\eps^2}}
	\geq
	\frac{1}{2}|B_{\eps/4}(x_0)|,
\end{align*}
and the proof is concluded for the chosen value of $t>0$.
\end{proof}

\subsection{Measure estimates for $\nabla\Gamma$ and proof of \Cref{eps-ABP-thm-2}}

The  proof of the following lemma is the same as that of \cite[ Lemma 4.2]{arroyobp23}, since the operator does not appear in the proof. The lemma states that the image $\nabla\Gamma(K_v)$ contains a ball whose radius depends on the supremum of $v$. The idea of the proof is well-known and relies on a geometric intuition: since $\Gamma$ is defined as the infimum of hyperplanes    above $ v^+$, then for each $\xi$, for which $\abs \xi$ is not too big, one can find a supporting hyperplane for $\Gamma$ at $x_0\in K_v$ of the form $a+\xi\cdot z$, that is $\xi \in \nabla\Gamma(x_0)$.  Since $v$ is not necessarily continuous, and `touching' is defined as $\limsup_{y\to x}v^+(y)=\Gamma(x)$, the actual proof \cite[ Lemma 4.2]{arroyobp23} requires more care compared to the usual PDE proof.
\begin{lemma}
\label{lem:inclusion}
Let $v\leq 0$ in $\R^N\setminus\Omega$. Then the inclusion
\begin{equation*}
	B_M
	\subset
	\nabla\Gamma(K_v)
\end{equation*}
holds for 
\begin{equation*}
	M
	=
	\frac{\sup_\Omega v^+}{\diam\Omega+\Lambda\eps+\tau\eps^2}.
\end{equation*}
\end{lemma}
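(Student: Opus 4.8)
The plan is to establish the inclusion $B_M\subset\nabla\Gamma(K_v)$ by the standard geometric argument adapted to the lack of continuity of $v$, following \cite[Lemma 4.2]{arroyobp23}. Fix any $\xi\in\R^N$ with $|\xi|<M=\frac{\sup_\Omega v^+}{\diam\Omega+\Lambda\eps+\tau\eps^2}$; I must produce a point $x_0\in K_v$ with $\xi\in\nabla\Gamma(x_0)$, i.e. a supporting hyperplane of $\Gamma$ at $x_0$ with slope exactly $\xi$ that also realizes the contact condition $\limsup_{y\to x_0}\hat v(y)=\Gamma(x_0)$.

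First I would slide the hyperplane $z\mapsto \xi\cdot z$ down from above until it first touches the graph of $\hat v$ over $\Omega_{\Lambda\eps+\tau\eps^2}$. Concretely, set $c=\sup_{y\in\Omega_{\Lambda\eps+\tau\eps^2}}(\hat v(y)-\xi\cdot y)$, which is finite since $\hat v$ is bounded, so that the affine function $p(z)=c+\xi\cdot z$ satisfies $p\geq \hat v$ on $\Omega_{\Lambda\eps+\tau\eps^2}$ and $p$ is a minimal such hyperplane with this slope. By definition of $\Gamma$ as the infimum of hyperplanes lying above $\hat v$, we get $\Gamma\leq p$ on $\Omega_{\Lambda\eps+\tau\eps^2}$. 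The next step is to pick a sequence $y_k\in\Omega_{\Lambda\eps+\tau\eps^2}$ with $\hat v(y_k)-\xi\cdot y_k\to c$; by compactness of $\overline{\Omega_{\Lambda\eps+\tau\eps^2}}$ pass to a subsequence $y_k\to x_0$. One then checks $x_0\in K_v$: on one hand $\limsup_{y\to x_0}\hat v(y)\geq \limsup_k\hat v(y_k)=c+\xi\cdot x_0=p(x_0)\geq\Gamma(x_0)$, and on the other hand $\hat v\leq\Gamma$ gives $\limsup_{y\to x_0}\hat v(y)\leq\Gamma(x_0)$ (using upper semicontinuity of $\Gamma$), so equality holds and $p(x_0)=\Gamma(x_0)$. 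Here one must also rule out $x_0$ lying on the part of the domain where the constant branch of $\Gamma$ takes over; this is where the radius $M$ enters, since $|\xi|<M$ forces the supremum defining $c$ to be attained in the interior where $\hat v$ genuinely exceeds $\sup_{\R^N\setminus\Omega}v=0$, so $x_0\in\overline\Omega$ and the contact is with $v^+$ itself. Finally, from $\Gamma\leq p$ on $\Omega_{\Lambda\eps+\tau\eps^2}$ together with $p\leq\Gamma$ outside (trivially, since $\Gamma$ is constant $=0$ there while $|\xi|<M$ keeps $p$ below that constant on the relevant set by the diameter bound), and $\Gamma(x_0)=p(x_0)$, one concludes $\Gamma(z)\leq\Gamma(x_0)+\xi\cdot(z-x_0)$ for all $z\in\Omega$, i.e. $\xi\in\nabla\Gamma(x_0)$.

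The main obstacle is the bookkeeping around discontinuity of $v$: since $\hat v$ need not be upper semicontinuous, the supremum $c$ may fail to be attained, so one genuinely needs the $\limsup$ formulation of $K_v$ and the argument that the touching point survives the limit. A secondary technical point is confirming that the touching happens strictly inside $\Omega$ (not merely in the enlarged domain $\Omega_{\Lambda\eps+\tau\eps^2}$ nor on the constant branch): the inequality $|\xi|(\diam\Omega+\Lambda\eps+\tau\eps^2)<\sup_\Omega v^+$ is precisely tailored so that a hyperplane of slope $\xi$ through any boundary point of the enlarged domain lies strictly below the height $\sup_\Omega v^+$, forcing the contact into $\overline\Omega$. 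Since, as noted in the statement, the operator plays no role here, I would simply invoke \cite[Lemma 4.2]{arroyobp23} verbatim after remarking that the definition of $\Gamma$, $\hat v$ and $K_v$ used there coincides with the one in this paper.
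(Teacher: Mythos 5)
Your argument is correct and is essentially the sliding-hyperplane proof of \cite[Lemma 4.2]{arroyobp23} that the paper invokes without reproducing: the definition of $c=\sup(\hat v(y)-\xi\cdot y)$, the extraction of a maximizing sequence whose limit $x_0$ lies in $K_v$ via the $\limsup$ formulation, and the use of $|\xi|<M$ to force $x_0$ into $\overline\Omega$ rather than the collar $\Omega_{\Lambda\eps+\tau\eps^2}\setminus\Omega$ all match. The only blemish is the aside asserting $p\leq\Gamma$ outside the enlarged domain --- that claim is neither true in general nor needed, since the paper's superdifferential only requires $\Gamma(z)\leq\Gamma(x_0)+\xi\cdot(z-x_0)$ for $z\in\Omega$, which already follows from $\Gamma\leq p$ on $\Omega_{\Lambda\eps+\tau\eps^2}$ together with $\Gamma(x_0)=p(x_0)$.
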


\Cref{lem:inclusion} will be used to derive an estimate for $\sup_\Omega v$ in terms of $\abs{\nabla\Gamma(K_v)}$. Then we estimate  $\abs{\nabla \Gamma(K_v)}$. This is done by using the cover of $K_v$ by balls of radius $\eps/4$ constructed in \eqref{covering} having finite overlap \eqref{measure-balls}. Thus we just need to estimate the diameter of $\nabla\Gamma(B_{\eps/4}(x_0))$ with $x_0\in K_v$. It is natural that this can be estimated in terms of the oscillation of $\Gamma$: roughly, after reducing the situation so that at the center of the ball the slope of a supporting hyperplane is $0$, then the slope of a supporting hyperplane touching somewhere in a ball of radius $\eps/4$ cannot be larger than the oscillation in $\eps/2$ divided by $\eps/2$. Again the operator does not appear in this lemma and the proof can be found in \cite[Lemma 4.3]{arroyobp23}.

\begin{lemma}
\label{lem:inclusion2}
Let  $\Gamma:\Omega_{\Lambda\eps+\tau\eps^2}\to\R$ be a concave bounded function. Then
\begin{equation*}
	\nabla\Gamma(B_{\eps/4}(x_0))
	\subset
	\overline B_R(\xi)
\end{equation*}
for every $x_0\in\overline\Omega$ and $\xi\in\nabla\Gamma(x_0)$, where
\begin{equation}\label{R}
	R
	=
	\frac{2}{\eps}\sup_{y\in B_{\eps/2}(x_0)}\braces{\Gamma(x_0)-\Gamma(y)+\xi\cdot(y-x_0)}.
\end{equation}
\end{lemma}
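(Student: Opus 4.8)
The plan is to read the bound off directly from the defining inequality of the superdifferential, using a single test point placed on a sphere of radius of order $\eps$ centered at $x_0$; no compactness or iteration is involved, so in the end this is just the elementary geometric fact that a concave function cannot have a steep supporting slope at an interior point of a ball without a correspondingly large oscillation over a slightly larger ball — exactly the heuristic anticipated in the statement.

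Concretely, I would fix $x_1\in B_{\eps/4}(x_0)$ and $\eta\in\nabla\Gamma(x_1)$ and show $|\eta-\xi|\le R$. The case $\eta=\xi$ being trivial, set $e=\frac{\eta-\xi}{|\eta-\xi|}$ and take the test point $z=x_1-\tfrac{\eps}{4}e$; since $|x_1-x_0|<\tfrac{\eps}{4}$, the triangle inequality gives $z\in B_{\eps/2}(x_0)$, so that $\Gamma(x_0)-\Gamma(z)+\xi\cdot(z-x_0)$ is admissible in the supremum defining $R$ in \eqref{R}. Evaluating the inequality $\Gamma(z)\le\Gamma(x_1)+\eta\cdot(z-x_1)$ provided by $\eta\in\nabla\Gamma(x_1)$, and writing $\eta\cdot e=\xi\cdot e+|\eta-\xi|$, one rearranges to
\begin{equation*}
	\tfrac{\eps}{4}\,|\eta-\xi|
	\le
	\Gamma(x_1)-\Gamma(z)-\tfrac{\eps}{4}\,\xi\cdot e.
\end{equation*}
Then I would insert the bound $\Gamma(x_1)\le\Gamma(x_0)+\xi\cdot(x_1-x_0)$ coming from $\xi\in\nabla\Gamma(x_0)$ and use $\xi\cdot(x_1-x_0)-\tfrac{\eps}{4}\xi\cdot e=\xi\cdot(z-x_0)$, which collapses the right-hand side to $\Gamma(x_0)-\Gamma(z)+\xi\cdot(z-x_0)$. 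Taking the supremum over $z\in B_{\eps/2}(x_0)$ gives $|\eta-\xi|\le R$ with $R$ as in \eqref{R} (the precise value of the universal constant in front is immaterial for the applications). Since $x_1$ and $\eta$ were arbitrary, this yields the stated inclusion.

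There is essentially no obstacle: the computation is algebraic once $z$ is chosen. The only points requiring a little care — treated in the self-contained proof of \cite[Lemma 4.3]{arroyobp23} cited here — are that $\Gamma$ is merely concave, so one must argue with the superdifferential directly rather than a classical gradient and may restrict attention to interior points, where concavity already forces continuity, and that the radius $\eps/4$ on the left of the inclusion is precisely the largest for which the test point $z$ is guaranteed to stay in $B_{\eps/2}(x_0)$; this is the geometric reason, flagged in the statement, behind the appearance of $\eps/4$.
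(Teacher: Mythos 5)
Your argument is the standard one and, as far as the paper is concerned, matches the approach of the cited reference (the paper itself only points to \cite[Lemma 4.3]{arroyobp23}): test the supporting hyperplane at $x_1$ against the point $z=x_1-\tfrac{\eps}{4}e$ and absorb the term $\xi\cdot(x_1-x_0)$ using $\xi\in\nabla\Gamma(x_0)$. The chain of inequalities is correct. The one substantive issue is the constant at the very end: your derivation yields $\tfrac{\eps}{4}\,|\eta-\xi|\le \sup_{y\in B_{\eps/2}(x_0)}\{\Gamma(x_0)-\Gamma(y)+\xi\cdot(y-x_0)\}=\tfrac{\eps}{2}R$, i.e.\ $|\eta-\xi|\le 2R$, not $|\eta-\xi|\le R$ as you assert in the last sentence before the parenthesis. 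This is not a defect you can repair by a cleverer choice of test point: in one dimension take $x_0=0$, $\xi=0$ and $\Gamma(y)=\min\{0,\,a-y\}$ with $a=\tfrac{\eps}{4}-\delta$; then $R=\tfrac12+\tfrac{2\delta}{\eps}$ while $\eta=-1\in\nabla\Gamma(x_1)$ for $x_1=\tfrac{\eps}{4}-\tfrac{\delta}{2}\in B_{\eps/4}(x_0)$, so the inclusion with the radius $R$ of \eqref{R} actually fails and $2R$ is essentially sharp. So what you prove is $\nabla\Gamma(B_{\eps/4}(x_0))\subset\overline B_{2R}(\xi)$, which, as you correctly note, is all that is used downstream (in \Cref{lemma-Gamma} only the structure $R\lesssim\eps^{-1}\sup(\cdots)$ matters, and universal constants are not tracked). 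You should either state the lemma with $2R$ (equivalently with the prefactor $\tfrac4\eps$ in \eqref{R}) or delete the claim that the computation gives the constant as written. A second, minor point: the supporting inequalities $\Gamma(z)\le\Gamma(x_1)+\eta\cdot(z-x_1)$ and $\Gamma(x_1)\le\Gamma(x_0)+\xi\cdot(x_1-x_0)$ are needed at points $z,x_1$ that may lie in $\Omega_{\Lambda\eps+\tau\eps^2}\setminus\Omega$, whereas the paper's definition of $\nabla\Gamma$ only quantifies over $z\in\Omega$; this mismatch is inherited from the paper's setup rather than introduced by you, but it is worth flagging that the superdifferential must be understood relative to the enlarged domain for the argument to go through.
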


\begin{lemma}\label{lemma-Gamma}
Let $v\leq 0$ in $\R^N\setminus\Omega$ and $\Gamma:\Omega_{\Lambda\eps+\tau\eps^2}\to\R$ be the concave envelope of $v$. Then
\begin{equation}\label{diam-Gamma}
	\diam\nabla\Gamma(B_{\eps/4}(x_0))
	\leq
	C\left[\sup_{B_{\eps/4}(x_0)}(-\L_{\Omega,\eps}^+v)^++\left(\tau+\frac{\|\nabla\dens\|_\infty}{\dens_0}\right)|\xi|\right]\eps
\end{equation}
and
\begin{equation}\label{measure-Gamma}
	\frac{|\nabla\Gamma(B_{\eps/4}(x_0))|}{|B_{\eps/4}|}
	\leq
	C\left[\sup_{B_{\eps/4}(x_0)}(-\L_{\Omega,\eps}^+v)^++\left(\tau+\frac{\|\nabla\dens\|_\infty}{\dens_0}\right)|\xi|\right]^N
\end{equation}
for every $x_0\in K_v$ and $\xi\in\nabla\Gamma(x_0)$, where $C>0$ is a universal constant.
\end{lemma}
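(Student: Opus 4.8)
The statement to be proven is \Cref{lemma-Gamma}, which combines the pointwise contact-point estimate of \Cref{liminf-estimate} with the geometric inclusion of \Cref{lem:inclusion2}. The plan is to fix $x_0\in K_v$ and $\xi\in\nabla\Gamma(x_0)$, and chase the radius $R$ appearing in \eqref{R} through the two main ingredients.

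First I would reduce \eqref{R} to something controlled by the operator. The quantity $\sup_{y\in B_{\eps/2}(x_0)}\{\Gamma(x_0)-\Gamma(y)+\xi\cdot(y-x_0)\}$ measures how far $\Gamma$ falls below its supporting hyperplane at $x_0$ on the ball $B_{\eps/2}(x_0)$. Each such point $y$ lies within $\eps/4$ of some center of the covering, or more directly, one covers $B_{\eps/2}(x_0)$ by balls $B_{\eps/4}(\tilde x)$ with $\tilde x \in K_v$ (the covering \eqref{covering}–type argument restricted to this ball — this is precisely the geometric reason the radius $\eps/4$ was chosen, as remarked before \Cref{measure-estimate}). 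On each such smaller ball, \Cref{estimate Q} tells us that in at least half of the points in measure, $\Gamma - v \leq -C_2\eps^2\liminf_{x\to \tilde x}\L^+_{\Omega,\eps}v(x) + C_1\eps\sup_\Omega v$, and since $\Gamma$ is concave and these are more than half the points, the concave function $\Gamma$ on $B_{\eps/4}(\tilde x)$ cannot be far below the chord, which forces a bound on $\Gamma(x_0)-\Gamma(y)+\xi\cdot(y-x_0)$ of the form $C\eps^2[\sup_{B_{\eps/4}(x_0)}(-\L^+_{\Omega,\eps}v)^+ + (\tau + \|\nabla\dens\|_\infty/\dens_0)|\xi|]$, using also \Cref{liminf-estimate} to absorb the lower bound on $-\liminf\L^+_{\Omega,\eps}v$ by the operator term. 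Dividing by $\eps/2$ as in \eqref{R} gives exactly \eqref{diam-Gamma}.

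Then \eqref{measure-Gamma} follows immediately: by \Cref{lem:inclusion2}, $\nabla\Gamma(B_{\eps/4}(x_0)) \subset \overline B_R(\xi)$, so $|\nabla\Gamma(B_{\eps/4}(x_0))| \leq |B_1| R^N$, and substituting the bound on $R$ from \eqref{diam-Gamma} (noting $R = \diam\,\overline B_R(\xi)/2$ up to constants, so $R$ and the diameter bound differ only by a universal factor) and dividing by $|B_{\eps/4}| = |B_1|(\eps/4)^N$, the powers of $\eps$ cancel and one is left with the $N$-th power of the bracketed quantity times a universal constant. This is a purely arithmetic step once \eqref{diam-Gamma} is in hand.

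The main obstacle I anticipate is the geometric covering/concavity argument in the first step: turning the ``half the points are good'' statement of \Cref{estimate Q} into a genuine pointwise bound on $\Gamma(x_0)-\Gamma(y)+\xi\cdot(y-x_0)$ for \emph{every} $y\in B_{\eps/2}(x_0)$. This requires the standard but slightly delicate observation that a concave function which lies below a given affine level on a set of measure more than half of a ball must lie below a comparable level on the whole (smaller) ball — one applies this with the ball $B_{\eps/4}(\tilde x)$ where $\tilde x$ is chosen so that $y \in B_{\eps/4}(\tilde x)$ and $\tilde x$ is a contact point, which is why one needs the radius $\eps/2$ in \eqref{R} versus $\eps/4$ in the covering. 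Care is also needed because $v$ need not be continuous, so all touching statements must be phrased via $\liminf/\limsup$; but since \Cref{estimate Q} and \Cref{liminf-estimate} already package these subtleties, the argument here should be routine modulo bookkeeping of the universal constants.
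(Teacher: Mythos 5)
Your overall skeleton is right (reduce everything via \Cref{lem:inclusion2} to bounding the radius $R$ in \eqref{R}, obtain that bound from a measure estimate plus a midpoint--concavity argument, and then get \eqref{measure-Gamma} arithmetically from \eqref{diam-Gamma}), but the central step --- bounding $R$ --- contains genuine gaps. First, you propose to cover $B_{\eps/2}(x_0)$ by balls $B_{\eps/4}(\tilde x)$ with $\tilde x\in K_v$; no such covering is available, since a generic point of $B_{\eps/2}(x_0)$ need not lie near any contact point other than $x_0$. The paper avoids this entirely: for $y\in B_{\eps/2}(x_0)$ one has $B_{\eps/2}(y)\subset B_\eps(x_0)$, so \Cref{measure-estimate} applied \emph{only at $x_0$} (with the ball $B_\eps(x_0)$ and a suitable choice of $t$) already controls the bad set inside every $B_{\eps/2}(y)$. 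Second, the tool you invoke, \Cref{estimate Q}, produces the error term $C_1\eps\sup_\Omega v$, obtained there by the crude bound $|\xi|\leq\frac{1}{\eps}\sup_\Omega v$; this cannot be converted back into the $(\tau+\|\nabla\dens\|_\infty/\dens_0)|\xi|$ term claimed in \eqref{diam-Gamma}, which is strictly sharper and is essential downstream (in \Cref{lemma-Gamma-2} the $|\xi|$-term is absorbed precisely because its coefficient carries a factor $C\eps_0$). The correct tool is \Cref{measure-estimate} itself, keeping the $|\xi|$-dependence.

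Two further points in the symmetrization step need repair. The midpoint inequality $\Gamma(y)\geq\tfrac12\bigl(\Gamma(y+z)+\Gamma(y-z)\bigr)$ requires a symmetric pair $y\pm z$ inside the \emph{good} set, so one must work in a ball centered at $y$ (not merely a ball containing $y$, as in your $B_{\eps/4}(\tilde x)\ni y$), and the bad set must occupy strictly less than half of that ball so that it and its reflection through $y$ cannot exhaust it; the ``at least half'' density from \Cref{estimate Q} is exactly borderline and does not suffice. The paper arranges the bad set to fill less than $\tfrac14$ of $B_{\eps/2}(y)$ by choosing $t$ with the factor $2^{N+2}$. Also note that the mechanism is a \emph{lower} bound on the concave function $\Gamma$ at $y$ (equivalently, an upper bound on the convex function $y\mapsto\Gamma(x_0)+\xi\cdot(y-x_0)-\Gamma(y)$), not the statement that ``a concave function lying below an affine level on more than half a ball lies below a comparable level everywhere,'' which is the wrong direction.
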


\begin{proof}
We follow \cite[Lemma 4.4]{arroyobp23} with modifications. The idea of the proof is as follows: 
for $x_0\in K_v$ and $\xi\in\nabla\Gamma(x_0)$, using the estimate from \Cref{lem:inclusion2}, it holds that $\nabla\Gamma(B_{\eps/4}(x_0))\subset\overline B_R(\xi)$ with $R$ given in \eqref{R}. Thus
\begin{equation}\label{R1}
	\diam\nabla\Gamma(B_{\eps/4}(x_0))
	\leq
	2R,
\end{equation}
and
\begin{equation}\label{R2}
	\frac{|\nabla\Gamma(B_{\eps/4}(x_0))|}{|B_{\eps/4}|}
	\leq
	\bigg(\frac{4R}{\eps}\bigg)^N
\end{equation}
so our aim is to obtain an estimate of $R$ in terms of the maximal operator and $|\xi|$. In order to do that, let us start by letting any $y\in B_{\eps/2}(x_0)$. We have that $B_{\eps/2}(y)\subset B_\eps(x_0)$, so using \Cref{measure-estimate} with
\begin{align*}
	t
	=
	\frac{2^{N+2}\dens_1}{\beta\dens_0}\left[-\liminf_{x\to x_0}\L^+_{\Omega,\eps}v(x)+\left(\tau+\frac{\|\nabla\dens\|_\infty}{\dens_0}\right)|\xi|\right],
\end{align*}
we have that the portion of the $z$'s in $B_{\eps/2}$ for which
\begin{align*}
	\Gamma(x_0)-v(y+z)+\xi\cdot(y+z-x_0)>t\eps^2
\end{align*}
is less than $1/4$. Recall that $t\geq 0$ by \Cref{liminf-estimate}, we assume $t>0$ in order to apply \Cref{measure-estimate} and will address the case $t=0$ afterward. Hence, the reversed inequality holds in $3$ quarters of $B_{\eps/2}$, and thus there exists at least one $z\in B_{\eps/2}$ for which we simultaneously have
\begin{align*}
	\Gamma(x_0)-\Gamma(y+z)+\xi\cdot(y+z-x_0)
	\leq
	~&
	t\eps^2
	\\
	\text{ and }\quad
	\Gamma(x_0)-\Gamma(y-z)+\xi\cdot(y-z-x_0)
	\leq
	~&
	t\eps^2.
\end{align*}
Finally, by this and concavity
\begin{align*}
	\Gamma(x_0)-\Gamma(y)+\xi\cdot(y-x_0)
	\leq
	\Gamma(x_0)-\frac{\Gamma(y+z)+\Gamma(y-z)}{2}+\xi\cdot(y-x_0)
	\leq
	~&
	t\eps^2,
\end{align*}
and this holds for every $y\in B_{\eps/2}(x_0)$, so
\begin{equation*}
	R
	\leq
	2t\eps
	=
	\frac{2^{N+3}\dens_1}{\beta\dens_0}\left[-\liminf_{x\to x_0}\L^+_{\Omega,\eps}v(x)+\left(\tau+\frac{\|\nabla\dens\|_\infty}{\dens_0}\right)|\xi|\right]\eps.
\end{equation*}
Observe that if $t=0$ we can repeat the argument for any $t>0$ obtaining that $R=0$, so the desired inequality follows immediately.

In addition to this, we can use the rough estimate
\begin{equation*}
	-\liminf_{x\to x_0}\L_{\Omega,\eps}^+v(x)
	\leq
	\sup_{B_{\eps/4}(x_0)}(-\L_{\Omega,\eps}^+v)^+,
\end{equation*}
so that \eqref{diam-Gamma} and \eqref{measure-Gamma} follow immediately after replacing this in \eqref{R1} and \eqref{R2}, respectively.
\end{proof}

Note that the inequality \eqref{measure-Gamma} depends on the norm of a vector $\xi$ in $\nabla\Gamma(x_0)$. However, as it will be made clear later in the proof of \Cref{eps-ABP-thm-2}, the nonlocal nature of the problem forces us to consider superdifferentials other than those in $\nabla\Gamma(x_0)$. More precisely, we later need to estimate the left hand side in \eqref{measure-Gamma} using a superdifferential in the bigger set $\nabla\Gamma(B_{\eps/4}(x_0))$ (in particular, the one minimizing the Euclidean norm). To this end, in the following lemma we use the control on the diameter of $\nabla\Gamma(B_{\eps/4}(x_0))$ stated in \eqref{diam-Gamma} to estimate the difference between $\xi\in\nabla\Gamma(x_0)$ and any other vector $\zeta\in\nabla\Gamma(B_{\eps/4}(x_0))$.

\begin{lemma}\label{lemma-Gamma-2}
Let $v\leq 0$ in $\R^N\setminus\Omega$ and $\Gamma:\Omega_{\Lambda\eps+\tau\eps^2}\to\R$ be the concave envelope of $v$. For $x_0\in K_v$, let $\zeta$ be a vector minimizing the norm among all vectors in the closure of $\nabla\Gamma(B_{\eps/4}(x_0))$. Then
\begin{equation}\label{measure-Gamma-2}
	\frac{|\nabla\Gamma(B_{\eps/4}(x_0))|}{|B_{\eps/4}|}
	\leq
	C\bigg[\Big(\sup_{B_{\eps/4}(x_0)}(-\L_{\Omega,\eps}^+v)^+\Big)^N+|\zeta|^N\bigg]
\end{equation}
for some universal constant $C>0$.
\end{lemma}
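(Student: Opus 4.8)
## Proof proposal

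The plan is to combine the diameter estimate \eqref{diam-Gamma} from \Cref{lemma-Gamma} with the triangle inequality to replace the dependence on $\xi\in\nabla\Gamma(x_0)$ in \eqref{measure-Gamma} by a dependence on the norm-minimizing vector $\zeta$ in the closure of $\nabla\Gamma(B_{\eps/4}(x_0))$. First I would fix $x_0\in K_v$ and $\xi\in\nabla\Gamma(x_0)$, and observe that since $x_0\in B_{\eps/4}(x_0)$ we have $\xi\in\nabla\Gamma(B_{\eps/4}(x_0))$, so by the definition of $\zeta$ as the minimizer of the Euclidean norm over the closure of that set, both $\xi$ and $\zeta$ lie in $\nabla\Gamma(B_{\eps/4}(x_0))$. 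Therefore $|\xi-\zeta|\leq\diam\nabla\Gamma(B_{\eps/4}(x_0))$, and applying \eqref{diam-Gamma} we obtain
\begin{equation*}
	|\xi-\zeta|
	\leq
	C\left[\sup_{B_{\eps/4}(x_0)}(-\L_{\Omega,\eps}^+v)^++\left(\tau+\frac{\|\nabla\dens\|_\infty}{\dens_0}\right)|\xi|\right]\eps.
\end{equation*}

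The subtlety here — and the one step requiring a small argument rather than pure bookkeeping — is that the right-hand side of this bound itself contains $|\xi|$, so I cannot immediately conclude $|\xi|\lesssim|\zeta|+\sup(-\L^+_{\Omega,\eps}v)^+$. To absorb the $|\xi|$ term I would use that $\eps<\eps_0$ and, recalling that $\tau$, $\dens_0$, $\dens_1$ and the Lipschitz constant of $\dens$ are universal, I can shrink the universal constant $\eps_0$ (adding one more smallness condition of the type already collected in \eqref{eps0-bound-a}, \eqref{eps0-bound-b}) so that $C\left(\tau+\tfrac{\|\nabla\dens\|_\infty}{\dens_0}\right)\eps\leq\tfrac12$. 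Then $|\xi-\zeta|\leq C\eps\sup_{B_{\eps/4}(x_0)}(-\L_{\Omega,\eps}^+v)^++\tfrac12|\xi|$, and by the triangle inequality $|\xi|\leq|\zeta|+|\xi-\zeta|\leq|\zeta|+C\eps\sup_{B_{\eps/4}(x_0)}(-\L_{\Omega,\eps}^+v)^++\tfrac12|\xi|$, so that
\begin{equation*}
	|\xi|
	\leq
	2|\zeta|+C\eps\sup_{B_{\eps/4}(x_0)}(-\L_{\Omega,\eps}^+v)^+
	\leq
	2|\zeta|+C\sup_{B_{\eps/4}(x_0)}(-\L_{\Omega,\eps}^+v)^+,
\end{equation*}
using $\eps<\eps_0<\tfrac12$ in the last step.

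To finish, I would insert this bound on $|\xi|$ back into \eqref{measure-Gamma} from \Cref{lemma-Gamma}:
\begin{equation*}
	\frac{|\nabla\Gamma(B_{\eps/4}(x_0))|}{|B_{\eps/4}|}
	\leq
	C\left[\sup_{B_{\eps/4}(x_0)}(-\L_{\Omega,\eps}^+v)^++\left(\tau+\frac{\|\nabla\dens\|_\infty}{\dens_0}\right)\Big(2|\zeta|+C\sup_{B_{\eps/4}(x_0)}(-\L_{\Omega,\eps}^+v)^+\Big)\right]^N,
\end{equation*}
and then use the elementary inequality $(a+b)^N\leq 2^{N-1}(a^N+b^N)$ together with the fact that $\tau+\tfrac{\|\nabla\dens\|_\infty}{\dens_0}$ is a universal constant to collect everything into
\begin{equation*}
	\frac{|\nabla\Gamma(B_{\eps/4}(x_0))|}{|B_{\eps/4}|}
	\leq
	C\bigg[\Big(\sup_{B_{\eps/4}(x_0)}(-\L_{\Omega,\eps}^+v)^+\Big)^N+|\zeta|^N\bigg],
\end{equation*}
which is \eqref{measure-Gamma-2}. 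The only real obstacle is the circular appearance of $|\xi|$ described above; everything else is a routine application of \Cref{lemma-Gamma}, the triangle inequality, and convexity of $t\mapsto t^N$. One should double-check that the new smallness requirement on $\eps_0$ depends only on universal quantities, which it does since it involves only $\tau$, $\dens_0$, and $\|\nabla\dens\|_\infty$.
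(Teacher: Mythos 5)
Your proposal is correct and follows essentially the same route as the paper: both start from $|\xi|\leq|\zeta|+\diam\nabla\Gamma(B_{\eps/4}(x_0))$, feed this into the diameter estimate \eqref{diam-Gamma}, absorb the self-referential term by imposing a universal smallness condition on $\eps_0$ (this is exactly the paper's condition \eqref{eps0-bound-c}), and then insert the resulting bound $|\xi|\leq C\big[\sup_{B_{\eps/4}(x_0)}(-\L_{\Omega,\eps}^+v)^++|\zeta|\big]$ into \eqref{measure-Gamma} together with $(a+b)^N\leq 2^{N-1}(a^N+b^N)$. The only cosmetic difference is that you absorb $|\xi|$ directly while the paper rearranges to absorb the diameter term; the two manipulations are equivalent.
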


\begin{proof}
Let $\xi\in\nabla\Gamma(x_0)$ and use the inequality
\begin{equation}\label{diam-bound}
	|\xi|
	\leq
	|\zeta|+|\xi-\zeta|
	\leq
	|\zeta|+\diam\nabla\Gamma(B_{\eps/4}(x_0))
\end{equation}
in \eqref{diam-Gamma} to get the estimate
\begin{equation*}
	\diam\nabla\Gamma(B_{\eps/4}(x_0))
	\leq
	C\eps_0\bigg[\sup_{B_{\eps/4}(x_0)}(-\L_{\Omega,\eps}^+v)^++|\zeta|+\diam\nabla\Gamma(B_{\eps/4}(x_0))\bigg],
\end{equation*}
where we used that $\tau\geq 1$ and also that $\eps\in(0,\eps_0)$. Rearranging terms we get
\begin{equation*}
	(1-C\eps_0)\diam\nabla\Gamma(B_{\eps/4}(x_0))
	\leq
	C\eps_0\bigg[\sup_{B_{\eps/4}(x_0)}(-\L_{\Omega,\eps}^+v)^++|\zeta|\bigg],
\end{equation*}
so by selecting small enough $\eps_0$ we ensure that the left hand side is strictly positive. More precisely, if we impose
\begin{equation}\label{eps0-bound-c}
	\eps_0
	\leq
	\frac{1}{2C},
\end{equation}
then
\begin{equation*}
	\diam\nabla\Gamma(B_{\eps/4}(x_0))
	\leq
	C\eps_0\bigg[\sup_{B_{\eps/4}(x_0)}(-\L_{\Omega,\eps}^+v)^++|\zeta|\bigg].
\end{equation*}
Inserting this into \eqref{diam-bound} we obtain
\begin{equation*}
	|\xi|
	\leq
	C\bigg[\sup_{B_{\eps/4}(x_0)}(-\L_{\Omega,\eps}^+v)^++|\zeta|\bigg]
\end{equation*}
for some universal constant $C>0$. Finally, using this in \eqref{measure-Gamma} and applying the elementary inequality $(a+b)^N\leq2^{N-1}(a^N+b^N)$ for every $a,b\geq0$ we reach the desired estimate \eqref{measure-Gamma-2}.
\end{proof}

Now we have all the tools in order to prove the main theorem of the section.

\begin{proof}[Proof of \Cref{eps-ABP-thm-2}.]
We denote by $\sigma$ the $(n-1)$-dimensional Lebesgue measure. Let $M\geq 0$ and $\eta>0$. Then
\begin{equation}\label{eq-04}
\begin{split}
	\log\pare{\frac{M^N+\eta^N}{\eta^N}}
	=
	~&
	\int_0^M\frac{Nt^{N-1}}{t^N+\eta^N}\ dt
	\\
	=
	~&
	\frac{1}{|B_1|}
	\int_0^M\frac{\sigma(\partial B_t)}{t^N+\eta^N}\ dt
	\\
	=
	~&
	\frac{1}{|B_1|}
	\int_0^M\int_{\partial B_t}\frac{1}{|z|^N+\eta^N}\ d\sigma(z)\ dt
	\\
	=
	~&
	\frac{1}{|B_1|}
	\int_{B_M}\frac{1}{|z|^N+\eta^N}\ dz,
\end{split}
\end{equation}
where we have used that $\sigma(\partial B_t)=\sigma(\partial B_1)t^{N-1}=N|B_1|t^{N-1}$, which follows by integrating $\sigma(\partial B_t)$ to get $|B_1|$. Our aim is to show that this integral is bounded by a constant $C>0$ independent of $\eps>0$, from which it follows that
\begin{align}
\label{eq:M-bound}
	M
	\leq
	(e^C-1)^{1/N}\eta,
\end{align}
and select $M$ in such a way that the desired estimate follows from this.
To be more precise, we select 
\begin{equation*}
	M
	= 
	\frac{\sup_\Omega v^+}{\diam\Omega+\Lambda\eps+\tau\eps^2}.
\end{equation*}
Thus we start estimating the right hand side of  \eqref{eq-04}. The value of $M$ allows us to use \Cref{lem:inclusion}, and  we have $B_M \subset \nabla\Gamma(K_v)$, so
\begin{align}
\label{eq-05}
	\int_{B_M}\frac{1}{|z|^N+\eta^N}\ dz
	\leq
	~&
	\int_{\nabla\Gamma(K_v)}\frac{1}{|z|^N+\eta^N}\ dz\nonumber 
	\\
	\leq
	~&
	\sum_{i=1}^m\int_{\nabla\Gamma(B_{\eps/4}(x_i))}\frac{1}{|z|^N+\eta^N}\ dz,
\end{align}
where we use the covering in \eqref{covering}.

In order to make a rough estimate, let us select $\zeta_i$ in the closure of $\nabla\Gamma(B_{\eps/4}(x_i))$ minimizing $|z|^N$. Thus we obtain
\begin{equation*}
	\int_{\nabla\Gamma(B_{\eps/4}(x_i))}\frac{1}{|z|^N+\eta^N}\ dz
	\leq
	\frac{|\nabla\Gamma(B_{\eps/4}(x_i))|}{|\zeta_i|^N+\eta^N}.
\end{equation*}
We recall \Cref{lemma-Gamma-2} to estimate the right hand side and then we use it in \eqref{eq-05} and further in \eqref{eq-04} to get
\begin{equation*}
\begin{split}
	\log\pare{\frac{M^N+\eta^N}{\eta^N}}
	\leq
	~&
	C\sum_{i=1}^m\frac{
	|\zeta_i|^N+\Big(\displaystyle\sup_{B_{\eps/4}(x_i)}(-\L_{\Omega,\eps}^+v)^+\Big)^N}{|\zeta_i|^N+\eta^N}|B_{\eps/4}|
	\\
	\leq
	~&
	C\bigg(\sum_{i=1}^m|B_{\eps/4}|+\frac{1}{\eta^N}\sum_{i=1}^m\Big(\sup_{B_{\eps/4}(x_i)}(-\L^+_{\Omega,\eps}v)^+\Big)^N|B_{\eps/4}|\bigg).
\end{split}
\end{equation*}
Suppose now that the following quantity is strictly positive
\begin{equation}\label{mu}
	\eta
	:=
	\bigg(\sum_{i=1}^m\Big(\sup_{B_{\eps/4}(x_i)}(-\L^+_{\Omega,\eps}v)^+\Big)^N|B_{\eps/4}|\bigg)^{1/N},
\end{equation}
where $x_1,\ldots,x_m\in K_v$ are such that \eqref{covering} and \eqref{measure-balls} hold. Then by using this choice in the previous estimate we obtain 
\begin{align*}
	\log\pare{\frac{M^N+\eta^N}{\eta^N}}
	\leq 	C\big((\diam\Omega+\eps_0)^N+1\big),
\end{align*}
and we can finish the proof as explained before \eqref{eq:M-bound}. Otherwise, if the quantity in \eqref{mu} equals zero, then we have
\begin{align*}
\log\pare{\frac{M^N+\eta^N}{\eta^N}}
	\leq C \sum_{i=1}^m|B_{\eps/4}|\leq C\big((\diam\Omega+\eps_0)^N.
\end{align*}
Thus we get \eqref{eq:M-bound} for any $\eta>0$ and letting $\eta\to 0$ we again obtain the result.
\end{proof}

\section{A barrier function for $\L_{\Om,\eps}^-$}
\label{sec:barrier}

In what follows we construct an auxiliary function called barrier function for example by Cabr\'e and Caffarelli, which will be used in the proof of \Cref{first}.
The shape of this barrier function ensures that when added to a subsolution $v$ the assumptions of the $\eps$-ABP estimate (\Cref{eps-ABP-thm-2}) are fulfilled. We will also use the barrier in \Cref{lem:L+barrier}, which is why we perform the computation within a ball of general radius.

In order to construct a proper explicit subsolution for
\begin{align*}
	\L_{\Om,\eps}^-\Psi(x)
	=
	~&
	\alpha\inf_{|z|<\Lambda\eps}\frac{\delta_{\Omega,\tau\eps^2}^-\Psi(x,z)}{2\eps^2}
	+
	\beta\dashint_{B_\eps(x)}\frac{\Psi(y)-\Psi(x)}{\eps^2}\ d\mu(y).
\end{align*}
where
\begin{equation*}
	\delta_{\Omega,\tau\eps^2}^-\Psi(x,z)
	=
	\Psi(x+z)+\inf_{|h|<\tau\eps^2}\Psi(x-z+h)-2\Psi(x),
\end{equation*}
we first need to show the following technical lemmas which will be used later in the main \Cref{barrier-0,barrier}. We are using the same barrier function as in \cite{arroyobp22}, but we need to account for the loss of symmetry due to the density function $\dens$ and the presence of the parameter $h$ in the nonlinear part of the operator.

First, we state an auxilary result taken from \cite[Lemma 3.4]{arroyobp22}, whose proof is based on estimating Taylor's expansion and using concavity properties.

\begin{lemma}
Let $\sigma>0$. If $a,b>0$ and $c\in\R$ are such that
\begin{align*}
	2(\sigma+2)b
	\leq
	a
	\qquad\text{ and }\qquad
	|c|<a+b,
\end{align*}
then
\begin{equation}\label{ineq:abc}
	(a+b+c)^{-\sigma}+(a+b-c)^{-\sigma}-2a^{-\sigma}
	\geq
	2\sigma a^{-\sigma-1}\Big(-b+(\sigma+1)\frac{c^2}{4a}\Big).
\end{equation}
\end{lemma}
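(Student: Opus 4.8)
The plan is to reduce everything to elementary one‑variable estimates for the convex function $f(t)=t^{-\sigma}$ on $(0,\infty)$, using that both $f$ and $f''$ are convex there (indeed $f''(t)=\sigma(\sigma+1)t^{-\sigma-2}>0$, and $t\mapsto t^{-\sigma-2}$ is convex on $(0,\infty)$). First one observes that all three arguments are positive: $a>0$ by hypothesis, and $|c|<a+b$ forces $a+b\pm c>0$, so $f$ may be evaluated and differentiated freely at each of them.

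The first step is the splitting
\begin{equation*}
	f(a+b+c)+f(a+b-c)-2f(a)
	=
	\big[f(x+c)+f(x-c)-2f(x)\big]+2\big[f(a+b)-f(a)\big],
	\qquad x:=a+b.
\end{equation*}
For the last bracket, convexity of $f$ gives $f(a+b)-f(a)\geq f'(a)\,b=-\sigma b\,a^{-\sigma-1}$, which already accounts for the term $-2\sigma b\,a^{-\sigma-1}$ in the claimed lower bound. For the first bracket, assuming without loss of generality $c\geq0$ and using Taylor's formula with integral remainder (legitimate since $x-c=a+b-c>0$),
\begin{equation*}
	f(x+c)+f(x-c)-2f(x)
	=
	\int_0^{c}(c-s)\big[f''(x+s)+f''(x-s)\big]\ ds
	\geq
	f''(x)\,c^2
	=
	\sigma(\sigma+1)(a+b)^{-\sigma-2}c^2,
\end{equation*}
where the inequality uses $c-s\geq0$ on $[0,c]$ together with the midpoint inequality $f''(x+s)+f''(x-s)\geq2f''(x)$ coming from convexity of $f''$.

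It then remains to compare $(a+b)^{-\sigma-2}$ with $a^{-\sigma-2}$, and this is exactly where the structural hypothesis $2(\sigma+2)b\leq a$ is used: since $b/a\leq\frac{1}{2(\sigma+2)}$, the elementary bound $1+t\leq e^t$ yields
\begin{equation*}
	\Big(1+\tfrac{b}{a}\Big)^{\sigma+2}
	\leq
	\exp\!\Big((\sigma+2)\cdot\tfrac{1}{2(\sigma+2)}\Big)
	=
	e^{1/2}
	<
	2,
\end{equation*}
hence $(a+b)^{-\sigma-2}\geq\tfrac12 a^{-\sigma-2}$. Combining the two brackets,
\begin{equation*}
	f(a+b+c)+f(a+b-c)-2f(a)
	\geq
	\frac{\sigma(\sigma+1)}{2}\,a^{-\sigma-2}c^2-2\sigma b\,a^{-\sigma-1}
	=
	2\sigma a^{-\sigma-1}\Big(-b+(\sigma+1)\frac{c^2}{4a}\Big),
\end{equation*}
which is precisely \eqref{ineq:abc}.

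All the computations above are routine. The only points demanding a little care are keeping track of the positivity of the arguments so that the Taylor remainder and the convexity of $f''$ can legitimately be invoked on the interval $[a+b-c,\,a+b+c]$, and the quantitative passage from $(a+b)^{-\sigma-2}$ to $a^{-\sigma-2}$, which is exactly the place where the constraint $2(\sigma+2)b\leq a$ is needed; no genuine obstacle is expected.
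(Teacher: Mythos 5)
Your proof is correct: the splitting at $x=a+b$, the convexity bound $f(a+b)-f(a)\geq f'(a)b$, the integral-remainder identity combined with the midpoint inequality for the convex function $f''$, and the quantitative step $(1+b/a)^{\sigma+2}\leq e^{1/2}<2$ using $2(\sigma+2)b\leq a$ all check out, and the constants recombine exactly into \eqref{ineq:abc}. The paper does not reproduce a proof (it cites \cite[Lemma 3.4]{arroyobp22}, described as ``estimating Taylor's expansion and using concavity properties''), and your argument is precisely in that spirit, with the added benefit of being self-contained.
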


Next we use the previous lemma to obtain a lower estimate for the second differences of the barrier function similarly as in \cite{arroyobp22} taking into account the error caused by the non symmetry. It is worth to mention that this result introduces an extra dependence of $\eps_0$ on the exponent $\sigma$ and a radius $R>0$. This is not problematic since later in \Cref{barrier} the exponent $\sigma$ and the radius $R$ are selected according to the universal constants, see \Cref{remarkeps0}.
We introduce the following shorthand notation for simplicity,
\begin{equation}\label{delta}
	\delta v(x,z;h)=v(x+z)+v(x-z+h)-2v(x).
\end{equation}

\begin{lemma}
For a fixed $\sigma>0$, let $\varphi(x)=(1+|x|^2)^{-\sigma}$ for $x\in\R^N$. Let $\eps\in(0,\eps_0)$ with $\eps_0=\eps_0(R)$ satisfying \eqref{eps0-bound-1}. There exists a universal constant $C_1>0$ such that
\begin{equation}\label{estimate-Psi}
	\frac{\delta\varphi(x,\eps z;\eps^2 h)}{2\eps^2}
	\geq
	\sigma\,\frac{\varphi(x)}{1+|x|^2}\bigg[-C_1(R+1)+(\sigma+1)\frac{|x|^2}{1+|x|^2}\left(\frac{x}{|x|}\cdot z\right)^2\bigg]
\end{equation}
for every $|x|\leq R$, $|z|<\Lambda$ and $|h|<\tau$. Moreover, as an immediate consequence,
\begin{equation}\label{estimate-inf-Psi}
	\inf_{|z|<\Lambda\eps}\frac{\delta_{\Omega,\tau\eps^2}^-\varphi(x,z)}{2\eps^2}
	\geq
	\sigma \varphi(x)[-C_1(R+1)]
\end{equation}
for every $|x|\leq R$.
\end{lemma}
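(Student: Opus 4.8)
The plan is to separate the genuine symmetric second difference of $\varphi$ from the first-order error produced by the shift $\eps^2 h$. I would write
\[
\delta\varphi(x,\eps z;\eps^2 h)
=\bigl(\varphi(x+\eps z)+\varphi(x-\eps z)-2\varphi(x)\bigr)
+\bigl(\varphi(x-\eps z+\eps^2 h)-\varphi(x-\eps z)\bigr)
=:I+II,
\]
estimate $I$ and $II$ separately, and then add. Here the point is that $1+|x\pm\eps z|^2=(1+|x|^2)\pm 2\eps\,x\cdot z+\eps^2|z|^2$, which is exactly what makes $I$ amenable to the algebraic inequality \eqref{ineq:abc}.

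For $I$ I would apply \eqref{ineq:abc} with $a=1+|x|^2$, $b=\eps^2|z|^2$ and $c=2\eps\,x\cdot z$, so that $a+b\pm c=1+|x\pm\eps z|^2$; its hypotheses $2(\sigma+2)b\leq a$ and $|c|<a+b$ hold once $\eps_0$ is small in terms of $\sigma,\Lambda,R$ (part of \eqref{eps0-bound-1}), and the case $z=0$ is trivial since then $I=0$. Using $a^{-\sigma-1}=\varphi(x)/(1+|x|^2)$ and $c^2/(4a)=\eps^2(x\cdot z)^2/(1+|x|^2)$, division by $2\eps^2$ produces
\[
\frac{I}{2\eps^2}\geq\sigma\,\frac{\varphi(x)}{1+|x|^2}\Bigl(-|z|^2+(\sigma+1)\,\frac{(x\cdot z)^2}{1+|x|^2}\Bigr),
\]
with $-|z|^2\geq-\Lambda^2\geq-\Lambda^2(R+1)$ taking care of the negative part.

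For $II$ I would write $II=\int_0^1\nabla\varphi\bigl(x-\eps z+t\eps^2 h\bigr)\cdot(\eps^2 h)\,dt$, so that $|II|\leq\tau\eps^2\sup|\nabla\varphi|$, the supremum over a segment contained in $\{\,|y-x|\leq\Lambda\eps+\tau\eps^2\,\}$. Since $|\nabla\varphi(y)|=2\sigma|y|(1+|y|^2)^{-\sigma-1}$ and $|y|\leq C(R+1)$ there, the heart of the matter is the comparison $(1+|y|^2)^{-\sigma-1}\leq 2(1+|x|^2)^{-\sigma-1}$; I would deduce it from $1+|y|^2\geq(1+|x|^2)-2R(\Lambda\eps+\tau\eps^2)$ once $2R(\Lambda\eps+\tau\eps^2)\leq 1-2^{-1/(\sigma+1)}$, which, since $1-2^{-1/(\sigma+1)}\geq(\log 2)/(2(\sigma+1))$, forces $\eps_0$ to be small in terms of $\sigma$ and $R$ — exactly the extra dependence announced before the statement. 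This yields $|\nabla\varphi(y)|\leq C(R+1)\,\sigma\,\varphi(x)/(1+|x|^2)$, hence $II/(2\eps^2)\geq-C(R+1)\,\sigma\,\varphi(x)/(1+|x|^2)$ with $C$ universal; adding this to the bound for $I$ and absorbing $-|z|^2$ together with the $II$-term into a single $-C_1(R+1)$ (with $C_1$ universal, of the size $\Lambda^2+\tau$) produces \eqref{estimate-Psi}, after rewriting $\tfrac{(x\cdot z)^2}{1+|x|^2}=\tfrac{|x|^2}{1+|x|^2}\bigl(\tfrac{x}{|x|}\cdot z\bigr)^2$ (both sides being $0$ at $x=0$).

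Finally, \eqref{estimate-inf-Psi} should follow at once: for $|z|<\Lambda\eps$ write $z=\eps z'$ with $|z'|<\Lambda$, so that $\delta_{\Omega,\tau\eps^2}^-\varphi(x,z)=\inf_{|h|<\tau}\delta\varphi(x,\eps z';\eps^2 h)$; since the right-hand side of \eqref{estimate-Psi} does not depend on $h$, I would take the infimum over $h$, discard the nonnegative $(\sigma+1)$-term, use $\varphi(x)/(1+|x|^2)\leq\varphi(x)$ and $-C_1(R+1)\leq 0$, and then take the infimum over $z$. I expect the estimate for $II$ to be the main obstacle: forcing the $h$-correction to carry the same decay factor $\varphi(x)/(1+|x|^2)$ as the leading term is the one place where \eqref{ineq:abc} does not apply directly, and it is precisely what costs the dependence of $\eps_0$ on $\sigma$ (and on $R$).
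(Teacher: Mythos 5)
Your argument is essentially correct, but it handles the asymmetry in a genuinely different way from the paper. The paper does not split off the $h$-correction at all: it bounds $1+|x\pm\eps z+\eps^2h|^2\leq a+b\pm c$ with $a=1+|x|^2$, $b=C_1(R+1)\eps^2$ (the inflated $b$ absorbing $\eps^2|z|^2$, $\eps^4|h|^2$, $2\eps^2 x\cdot h$ and $2\eps^3 z\cdot h$ all at once) and $c=2\eps\,x\cdot z$, then uses the monotonicity of $t\mapsto t^{-\sigma}$ to get $\delta\varphi(x,\eps z;\eps^2h)\geq(a+b+c)^{-\sigma}+(a+b-c)^{-\sigma}-2a^{-\sigma}$ and applies \eqref{ineq:abc} exactly once. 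The $-b$ term of \eqref{ineq:abc} then directly produces $-C_1(R+1)$, and the only smallness needed is $2(\sigma+2)b\leq a$, i.e.\ \eqref{eps0-bound-1}. You instead apply \eqref{ineq:abc} only to the symmetric part $I$ (with the exact $b=\eps^2|z|^2$, which is fine and even gives a sharper negative term $-|z|^2$) and treat the $h$-shift $II$ by a gradient bound. Both routes are sound, and your decomposition is arguably more transparent about where the nonsymmetry enters; the paper's is slicker in that no separate estimate for $II$ is needed.

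The one point where your version falls short of the statement as written: your bound for $II$ needs $(1+|y|^2)^{-\sigma-1}\leq 2(1+|x|^2)^{-\sigma-1}$ along the segment, which you correctly identify as requiring $2R(\Lambda\eps_0+\tau\eps_0^2)\lesssim(\sigma+1)^{-1}$. This is \emph{not} implied by \eqref{eps0-bound-1}, which only gives $\eps_0\lesssim(\sigma R)^{-1/2}$, so $\sigma\eps_0$ can still be large; without the extra restriction your constant would degenerate like $(1-C\eps)^{-\sigma-1}$. So your proof establishes the lemma only under an additional smallness condition on $\eps_0$ (depending on $\sigma$ and $R$) beyond \eqref{eps0-bound-1}. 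Since $\sigma$, $r$, $R$ are eventually fixed universally and $\eps_0$ is anyway taken to satisfy a finite list of such conditions (cf.\ \Cref{remarkeps0}), this does not damage any downstream use, but the lemma's hypothesis would have to be amended to match your proof; the paper's route avoids the issue entirely. Your deduction of \eqref{estimate-inf-Psi} from \eqref{estimate-Psi} (infimum over $h$ and $z$, discard the nonnegative term, use $\frac{1}{1+|x|^2}\leq1$ against the negative constant) matches the paper.
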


\begin{proof}
For any $\eps\in(0,\eps_0)$ we obtain the following estimate by direct evaluation,
\begin{align*}
	1+|x\pm\eps z+\eps^2 h|^2
	=
	~&
	1+|x|^2+\eps^2|z|^2+\eps^4|h|^2\pm 2\eps\,x\cdot z+2\eps^2 x\cdot h\pm2\eps^3z\cdot h
	\\
	\leq
	~&
	1+|x|^2+((|z|+|h|)^2+2|x||h|)\eps^2\pm2\eps\,x\cdot z
	\\
	\leq
	~&
	1+|x|^2+((\Lambda+\tau)^2+2R\tau)\eps^2\pm2\eps\,x\cdot z
	\\
	=
	~&
	1+|x|^2+C_1(R+1)\eps^2\pm2\eps\,x\cdot z
\end{align*}
for every $|x|\leq R$, $|z|<\Lambda$ and $|h|<\tau$, where $C_1>0$. So, since $\sigma>0$, using the previous estimate for $1+|x+\eps z|^2$ (with $h=0$) and $1+|x-\eps z+\eps^2 h|^2$ we obtain
\begin{align*}
	\delta\varphi(x,\eps z;\eps^2 h)
	=
	~&
	\varphi(x+\eps z)+\varphi(x-\eps z+\eps^2h)-2\varphi(x)
	\\
	\geq
	~&
	(1+|x|^2+C_1(R+1)\eps^2+2\eps\,x\cdot z)^{-\sigma}
	\\
	~&
	+(1+|x|^2+C_1(R+1)\eps^2-2\eps\,x\cdot z)^{-\sigma}
	\\
	~&
	-2(1+|x|^2)^{-\sigma}.
\end{align*}

Next we recall inequality \eqref{ineq:abc} with
\begin{align*}
	a
	=
	1+|x|^2,
	\qquad
	b
	=
	C_1(R+1)\eps^2
	\qquad\text{ and }\qquad
	c
	=
	2\eps x\cdot z.
\end{align*}
Indeed, choosing
\begin{align}\label{eps0-bound-1}
	\eps_0
	\leq
	\frac{1}{\sqrt{2(\sigma+2)C_1(R+1)}\,},
\end{align}
it holds that $2(\sigma+2)b\leq a$ and $|c|<a+b$ for any $\eps\in(0,\eps_0)$, so the inequality \eqref{ineq:abc} yields \eqref{estimate-Psi}. Furthermore, bounding the second term in brackets in \eqref{estimate-Psi} by $0$ we immediately get that \eqref{estimate-inf-Psi} holds.
\end{proof}

In the next lemma is where the lack of symmetry caused by the density function $\dens$ has more impact causing an extra negative term depending on $\eps$ and $\sigma$.

\begin{lemma}
For a fixed $\sigma>0$, let $\varphi(x)=(1+|x|^2)^{-\sigma}$ for $x\in\R^N$. Let $\eps\in(0,\eps_0)$ with $\eps_0=\eps_0(R)$ satisfying \eqref{eps0-bound-1}. There exist universal constants $C_2,C_3>0$ such that
\begin{equation}\label{estimate-integral-Psi}
	\dashint_{B_\eps(x)}\frac{\varphi(y)-\varphi(x)}{\eps^2}\ d\mu(y)
	\geq
	\sigma\varphi(x)\bigg[C_2\frac{\sigma+1}{R^2+1}\,\frac{|x|^2}{1+|x|^2}-C_3(R+1)(1-\eps)^{-\sigma-1}\bigg]
\end{equation}
for every $|x|\leq R$.
\end{lemma}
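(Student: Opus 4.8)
The plan is to compare the $\mu$-average with its symmetrization and to treat the non-constant density $\dens$ as a lower-order perturbation. Write
\[
	e(x):=\int_{B_\eps(x)}\bigl(\varphi(y)-\varphi(x)\bigr)\,\dens(y)\,dy,
\]
so that the left-hand side of \eqref{estimate-integral-Psi} equals $e(x)/\bigl(\eps^2\mu(B_\eps(x))\bigr)$, and recall that $\dens_0|B_1|\eps^N\leq\mu(B_\eps(x))\leq\dens_1|B_1|\eps^N$ by \eqref{mu-bounds} (the ball being contained in $\Omega$ in the relevant range of $x$). Since the map $y\mapsto 2x-y$ preserves $B_\eps(x)$ and the Lebesgue measure, a change of variables together with the splitting $\dens(2x-y)=\dens(y)+(\dens(2x-y)-\dens(y))$ gives
\[
	e(x)=\underbrace{\tfrac{1}{2}\int_{B_\eps(x)}\bigl(\varphi(y)+\varphi(2x-y)-2\varphi(x)\bigr)\dens(y)\,dy}_{=:\,I_{\mathrm{sym}}}+\underbrace{\tfrac{1}{2}\int_{B_\eps(x)}\bigl(\varphi(2x-y)-\varphi(x)\bigr)\bigl(\dens(2x-y)-\dens(y)\bigr)dy}_{=:\,I_{\mathrm{corr}}}.
\]

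For $I_{\mathrm{sym}}$ the key point is that, taking $z=(y-x)/\eps$ (so $|z|<1\leq\Lambda$) and $h=0$, the integrand $\varphi(y)+\varphi(2x-y)-2\varphi(x)$ is exactly $\delta\varphi(x,\eps z;\eps^2 h)$ in the notation \eqref{delta}, hence the pointwise bound \eqref{estimate-Psi} applies and yields $\varphi(y)+\varphi(2x-y)-2\varphi(x)\geq P(y)-Q$ with $P(y):=\frac{2\sigma(\sigma+1)\varphi(x)}{(1+|x|^2)^2}\bigl(x\cdot(y-x)\bigr)^2\geq 0$ and $Q:=\frac{2C_1(R+1)\sigma\eps^2\varphi(x)}{1+|x|^2}\geq 0$. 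Multiplying by $\dens(y)$, using $\dens\geq\dens_0$ on the nonnegative term $P$ and $\dens\leq\dens_1$ on $Q$, and computing $\int_{B_\eps(x)}\bigl(x\cdot(y-x)\bigr)^2\,dy=\frac{|B_1|}{N+2}|x|^2\eps^{N+2}$, one obtains $I_{\mathrm{sym}}\geq\frac{\dens_0|B_1|}{N+2}\sigma(\sigma+1)\varphi(x)\frac{|x|^2}{(1+|x|^2)^2}\eps^{N+2}-\dens_1C_1|B_1|(R+1)\sigma\frac{\varphi(x)}{1+|x|^2}\eps^{N+2}$.

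The main obstacle is the term $I_{\mathrm{corr}}$, where the loss of symmetry of $\dens$ genuinely enters. I would bound $|\dens(2x-y)-\dens(y)|\leq 2\|\nabla\dens\|_\infty\eps$ and, for $y\in B_\eps(x)$, $|\varphi(2x-y)-\varphi(x)|=|\nabla\varphi(\eta)\cdot(x-y)|\leq\sigma(1-\eps)^{-\sigma}\varphi(x)\eps$: indeed any $\eta$ on the segment from $x$ to $2x-y$ has $|\eta-x|<\eps$, so $1+|\eta|^2\geq 1+|x|^2-2\eps|x|\geq(1+|x|^2)(1-\eps)$ since $2|x|\leq 1+|x|^2$, whence $\varphi(\eta)\leq(1-\eps)^{-\sigma}\varphi(x)$, while $|\nabla\varphi(w)|=\frac{2\sigma|w|}{1+|w|^2}\varphi(w)\leq\sigma\varphi(w)$ for every $w$. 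This is precisely the step producing the factor $(1-\eps)^{-\sigma}\leq(1-\eps)^{-\sigma-1}$ and the mild extra $\eps$-dependence appearing in \eqref{estimate-integral-Psi}. It follows that $I_{\mathrm{corr}}\geq-\sigma\|\nabla\dens\|_\infty(1-\eps)^{-\sigma}|B_1|\varphi(x)\eps^{N+2}$.

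Finally I would assemble the estimates: $e(x)=I_{\mathrm{sym}}+I_{\mathrm{corr}}\geq L_+-L_-$, where $L_+\geq 0$ is the good term above and $L_-\geq 0$ collects the two error terms. Dividing by $\eps^2\mu(B_\eps(x))$ and using that for $L_\pm\geq 0$ and $D\in[\dens_0|B_1|\eps^{N+2},\dens_1|B_1|\eps^{N+2}]$ one has $\frac{L_+-L_-}{D}\geq\frac{L_+}{\dens_1|B_1|\eps^{N+2}}-\frac{L_-}{\dens_0|B_1|\eps^{N+2}}$, together with the elementary bounds $\frac{|x|^2}{(1+|x|^2)^2}\geq\frac{1}{R^2+1}\frac{|x|^2}{1+|x|^2}$, $\frac{1}{1+|x|^2}\leq 1$, $R+1\geq 1$ and $1\leq(1-\eps)^{-\sigma-1}$, gives \eqref{estimate-integral-Psi} with the universal constants $C_2=\frac{\dens_0}{\dens_1(N+2)}$ and $C_3=\frac{\dens_1C_1+\|\nabla\dens\|_\infty}{\dens_0}$. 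No new smallness of $\eps_0$ is needed beyond \eqref{eps0-bound-1} (which is exactly what makes \eqref{estimate-Psi} available) and $\eps_0<\tfrac{1}{2}$ (which keeps $1-\eps>0$); the case $x=0$ is covered verbatim, all $|x|^2$-terms simply vanishing.
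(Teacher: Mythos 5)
Your proof is correct and follows essentially the same strategy as the paper's: both isolate the symmetric second-difference part of the integral (handled by \eqref{estimate-Psi} with $h=0$, giving the good term $\frac{\sigma+1}{N+2}\frac{|x|^2}{(1+|x|^2)^2}$ and the error $C_1(R+1)$) and treat the variation of $\dens$ as a Lipschitz perturbation estimated via the mean value theorem, producing the $(1-\eps)^{-\sigma-1}$ factor and the same constants up to inessential numerical factors. The only cosmetic difference is that the paper freezes the density at the center, writing $\dens(y)=\dens(x)+(\dens(y)-\dens(x))$, whereas you symmetrize the integral under $y\mapsto 2x-y$ before splitting off $\dens(2x-y)-\dens(y)$; the two decompositions are equivalent in substance.
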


\begin{proof}
Since $\dens$ is the density function of the measure $\mu$, adding and subsctracting we get
\begin{equation}\label{estimate-integral-0-Psi}
\begin{split}
	\dashint_{B_\eps(x)}\frac{\varphi(y)-\varphi(x)}{\eps^2}\ d\mu(y)
	=
	~&
	\frac{|B_\eps|}{\mu(B_\eps(x))}\dashint_{B_\eps(x)}\frac{\varphi(y)-\varphi(x)}{\eps^2}\,\dens(y)\ dy
	\\
	=
	~&
	\frac{|B_\eps|}{\mu(B_\eps(x))}\bigg[\dens(x)\dashint_{B_\eps(x)}\frac{\varphi(y)-\varphi(x)}{\eps^2}\ dy
	\\
	~&
	+\dashint_{B_\eps(x)}\frac{\varphi(y)-\varphi(x)}{\eps}\cdot\frac{\dens(y)-\dens(x)}{\eps}\ dy\bigg].
\end{split}
\end{equation}
We study the two integrals in the right-hand side separately. For the first integral we use the symmetry of the ball to write the integrand using the second differences of $\varphi$, that is
\begin{multline}\label{estimate-integral-1-Psi}
	\dashint_{B_\eps(x)}\frac{\varphi(y)-\varphi(x)}{\eps^2}\ dy
	\\
\begin{split}
	=
	~&
	\dashint_{B_1}\frac{\delta\varphi(x,\eps y;0)}{2\eps^2}\ dy
	\\
	\geq
	~&
	\sigma\,\frac{\varphi(x)}{1+|x|^2}\bigg[-C_1(R+1)+(\sigma+1)\frac{|x|^2}{1+|x|^2}\dashint_{B_1}\Big(\frac{x}{|x|}\cdot y\Big)^2\ dy\bigg]
	\\
	\geq
	~&
	\sigma\,\frac{\varphi(x)}{1+|x|^2}\bigg[-C_1 (R+1)+\frac{\sigma+1}{N+2}\cdot\frac{|x|^2}{1+|x|^2}\bigg],
\end{split}
\end{multline}
where we have used \eqref{estimate-Psi} with $h=0$ in order to recover the second differences of $\varphi$ centered at $x$. Also, we have used that the averaged integral of $y_1^2$ over $B_1$ is equal to $1/(N+2)$. 

For the other integral, we estimate
\begin{align}\label{ineq:dens-gradient}
	\dashint_{B_\eps(x)}\frac{\varphi(y)-\varphi(x)}{\eps}\frac{\dens(y)-\dens(x)}{\eps}\ dy
	\geq
	~&
	-\|\nabla\dens\|_\infty\dashint_{B_\eps(x)}\left|\frac{\varphi(y)-\varphi(x)}{\eps}\right|\ dy\nonumber
	\\
	=
	~&
	-\|\nabla\dens\|_\infty\dashint_{B_1}\left|\frac{\varphi(x+\eps y)-\varphi(x)}{\eps}\right|\ dy.
\end{align}

By the Mean Value Theorem, for $|x|\leq R$ and $|y|<1$, there exists $\lambda\in[0,1]$ such that
\begin{equation*}
	\Big|\frac{\varphi(x+\eps y)-\varphi(x)}{|x+\eps y|^2-|x|^2}\Big|
	=
	\sigma(1+|x+\lambda\eps y|^2)^{-\sigma-1}
	\leq
	\sigma(1+|x|^2)^{-\sigma-1}(1-\eps)^{-\sigma-1},
\end{equation*}
where in the inequality we have estimated
\begin{align*}
	1+|x+\lambda\eps y|^2
	\geq
	~&
	1+|x|^2+|\lambda\eps y|^2-2|x||\lambda\eps y|
	\\
	\geq
	~&
	1+|x|^2-2|x|\eps
	\\
	\geq
	~&
	1+|x|^2-\eps(1+|x|^2)
	\\
	=
	~&
	(1+|x|^2)(1-\eps)
\end{align*}
for every $|y|<1$ and any $\lambda\in[0,1]$, which is bounded from below by $1-\eps_0>0$ for any $\eps\in(0,\eps_0)$. On the other hand, taking into account that $\big||x+\eps y|^2-|x|^2\big|=|2\eps x\cdot y+\eps^2|y|^2|\leq(2|x|+1)\eps$ for every $|y|<1$ and $\eps\in(0,\eps_0)$ we get
\begin{align*}
	\Big|\frac{\varphi(x+\eps y)-\varphi(x)}{\eps}\Big|
	\leq
	~&
	\sigma(1+|x|^2)^{-\sigma-1}(1-\eps)^{-\sigma-1}\Big|\frac{|x+\eps y|^2-|x|^2}{\eps}\Big|
	\\
	\leq
	~&
	\sigma(1+|x|^2)^{-\sigma-1}(1-\eps)^{-\sigma-1}(2|x|+1)
	\\
	=
	~&
	\sigma\varphi(x)(1-\eps)^{-\sigma-1}\,\frac{2|x|+1}{1+|x|^2}
	\\
	\leq
	~&
	2\sigma\varphi(x)(1-\eps)^{-\sigma-1}.
\end{align*}
Replacing this estimate in \eqref{ineq:dens-gradient},
\begin{align}\label{estimate-integral-2-Psi}
	\dashint_{B_\eps(x)}\frac{\varphi(y)-\varphi(x)}{\eps}\frac{\dens(y)-\dens(x)}{\eps}\ dy
	\geq
	-2\sigma\varphi(x)\|\nabla\dens\|_\infty(1-\eps)^{-\sigma-1}.
\end{align}

Combining \eqref{estimate-integral-1-Psi} and \eqref{estimate-integral-2-Psi} in \eqref{estimate-integral-0-Psi} and using that $\dens_0\leq\dens(x)\leq\dens_1$ and $\dens_0|B_\eps|\leq\mu(B_\eps(x))\leq\dens_1|B_\eps|$, we get
\begin{multline*}
	\dashint_{B_\eps(x)}\frac{\varphi(y)-\varphi(x)}{\eps^2}\ d\mu(y)
	\\
\begin{split}
	~&\geq
	\sigma\varphi(x)\frac{|B_\eps|}{\mu(B_\eps(x))}\bigg[
	\frac{\dens(x)}{1+|x|^2}\cdot\frac{\sigma+1}{N+2}\cdot\frac{|x|^2}{1+|x|^2}
	\\
	~&\hspace{85pt}
	-\frac{\dens(x)}{1+|x|^2}C_1(R+1)-2\|\nabla\dens\|_\infty(1-\eps)^{-\sigma-1}
	\bigg]
	\\
	~&
	\geq
	\sigma \varphi(x)\bigg[
	\frac{\dens_0}{\dens_1}\cdot\frac{1}{1+|x|^2}\cdot\frac{\sigma+1}{N+2}\cdot\frac{|x|^2}{1+|x|^2}
	\\
	~&\hspace{80pt}
	-\frac{1}{\dens_0}\bigg(\frac{C_1\dens_1(R+1)}{1+|x|^2}+2\|\nabla\dens\|_\infty(1-\eps)^{-\sigma-1}\bigg)
	\bigg]
\end{split}
\end{multline*}
for every $|x|\leq R$ and any $\eps\in(0,\eps_0)$. 
Using that $|x|\leq R$ by assumption, we can estimate
\begin{equation*}
	\frac{1}{1+|x|^2}
	\geq
	\frac{1}{R^2+1}
\end{equation*}
in the positive term in brackets, while we can bound the negative terms in the following way,
\begin{multline*}
	\frac{C_1\dens_1(R+1)}{1+|x|^2}+2\|\nabla\dens\|_\infty(1-\eps)^{-\sigma-1}
	\\
\begin{split}
	=
	~&
	\bigg(\frac{C_1\dens_1}{1+|x|^2}(1-\eps)^{\sigma+1}+\frac{2\|\nabla\dens\|_\infty}{R+1}\bigg)(R+1)(1-\eps)^{-\sigma-1}
	\\
	\leq
	~&
	(C_1\dens_1+2\|\nabla\dens\|_\infty)(R+1)(1-\eps)^{-\sigma-1}
\end{split}
\end{multline*}
for $\eps\in(0,\eps_0)$.
Thus
\begin{align*}
	\dashint_{B_\eps(x)}\frac{\varphi(y)-\varphi(x)}{\eps^2}\ d\mu(y)
	\geq
	~&
	\sigma \varphi(x)\bigg[
	\frac{1}{R^2+1}\cdot\frac{\dens_0}{\dens_1}\cdot\frac{\sigma+1}{N+2}\cdot\frac{|x|^2}{1+|x|^2}
	\\
	~&\hspace{30pt}
	-\frac{1}{\dens_0}(C_1\dens_1+2\|\nabla\dens\|_\infty)(R+1)(1-\eps)^{-\sigma-1}
	\bigg]
\end{align*}
for every $|x|\leq R$ and any $\eps\in(0,\eps_0)$. Hence \eqref{estimate-integral-Psi} follows after letting $C_2=\frac{\dens_0}{(N+2)\dens_1}$ and $C_3=\frac{C_1\dens_1+2\|\nabla\dens\|_\infty}{\dens_0}$.

\end{proof}

In the next result we combine the previous lemmas to derive a lower bound for the minimal operator applied to $\varphi$ far away from zero.

\begin{lemma}\label{barrier-0}
For a fixed $\sigma>0$, let $\varphi(x)=(1+|x|^2)^{-\sigma}$ for $x\in\R^N$. Let $\eps\in(0,\eps_0)$ with $\eps_0=\eps_0(R)$ satisfying \eqref{eps0-bound-1}. There exist universal constants $C_1,C_2,C_3>0$ such that
\begin{equation}\label{estimate-L-Psi}
	\L_{\Om,\eps}^-\varphi(x)
	\geq
	\sigma\varphi(x)\bigg[
	\frac{\beta C_2}{R^2+1}\,\frac{|x|^2}{1+|x|^2}(\sigma+1)
	-(C_1+C_3)(R+1)(1-\eps)^{-\sigma-1}
	\bigg]
\end{equation}
for every $|x|\leq R$. Furthermore, if $0<r<R$, there exists $\sigma_0=\sigma_0(r,R)$ such that for every $\sigma>\sigma_0$ it holds that
\begin{equation}\label{bound-L-Psi}
	\L_{\Omega,\eps}^-\varphi(x)
	\geq
	\sigma\varphi(x)
\end{equation}
for every $r\leq|x|\leq R$ and $\eps\in(0,\eps_0)$, for certain $\eps_0=\eps_0(r,R,\sigma)>0$.
\end{lemma}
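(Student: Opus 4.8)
The plan is to obtain \eqref{estimate-L-Psi} by inserting the two lower bounds \eqref{estimate-inf-Psi} and \eqref{estimate-integral-Psi} into the definition of $\L_{\Om,\eps}^-$, and then to extract \eqref{bound-L-Psi} from \eqref{estimate-L-Psi} by a two-step choice of parameters: first $\sigma$ large, depending only on $r$ and $R$, and then $\eps_0$ small, depending on $r$, $R$ and $\sigma$.

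For \eqref{estimate-L-Psi}: I would write $\L_{\Om,\eps}^-\varphi(x)=\alpha\inf_{|z|<\Lambda\eps}\frac{\delta_{\Omega,\tau\eps^2}^-\varphi(x,z)}{2\eps^2}+\beta\dashint_{B_\eps(x)}\frac{\varphi(y)-\varphi(x)}{\eps^2}\,d\mu(y)$, with $\alpha=1-\beta\in[0,1)$ and $\varphi(x)>0$, and substitute \eqref{estimate-inf-Psi} into the $\alpha$-term and \eqref{estimate-integral-Psi} into the $\beta$-term. Since $\sigma\varphi(x)>0$ and $0\le\alpha\le1$, and the $\alpha$-bound $-\sigma\varphi(x)C_1(R+1)$ is nonpositive, one has $\alpha\cdot(-\sigma\varphi(x)C_1(R+1))\ge-\sigma\varphi(x)C_1(R+1)$; likewise $-\beta\sigma\varphi(x)C_3(R+1)(1-\eps)^{-\sigma-1}\ge-\sigma\varphi(x)C_3(R+1)(1-\eps)^{-\sigma-1}$ because $\beta\le1$, whereas the favourable contribution $\beta\sigma\varphi(x)C_2\frac{\sigma+1}{R^2+1}\frac{|x|^2}{1+|x|^2}$ keeps its $\beta$ factor. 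Finally $0<\eps<1$ forces $(1-\eps)^{-\sigma-1}\ge1$, hence $C_1(R+1)\le C_1(R+1)(1-\eps)^{-\sigma-1}$, and collecting the two negative terms produces exactly \eqref{estimate-L-Psi}. This step is pure bookkeeping, the only care being the direction of the inequalities when multiplying by $\alpha,\beta\in[0,1]$.

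For \eqref{bound-L-Psi}, fix $0<r<R$. On the annulus $r\le|x|\le R$ the monotonicity of $s\mapsto\frac{s}{1+s}$ gives $\frac{|x|^2}{1+|x|^2}\ge\frac{r^2}{1+r^2}$, so by \eqref{estimate-L-Psi} it suffices to ensure $A(\sigma+1)-(C_1+C_3)(R+1)(1-\eps)^{-\sigma-1}\ge1$, where $A=A(r,R):=\beta C_2\frac{r^2}{(R^2+1)(1+r^2)}>0$ depends only on $r$, $R$ and universal constants. I would first choose $\sigma_0=\sigma_0(r,R)$ so large that $A(\sigma+1)\ge1+2(C_1+C_3)(R+1)$ for every $\sigma>\sigma_0$ (any $\sigma_0\ge\frac{1+2(C_1+C_3)(R+1)}{A}-1$ works, using that the good term is linear in $\sigma$). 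With such a $\sigma$ now frozen, pick $\eps_0=\eps_0(r,R,\sigma)$ small enough to satisfy \eqref{eps0-bound-1} and also $\eps_0\le1-2^{-1/(\sigma+1)}$, so that $(1-\eps)^{-\sigma-1}\le(1-\eps_0)^{-\sigma-1}\le2$ for every $\eps\in(0,\eps_0)$. Then $A(\sigma+1)-(C_1+C_3)(R+1)(1-\eps)^{-\sigma-1}\ge A(\sigma+1)-2(C_1+C_3)(R+1)\ge1$, which together with \eqref{estimate-L-Psi} yields $\L_{\Omega,\eps}^-\varphi(x)\ge\sigma\varphi(x)$ on $r\le|x|\le R$.

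The main point to watch — more an ordering subtlety than a genuine obstacle — is the sequence of quantifiers: $\sigma$ must be selected before $\eps_0$ and independently of it, which is legitimate precisely because the favourable term in \eqref{estimate-L-Psi} grows linearly in $\sigma$ and dominates any fixed multiple of the error, while the error term, once $\sigma$ is fixed, loses at most the factor $2$ upon taking $\eps$ small. This is consistent with the earlier constraint \eqref{eps0-bound-1}, which is itself of the form "$\eps_0$ small depending on $\sigma$ and $R$", so no circularity arises.
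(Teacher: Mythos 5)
Your proposal is correct and follows essentially the same route as the paper: the first estimate is obtained by inserting \eqref{estimate-inf-Psi} and \eqref{estimate-integral-Psi} into the definition of $\L_{\Om,\eps}^-$ and using $\alpha,\beta\leq1$ together with $(1-\eps)^{-\sigma-1}\geq1$, and the second by bounding $\frac{|x|^2}{1+|x|^2}\geq\frac{r^2}{1+r^2}$ and then choosing $\sigma$ large (depending only on $r,R$) before shrinking $\eps_0$. Your explicit two-step choice (forcing $(1-\eps)^{-\sigma-1}\leq2$) is a clean, equivalent variant of the paper's parameter selection.
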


\begin{proof}
The inequality \eqref{estimate-L-Psi} is an immediate consequence of \eqref{estimate-inf-Psi} and \eqref{estimate-integral-Psi}. Indeed, replacing in the definition of $\L_{\Om,\eps}^-\Psi(x)$ we have
\begin{align*}
	\L_{\Om,\eps}^-\varphi(x)
	\geq
	~&
	\sigma\varphi(x)\bigg[
	-\alpha C_1(R+1)
	\\
	~&\hspace{30pt}
	+\beta C_2\frac{\sigma+1}{R^2+1}\,\frac{|x|^2}{1+|x|^2}
	-\beta C_3(R+1)(1-\eps)^{-\sigma-1}
	\bigg]
\end{align*}
for every $|x|\leq R$, so \eqref{estimate-L-Psi} follows after using that $\beta=1-\alpha\in(0,1]$ and $(1-\eps)^{-\sigma-1}\geq 1$.

To show \eqref{bound-L-Psi}, we use the positive term inside the brackets in \eqref{estimate-L-Psi} to compensate the negativity of the other term. 
To do that, for clarity we state the following elementary fact. If $a,b>0$ and we choose any $\sigma>\frac{1+b}{a}$, then it is possible to select $\eps_0=\eps_0(a,b,\sigma)$ satisfying
\begin{equation*}
	0
	<
	\eps_0
	<
	1-\bigg(\frac{1+b}{a\sigma}\bigg)^{1/\sigma},
\end{equation*}
and so it holds that
\begin{equation*}
	a\sigma-b(1-\eps)^{-\sigma}
	>
	(1-\eps)^{-\sigma}
	>
	1
\end{equation*}
for every $\eps\in(0,\eps_0)$. Observe that $\frac{|x|^2}{1+|x|^2}\geq \frac{r^2}{1+r^2}$. Using this with
\begin{equation*}
	a
	=
	\frac{\beta C_2}{R^2+1}\,\frac{r^2}{1+r^2}
	\qquad
	\text{and}
	\qquad
	b
	=
	(C_1+C_3)(R+1),
\end{equation*}
we can fix
\begin{equation}\label{sigma-bound-r-R}
	\sigma_0
	=
	\sigma_0(r,R)
	=
	\frac{1+b}{a},
\end{equation}
such that for any $\sigma>\sigma_0$ we can then select $\eps_0=\eps_0(r,R,\sigma)$ satisfying
\begin{equation}\label{eps-bound-r-R}
	0
	<
	\eps_0
	<
	1-\bigg(\frac{1+b}{a\sigma}\bigg)^{1/\sigma},
\end{equation}
and hence \eqref{bound-L-Psi} follows for $r\leq|x|\leq R$ and $\eps\in(0,\eps_0)$.
\end{proof}

Finally we state the main result of this section.

\begin{lemma}\label{barrier}
There exist smooth radial functions $\Psi,\psi:\R^N\to\R$ (depending only on universal constants) such that
\begin{equation*}
	\begin{cases}
	\L_{\Om,\eps}^-\Psi+\psi\geq 0 & \text{ in } \R^N,
	\\
	\Psi\geq 2 & \text{ in } Q_3,
	\\
	\Psi\leq 0 & \text{ in } \R^N\setminus B_{\frac{5}{2}\sqrt{N}},
	\end{cases}
	\qquad\text{ and }\qquad
	\begin{cases}
	\psi\leq\psi(0) & \text{ in } \R^N,
	\\
	\psi\leq 0 & \text{ in } \R^N\setminus B_{1/4},
	\end{cases}
\end{equation*}
for every $\eps\in(0,\eps_0)$, where $Q_3$ denotes the cube in $\R^N$ of side-length $3$ centered at the origin and sides parallel to the coordinate axis.
\end{lemma}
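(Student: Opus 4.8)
The plan is to build $\Psi$ from the radial profile $\varphi(x)=(1+|x|^2)^{-\sigma}$ of the preceding lemmas by an affine normalization followed by a truncation, and to take $\psi$ to be a fixed smooth radial bump concentrated near the origin that absorbs the bounded defect of $\L_{\Om,\eps}^-\Psi$ there. First I would fix the geometry by setting $R=\tfrac52\sqrt N$ and $r=\tfrac18$ (any universal $r\in(0,\tfrac14)$ works). Applying \Cref{barrier-0} to this pair gives a universal exponent $\sigma>\sigma_0(r,R)$ and a threshold $\eps_0=\eps_0(r,R,\sigma)$ with $\L_{\Om,\eps}^-\varphi(x)\geq\sigma\varphi(x)>0$ for $r\leq|x|\leq R$, while \eqref{estimate-L-Psi} applied with radius $r$ in place of $R$ yields a universal constant $C_\sigma>0$ with $\L_{\Om,\eps}^-\varphi(x)\geq-C_\sigma$ for $|x|\leq r$. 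Writing $c_R=(1+R^2)^{-\sigma}$ for the value of $\varphi$ on $\partial B_R$, I would then set $\Phi(x)=A(\varphi(x)-c_R)$ with $A=2/(\min_{\overline{Q_3}}\varphi-c_R)>0$; this is a valid positive universal constant because $Q_3\Subset B_R$ and $\varphi$ is radially decreasing, and it makes $\Phi\geq2$ on $Q_3$. By construction $\Phi>0$ in $B_R$, $\Phi=0$ on $\partial B_R$, and $\Phi\leq0$ outside $B_R$. The barrier is $\Psi:=\Phi^+=\max\{\Phi,0\}$; this is Lipschitz, and a genuinely smooth $\Psi$ with the same three properties is obtained by an elementary regularization near $\partial B_R$ (where $\Phi^+$ vanishes), exactly as in \cite{arroyobp22}, which does not affect any of the estimates below. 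Then $\Psi\geq\Phi\geq2$ on $Q_3$ and $\Psi\equiv0$ in $\R^N\setminus B_R$, which are the last two conditions on $\Psi$.

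For the differential inequality I would exploit two structural properties of $\L_{\Om,\eps}^-$: it is positively $1$-homogeneous and invariant under adding constants, so $\L_{\Om,\eps}^-\Phi=A\,\L_{\Om,\eps}^-\varphi$ and $\L_{\Om,\eps}^-$ annihilates constants; and it is monotone, in the sense that $w\geq\widetilde w$ with $w(x_0)=\widetilde w(x_0)$ forces $\L_{\Om,\eps}^-w(x_0)\geq\L_{\Om,\eps}^-\widetilde w(x_0)$ (immediate from \Cref{def:L-Omega}, since then every numerator in the infimum and in the $\mu$-average can only increase). I split $\R^N$ into three zones. For $|x|\geq R$ we have $\Psi(x)=0$ and $\Psi\geq0$, so comparing with the zero function gives $\L_{\Om,\eps}^-\Psi(x)\geq0$. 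For $r\leq|x|<R$ we have $\Psi(x)=\Phi(x)$ and $\Psi\geq\Phi$, so $\L_{\Om,\eps}^-\Psi(x)\geq\L_{\Om,\eps}^-\Phi(x)=A\,\L_{\Om,\eps}^-\varphi(x)\geq A\sigma\varphi(x)>0$. For $|x|<r$, choosing $\eps_0$ also small enough that $B_{r+\Lambda\eps+\tau\eps^2}\subset B_R$, the operator at $x$ only sees the region where $\Psi=\Phi$, hence $\L_{\Om,\eps}^-\Psi(x)=A\,\L_{\Om,\eps}^-\varphi(x)\geq-AC_\sigma$. In summary, $\L_{\Om,\eps}^-\Psi\geq0$ on $\{|x|\geq r\}$ and $\L_{\Om,\eps}^-\Psi\geq-AC_\sigma$ on $\{|x|<r\}$.

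It then remains to choose $\psi$: take any smooth radial function with $\psi\equiv AC_\sigma$ on $\overline B_r$, $0\leq\psi\leq AC_\sigma$ everywhere, $\psi$ nonincreasing in $|x|$, and $\psi\equiv0$ on $\R^N\setminus B_{1/4}$ (possible since $r=\tfrac18<\tfrac14$). Then $\psi\leq\psi(0)=AC_\sigma$ and $\psi\leq0$ outside $B_{1/4}$, which are exactly the conditions on $\psi$. Adding the two estimates: on $\{|x|\geq r\}$ one has $\L_{\Om,\eps}^-\Psi\geq0$ and $\psi\geq0$, and on $\{|x|<r\}$ one has $\L_{\Om,\eps}^-\Psi\geq-AC_\sigma$ and $\psi\equiv AC_\sigma$; in either case $\L_{\Om,\eps}^-\Psi+\psi\geq0$ on $\R^N$. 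Finally I would let the universal $\eps_0$ be the minimum of the thresholds used above; all the constants ($\sigma$, $A$, $C_\sigma$, and hence $\Psi$ and $\psi$) depend only on universal constants.

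The substantive content is entirely in the earlier lemmas, which track how the non-constant density $\dens$ and the parameter $h$ perturb the second differences and the $\mu$-average of $\varphi$, and culminate in \Cref{barrier-0}. Granting those, the present lemma is bookkeeping, and the one point that genuinely shapes the argument is that the explicit lower bound for $\L_{\Om,\eps}^-\varphi$ is available — and positive — only on the bounded annulus $r\leq|x|\leq R$; indeed $\L_{\Om,\eps}^-\varphi(x)<0$ for $|x|$ large, since the drift contribution eventually dominates the curvature contribution, so no affine image of $\varphi$ can serve as a global barrier. The truncation $\Phi^+$ together with the comparison principle is precisely what repairs the far field, while the localized bump $\psi$ repairs the bounded defect near the origin. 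The only real technical nuisance — reconciling the truncation with the smoothness demanded in the statement — is dealt with by a routine regularization near $\partial B_R$, where $\Phi^+$ vanishes to first order.
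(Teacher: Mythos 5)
Your route is genuinely different from the paper's. The paper simply takes $\Psi=A\varphi-B$ with no truncation, defines $\psi$ explicitly from the bracket in \eqref{estimate-L-Psi}, and verifies $\L_{\Om,\eps}^-\Psi+\psi\geq 0$ only in $B_{\frac{5}{2}\sqrt{N}}$ --- which is all that is actually used later in \Cref{first}, the statement's ``in $\R^N$'' notwithstanding. You instead truncate, $\Psi=\Phi^+$, and run a three-zone comparison argument. Your monotonicity observation for $\L_{\Omega,\eps}^-$ is correct, the annulus and far-field zones are handled correctly for the Lipschitz function $\Phi^+$, your abstract bump $\psi$ is a legitimate simplification of the paper's explicit formula, and you correctly diagnose that the available estimates do not control $\L_{\Om,\eps}^-\varphi$ outside $B_R$, which is precisely the point the paper's own proof glosses over.

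The genuine gap is the smoothing step, which is not the ``routine regularization'' you claim. Your comparison argument needs, simultaneously, $\Psi=\Phi$ on $B_R\setminus B_r$ together with $\Psi\geq\Phi$ globally (for the annulus zone), and $\Psi=0$ outside $B_R$ together with $\Psi\geq 0$ globally (for the far zone). These constraints pin $\Psi$ down to equal $\Phi^+$ in a full neighbourhood of $\partial B_R$; since $\Phi$ crosses zero transversally there ($\partial_r\varphi\neq 0$ on $\partial B_R$), $\Phi^+$ has a genuine kink and no smooth function can satisfy all four conditions at once. An approximate smoothing within distance $\delta$ of $\Phi^+$ destroys the exact touching and costs an error of order $\delta\eps^{-2}$ in the operator (every term of \eqref{L-Omega} carries the factor $\eps^{-2}$), so one would need $\delta\ll\eps^2$ and hence a barrier depending on $\eps$ --- but the lemma asks for a single pair $(\Psi,\psi)$, depending only on universal constants, valid for every $\eps\in(0,\eps_0)$. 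The clean repair is the paper's: keep the globally smooth $\Psi=A\varphi-B$, prove the inequality only on $B_{\frac{5}{2}\sqrt{N}}$ via \Cref{barrier-0}, and note that this is the only region where the barrier is ever invoked.
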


\begin{proof} 
Let $\sigma>0$ be a fixed constant to be determined later. Let
\begin{equation*}
	\Psi(x)
	=
	A\varphi(x)-B
\end{equation*}
for $x\in\R^N$, where $A,B>0$ are fixed constants depending on $\sigma$ such that $\Psi\geq2$ in $B_{\frac{3}{2}\sqrt{N}}\supset Q_3$ and $\Psi\leq0$ in $\R^N\setminus B_{\frac{5}{2}\sqrt{N}}$.
We show that $\L_{\Om,\eps}^-\Psi$ can be bounded from below by a function $-\psi$, where $\psi:\R^N\to\R$ is a smooth function in $\R^N$ having the desired properties, for which both $\sigma$ and $\eps_0$ have to be chosen conveniently.

We apply \Cref{barrier-0} with $R=\frac{5}{2}\sqrt{N}$. In particular, by \eqref{estimate-L-Psi} with $\eps_0$ satisfying \eqref{eps0-bound-1}, we can  define
\begin{align*}
	\psi(x)
	:\,=
	A\sigma \varphi(x)\bigg[
	(C_1+C_3)(1-\eps)^{-\sigma-1}-\beta C_2(\sigma+1)\frac{|x|^2}{1+|x|^2}
	\bigg]
\end{align*}
for every $x\in\R^N$, so that
\begin{align*}
	\L_{\Om,\eps}^-\Psi+\psi
	\geq
	0
\end{align*}
in $B_{\frac{5}{2}\sqrt{N}}$.
Since $\varphi$ is radially decreasing, it is clear that
\begin{align*}
	\psi(x)
	\leq
	\psi(0)
	=
	A\sigma(C_1+C_3)(1-\eps)^{-\sigma-1}
	\leq
	A\sigma(C_1+C_3)(1-\eps_0)^{-\sigma-1}
\end{align*}
for every $x\in\R^N$.
Finally, we fix $r=\frac{1}{4}$ and recall that by \eqref{bound-L-Psi} with a fixed universal constant $\sigma>\sigma_0$ it holds that
\begin{equation*}
	\L_{\Omega,\eps}^-\Psi(x)
	\geq
	-\psi(x)
	\geq
	A\sigma\varphi(x)
\end{equation*}
for every $\frac{1}{4}\leq|x|\leq\frac{5}{2}\sqrt{N}$ and $\eps\in(0,\eps_0)$ with a universal $\eps_0$ satisfying \eqref{eps-bound-r-R}, so in particular $\psi\leq 0$ in $\R^N\setminus B_{1/4}$.
That is we have proved the result for every $\eps\in(0,\eps_0)$ with $\eps_0>0$ satisfying \eqref{eps0-bound-1} and \eqref{eps-bound-r-R}.

\end{proof}

\begin{remark}
\label{remarkeps0}
The constant $\eps_0$ is universal since in the 
\eqref{eps0-bound-1} and \eqref{eps-bound-r-R} we have employed $R=\frac{5}{2}\sqrt{N}$, $r=\frac{1}{4}$ and a fixed universal $\sigma$.
\end{remark}

\section{Estimate for the distribution function of $v$}

In this section, we provide different controls of the measure of the set where a given subsolution is large depending on the scale of the parameter $\eps$. More precisely,
in \Cref{first} we focus in the case in which $\eps$ can be arbitrary small, while in \Cref{second} we deal with a range of the $\eps$'s bounded away from zero.

The proof of the next result follows the lines of \cite[Lemma 3.6]{arroyobp22}, the main difference being the use of \Cref{estimate Q} and the appearance of an extra term due to the nonsymmetry of the density function.

\begin{lemma}
\label{first}
Let $\eps\in(0,\eps_0)$. There exist universal constants $\rho>0$, $M\geq 1$ and $\theta\in(0,1)$ such that if $v$ is a bounded measurable function satisfying
\begin{equation*}
	\begin{cases}
	\L_{\Om,\eps}^-v\leq\rho & \text{ in } B_{\frac{5}{2}\sqrt{N}},
	\\
	v\geq 0 & \text{ in } \R^N,
	\end{cases}
\end{equation*}
and
\begin{equation*}
	\inf_{Q_3}v
	\leq
	1,
\end{equation*}
then
\begin{equation*}
	|Q_1\cap \{v>M\}|
	\leq 
	\theta.
\end{equation*}
\end{lemma}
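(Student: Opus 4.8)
The plan is to adapt the proof of \cite[Lemma 3.6]{arroyobp22}: superpose the barrier of \Cref{barrier} onto $v$, apply the $\eps$-ABP estimate \Cref{eps-ABP-thm-2}, and convert the pointwise information at the contact points into the desired measure estimate via \Cref{estimate Q}.

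\textbf{Step 1: the auxiliary function.} Let $\Psi,\psi$ be the barrier functions of \Cref{barrier}, put $\Omega:=B_{\frac52\sqrt N}$ and $w:=\Psi-v$ on $\R^N$. Three properties are used. Since $\Psi\le 0$ outside $\Omega$ and $v\ge 0$, one has $w\le 0$ in $\R^N\setminus\Omega$, so $\sup_{\R^N\setminus\Omega}w\le 0$. Since $\inf_{Q_3}v\le 1$ and $\Psi\ge 2$ on $Q_3\subset\Omega$, for every $\delta>0$ there is $x_\delta\in Q_3$ with $w(x_\delta)>2-(1+\delta)$, hence $\sup_\Omega w\ge 1$. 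Finally, from the elementary inequality $\L_{\Omega,\eps}^+(f+g)\ge\L_{\Omega,\eps}^-f+\L_{\Omega,\eps}^+g$ (the $\beta$-term is linear and $\sup(F+G)\ge\inf F+\sup G$ handles the $\alpha$-term) applied with $f=\Psi$, $g=-v$, together with $\L_{\Omega,\eps}^+(-v)=-\L_{\Omega,\eps}^-v$, $\L_{\Omega,\eps}^-\Psi\ge-\psi$ in $\R^N$ and the hypothesis $\L_{\Omega,\eps}^-v\le\rho$ in $\Omega$, we obtain $\L_{\Omega,\eps}^+w\ge-\psi-\rho$ in $\Omega$. Recalling $0\le\psi\le\psi(0)$ with $\psi(0)$ universal and $\psi\le 0$ outside $B_{1/4}$, this reads $(-\L_{\Omega,\eps}^+w)^+\le\rho+\psi^+$ with $\psi^+$ supported in $B_{1/4}$.

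\textbf{Step 2: ABP and a dichotomy on the contact set.} Applying \Cref{eps-ABP-thm-2} to $w$ on $\Omega$ yields points $x_1,\dots,x_m\in K_w$ satisfying \eqref{covering}--\eqref{measure-balls} and
\[
1\le\sup_\Omega w\le\sup_{\R^N\setminus\Omega}w+C\Big(\sum_{i=1}^m\big(\sup_{B_{\eps/4}(x_i)}(-\L_{\Omega,\eps}^+w)^+\big)^N|B_{\eps/4}|\Big)^{1/N}.
\]
Split the indices into those with $\dist(x_i,B_{1/4})\ge\eps/4$, for which $\sup_{B_{\eps/4}(x_i)}\psi^+=0$ and hence $(-\L_{\Omega,\eps}^+w)^+\le\rho$, and the ``near'' ones with $\dist(x_i,B_{1/4})<\eps/4$, for which $(-\L_{\Omega,\eps}^+w)^+\le\rho+\psi(0)$. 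By \eqref{measure-balls} the first group contributes at most $C\rho^N$ to the sum; choosing $\rho$ universal and small enough, this is $\le\frac12 C^{-N}$, so the near indices must satisfy $\sum_{\mathrm{near}}|B_{\eps/4}|\ge c>0$ for a universal $c$ (using $\rho+\psi(0)\le C$). Moreover, since $\psi\le 0$ outside $B_{1/4}$, each near $x_i$ lies in $B_{1/4+\eps/4}$, whence $B_{\eps/4}(x_i)\subset B_{1/4+\eps/2}\subset B_{1/2}\subset Q_1$ (using $\eps_0<\tfrac12$). As a byproduct of the same inequality, $\sup_\Omega w\le\sup_{\R^N\setminus\Omega}w+C(\rho+\psi(0))\le C_*$, hence $\sup_{\R^N\setminus\Omega}w\ge\sup_\Omega w-C_*\ge 1-C_*$, with $C_*$ universal.

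\textbf{Step 3: passing to a measure estimate.} Normalize so that $\sup_{\R^N\setminus\Omega}w=0$ (subtracting a constant lying in $[-C_*,0]$, which alters neither $\L_{\Omega,\eps}^+w$ nor the bounds above); then the concave envelope $\Gamma$ of $\hat w$ satisfies $\Gamma\ge\hat w\ge 0$. Fix a near index $i$ and apply \Cref{estimate Q} to $w$ at $x_0=x_i$: at least half of $B_{\eps/4}(x_i)$ lies in $\{\Gamma-w\le -C_2\eps^2\liminf_{x\to x_i}\L_{\Omega,\eps}^+w(x)+C_1\eps\sup_\Omega w\}$. Since $\liminf_{x\to x_i}\L_{\Omega,\eps}^+w(x)\ge-\rho-\psi(0)\ge-C$ and $\sup_\Omega w\le C$ (both universal), the right-hand side is $\le C\eps$, so on that half-ball $w\ge\Gamma-C\eps\ge-C\eps$, i.e. $v=\Psi-w\le\|\Psi\|_\infty+C\eps\le M$ for a universal $M$ (absorbing the normalizing constant, and shrinking $\eps_0$ so that $C\eps\le 1$). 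Summing over the near indices using the finite overlap $\ell=\ell(N)$,
\[
|Q_1\cap\{v\le M\}|\ \ge\ \frac1\ell\sum_{\mathrm{near}}\big|\{v\le M\}\cap B_{\eps/4}(x_i)\big|\ \ge\ \frac1{2\ell}\sum_{\mathrm{near}}|B_{\eps/4}|\ \ge\ \frac{c}{2\ell},
\]
so $|Q_1\cap\{v>M\}|=|Q_1|-|Q_1\cap\{v\le M\}|\le 1-\frac{c}{2\ell}=:\theta<1$ (enlarging $M$ if needed so that $\theta>0$), which is the claim.

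\textbf{Main obstacle.} The computations (the superadditivity inequality, the sum bookkeeping, the inclusions of small balls into $Q_1$) are routine; the crux is the chain of deductions in Steps 2--3. The $\eps$-ABP estimate only produces a size bound on $\sup w$, and one must squeeze out of it both that $\sup_{\R^N\setminus\Omega}w$ — a priori merely $\le 0$ — is in fact bounded below by a universal constant, and that a \emph{fixed fraction in measure} of the contact cover lies in $B_{1/4}$, i.e. precisely where the barrier term $\psi$ is active; only there does \Cref{estimate Q} promote the ``touching'' information into the genuine bound $v\le M$ on half of each small ball. The lack of symmetry of the density $\dens$ enters only through the extra term in $\psi$ and through the factor $\tau+\|\nabla\dens\|_\infty/\dens_0$ in \Cref{estimate Q}, both controlled by universal constants, so it does not change the structure of the argument.
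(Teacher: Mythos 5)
Your argument is correct and follows essentially the same route as the paper's proof: set $w=\Psi-v$ with the barrier of \Cref{barrier}, apply the $\eps$-ABP estimate, isolate the contact balls meeting $B_{1/4}$ (where $\psi^+$ lives) to get a universal lower bound on their total measure, and then upgrade this to the measure bound on $\{v\le M\}$ via \Cref{estimate Q} using $\Gamma\ge 0$ and the finite overlap of the cover. The only (welcome) difference is that you make explicit two points the paper leaves implicit — the near/far splitting of indices in place of Minkowski's inequality, and the fact that the normalization constant $-\sup_{\R^N\setminus\Omega}w$ is itself controlled by the ABP estimate so that it can be absorbed into $M$.
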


\begin{proof}
The basic idea consists on using the auxiliary functions $\Psi$ and $\psi$ from \Cref{barrier} to define $w=\Psi-v$, which satisfies
\begin{equation*}
	\sup_{B_{\frac{5}{2}\sqrt{N}}}w
	\geq
	\sup_{Q_3}w
	\geq
	\inf_{Q_3}\Psi-\inf_{Q_3}v
	\geq
	1,
\end{equation*}
and since $\L_{\Om,\eps}^+(\Psi-v)\geq\L_{\Om,\eps}^-\Psi-\L_{\Om,\eps}^-v$,
\begin{equation*}
	\begin{cases}
	\L_{\Om,\eps}^+w+\psi^++\rho\geq 0 & \text{ in } B_{\frac{5}{2}\sqrt{N}},
	\\
	w\leq 0 & \text{ in } \R^N\setminus B_{\frac{5}{2}\sqrt{N}}.
	\end{cases}
\end{equation*}
Since $\eps\in(0,\eps_0)$, the $\eps$-ABP estimate (\Cref{eps-ABP-thm-2}) yields that 
\begin{align*}
	1
	\leq
	~&
	\sup_{B_{\frac{5}{2}\sqrt{N}}}w
	\leq
	C\bigg(\sum_{i=1}^m(\sup_{B_{\eps/4}(x_i)}\psi^++\rho)^N|B_{\eps/4}|\bigg)^{1/N}
	\\
	\leq
	~&
	C\bigg[\bigg(\sum_{i=1}^m(\sup_{B_{\eps/4}(x_i)}\psi^+)^N|B_{\eps/4}|\bigg)^{1/N}
	+
	\rho\bigg(\sum_{i=1}^m|B_{\eps/4}|\bigg)^{1/N}\bigg],
\end{align*}
where we also used Minkowski's inequality. Now observe that the negativity of $\psi$ outside $B_{1/4}$ together with the fact that $\psi\leq\psi(0)$ allows to bound the first term inside the brackets by
\begin{align*}
	\psi(0)\bigg(\sum_{|x_i|<\frac{\eps+1}{4}}|B_{\eps/4}(x_i)|\bigg)^{1/N},
\end{align*}
where the sum is taken over all indexes $i=1,\ldots,m$ such that $B_{\eps/4}(x_i)\cap B_{1/4}\neq\emptyset$, while the other term can be directly estimated by $C\rho$ using \eqref{measure-balls} with $\Omega=B_{\frac{5}{2}\sqrt{N}}$ and taking into account that $\eps_0<1$, where $C>0$ is a universal constant. Then choosing small enough $\rho>0$ and rearranging terms we get
\begin{equation*}
	\frac{C}{\psi(0)^N}
	\leq
	\sum_{|x_i|<\frac{\eps+1}{4}}|B_{\eps/4}|.
\end{equation*}
Let $\Gamma$ be the concave envelope of $w$. Now \Cref{estimate Q} with $\sup_{B_2}w\leq\sup_{B_2}\Psi=\Psi(0)$ yields
\begin{align*}
	\frac{C}{\psi(0)^N}
	\leq
	~&
	\sum_{|x_i|<\frac{\eps+1}{4}}\big|B_{\eps/4}(x_i)\cap\big\{\Gamma-w\leq \widetilde C(\psi(0)+\rho+\Psi(0))\big\}\big|
\end{align*}
using $\eps\in(0,\eps_0)$, where $\widetilde C>0$ is universal. Moreover, since $B_{\eps/4}(x_i)\subset B_1\subset Q_1$ for every $i=1,\ldots,m$ such that $B_{\eps/4}(x_i)\cap B_{1/4}\neq\emptyset$ and that the family $\{B_{\eps/4}(x_i)\}_{i=1}^m$ has maximum overlap $\ell=\ell(N)$, we have
\begin{align*}
	\frac{C}{\psi(0)^N}
	\leq
	~&
	\ell\big|Q_1\cap\big\{\Gamma-w\leq \widetilde C(\psi(0)+\rho+\Psi(0))\big\}\big|
\end{align*}

To finish the proof just observe that since $w=\Psi-v\leq\Psi(0)-v$, $\Gamma\geq0$ and $\eps\in(0,\eps_0)$, then
\[
\begin{split}
	\frac{C}{\psi(0)^N\ell}
	&\leq
	\big|Q_1\cap\big\{\Gamma-w\leq \widetilde C(\psi(0)+\rho+\Psi(0))\big\}\big|\\
	&\leq
	\big|Q_1\cap\big\{v\leq \Psi(0)+\widetilde C(\psi(0)+\rho+\Psi(0))\big\}\big|,
\end{split}
\]
from which the result follows by conveniently taking the constants $M$ and $\theta$. 
\end{proof}

Next we state a more qualitative version of the previous lemma provided that $\eps$ is at a controlled distance to $0$, similarly saying that if $v$ is such that $\inf_{Q_1}v\leq 1$ then $|Q_1\cap\{v>K\}|\leq\frac{c}{K}$ for every $K>0$. This result will be useful later in the proof of \Cref{lem:main}. However, we state it in a slightly different way which is more conveniente for our purposes.

\begin{lemma}
\label{second}
Let $\eps\in[\frac{\eps_0}{2},\eps_0)$ and $\rho>0$. Suppose that $v$ is a bounded measurable function satisfying
\begin{equation*}
	\begin{cases}
	\L_{\Omega,\eps}^-v\leq\rho & \text{ in } B_{\frac{5}{2}\sqrt{N}},
	\\
	v\geq 0 & \text{ in } \R^N.
	\end{cases}
\end{equation*}
There exists a universal constant $c>0$ such that if
\begin{equation*}
	|Q_1\cap\{v> K\}|
	>
	\frac{c}{K}
\end{equation*}
holds for some $K>0$, then
\begin{equation*}
	v> 1 \quad \text{ in }Q_1.
\end{equation*}
\end{lemma}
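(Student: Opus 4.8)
The plan is to prove directly the pointwise lower bound
\[
	v(x)\ \geq\ c_1\int_{Q_1}v\,dy\ -\ c_2\rho\qquad\text{for every }x\in Q_1,
\]
with $c_1,c_2>0$ universal, and then read off the statement: since $\int_{Q_1}v\,dy\geq K\,|Q_1\cap\{v>K\}|$ (because $v\geq 0$), the hypothesis $|Q_1\cap\{v>K\}|>c/K$ with the choice $c:=(1+c_2\rho)/c_1$ forces $v(x)>c_1K\cdot c/K-c_2\rho=1$ at every $x\in Q_1$.

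The first ingredient is an elementary \emph{superaveraging} inequality extracted from the hypotheses. Since $v\geq 0$ in $\R^N$, the two terms $v(x+z)$ and $\inf_{|h|<\tau\eps^2}v(x-z+h)$ in $\delta^-_{\Omega,\tau\eps^2}v(x,z)$ are nonnegative, so $\inf_{|z|<\Lambda\eps}\delta^-_{\Omega,\tau\eps^2}v(x,z)\geq-2v(x)$. Substituting this into $\L^-_{\Omega,\eps}v(x)\leq\rho$, multiplying by $\eps^2$ and using $\alpha+\beta=1$ gives
\[
	\beta\dashint_{B_\eps(x)}v\,d\mu\ \leq\ v(x)+\rho\eps^2\qquad\text{for every }x\in B_{\frac52\sqrt N},
\]
and then \eqref{eq:comparable} upgrades this (for the Lebesgue average) to $v(x)\geq\kappa\dashint_{B_\eps(x)}v\,dy-\rho\eps_0^2$ with $\kappa:=\beta\dens_0/\dens_1>0$, valid for all $x\in B_{\frac52\sqrt N}$.

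The main step — and the only delicate one — is to iterate this inequality. Because $\eps\in[\tfrac{\eps_0}{2},\eps_0)$ is comparable to the fixed scale $\eps_0$, and $Q_1\subset B_{\sqrt N/2}$ sits well inside $B_{\frac52\sqrt N}$, the inequality may be composed with itself a \emph{universal} number $m_0=m_0(N,\eps_0)$ of times while keeping every intermediate ball center inside $B_{\frac52\sqrt N}$; the resulting $m_0$-fold normalized averaging kernel $k_{m_0}(x,\cdot)$ is a probability density supported in $B_{m_0\eps}(x)$ whose value on $Q_1$ is bounded below by a universal constant for every $x\in Q_1$. This yields $v(x)\geq\kappa^{m_0}\!\int v(y)\,k_{m_0}(x,y)\,dy-\rho\eps_0^2\sum_{j<m_0}\kappa^j\geq c_1\int_{Q_1}v\,dy-c_2\rho$ for $x\in Q_1$, which is the bound announced above (the "$v>1$ in $Q_1$" conclusion then holds at a.e.\ point, which suffices). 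The hard part is precisely this bookkeeping in Step~2 — covering $Q_1$ by the swept balls while all the centers of the balls used in the iteration remain in the region $B_{\frac52\sqrt N}$ where the operator inequality is available; this is where $\eps\geq\eps_0/2$ is essential and is the reason the complementary regime $\eps\to 0$ must be handled separately in \Cref{first}. One should also note that the constant $c$ obtained here depends on $\rho$ through the term $c_2\rho$; in the application to \Cref{lem:main}, $\rho$ is the universal constant produced by \Cref{first}, so $c$ is universal there.
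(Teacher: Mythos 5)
Your proposal is correct and follows essentially the same route as the paper: discard the $\alpha$-term using $v\geq 0$ to get the superaveraging inequality $v(x)+\eps^2\rho\geq\beta\dashint_{B_\eps(x)}v\,d\mu$, pass to the Lebesgue average via \eqref{eq:comparable}, iterate a universal number of times (the paper uses the $n$-fold self-convolution $f^{*n}$ of the normalized indicator of the unit ball, with $n=\lfloor 2\sqrt N/\eps_0\rfloor$ so that all intermediate centers stay in $B_{\frac52\sqrt N}$ and the kernel is bounded below on $Q_1$), and conclude by Chebyshev. Your closing remark that $c$ really depends on $\rho$, and is universal only because $\rho$ is the universal constant from \Cref{first} in the application to \Cref{lem:main}, is an accurate reading of how the lemma is used.
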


\begin{proof}
By the definition of the minimal Pucci-type operator $\L_{\Omega,\eps}^-$ and since $\L_{\Omega,\eps}^- v(x)\leq \rho$ for every $x\in B_{\frac{5}{2}\sqrt{N}}$ by assumption, 
\begin{equation*}
	v(x)+\eps^2\rho
	\geq
	v(x)+\eps^2\L_{\Omega,\eps}^-v(x)
	\geq
	\beta\dashint_{B_\eps(x)}v(y)\ d\mu(y)
	\geq
	\beta\frac{\dens_0}{\dens_1}\dashint_{B_\eps(x)}v(y)\ dy
\end{equation*}
where using that $v\geq 0$ we have discarded the $\alpha$-term and we have recalled \eqref{eq:comparable}.
Then, by considering $f=\frac{\chi_{B_1}}{|B_1|}$, we can rewrite this inequality as
\begin{equation*}
	v (x) 
	\geq
	\frac{1}{\eps^N}\beta\frac{\dens_0}{\dens_1}\int_{\R^N} f\Big(\frac{y-x}{\eps}\Big) v(y)\ dy-\eps^2\rho,
\end{equation*}
which holds for every $x\in B_{\frac{5}{2}\sqrt{N}}$. Let us fix $n=\left\lfloor\frac{2\sqrt{N}}{\eps_0}\right\rfloor\in\N$ so that $Q_1\subset B_{\frac{5}{2}\sqrt{N}-(n-1)\eps}$ and $\diam Q_1\leq\frac{n\eps}{2}$ for any $\eps\in[\frac{\eps_0}{2},\eps_0)$.

Repeating the steps in \cite[Lemma 4.2]{arroyobp22} with $\beta\frac{\dens_0}{\dens_1}$ instead of just $\beta$, we get that
\begin{equation*}
	v(x)
	\geq
	\frac{1}{\eps^N}\Big(\beta\frac{\dens_0}{\dens_1}\Big)^n\int_{\R^N}f^{*n}\Big(\frac{y-x}{\eps}\Big) v(y)\ dy-\frac{\eps^2\rho}{1-\beta\frac{\dens_0}{\dens_1}}
\end{equation*}
for every $x\in Q_1\subset B_{\frac{5}{2}\sqrt{N}-(n-1)\eps}$. Here $f^{*n}$ denotes the convolution of $f$ with itself $n$ times, and by the choice of $n\in\N$ it turns out that $Q_1$ is contained in the support of $y\mapsto f^{*n}\big(\frac{y-x}{\eps}\big)$ for every $x\in Q_1$. This allows to estimate the previous integral from below and to conclude the proof as in \cite[Lemma 4.2]{arroyobp22}.
\end{proof}

\section{H\"older regularity}

Next we prove  the main result of this part, the H\"older regularity result in the context of the $\eps$-nonlocal operators. The main ingredients are  the Calder\'on-Zygmund decomposition that allows us to obtain one iteration step between the measures of the level sets or in other words a kind of expansion of positivity and the De Giorgi oscillation estimate obtained after a full iteration. The $\eps$-ABP-estimates are used in checking that the conditions of the Calder\'on-Zygmund decomposition are satisfied.

The discrete nature of the DPP does not allow us to use the rescaling argument in arbitrary small scales. Since the previous estimates require the bound $\eps_0>0$ for the scale-size, in a dyadic cube of generation $\ell$, the rescaling argument only works whenever $2^\ell\eps<\eps_0$. Thus the dyadic splitting in the Calder\'on-Zygmund decomposition has to be stopped at a certain generation, and  thus we need an additional criterion for selecting cubes in order to control the error caused by stopping the process.

The proof of the following version of the Calder\'on-Zygmund decomposition  is given in \cite[Lemma 4.1]{arroyobp23}.
We consider dyadic cubes and use the following notation. Let $\mathcal{D}_\ell$ be the family of dyadic open subcubes of $Q_1$ of generation $\ell\in\N$, where $\mathcal{D}_0=\{Q_1\}$, $\mathcal{D}_1$ is the family of $2^N$ dyadic cubes obtained by dividing $Q_1$, and so on. Given $Q\in\mathcal D_\ell$, the predecessor of $Q$, which we denote by $\mathrm{pre}(Q)$, is the unique dyadic cube in $\mathcal{D}_{\ell-1}$ that contains $Q$.

\begin{lemma}[Calder\'on-Zygmund dyadic decomposition]\label{CZ}
Let $A\subset B\subset Q_1$ be measurable sets, $\delta_1,\delta_2\in (0,1)$ and $L\in\N$. If the following assumptions hold,
\begin{enumerate}
\item $|A|\leq\delta_1$;
\item \label{item:includedB}if $Q\in\mathcal D_\ell$ with $\ell\leq L$ satisfies $|Q\cap A|>\delta_1|Q|$ then $\mathrm{pre}(Q)\subset B$;
\item \label{item:includedB2} if $Q\in\mathcal D_L$ satisfies $|Q\cap A|>\delta_2|Q|$ then $Q\subset B$;
\end{enumerate}
then,
\begin{align*}
	|A|
	\leq
	\delta_1|B|+\delta_2.
\end{align*}
\end{lemma}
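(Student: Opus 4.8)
The plan is to prove the following slightly stronger statement by reverse induction on the generation: for every $\ell\in\{0,1,\dots,L\}$ and every $Q\in\mathcal{D}_\ell$ with $|Q\cap A|\leq\delta_1|Q|$, one has
\[
	|Q\cap A|\leq\delta_1|Q\cap B|+\delta_2|Q|.
\]
Once this is established, applying it to $Q=Q_1$ --- which is admissible because $|Q_1\cap A|=|A|\leq\delta_1=\delta_1|Q_1|$ by hypothesis~(1), and because $B\subset Q_1$ and $|Q_1|=1$ --- gives exactly $|A|\leq\delta_1|B|+\delta_2$.

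First I would handle the base case $\ell=L$. Fix $Q\in\mathcal{D}_L$ with $|Q\cap A|\leq\delta_1|Q|$. If $|Q\cap A|\leq\delta_2|Q|$ the inequality is immediate, since $\delta_1|Q\cap B|\geq0$. Otherwise $|Q\cap A|>\delta_2|Q|$, and hypothesis~(3) forces $Q\subset B$, so that $|Q\cap B|=|Q|$ and $|Q\cap A|\leq\delta_1|Q|=\delta_1|Q\cap B|\leq\delta_1|Q\cap B|+\delta_2|Q|$.

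For the inductive step, fix $\ell<L$ and $Q\in\mathcal{D}_\ell$ with $|Q\cap A|\leq\delta_1|Q|$, and let $Q_1',\dots,Q_{2^N}'\in\mathcal{D}_{\ell+1}$ be the dyadic children of $Q$. The dichotomy is whether some child has $\delta_1$-large density. If there is a child $Q_j'$ with $|Q_j'\cap A|>\delta_1|Q_j'|$, then since $\ell+1\leq L$ hypothesis~(2) applies to $Q_j'$ and yields $\mathrm{pre}(Q_j')=Q\subset B$; hence $|Q\cap B|=|Q|$ and we conclude as in the base case. If instead every child satisfies $|Q_j'\cap A|\leq\delta_1|Q_j'|$, then the inductive hypothesis applies to each child, and summing the $2^N$ inequalities $|Q_j'\cap A|\leq\delta_1|Q_j'\cap B|+\delta_2|Q_j'|$ while using that the children partition $Q$ up to a null set gives $|Q\cap A|\leq\delta_1|Q\cap B|+\delta_2|Q|$. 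This closes the induction.

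I expect the main obstacle to be locating the right formulation rather than any hard estimate: a direct argument that selects the maximal ``bad'' cubes (those with $|Q\cap A|>\delta_1|Q|$) and invokes~(2) on their predecessors runs into the difficulty that distinct predecessors may be nested, so that the sum of their measures over-counts $B$ and only produces a bound with a spurious extra term. The per-cube reverse induction above avoids this entirely; the point that makes it work is the observation that a cube $Q$ of $\delta_1$-small density possessing a child of $\delta_1$-large density must, by~(2), lie wholly inside $B$, so that no mass of $A$ is wasted at such cubes. Everything else is elementary bookkeeping with the additive structure of the dyadic cubes.
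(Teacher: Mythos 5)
Your proof is correct. Note that the paper does not reproduce an argument here at all: it defers to \cite[Lemma 4.1]{arroyobp23}, where the lemma is proved by the classical Calder\'on--Zygmund stopping-time procedure (subdivide $Q_1$, select the cubes with $|Q\cap A|>\delta_1|Q|$, pass to a disjoint maximal subfamily of their predecessors, which lie in $B$ by hypothesis~(2), and handle the non-selected generation-$L$ cubes via hypothesis~(3)). Your reverse induction on the generation, with the strengthened per-cube statement $|Q\cap A|\leq\delta_1|Q\cap B|+\delta_2|Q|$, is a genuine reorganization of that counting: it uses the three hypotheses in exactly the same roles (hypothesis~(1) only to launch the induction at $Q_1$, hypothesis~(2) at the moment a child first becomes $\delta_1$-dense, hypothesis~(3) only at generation $L$), but it sidesteps the need to extract a pairwise disjoint family of predecessors, since the additivity is handled locally cube by cube. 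The only points worth being explicit about in a written version are the ones you already gesture at: the children of a dyadic cube cover it up to a Lebesgue-null set, so the three sums $\sum_j|Q_j'\cap A|$, $\sum_j|Q_j'\cap B|$, $\sum_j|Q_j'|$ really equal $|Q\cap A|$, $|Q\cap B|$, $|Q|$; and in the case where some child is $\delta_1$-dense one discards the $\delta_2|Q|$ term rather than needing it. Both routes are elementary and of comparable length; yours is arguably easier to verify line by line, while the stopping-time version has the advantage of producing the covering family of cubes explicitly, which is sometimes wanted elsewhere.
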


In order to utilize the Calder\'on-Zygmund decomposition in deriving an estimate between the superlevel sets of $u$ below, we will use \Cref{first} to verify condition \eqref{item:includedB}, and \Cref{second} in connection to \eqref{item:includedB2} for the dyadic cubes in $\mathcal{D}_L$ where $\eps$ is `relatively large'.

\subsection{De Giorgi oscillation lemma}

One of the crucial steps in the proof of the H\"older estimate consists on a rescaling of the function $v$ to cubes of different sizes so that the measure estimates from \Cref{first} and \Cref{second} can be applied. This is done in \Cref{lem:main} below following the same argument from \cite{arroyobp22}. However, the presence in the extremal operators of a non-uniform measure $\mu$, as well as the supremum of $h$ in the $\alpha$-term, implies that its rescaling has a deeper effect than in \cite{arroyobp22}. Indeed, the rescaling of $\L_{\Omega,\eps}^\pm$ gives rise to new extremal operators (which we denote as $\widetilde\L_{\Omega,\eps}^\pm$) depending on a rescaled version of the original measure, and in turn on its rescaled density function $\tilde\dens$. It is worth to mention here that the rescalings considered in this section preserve the constants $\dens_0$, $\dens_1$ and $\|\nabla\dens\|_\infty$. To be more precise, it holds that $\tilde\dens_0=\dens_0$, $\tilde\dens_1=\dens_1$ and $\|\nabla\tilde\dens\|_\infty\leq\|\nabla\dens\|_\infty$. We state the details in the following result
where we use the notation \eqref{delta}.

\begin{lemma}[Rescaling of the extremal operators]\label{lem:rescaling}
Let $R\in(0,1]$ and $v:B_R(x_0)\to\R$ be a bounded measurable function and consider the rescaled function $\tilde v:B_1\to\R$ given by
\begin{equation*}
	\tilde v(y)
	=
	Cv(x_0+Ry),
\end{equation*}
where $C>0$. Let $\mu$ be a probability measure supported in $\Omega$ with Lipschitz density $\dens:\Omega\to[\dens_0,\dens_1]$. If $\L^\pm_{\Omega,\eps}$ are the extremal operators with probability measure $\mu$, then
\begin{align*}
	\widetilde\L^-_{\Omega,\frac{\eps}{R}}\tilde v(y)
	\leq
	~&
	CR^2\L^-_{\Omega,\eps}v(x_0+Ry)
	\\
	\widetilde\L^+_{\Omega,\frac{\eps}{R}}\tilde v(y)
	\geq
	~&
	CR^2\L^+_{\Omega,\eps}v(x_0+Ry)
\end{align*}
for every $y\in B_1$, where $\widetilde\L^\pm_{\Omega,\frac{\eps}{R}}$ are the rescaled extremal operators related to the density function $\tilde\dens$ satisfying $\tilde\dens(\xi)=\dens(x_0+R\xi)$.
\end{lemma}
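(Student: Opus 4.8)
The plan is a direct computation: unravel the definitions of the three pieces of the extremal operators (the $\alpha$-second-difference term, the $\beta$-average term, and the $-v(x)$ term) under the substitution $x = x_0 + Ry$, and check that the rescaled operator $\widetilde\L^\pm_{\Omega,\eps/R}$ — which by fiat carries the rescaled density $\tilde\dens(\xi) = \dens(x_0+R\xi)$ — reproduces $CR^2$ times the original operator evaluated at $x_0+Ry$. The point is that rescaling lengths by $R$ turns $\eps$-sized neighbourhoods into $(\eps/R)$-sized ones, and the $\eps^{-2}$ prefactor produces the $R^2$; the constant $C$ just rides along linearly because all three terms are linear (or sup/inf, which commute with multiplication by $C>0$).

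First I would write $w(y) = v(x_0+Ry)$ so that $\tilde v = Cw$, and treat the $\alpha$-term: by definition
\begin{equation*}
	\delta\tilde v\big(y, \tfrac{\eps}{R}z; (\tfrac{\eps}{R})^2 h\big)
	=
	C\Big(w(y + \tfrac{\eps}{R}z) + w(y - \tfrac{\eps}{R}z + \tfrac{\eps^2}{R^2}h) - 2w(y)\Big),
\end{equation*}
and since $w(y + \tfrac{\eps}{R}z) = v(x_0 + Ry + \eps z)$ and similarly for the other two points, with $|z| < \Lambda$, $|h|<\tau$ corresponding to $|\eps z| < \Lambda\eps$, $|\tfrac{\eps^2}{R^2}\cdot R^2 h|<\tau\eps^2$, taking the sup over $z$ and $h$ gives exactly $C\,\delta_{\Omega,\tau\eps^2}^+ v(x_0+Ry, \eps z')$-type quantities; dividing by $2(\eps/R)^2 = 2\eps^2/R^2$ yields the factor $CR^2$ times $\alpha \sup_{|z'|<\Lambda\eps}\frac{\delta_{\Omega,\tau\eps^2}^+ v(x_0+Ry,z')}{2\eps^2}$. (Here one uses that $\sup$ and $\inf$ distribute over the positive scalar $C$, which is why the inequality flips appropriately between $\L^-$ and $\L^+$: for $\L^-$ one gets $\leq$, for $\L^+$ one gets $\geq$, which is exactly the statement.) Next the $\beta$-term: the average $\dashint_{B_{\eps/R}(y)} \tilde v\,d\tilde\mu$ over the rescaled measure, by the change of variables $\xi = x_0 + R\eta$ together with $\tilde\dens(\eta) = \dens(x_0+R\eta)$, becomes the average $\dashint_{B_\eps(x_0+Ry)} C w(\text{·})\,d\mu$, because the Jacobian factors $R^N$ cancel between numerator and the normalizing $\tilde\mu(B_{\eps/R}(y))$. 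Dividing by $(\eps/R)^2$ again produces the $R^2$. The $-v(x)$ terms match trivially.

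**The main subtlety** — and the only place where real care is needed — is the treatment of the density: one must verify that the rescaled density $\tilde\dens(\eta)=\dens(x_0+R\eta)$ is again a \emph{probability} density (or at least that the $\eps$-nonlocal operator only uses it through \emph{normalized} averages, so a nonnegative multiplicative constant is harmless) and that it still satisfies the standing hypotheses, namely $\tilde\dens \in [\dens_0,\dens_1]$ and $\|\nabla\tilde\dens\|_\infty \leq \|\nabla\dens\|_\infty$. The bounds are immediate since composition with $\xi\mapsto x_0+R\xi$ does not change the range, and the Lipschitz constant of $\tilde\dens$ is $R$ times that of $\dens$, hence $\leq \|\nabla\dens\|_\infty$ because $R\leq 1$; this is precisely where the hypothesis $R\in(0,1]$ is used, and it is the reason the rescaling preserves universality of constants, as advertised before the statement. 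A secondary point is bookkeeping on the admissible displacements in the $\alpha$-term: one must check that $|z|<\Lambda\tfrac{\eps}{R}$ in the rescaled operator corresponds under multiplication by $R$ exactly to $|Rz|<\Lambda\eps$ and likewise $|h|<\tau(\tfrac{\eps}{R})^2$ corresponds to $|R^2 h| < \tau\eps^2$, so the parameters $\Lambda,\tau$ are genuinely unchanged — no loss or gain there. Once these verifications are in place, the equalities/inequalities in the statement follow by assembling the three term-by-term identities and multiplying through by $CR^2$, with the direction of the inequality dictated by whether we took sup (for $\L^+$) or inf (for $\L^-$) in the nonlinear term.
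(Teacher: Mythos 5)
Your overall strategy is the same as the paper's (direct change of variables in each term, exact identity for the $\beta$-average, and checking that $\tilde\dens$ keeps the bounds and the Lipschitz constant), but there is a concrete error in your bookkeeping of the $h$-perturbation, and it hides the one genuinely nontrivial point of the lemma. The map $y\mapsto x_0+Ry$ multiplies displacements by $R$ \emph{once}, so the perturbation $\tilde\eps^2h=\tfrac{\eps^2}{R^2}h$ appearing in $\tilde v(y-\tilde\eps z+\tilde\eps^2h)$ becomes $R\cdot\tfrac{\eps^2}{R^2}h=\eps^2\tfrac{h}{R}$ in the $x$-variable, not $\eps^2 h$ as your computation $\big|\tfrac{\eps^2}{R^2}\cdot R^2h\big|<\tau\eps^2$ asserts. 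Hence as $h$ ranges over $|h|<\tau$ in the rescaled operator, the effective perturbation $h/R$ ranges over the strictly larger ball $|h'|<\tau/R$ when $R<1$, and the nonlinear terms are \emph{not} equal: one only gets
\begin{equation*}
	\sup_{|h|<\tau}f\Big(\frac{h}{R}\Big)\geq\sup_{|h|<\tau}f(h)
	\qquad\text{and}\qquad
	\inf_{|h|<\tau}f\Big(\frac{h}{R}\Big)\leq\inf_{|h|<\tau}f(h),
\end{equation*}
which is exactly why the lemma is stated as a pair of one-sided inequalities rather than identities. Your claim that the parameters match with ``no loss or gain'' would lead you to assert equalities that are false for $R<1$.

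Relatedly, your explanation of why ``the inequality flips appropriately between $\L^-$ and $\L^+$'' — namely that sup and inf distribute over the positive scalar $C$ — is not the right reason: multiplication by $C>0$ commutes \emph{exactly} with both sup and inf and produces no inequality at all. The genuine source of the inequality (and of its direction) is the enlargement of the $h$-ball described above; the fact that the enlargement helps the sup and hurts the inf is precisely what makes the two stated inequalities come out with the correct signs. The rest of your argument (the $z$-ball matching exactly because $z$ scales at first order in $\eps$, the $\beta$-term identity via the Jacobian cancellation, and the preservation of $\dens_0,\dens_1,\|\nabla\dens\|_\infty$ using $R\leq 1$) is correct and agrees with the paper.
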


\begin{proof}
By the affine invariance of the extremal operators, we can assume without loss of generality that $C=1$. Let $\tilde\eps=\frac{\eps}{R}$. By the definition of $\tilde v$ and \eqref{delta} we have
\begin{equation*}
	\delta\tilde v(y,\tilde\eps z;\tilde\eps^2h)
	=
	\delta v\Big(x_0+Ry,\eps z;\eps^2\frac{h}{R}\Big)
\end{equation*}
for $y\in B_1$, $|z|<\Lambda$ and $|h|<\tau$. Using that
\begin{equation*}
	\inf_{|h|<\tau}f\Big(\frac{h}{R}\Big)
	\leq
	\inf_{|h|<\tau}f(h)
	\quad
	\text{and}
	\quad
	\sup_{|h|<\tau}f\Big(\frac{h}{R}\Big)
	\geq
	\sup_{|h|<\tau}f(h)
\end{equation*}
for $0<R\leq 1$ we get
\begin{equation*}
	\inf_{|z|<\Lambda}\inf_{|h|<\tau}\frac{\delta\tilde v(y,\tilde\eps z;\tilde\eps^2h)}{2\tilde\eps^2}
	\leq
	R^2\inf_{|z|<\Lambda}\inf_{|h|<\tau}\frac{\delta v(x_0+Ry,\eps z;\eps^2h)}{2\eps^2}
\end{equation*}
and
\begin{equation*}
	\sup_{|z|<\Lambda}\sup_{|h|<\tau}\frac{\delta\tilde v(y,\tilde\eps z;\tilde\eps^2h)}{2\tilde\eps^2}
	\geq
	R^2\sup_{|z|<\Lambda}\sup_{|h|<\tau}\frac{\delta v(x_0+ty,\eps z;\eps^2h)}{2\eps^2}.
\end{equation*}

On the other hand, let $\tilde\mu$ be the push-forward measure of $\mu$ given by
\begin{align*}
	\tilde\mu(A)
	=
	\int_A\tilde\dens(\xi)\ d\xi
	=
	\int_A\dens(x_0+R\xi)\ d\xi
\end{align*}
for every Borel measurable set $A$. Observe that $\dens_0\leq\tilde\dens\leq\dens_1$ and $\|\nabla\tilde\dens\|_\infty=R\|\nabla\dens\|_\infty\leq \|\nabla\dens\|_\infty$ for every $x_0$ and $0<R\leq 1$.
Then
\begin{align*}
	\dashint_{B_{\tilde\eps}(y)}\frac{\tilde v(\xi)-\tilde v(y)}{\tilde\eps^2}\ d\tilde\mu(\xi)
	=
	~&
	R^2\dashint_{B_{\tilde\eps}(y)}\frac{v(x_0+R\xi)-v(x_0+Ry)}{\eps^2}\ d\tilde\mu(\xi)
	\\
	=
	~&
	R^2\frac{\displaystyle\dashint_{B_{\tilde\eps}(y)}\frac{v(x_0+R\xi)-v(x_0+Ry)}{\eps^2}\,\dens(x_0+R\xi)\ d\xi}{\displaystyle\dashint_{B_{\tilde\eps}(y)}\dens(x_0+R\xi)\ d\xi}
	\\
	=
	~&
	R^2\frac{\displaystyle\dashint_{B_\eps(x_0+Ry)}\frac{v(\xi)-v(x_0+Ry)}{\eps^2}\,\dens(\xi)\ d\xi}{\displaystyle\dashint_{B_\eps(x_0+Ry)}\dens(\xi)\ d\xi}
	\\
	=
	~&
	R^2\dashint_{B_\eps(x_0+Ry)}\frac{v(\xi)-v(x_0+Ry)}{\eps^2}\ d\mu(\xi).
\end{align*}
Combining this identity with the inequalities for the infima and the suprema we obtain the desired results.
\end{proof}

Next we state De Giorgi iteration for the measures of the superlevel sets that is later needed in the power decay lemma. Here we use the $\eps$-nonlocal version of the  Calder\'on-Zygmund decomposition from \Cref{CZ} in order to expand the positivity together with the preliminary measure estimates from \Cref{first} and \Cref{second}  to check the conditions  Calder\'on-Zygmund decomposition. Since the explicit  operators are only used  in the lemmas but not in the actual proof, it is essentially the same as \cite[Lemma 4.3]{arroyobp22}. Nonetheless for the convenience of the reader, we carefully write down the proof including the use of the rescaling property for operators of this paper in \eqref{eq:DG-rescaling}.

\begin{lemma}[De Giorgi iteration]
\label{lem:main}
Let $\eps\in(0,\eps_0)$, $\rho>0$, $M\geq 1$ and $\theta\in(0,1)$ as in \Cref{first} and $c>0$ as in \Cref{second}. If $v$ is a bounded measurable function satisfying
\begin{equation*}
	\begin{cases}
	\L_{\Omega,\eps}^-v\leq\rho & \text{ in } B_{\frac{5}{2}\sqrt{N}},
	\\
	v\geq 0 & \text{ in } \R^N,
	\end{cases}
\end{equation*}
and
\begin{equation*}
	\inf_{Q_3}v
	\leq
	1,
\end{equation*}
then
\begin{equation*}
	|Q_1\cap\{v> K^m\}|
	\leq
	\frac{c}{(1-\theta)K}+\theta^m,
\end{equation*}
holds for every $K\geq M$ and $m\in\N$.
\end{lemma}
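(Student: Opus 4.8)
The plan is to prove the lemma by induction on $m$, using the Calder\'on--Zygmund decomposition (\Cref{CZ}) to pass from the level $K^{m-1}$ to the level $K^m$, with the two preliminary measure estimates \Cref{first} and \Cref{second} supplying the hypotheses (2) and (3) of \Cref{CZ}.

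\begin{proof}
We argue by induction on $m\in\N$. The base case $m=1$ is exactly \Cref{first}: since $\inf_{Q_3}v\leq 1$ and $\L^-_{\Omega,\eps}v\leq\rho$ in $B_{\frac52\sqrt N}$ with $v\geq 0$, for $K\geq M$ we have $|Q_1\cap\{v>K^1\}|\leq|Q_1\cap\{v>M\}|\leq\theta\leq\frac{c}{(1-\theta)K}+\theta$.

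Assume now the claim holds for $m-1$. Apply \Cref{CZ} with
\begin{equation*}
	A=Q_1\cap\{v>K^m\},\qquad B=Q_1\cap\{v>K^{m-1}\},\qquad \delta_1=\theta,\qquad \delta_2=\frac{c}{K},
\end{equation*}
and $L=L(\eps)\in\N$ chosen as the largest generation for which $2^L\eps<\eps_0$ (note $A\subset B\subset Q_1$, and $|A|\leq\theta<1$ holds by \Cref{first} since $\theta\in(0,1)$). To check hypothesis (2), let $Q\in\mathcal D_\ell$ with $\ell\leq L$ satisfy $|Q\cap A|>\theta|Q|$; we must show $\mathrm{pre}(Q)\subset B$. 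Arguing by contradiction, suppose there is $\bar x\in\mathrm{pre}(Q)$ with $v(\bar x)\leq K^{m-1}$. Let $Q'=\mathrm{pre}(Q)$, with center $x_0$ and side length $2^{-\ell+1}$, so that $Q\subset Q'$, and rescale via $\tilde v(y)=K^{-(m-1)}v(x_0+Ry)$ with $R$ comparable to $2^{-\ell}$ chosen so that $B_1$ is centered appropriately and $Q_3$ (in the rescaled picture) contains the rescaled copy of $Q'$; by \Cref{lem:rescaling}, $\widetilde\L^-_{\Omega,\eps/R}\tilde v\leq R^2 K^{-(m-1)}\rho\leq\rho$ in $B_{\frac52\sqrt N}$ since $R\leq 1$ and $K^{m-1}\geq 1$, and $\tilde v\geq 0$. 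Since $\bar x\in Q'$ gives a point where $\tilde v\leq 1$, we get $\inf_{Q_3}\tilde v\leq 1$, so \Cref{first} applied to $\tilde v$ (with $\eps/R<\eps_0$, valid because $\ell\leq L$) yields $|Q_1\cap\{\tilde v>M\}|\leq\theta$. Undoing the rescaling and using $K\geq M$, this translates precisely to $|Q\cap\{v>K^m\}|\leq\theta|Q|$, contradicting $|Q\cap A|>\theta|Q|$. Thus $\mathrm{pre}(Q)\subset B$.

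For hypothesis (3), let $Q\in\mathcal D_L$ with $|Q\cap A|>\frac{c}{K}|Q|$; we must show $Q\subset B$. Here $L$ is chosen so that $2^L\eps$ sits in $[\frac{\eps_0}{2},\eps_0)$ after rescaling, i.e. the rescaled scale $\eps/R\in[\frac{\eps_0}{2},\eps_0)$. Rescale $Q$ (or rather a fixed dilate of it) to $Q_1$ by $\tilde v(y)=K^{-(m-1)}v(x_0+Ry)$ as above; again $\widetilde\L^-_{\Omega,\eps/R}\tilde v\leq\rho$ and $\tilde v\geq 0$. The hypothesis becomes $|Q_1\cap\{\tilde v>K\}|>\frac{c}{K}$, so \Cref{second} (applicable since the rescaled $\eps$ lies in $[\frac{\eps_0}{2},\eps_0)$) gives $\tilde v>1$ in $Q_1$, i.e. $v>K^{m-1}$ in $Q$, that is $Q\subset B$.

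With all three hypotheses of \Cref{CZ} verified, we conclude
\begin{equation*}
	|A|\leq\theta|B|+\frac{c}{K}
	=\theta\,|Q_1\cap\{v>K^{m-1}\}|+\frac{c}{K}
	\leq\theta\Big(\frac{c}{(1-\theta)K}+\theta^{m-1}\Big)+\frac{c}{K}
	=\frac{c}{(1-\theta)K}+\theta^m,
\end{equation*}
using the induction hypothesis and $\frac{\theta c}{(1-\theta)K}+\frac{c}{K}=\frac{c}{(1-\theta)K}$. This completes the induction.
\end{proof}

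The main obstacle is the bookkeeping in the rescaling steps: one has to verify that for dyadic cubes of generation $\ell\leq L$ the rescaled scale $\eps/R$ indeed stays below $\eps_0$ (so \Cref{first} applies), and that for generation exactly $L$ the rescaled scale falls in the window $[\frac{\eps_0}{2},\eps_0)$ (so \Cref{second} applies), choosing $L$ accordingly — and to check that the rescaled operators $\widetilde\L^\pm$ still satisfy the hypotheses of those lemmas, which is exactly what \Cref{lem:rescaling} guarantees since the constants $\dens_0,\dens_1,\|\nabla\dens\|_\infty$ are preserved under rescaling and $R\leq 1$, $K^{m-1}\geq 1$ only help the sign of the right-hand side. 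The rest is the standard De Giorgi iteration already carried out in \cite[Lemma 4.3]{arroyobp22}.
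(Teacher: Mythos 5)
Your proof is correct and follows essentially the same route as the paper: the paper proves the one-step recursion $|A_{m+1}|\leq\theta|A_m|+\frac{c}{K}$ via \Cref{CZ} (checking hypothesis \eqref{item:includedB} with \Cref{first} and hypothesis \eqref{item:includedB2} with \Cref{second}, using the rescaling \Cref{lem:rescaling}) and then sums the geometric series, which is your induction with the indices shifted by one. The only slip is in the centering of the rescaling for hypothesis \eqref{item:includedB}: you dilate about the center of $\mathrm{pre}(Q)$, in which case $Q_1$ in the rescaled picture does not correspond to $Q$ and the final translation back to $|Q\cap\{v>K^m\}|\leq\theta|Q|$ fails as written; the correct choice (and the paper's) is to center at $Q$ itself with $R=2^{-\ell}$, so that $Q$ rescales exactly to $Q_1$ while $\mathrm{pre}(Q)$, having twice the side length and containing $Q$, rescales into $Q_3$, which is all that is needed to place the point where $\tilde v\leq 1$.
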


\begin{proof}
For simplicity, we define
\begin{equation*}
	A_m=Q_1\cap\{v>K^m\}
\end{equation*}
for each $m\in\N$. We start by observing that the desired inequality is immediately satisfied for $m=1$. Indeed, by \Cref{first}, we have
\begin{equation*}
	|A_1|
	=
	|Q_1\cap\{v>K\}|
	\leq
	|Q_1\cap\{v> M\}|
	\leq
	\theta
	\leq
	\frac{c}{K}+\theta
\end{equation*}
for every $K\geq M$. In order to prove that the inequality holds for every $m\in\N$, we claim that
\begin{equation}\label{claim}
	|A_{m+1}|
	\leq
	\frac{c}{K}+\theta|A_m|
\end{equation}
for every $m\in\N$, since in that case we immediately get that
\begin{equation*}
	|A_m|
	\leq
	\frac{c}{K}(1+\theta+\cdots+\theta^{m-1})+\theta^m
	\leq
	\frac{c}{(1-\theta)K}+\theta^m
\end{equation*}
for each $m\in\N$.

We now show \eqref{claim}. Fix $L=L(\eps)\in\N$ the unique integer such that $2^L\eps<\eps_0\leq 2^{L+1}\eps$. Our aim is to apply \Cref{CZ} with $\delta_1=\theta$ and $\delta_2=\frac{c}{K}$. It is clear that $A_{m+1}\subset A_m\subset Q_1$ and $|A_{m+1}|\leq\theta$, so the first assumption in \Cref{CZ} is satisfied. Next we check that the two remaining conditions also hold.

To check assumption \eqref{item:includedB}, given any dyadic cube $Q\in\mathcal{D}_\ell$ of generation $\ell\leq L$ satisfying
\begin{equation}\label{eq:meas-assump}
	|Q\cap A_{m+1}|>\theta|Q|,
\end{equation}
we need to prove that $\mathrm{pre}(Q)\subset A_m$. Suppose on the contrary that $\mathrm{pre}(Q)\not\subset A_m$. Then there exists $\tilde x\in\mathrm{pre}(Q)$ such that $v(\tilde x)\leq K^m$. Denote by $x_0$ the center of $Q$ and define the rescaled function $\tilde v:Q_1\to\R$ as
\begin{equation}\label{tilde v}
	\tilde v(y)
	=
	\frac{1}{K^m}\,v(x_0+2^{-\ell}y)
\end{equation}
for $y\in Q_1$. By \Cref{lem:rescaling} with $R=2^{-\ell}$ and $C=\frac{1}{K^m}$ we have that
\begin{align}
\label{eq:DG-rescaling}
	\widetilde\L^-_{\Omega,2^\ell\eps}\tilde v(y)
	\leq
	\frac{1}{2^{2\ell}K^m}\L^-_{\Omega,\eps}v(x_0+2^{-\ell}y)
	\leq
	\frac{\rho}{2^{2\ell}K^m}
	\leq
	\rho
\end{align}
where we have used that $K\geq M\geq1$ together with the assumption $\L_{\Om,\eps}^-v\leq\rho$. Observe also that $\widetilde\L_{\Omega,2^\ell\eps}^-$ is the minimal Pucci operator with the measure $\tilde\mu$ instead of $\mu$ and that $2^\ell\eps\leq 2^L\eps<\eps_0$. Moreover, $\inf_{Q_3}\tilde v\leq 1$ since $v(\tilde x)\leq K^m$. Thus $\tilde v\geq 0$ is a bounded measurable function satisfying $\widetilde\L_{\Om,2^\ell\eps}^-\tilde v\leq\rho$, so recalling \Cref{first} we obtain that
\begin{equation*}
	|Q_1\cap\{\tilde v> K\}|
	\leq
	\theta.
\end{equation*}
On the other hand, using \eqref{tilde v} we have
\begin{equation*}
	|Q_1\cap\{\tilde v> K\}|
	=
	2^{N\ell}|Q\cap\{u> K^{m+1}\}|
	=
	\frac{|Q\cap A_{m+1}|}{|Q|},
\end{equation*}
so we reach a contradiction with \eqref{eq:meas-assump}. Thus $\mathrm{pre}(Q)\subset A_m$ and the assumption \eqref{item:includedB} from \Cref{CZ} holds.

Next we check that the assumption \eqref{item:includedB2} also holds. That is, given any dyadic cube $Q\in\mathcal{D}_L$ such that
\begin{equation*}
	|Q\cap A_{m+1}|>\frac{c}{K}|Q|,
\end{equation*}
we need to show that $Q\subset A_m$. Using the same rescaled function constructed in \eqref{tilde v} we immediately have that
\begin{equation*}
	|Q_1\cap\{\tilde v>K\}|
	=
	\frac{|Q\cap A_{m+1}|}{|Q|}
	>
	\frac{c}{K},
\end{equation*}
so similarly recalling \Cref{second} we get that $\tilde v>1$ in $Q_1$. In terms of the function $v$, this reads as $v>K^m$ in $Q$, which in particular means that $Q\subset A_m$ as wanted, and so \eqref{item:includedB2} is satisfied.

Since the assumptions in \Cref{CZ} are fulfilled with $A=A_{m+1}$ and $B=A_m$, we conclude that \eqref{claim} holds, so the proof is finished.
\end{proof}

As an immediate consequence of \Cref{lem:main}, by appropriately selecting the constants $K\geq M$ and $m\in\N$, we obtain a decay estimate for the distribution function of $v$ which reads as
\begin{equation*}
	|Q_1\cap\{v>t\}|
	\leq
	d\exp\bigg(-\sqrt{\frac{\log t }{a}}\bigg)
\end{equation*}
for every $t\geq1$. This is proven in \cite[Lemma 4.4]{arroyobp22}.

Repeating the arguments in \cite[Lemmas 4.5 and 4.6]{arroyobp22} (see also \cite[Lemma 5.8]{arroyobp23}), one can show in a straightforward manner the following:
Let $R>0$ and $\eps\in(0,R\eps_0)$. There exists $c>1$ such that if $v\geq0$ is a bounded measurable function satisfying $\L_{\Omega,\eps}^+v\geq-\rho$ in $B_{cR}$ for some $\rho>0$ then
\begin{equation*}
	|B_R\cap\{v\leq M\}|
	\geq
	\theta|B_R|
\end{equation*}
for given $\theta\in(0,1)$ and $M\in\R$ implies that
\begin{equation*}
	\sup_{B_R}v
	\leq
	(1-\eta)\sup_{B_{cR}}v+\eta M+CR^2\rho
\end{equation*}
for certain $\eta=\eta(\theta)\in(0,1)$. This is the so-called De Giorgi oscillation lemma, and it is the key in the proof of the asymptotic H\"older regularity estimate that we state next. Interested readers may find the proof from Section 6 in \cite{arroyobp23}.

\begin{theorem}[Local asymptotic H\"older regularity for $v$]\label{Holder}
Let $R>0$ and fix any $\eps\in(0,R\eps_0)$. If $v\in L^\infty(B_R)$ is a function satisfying
\begin{equation*}
	\L_{\Omega,\eps}^+v
	\geq
	-\rho,
	\qquad
	\L_{\Omega,\eps}^-v
	\leq
	\rho,
\end{equation*}
in $B_{2R}$ for some $\rho>0$, then there exists $\gamma\in(0,1]$ such that
\begin{equation*}
	|v(x)-v(y)|
	\leq
	\frac{C}{R^\gamma}(\|v\|_{L^\infty(B_R)}+R^2\rho)(|x-y|^\gamma+\eps^\gamma)
\end{equation*}
for every $x,y\in B_R$, where $C>0$ and $\gamma\in(0,1]$ are universal constants.
\end{theorem}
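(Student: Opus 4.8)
The plan is to obtain \Cref{Holder} from the De Giorgi oscillation estimate recorded just above this theorem by the classical geometric iteration over dyadic scales, the only genuinely non-classical point being that the iteration must be stopped once the radius reaches the order of $\eps/\eps_0$ — which is precisely the mechanism producing the additive term $\eps^\gamma$. Since the results of \Cref{part2} are for functions on all of $\R^N$, I may assume $v\colon\R^N\to\R$. As the extremal operators are invariant under adding constants and positively homogeneous of degree one, after subtracting a constant and multiplying by $1/(\|v\|_{L^\infty(B_R)}+R^2\rho)$ (the trivial case being $\|v\|_{L^\infty(B_R)}+R^2\rho=0$) I may further assume $\|v\|_{L^\infty(B_R)}+R^2\rho\le 1$ and $v\ge 0$ locally. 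It then suffices to prove
\begin{equation*}
	\osc_{B_s(z)}v\le C\Big(\frac{s}{R}\Big)^{\gamma}
	\qquad\text{for all }z\in B_R\text{ and }c\eps/\eps_0\le s\le R/c,
\end{equation*}
where $c>1$ is the constant from the oscillation lemma and $\gamma\in(0,1]$ will be fixed below.

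First I would establish the one-step reduction of oscillation. Fix $z\in B_R$ and $r$ with $c^2r\le R$, write $M_s=\sup_{B_s(z)}v$, $m_s=\inf_{B_s(z)}v$ for $s\in\{r,cr\}$, and set $\mathrm{mid}=\tfrac12(M_{cr}+m_{cr})$. At least one of $\{v\le\mathrm{mid}\}$, $\{v\ge\mathrm{mid}\}$ meets $B_r(z)$ in measure at least $\tfrac12|B_r(z)|$. In the first case apply the oscillation lemma (after the standard truncation outside $B_{cr}(z)$, which preserves the inequality $\L^+_{\Omega,\eps}(\,\cdot\,)\ge-\rho$ since all operations defining $\L^+_{\Omega,\eps}$ are monotone) to the nonnegative function $v-m_{cr}$ with $M=\tfrac12(M_{cr}-m_{cr})$ and $\theta=\tfrac12$; in the second case apply it to $M_{cr}-v$, which satisfies $\L^+_{\Omega,\eps}(M_{cr}-v)\ge-\rho$ because $\L^-_{\Omega,\eps}v\le\rho$. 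Either way, with a universal $\eta=\eta(\tfrac12)\in(0,1)$,
\begin{equation*}
	\osc_{B_r(z)}v\le\Big(1-\tfrac{\eta}{2}\Big)\osc_{B_{cr}(z)}v+Cr^{2}\rho .
\end{equation*}
The oscillation lemma is legitimate at scale $r$ exactly because $\eps\in(0,r\eps_0)$, i.e. $r>\eps/\eps_0$; and when $r<R$ the rescaled problem of \Cref{lem:rescaling} has to be invoked, which is harmless since that rescaling leaves $\dens_0$, $\dens_1$ and $\|\nabla\dens\|_\infty$ — hence all universal constants entering \Cref{first,second} — unchanged.

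Next I would iterate on $r_k=c^{-k-1}R$ for $k=0,1,\dots,K$, with $K$ the largest index for which $r_K\ge c\eps/\eps_0$, obtaining
\begin{equation*}
	\osc_{B_{r_k}(z)}v\le\Big(1-\tfrac{\eta}{2}\Big)^{k}\osc_{B_R(z)}v+C\rho\sum_{j=1}^{k}\Big(1-\tfrac{\eta}{2}\Big)^{k-j}r_j^{2}.
\end{equation*}
Choosing $\gamma=\min\big\{-\log(1-\tfrac{\eta}{2})/\log c,\;1\big\}\in(0,1]$, so that $c^{-\gamma}\ge\max\{1-\tfrac{\eta}{2},\,c^{-2}\}$, the geometric sum is bounded by $C(r_k/R)^{\gamma}$, and with $\osc_{B_R(z)}v\le 2$ this gives $\osc_{B_{r_k}(z)}v\le C(r_k/R)^{\gamma}$; since consecutive radii differ by the fixed factor $c$, the displayed estimate at the start of the proof follows for all $s\in[c\eps/\eps_0,\,R/c]$ after enlarging $C$. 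Finally, for $x,y\in B_R$ (first on a fixed fraction of $B_R$ and then covering, if one wants the constant in terms of $\|v\|_{L^\infty(B_R)}$ only, cf. \cite[Section~6]{arroyobp23}): if $|x-y|\ge c\eps/\eps_0$ take $s=|x-y|$ and bound $|v(x)-v(y)|\le\osc_{B_s(x)}v\le C(|x-y|/R)^{\gamma}$, while if $|x-y|<c\eps/\eps_0$ take $s=c\eps/\eps_0$ and bound $|v(x)-v(y)|\le\osc_{B_s(x)}v\le C(\eps/R)^{\gamma}$; adding the two estimates and undoing the normalization yields
\begin{equation*}
	|v(x)-v(y)|\le\frac{C}{R^{\gamma}}\big(\|v\|_{L^\infty(B_R)}+R^{2}\rho\big)\big(|x-y|^{\gamma}+\eps^{\gamma}\big).
\end{equation*}

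The main obstacle, as already visible in the sketch, is carrying the two non-classical features through the iteration simultaneously: the inhomogeneity $\rho$, which forces the Hölder exponent below $2$ and is absorbed only through the geometric-series estimate above, and the hard cutoff of the iteration at the scale $\eps/\eps_0$ — below which the $\eps$-nonlocal operator carries no information and no further gain in oscillation is available — whose error is exactly the $\eps^{\gamma}$ term. Keeping the constants universal requires, at each admissible scale, rescaling via \Cref{lem:rescaling} and re-checking that \Cref{first,second} still apply with the same constants; this is a routine adaptation of \cite[Lemmas~4.5 and~4.6]{arroyobp22} and \cite[Section~6]{arroyobp23}, which is why I would cite those for the remaining details rather than reproduce them in full.
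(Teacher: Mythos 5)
Your proposal is correct and follows essentially the same route the paper takes: the paper also derives \Cref{Holder} from the De Giorgi oscillation lemma stated just above it (obtained from \Cref{lem:main} via the arguments of \cite[Lemmas 4.5 and 4.6]{arroyobp22}), iterated over geometric scales with the iteration truncated at scale comparable to $\eps/\eps_0$, deferring the routine details to \cite[Section 6]{arroyobp23}. Your handling of the inhomogeneity $\rho$ by a geometric series, the truncation needed to apply the oscillation lemma to $v-m_{cr}$ and $M_{cr}-v$, and the origin of the additive $\eps^\gamma$ term all match the intended argument.
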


Observe that since this result gives H\"older regularity for the operators with drift terms, this is of independent interest apart from the graph context and also compared to the results in \cite{arroyobp22,arroyobp23}. In the PDE setting this would correspond to operators with first order gradient dependent terms.

\begin{example}[$\eps$-nonlocal operators with a drift term]
\label{ex:drift}
To emphasize that the ABP and regularity results obtained in \Cref{part2} are quite robust and that definitions originally tailored for treating the loss of symmetry in the graph operators allow extra flexibility, we present a further   example. We show that the results apply to an operator with a drift term. 
 Let $\tau>0$ and $\beta>0$, $\alpha,\gamma\geq 0$, and define
\begin{align*}
	\hat \L_{\Omega,\eps}^+v(x)
	=
	~&
	\alpha\sup_{|z|<\Lambda\eps}\frac{\delta_{\Omega,\tau\eps^2}^+v(x,z)}{2\eps^2}
	+
	\beta\dashint_{B_\eps(x)}\frac{v(y)-v(x)}{\eps^2}\ d\mu(y)
	\\
	~&
	+
	\gamma \sup_{|h|<\tau\eps^2}\frac{v(x+h)-v(x)}{\eps^2}.
\end{align*}
Here the last term can be used to include many kind of drift terms for example a constant drift to the first coordinate direction $e_1$   
\begin{align*}
\frac{v(x+\eps^2e_1)-v(x)}{\eps^2}
\end{align*}
as a simplest case.
It turns out that the  last term can be absorbed by the $\alpha$-term:
\begin{align*}
	\sup_{|h|<\tau\eps^2}\frac{v(x+h)-v(x)}{\eps^2}
	=
	~&
	\sup_{|h|<\tau\eps^2}\frac{v(x)+v(x+h)-2v(x)}{2\eps^2}
	\\
	\leq
	~&
	\sup_{|z|<\Lambda\eps}\sup_{|h|<\tau\eps^2}\frac{v(x+z)+v(x-z+h)-2v(x)}{2\eps^2}
	\\
	=
	~&
	\sup_{|z|<\Lambda\eps}\frac{\delta_{\Omega,\tau\eps^2}^+v(x,z)}{2\eps^2}.
\end{align*}
This means that the drift term of this form can be absorbed into the nonlinear part of the maximal $\eps$-nonlocal operator \eqref{L-Omega}.
\end{example}

\part{Connection to partial differential equations}\label{part3}

Our aim in this part is to show that the discrete operators can be associated to partial differential equations.
To do that, we begin by obtaining a convergent subsequence with an Ascoli-Arzel\`a type argument.
Here the asymptotic H\"older regularity obtained in \Cref{thm:main-graph-holder-regularity} plays a crucial role. 
Then we compute asymptotic expansions for twice-differentiable functions to take limits as $\eps\to 0$. In particular we consider an extremal partial differential operator related to the classical Pucci operator and the weighted $p$-Laplacian.

\section{Convergence}

We consider a sequence of random data clouds $\{\X_{n_k}^{(k)}\}_{k\in\N}$  such that $\{n_k\}_{k\in\N}$ is sequence of natural numbers.  Moreover, for each $k$ we consider a function $u_k\in L^\infty(\X_{n_k}^{(k)})$ such that
\begin{equation*}
	\L_{\X_{n_k}^{(k)},\eps_k}^+u_k
	\geq
	-\rho,
	\quad
	\L_{\X_{n_k}^{(k)},\eps_k}^-u_k
	\leq
	\rho
\end{equation*}
in $\X_{n_k}^{(k)}$ with some fixed $\rho>0$ and $\eps_k\in(0,\eps_0)$. 
In addition, we assume that $u_k$ has prescribed ``boundary'' values. More precisely, if we consider the inner boundary strip of width $\eps$, that is
\begin{equation*}
	\Gamma_\eps
	=
	\{x\in\Omega\,:\,\dist(x,\partial\Omega)\leq \eps\},
\end{equation*}
and $g\in C(\Gamma_{\eps_0})$ is a given function playing the role of the continuous boundary data, then we assume that $u_k=g$ in $\O_{\eps_k}$, where we use the notation
\[
\O_\eps=\X_n\cap\Gamma_{\eps}.
\]

Our aim is to use the established regularity results. However, there is an inconvenience when trying to apply directly this approach, and it is due to the fact that the discrete-to-nonlocal inequality from \Cref{thm:discrete-to-nonlocal} introduces a dependence on the supremum norm of the solutions. In fact,
\begin{equation}\label{d-t-n-uk}
	\L_{\Omega,\Lambda+\eps^2,\tau+\eps^2,\eps_k}^+[u_k\circ T_{\eps_k}](x)
	\geq
	-\rho-C\|u_k\|_{L^\infty(\X_{n_k}^{(k)})},
\end{equation}
so the regularity estimates for $u_k\circ T_{\eps_k}$ depend \emph{a priori} on the supremum norm of $u_k$, which does not allow to apply an Ascoli-Arzel\`a type result.

In order to bypass this problem, we show that a function $u_k\in L^\infty(\X_{n_k}^{(k)})$ satisfying the Pucci-type inequalities in $\X_{n_k}^{(k)}$ has bounds which are independent of $k$, and so the sequence $\{u_k\}_k$ is uniformly bounded. In consequence, the Pucci bounds in \eqref{d-t-n-uk} are uniform, an then the asymptotic H\"older regularity estimates for both $u_k$ and its extension $u_k\circ T_{\eps_k}$ are uniform in $k$.

But even more, the discrete-to-nonlocal inequality \eqref{d-t-n-uk} is not enough for the purposes of this part of the article even though we can obtain a uniform control on $\|u_k\|_{L^\infty(\X_{n_k}^{(k)})}$. The reason for this is that, despite being controlled by a uniform constant for every $k$, the size of the second term in the right hand side of \eqref{d-t-n-uk} adds an error in the estimates which affects negatively.

Fortunately, it is possible to improve the size of this error by appropriate selecting the constants $\delta$ and $\lambda$ in the proof of \Cref{lem:dens-eps}. More precisely, if we choose $\delta=\eps^{3+a}$ and $\lambda=\eps^{2+a}$ with some $a>0$, then the events \eqref{partition} and \eqref{event} become
\begin{equation}\label{partition-a>0}
	Z_i
	\in
	\U_i\subset B_{\eps^{3+a}}(Z_i)
	\quad\text{and}\quad
	\mu_\eps(\U_i)=\int_{\U_i}\dens_\eps(y)\ dy=\frac{1}{n}
\end{equation}
and
\begin{equation}\label{event-a>0}
	\|\dens_\eps-\dens\|_\infty
	\leq
	C_0\eps^{2+a},
\end{equation}
and they hold with probability at least
\begin{equation}\label{probability-a>0}
	1-2n\exp\{-c_0n\eps^{3N+4+(N+2)a}\}.
\end{equation}

Repeating the proofs in \Cref{sec:discrete-to-nonlocal} assuming that \eqref{partition-a>0} and \eqref{event-a>0} hold with some fixed $a>0$ leads to an analogous version of the estimate \eqref{eq:discrete-to-nonlocal-averages}
from \Cref{thm:discrete-to-nonlocal-averages} which reads as
\begin{equation}\label{eq:discrete-to-nonlocal-averages-a}
	\bigg|\frac{1}{\card{\V_\eps(T_\eps(x))}}\sum_{Z_i\in\V_\eps(T_\eps(x))}u(Z_i)
	-
	\dashint_{B_\eps(x)}(u\circ T_\eps)\ d\mu\bigg|
	\leq
	C\|u\|_{L^\infty(\X_n)}\eps^{2+a}.
\end{equation}
By introducing this change in the proof of the discrete-to-nonlocal inequality (\Cref{thm:discrete-to-nonlocal}) we get that
\begin{equation}\label{discrete-to-nonlocal-a>0}
	\L_{\X_n,\Lambda,\tau,\eps}^+u(T_\eps(x))
	\leq
	\L_{\Omega,\Lambda+\eps^2,\tau+2\eps,\eps}^+[u\circ T_\eps](x)
	+
	C\|u\|_{L^\infty(\X_n)}\eps^a
\end{equation}
holds for every $x\in\Omega_{-\eps}$ and each $u\in L^\infty(\X_n)$
with probability at least \eqref{probability-a>0}. Then, for $u_k$ as above, instead of \eqref{d-t-n-uk} we have that
\begin{equation*}
	\L_{\Omega,\Lambda+\eps^2,\tau+\eps^2,\eps_k}^+[u_k\circ T_{\eps_k}](x)
	\geq
	-\rho-C\|u_k\|_{L^\infty(\X_{n_k}^{(k)})}\eps^a,
\end{equation*}
so the second term in the right hand side can be as small as desired by setting a sufficiently small $\eps>0$ uniformly for every $k$. This allows to mitigate the bad effect produced by this extra term in the estimates.

The rest of the section is organized as follows. In \Cref{sec:uniform-bound} we show that the barrier constructed in \Cref{sec:barrier} can be adapted to the discrete setting, and thus with the aid of a comparison principle in the random data cloud, we obtain the uniform bound. Next in \Cref{sec:boundary-continuity}, using the explicit barriers from the preceding section, equicontinuity estimates are obtained for $u_k\in L^\infty(\X_{n_k}^{(k)})$. Finally, by using a variant of the Ascoli-Arzel\`a theorem, we prove that $\{u_k\circ T_{\eps_k}\}_k$ converge (up to a subsequence) to a function with probability $1$.

\subsection{Uniform boundedness for Pucci-type inequalities on $\X_n$}
\label{sec:uniform-bound}

We start by proving a comparison principle for functions defined in $\X_n$, for which we use a relaxed notion of ``boundary'' consisting on the nonempty subset of points from the data cloud $\O_\eps=\X_n\cap\Gamma_\eps$.
It is worth remarking that even thought $\O_\eps$ is a collection of points in the random data cloud $\X_n$, the points in $\O_\eps$ could be regarded as non-random, that is, we can assume that $\O_\eps$ is fixed, as long as the graph $\X_n$ satisfies \eqref{partition}, or in particular \eqref{partition-a>0} with some fixed $a>0$.

\begin{lemma}[Comparison principle]\label{lem:comparison-pple}
Let $\X_n\subset\Omega$ be a random data cloud and let $\O_\eps=\X_n\cap\Gamma_\eps$. Suppose that the event \eqref{partition-a>0} holds for some fixed $a>0$ and that $u,v\in L^\infty(\X_n)$ satisfy
\begin{equation*}
	\begin{dcases}
	\L_{\X_n,\eps}^+ u \geq \L_{\X_n,\eps}^+ v & \text{ in } \X_n\setminus\O_\eps,
	\\
	u \leq v & \text{ in } \O_\eps.
	\end{dcases}
\end{equation*}
Then
\begin{equation*}
	u
	\leq
	v
	\qquad
	\text{ in } \X_n.
\end{equation*}
\end{lemma}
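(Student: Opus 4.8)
The plan is to reduce the comparison principle to a maximum principle for the difference $w:=u-v$ and then to push the maximum of $w$ out to the boundary strip $\O_\eps$, where it is controlled by hypothesis. The first ingredient is the \emph{sub-additivity} of the discrete maximal operator: writing $F_i(u)=\max_{Z_j\in\V_{\Lambda\eps}(Z_i)}\max_{Z_k\in\V_{\tau\eps^2}(2Z_i-Z_j)}\tfrac{u(Z_j)+u(Z_k)}{2}$ and evaluating $F_i(v)$ at the pair optimal for $u$, one gets $F_i(u)-F_i(v)\leq F_i(u-v)$; since the $\beta$-average and the term $-u(Z_i)$ in \eqref{L-Xn} are linear, this gives $\L_{\X_n,\eps}^+u(Z_i)-\L_{\X_n,\eps}^+v(Z_i)\leq\L_{\X_n,\eps}^+w(Z_i)$ at every $Z_i\in\X_n$. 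Combined with the hypotheses, $\L_{\X_n,\eps}^+w\geq 0$ in $\X_n\setminus\O_\eps$ and $w\leq 0$ in $\O_\eps$, and it remains to prove $w\leq 0$ on all of $\X_n$.

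Next, arguing by contradiction, suppose $M:=\max_{\X_n}w>0$ and set $S:=\{Z\in\X_n:w(Z)=M\}$, which is nonempty and, because $w\leq 0<M$ on $\O_\eps$, satisfies $S\subset\X_n\setminus\O_\eps$. For any $Z_0\in S$, the inequality $\L_{\X_n,\eps}^+w(Z_0)\geq 0$ reads, after multiplying by $\eps^2$ and using $\alpha+\beta=1$, as a convex combination of quantities each at most $M$ being $\geq w(Z_0)=M$. Since $\beta>0$ this forces the $\beta$-average $\tfrac{1}{\card{\V_\eps(Z_0)}}\sum_{Z_j\in\V_\eps(Z_0)}w(Z_j)$ to equal $M$, hence $w\equiv M$ on $\V_\eps(Z_0)$, i.e. $\V_\eps(Z_0)\subset S$. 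Thus $S$ is ``$\eps$-open'' away from $\O_\eps$.

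Finally I would propagate this toward $\partial\Omega$. Under the event \eqref{partition-a>0} (equivalently \eqref{partition}), $\X_n$ is $\eps^{3+a}$-dense in $\Omega$, since every $x\in\Omega$ lies in some $\U_i\subset B_{\eps^{3+a}}(Z_i)$. Starting from $Z_0\in S$ we have $\dist(Z_0,\partial\Omega)>\eps$, so with $p_0$ a nearest boundary point the point $x_1=Z_0+(\eps-2\eps^{3+a})\tfrac{p_0-Z_0}{|p_0-Z_0|}$ lies in $B_{\dist(Z_0,\partial\Omega)}(Z_0)\subset\Omega$, and there is $Z_1\in\X_n$ with $|Z_1-x_1|<\eps^{3+a}$; then $|Z_1-Z_0|<\eps$, so $Z_1\in\V_\eps(Z_0)\subset S$, while $\dist(Z_1,\partial\Omega)<\dist(Z_0,\partial\Omega)-\eps+3\eps^{3+a}<\dist(Z_0,\partial\Omega)-\tfrac{\eps}{4}$ for $\eps<\eps_0$. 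Iterating produces $Z_0,Z_1,Z_2,\dots\in S$ along which $\dist(\cdot,\partial\Omega)$ drops by at least $\tfrac{\eps}{4}$ at each step, so after finitely many steps one reaches $Z_\ell$ with $\dist(Z_\ell,\partial\Omega)\leq\eps$, i.e. $Z_\ell\in\O_\eps$, contradicting $w(Z_\ell)=M>0$ and $w\leq 0$ on $\O_\eps$. Hence $M\leq 0$ and $u\leq v$ in $\X_n$.

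I expect the propagation step to be the main obstacle: the sub-additivity reduction and the pointwise flatness argument are routine, but one must make sure the maximum set genuinely reaches the strip $\O_\eps$ through a chain of data points with steps shorter than $\eps$, which is exactly where the near-density coming from \eqref{partition-a>0} enters, together with the elementary observation that moving a point toward its nearest boundary point stays inside a ball contained in $\Omega$.
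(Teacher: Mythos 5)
Your proposal is correct and follows essentially the same route as the paper: reduce to a maximum principle for $w=u-v$ (the paper uses monotonicity of the max operator at the maximum point rather than sub-additivity, but these are the same manipulation), use $\beta>0$ to force $w\equiv\max w$ on $\V_\eps(Z_0)$, and chain data points to the boundary strip using the $\eps^{3+a}$-density from \eqref{partition-a>0}. Your explicit construction of the chain toward the nearest boundary point, with the quantified drop of $\dist(\cdot,\partial\Omega)$ by at least $\eps/4$ per step, in fact fills in the detail that the paper only sketches.
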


\begin{proof}
The proof follows a standard argument. For the sake of simplicity, we introduce the following shorthand notations,
\begin{align*}
	\A u(Z_i)
	=
	~&
	\frac{1}{\card{\V_\eps(Z_i)}}\sum_{Z_j\in\V_\eps(Z_i)}u(Z_j),
	\\
	\M^+u(Z_i)
	=
	~&
	\max_{Z_j\in\V_{\Lambda\eps}(Z_i)}\max_{Z_k\in\V_{\tau\eps^2}(2Z_i-Z_j)}\frac{u(Z_j)+u(Z_k)}{2}.
\end{align*}
Note that these operators are monotone in $L^\infty(\X_n)$. In this way, by the definition of the maximal operator $\L_{\X_n,\eps}^+$ in \eqref{def:L-Xn} and by assumption,
\begin{equation*}
	\alpha\M^+u+\beta\A u-u
	=
	\eps^2\L_{\X_n,\eps}^+u
	\geq
	\eps^2\L_{\X_n,\eps}^+v
	=
	\alpha\M^+v+\beta\A v-v
\end{equation*}
in $\X_n\setminus\O_\eps$. Moreover, since $\A:L^\infty(\X_n)\to L^\infty(\X_n)$ is a linear operator, we can write
\begin{equation}\label{comparison-ineq}
	u-v
	\leq
	\alpha(\M^+u-\M^+v)+\beta\A[u-v].
\end{equation}
Let us define 
\begin{equation*}
	m=\max_{\X_n}(u-v).
\end{equation*}
In order to show that $m\leq 0$, we assume on the contrary that $m>0$ and define the set $S$ of points in $\X_n$ for which the maximum is attained, that is
\begin{equation*}
	S
	=
	\{Z_i\in\X_n\,:\,u(Z_i)-v(Z_i)=m\}.
\end{equation*}
Then the contradiction with the assumption $u-v\leq 0$ in $\O_\eps$ will follow after showing that $S\cap\O_\eps\neq\emptyset$.

Let any $Z_i\in S$. Then $u(Z_i)=m+v(Z_i)$ and by \eqref{comparison-ineq} we have
\begin{align*}
	m
	\leq
	~&
	\alpha\big(\M^+u(Z_i)-\M^+v(Z_i)\big)+\beta\A[u-v](Z_i)
	\\
	\leq
	~&
	\alpha m+\beta\A[u-v](Z_i),
\end{align*}
where in the second inequality we have used that $u\leq v+m$ together with the monotonicity of $\M^+$. Since $\beta=1-\alpha>0$, recalling again that $u-v\leq m$ in $\X_n$ using the monotonicity of $\A$ we obtain
\begin{equation*}
	\A[u-v](Z_i)
	=
	m.
\end{equation*}
This implies that $u-v=m$ in $\V_\eps(Z_i)$, since otherwise $u(Z_j)-v(Z_j)<m$ for at least one $Z_j\in\V_\eps(Z_i)$, which would imply that $\A[u-v](Z_i)<m$. That is, we have shown that from $Z_i\in S$ it follows that
\begin{equation}\label{comparison-inclusion}
	\V_\eps(Z_i)\subset S.
\end{equation}
Since \eqref{partition-a>0} holds by assumption, iterating this property we can deduce that $S\cap\O_\eps\neq\emptyset$ so we reach the desired contradiction. The idea is as follows: starting from any $Z_{i_0}\in S$, we can construct a finite sequence of points $Z_{i_0},Z_{i_1},Z_{i_2},\ldots$ approaching a point $W\in\O_\eps$ contained in the same connected component of $\Omega$ in such a way that $Z_{i_k}\in\V_\eps(Z_{i_{k-1}})$ for each $k$. Then \eqref{comparison-inclusion} yields that this collection of points is in $S$. Moreover, the condition \eqref{partition-a>0} implies that at each step the point $Z_{i_k}\in\V_\eps(Z_{i_{k-1}})$ can be selected in such a way that its distance to $W$ decreases at least $\eps/2$. Therefore, $W$ is reached after a finite amount of steps, and again by \eqref{comparison-inclusion} we have that $W\in S$, and in particular $S\cap\O_\eps\neq\emptyset$. Then $m\leq 0$ and so $u\leq v$ in $\X_n$.
\end{proof}

We show that the barrier function from \Cref{sec:barrier} can be used also as a barrier in the discrete setting.

\begin{lemma}\label{lem:L+barrier}
Let $\X_n\subset\Omega$ be a random data cloud and suppose that the events \eqref{partition-a>0} and \eqref{event-a>0} hold for some fixed $a>0$. Let $r>0$ and $\xi\in\R^N\setminus\Omega$ be such that $\dist(\xi,\Omega)\geq r$,  and define $R=R(\Omega,\xi)=\max\{|x-\xi|\,:\,x\in\Omega\}\leq\diam\Omega+\dist(\xi,\Omega)$. For $\sigma>\sigma_0$, where $\sigma_0=\sigma(r,R)$ satisfies \eqref{sigma-bound-r-R}, let $\Phi_\xi:\R^N\to[0,1)$ be the smooth function defined as
\begin{equation}
\label{eq:graph-barrier}
	\Phi_\xi(x)
	=
	1-(1+|x-\xi|^2)^{-\sigma}
\end{equation}
for every $x\in\R^N$. There exists $\eps_0=\eps_0(r,R,\sigma)>0$ (satisfying \eqref{eps0-bound-1} and \eqref{eps-bound-r-R}) such that
\begin{equation}\label{L+barrier}
	\L_{\X_n,\eps}^+\big[\Phi_\xi\big|_{\X_n}\big]
	\leq
	-\frac{\sigma}{2}(R^2+1)^{-\sigma}
\end{equation}
in $\X_n\cap\Omega_{-\eps}$ for every $\eps\in(0,\eps_0)$.
\end{lemma}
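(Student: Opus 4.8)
The plan is to transfer the estimate on $\L^-_{\Omega,\eps}$ for the continuous barrier $\varphi(x)=(1+|x-\xi|^2)^{-\sigma}$, established in \Cref{barrier-0}, to the discrete operator $\L^+_{\X_n,\eps}$ applied to $\Phi_\xi=1-\varphi$. Observe first that $\L^+_{\X_n,\eps}\big[\Phi_\xi\big|_{\X_n}\big]=-\L^-_{\X_n,\eps}\big[\varphi\big|_{\X_n}\big]$ since negation swaps $\max$ with $\min$ and the $\beta$-term is linear, so it suffices to bound $\L^-_{\X_n,\eps}\big[\varphi\big|_{\X_n}\big]$ from below. For this I would run the discrete-to-nonlocal comparison exactly as in \Cref{thm:discrete-to-nonlocal}, but for the minimal operator and ``in reverse'': using \Cref{thm:discrete-to-nonlocal-averages} (or its sharper version \eqref{eq:discrete-to-nonlocal-averages-a}) together with the inclusions $\V_{\Lambda\eps}(Z_i)\subset B_{(\Lambda+\eps^2)\eps}(Z_i)$ and $\V_{\tau\eps^2}(2Z_i-Z_j)\subset B_{(\tau+2\eps)\eps^2}(2Z_i-Z_j)$, one obtains
\begin{equation*}
	\L^-_{\X_n,\eps}\big[\varphi\big|_{\X_n}\big](Z_i)
	\geq
	\L^-_{\Omega,\Lambda+\eps^2,\tau+2\eps,\eps}\varphi(Z_i)
	-
	C\|\varphi\|_{L^\infty}\eps^a,
\end{equation*}
valid for $Z_i\in\X_n\cap\Omega_{-\eps}$ under the events \eqref{partition-a>0} and \eqref{event-a>0}. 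The point is that here $\varphi$ is a genuine smooth function on $\R^N$, so $\varphi\big|_{\X_n}\circ T_\eps$ is simply $\varphi$ up to an $O(\eps^{3+a})$ pointwise error coming from the transport map, which is harmless.

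Next I would apply \Cref{barrier-0}: since $\operatorname{dist}(\xi,\Omega)\geq r$ and $R=\max_{x\in\Omega}|x-\xi|$, every $x\in\Omega$ satisfies $r\leq|x-\xi|\leq R$, and by \eqref{bound-L-Psi} (after recentering at $\xi$, which is an affine change that the operators respect) we have $\L^-_{\Omega,\eps}\varphi(x)\geq\sigma\varphi(x)\geq\sigma(R^2+1)^{-\sigma}$ for every such $x$, provided $\sigma>\sigma_0(r,R)$ and $\eps<\eps_0(r,R,\sigma)$ with $\eps_0$ satisfying \eqref{eps0-bound-1} and \eqref{eps-bound-r-R}. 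One must only check that enlarging the parameters from $(\Lambda,\tau)$ to $(\Lambda+\eps^2,\tau+2\eps)$ does not spoil this; since $\eps_0<\tfrac12$ these stay within a fixed compact range and the constants $C_1,C_2,C_3$ in \Cref{barrier-0} can be chosen to accommodate, at the cost of a slightly larger universal $\sigma_0$. Combining the two displays gives
\begin{equation*}
	\L^-_{\X_n,\eps}\big[\varphi\big|_{\X_n}\big](Z_i)
	\geq
	\sigma(R^2+1)^{-\sigma}
	-
	C\eps^a,
\end{equation*}
and shrinking $\eps_0$ once more so that $C\eps^a\leq\tfrac{\sigma}{2}(R^2+1)^{-\sigma}$ for all $\eps\in(0,\eps_0)$ yields $\L^-_{\X_n,\eps}\big[\varphi\big|_{\X_n}\big]\geq\tfrac{\sigma}{2}(R^2+1)^{-\sigma}$, which is \eqref{L+barrier} after negating.

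The main obstacle, I expect, is bookkeeping the dependence of $\eps_0$ on $r,R,\sigma$ and making sure all the smallness requirements \eqref{eps0-bound-a}, \eqref{eps0-bound-b}, \eqref{eps0-bound-1}, \eqref{eps-bound-r-R} (needed for \Cref{thm:discrete-to-nonlocal-averages} and \Cref{barrier-0} respectively) can be met simultaneously, together with the final absorption of the $C\eps^a$ error term; none of these is deep, but they must be tracked consistently since later results (the comparison-principle-based uniform bound in \Cref{sec:uniform-bound}) will invoke this lemma with $\xi$ ranging over a compact set, so $r$ and $R$ should be treated as parameters and the resulting $\eps_0$ recorded explicitly. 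A secondary point worth a sentence is that \Cref{barrier-0} as stated is for $\varphi$ centered at the origin; I would note that $\L^\pm_{\Omega,\eps}$ are translation-covariant up to replacing $\dens(\cdot)$ by $\dens(\cdot+\xi)$, which has the same bounds $\dens_0,\dens_1$ and the same Lipschitz constant, so the estimate applies verbatim to $x\mapsto(1+|x-\xi|^2)^{-\sigma}$.
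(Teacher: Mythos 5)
Your proposal is correct and follows essentially the same route as the paper: apply the discrete-to-nonlocal inequality \eqref{discrete-to-nonlocal-a>0} to the barrier, pass from $\varphi\circ T_\eps$ to $\varphi$ using the Lipschitz bound on $\nabla\varphi$ and $|T_\eps(x)-x|\leq\eps^{3+a}$ (the paper's estimate \eqref{difference-barrier}, which turns the $O(\eps^{3+a})$ pointwise error into a harmless $O(\eps^{1+a})$ operator error), invoke \Cref{barrier-0} after translating $\xi$ to the origin, and absorb the $C\eps^a$ remainder by shrinking $\eps_0$. The only difference is cosmetic: you flip the sign at the discrete level ($\L^+_{\X_n,\eps}[\Phi_\xi]=-\L^-_{\X_n,\eps}[\varphi]$) and work with the minimal operator throughout, whereas the paper carries $\L^+$ applied to $\Phi$ and switches to $\varphi$ only when invoking \Cref{barrier-0}.
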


\begin{proof}
After a translation of the domain, we can assume without loss of generality that $\xi=0\in\R^N\setminus\Omega$, so that $r\leq|x|\leq R=R(\Omega,0)$ for every $x\in\Omega$. We consider the function
\begin{equation*}
	\Phi(x)
	=
	\Phi_0(x)
	=
	1-(1+|x|^2)^{-\sigma}.
\end{equation*}
By the discrete-to-nonlocal inequality from \eqref{discrete-to-nonlocal-a>0} applied with $\|\Phi\big|_{\X_n}\|_{L^\infty(\X_n)}\leq1$ we have that there exists a universal constant $C>0$ such that
\begin{equation}\label{discrete-to-nonlocal-barrier}
	\L_{\X_n,\Lambda,\tau,\eps}^+\Phi(Z_i)
	\leq
	\L_{\Omega,\Lambda+\eps^2,\tau+2\eps,\eps}^+[\Phi\circ T_\eps](x)
	+
	C\eps^a
\end{equation}
for every $Z_i\in\X_n$ and $x\in\U_i=T_\eps^{-1}(Z_i)$ provided that the events \eqref{partition-a>0} and \eqref{event-a>0} hold. The idea is to estimate the right hand side with the aid of \Cref{barrier-0}. In order to do so, first we observe that
\begin{equation}\label{difference-barrier}
	\abs{\L_{\Omega,\Lambda+\eps^2,\tau+2\eps,\eps}^+[\Phi\circ T_\eps](x)-\L_{\Omega,\Lambda+\eps^2,\tau+2\eps,\eps}^+\Phi(x)}
	\leq
	4\sigma R\eps^{1+a}
\end{equation}
for $x\in\Omega$. This can be directly deduced from the elementary estimate
\begin{align*}
	|(\Phi\circ T_\eps)(x)-\Phi(x)|
	\leq
	\|\nabla\Phi\|_\infty|T_\eps(x)-x|
	\leq
	2\sigma R\eps^{3+a},
\end{align*}
where we used that $|\nabla\Phi(x)|=2\sigma(1+|x|^2)^{-\sigma-1}|x|\leq 2\sigma R$ for every $x\in\Omega\subset B_R$ and that $|T_\eps(x)-x|\leq\eps^{3+a}$. Indeed, the desired bound in \eqref{difference-barrier} is obtained after inserting the following inequalities in the definition of \eqref{L-Omega} for the corresponding minimal operator,
\begin{multline*}
	\bigg|\sup_{|z|<(\Lambda+\eps^2)\eps}\frac{\delta_{\Omega,(\tau+2\eps)\eps^2}^+(\Phi\circ T_\eps)(x,z)}{2\eps^2}
	-
	\sup_{|z|<(\Lambda+\eps^2)\eps}\frac{\delta_{\Omega,(\tau+2\eps)\eps^2}^+\Phi(x,z)}{2\eps^2}\bigg|
	\\
	\leq
	\sup_{|z|<(\Lambda+\eps^2)\eps}\frac{|\delta_{\Omega,(\tau+2\eps)\eps^2}^+[\Phi\circ T_\eps-\Phi](x,z)|}{2\eps^2}
	\leq
	4\sigma R\eps^{1+a} 
\end{multline*}
and
\begin{multline*}
	\bigg|\dashint_{B_\eps(x)}\frac{(\Phi\circ T_\eps)(y)-(\Phi\circ T_\eps)(x)}{\eps^2}\ d\mu(y)
	-\dashint_{B_\eps(x)}\frac{\Phi(y)-\Phi(x)}{\eps^2}\ d\mu(y)\bigg|
	\\
	\leq
	\dashint_{B_\eps(x)}\frac{|(\Phi\circ T_\eps)(y)-\Phi(y)|}{\eps^2}\ d\mu(y)
	+
	\frac{|(\Phi\circ T_\eps)(x)-\Phi(x)|}{\eps^2}
	\leq
	4\sigma R\eps^{1+a} .
\end{multline*}
Therefore, \eqref{discrete-to-nonlocal-barrier} together with \eqref{difference-barrier} yields
\begin{equation*}
	\L_{\X_n,\Lambda,\tau,\eps}^+\Phi(T_\eps(x))
	\leq
	\L_{\Omega,\Lambda+\eps^2,\tau+2\eps,\eps}^+\Phi(x)
	+C\eps^a
	+4\sigma R\eps^{1+a} .
\end{equation*}

Now, recalling \Cref{barrier-0} noting that $\varphi=1-\Phi$, there exists $\sigma_0=\sigma_0(r,R)>0$ (with $R=R(\Omega,0)$) such that for $\sigma>\sigma_0$ it holds that
\begin{equation*}
	\L_{\Omega,\Lambda+\eps^2,\tau+2\eps,\eps}^+\Phi(x)
	=
	-\L_{\Omega,\Lambda+\eps^2,\tau+2\eps,\eps}^-\varphi(x)
	\leq
	-\sigma\varphi(x)
	\leq
	-\sigma(1+R^2)^{-\sigma}
\end{equation*}
for $x\in\Omega\subset B_{R}\setminus B_r$, so
\begin{equation*}
	\L_{\X_n,\Lambda,\tau,\eps}^+\Phi(T_\eps(x))
	\leq
	-\sigma(1+R^2)^{-\sigma}
	+(C
	+4\sigma R)\eps^a 
\end{equation*}
and \eqref{L+barrier} follows for every $\eps\in(0,\eps_0)$ with a sufficiently small $\eps_0=\eps_0(r,R,\sigma)>0$.
\end{proof}

Next we use the comparison principle together with the explicit barriers to obtain the following maximum principle.
Later this immediately implies uniform boundedness for example for solutions to equations of the form $$\L_{\X_n,\eps} u_k(Z_j) = f(Z_j)$$ and is utilized in passing to a limit. If in this example we had $f=0$, then uniform boundedness would follow just by taking big and small constants as barriers.

\begin{lemma}[Uniform boundedness]\label{lem:uniform-bound}Let $\X_n\subset\Omega$ be a random data cloud. Suppose that the events \eqref{partition-a>0} and \eqref{event-a>0} hold and that $u\in L^\infty(\X_n)$ satisfies
\begin{equation*}
	\L_{\X_n,\eps}^+u
	\geq
	-\rho,
	\quad
	\L_{\X_n,\eps}^-u
	\leq
	\rho
\end{equation*}
in $\X_n\setminus\O_\eps$ for some $\rho>0$. Then
\begin{equation*}
	\|u\|_{L^\infty(\X_n)}
	\leq
	\|u\|_{L^\infty(\O_\eps)}+C_\Omega\rho
\end{equation*}
for some constant $C_\Omega>0$ depending exclusively on $\Omega$.
\end{lemma}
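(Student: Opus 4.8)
The plan is to combine the comparison principle (\Cref{lem:comparison-pple}) with the discrete barrier (\Cref{lem:L+barrier}) in the standard way. First I would reduce to a one-sided estimate: it suffices to show $u(Z_i)\leq\|u\|_{L^\infty(\O_\eps)}+C_\Omega\rho$ for every $Z_i\in\X_n$, since applying the same bound to $-u$ (which satisfies $\L_{\X_n,\eps}^+(-u)\geq-\rho$ because $\L_{\X_n,\eps}^-u\leq\rho$, using $\L^-v=-\L^+(-v)$) gives the lower bound. So fix the goal of bounding $u$ from above.

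Next I would construct the comparison function. Since $\Omega_{-\eps_0}\neq\emptyset$, pick a point $\xi\in\R^N\setminus\Omega$ with $\dist(\xi,\Omega)\geq r$ for some universal $r>0$ (e.g. $r$ comparable to $\eps_0$ or to $\diam\Omega$; any fixed choice works and fixes $R=R(\Omega,\xi)\leq\diam\Omega+\dist(\xi,\Omega)$, hence $C_\Omega$). Choose $\sigma>\sigma_0(r,R)$ universal and let $\Phi_\xi$ be the barrier from \eqref{eq:graph-barrier}, so that by \eqref{L+barrier},
\begin{equation*}
	\L_{\X_n,\eps}^+\big[\Phi_\xi\big|_{\X_n}\big]
	\leq
	-\tfrac{\sigma}{2}(R^2+1)^{-\sigma}
	=:
	-c_\Omega
	<0
	\qquad\text{in }\X_n\cap\Omega_{-\eps}.
\end{equation*}
Set $v(Z_i)=\|u\|_{L^\infty(\O_\eps)}+\tfrac{\rho}{c_\Omega}\,\Phi_\xi(Z_i)$. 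Because $\Phi_\xi\in[0,1)$, on the boundary layer $\O_\eps=\X_n\cap\Gamma_\eps$ we have $v\geq\|u\|_{L^\infty(\O_\eps)}\geq u$; and in the interior $\X_n\setminus\O_\eps\subset\X_n\cap\Omega_{-\eps}$ (since $\dist(Z_i,\partial\Omega)>\eps$ there) we get, using positive $1$-homogeneity of $\L_{\X_n,\eps}^+$ in its argument up to additive constants — more precisely $\L_{\X_n,\eps}^+(a+bw)=b\,\L_{\X_n,\eps}^+w$ for $b>0$ — that $\L_{\X_n,\eps}^+v=\tfrac{\rho}{c_\Omega}\L_{\X_n,\eps}^+\Phi_\xi\leq-\rho\leq\L_{\X_n,\eps}^+u$. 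Hence $u$ and $v$ satisfy the hypotheses of \Cref{lem:comparison-pple} (with $\O_\eps$ treated as the fixed boundary set, which is legitimate under \eqref{partition-a>0}), so $u\leq v$ in $\X_n$, i.e. $u\leq\|u\|_{L^\infty(\O_\eps)}+\tfrac{\rho}{c_\Omega}$ everywhere; since $c_\Omega$ depends only on $\Omega$ (through $r,R,\sigma$, which are fixed by $\Omega$), this is the claimed estimate with $C_\Omega=1/c_\Omega$.

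The main obstacle I anticipate is purely bookkeeping rather than conceptual: making sure the domain inclusions line up so that the barrier estimate \eqref{L+barrier}, which holds on $\X_n\cap\Omega_{-\eps}$, covers all of $\X_n\setminus\O_\eps$ — this needs $\eps\leq\eps_0$ and the identification $\X_n\setminus\O_\eps=\X_n\cap(\Omega\setminus\Gamma_\eps)\subset\X_n\cap\Omega_{-\eps}$, which is immediate from the definition of $\Gamma_\eps$. One also has to check that the additive/multiplicative scaling behaviour of $\L_{\X_n,\eps}^+$ used above is valid: adding a constant to $w$ changes the $\M^+$-term and the $\A$-term by that constant and the $-u(Z_i)$ term by that constant, so they cancel, while multiplying by $b>0$ scales everything by $b$; this is clear from \Cref{def:L-Xn}. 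Finally, one should note that $\eps_0$ here may need to be taken smaller (depending on $r,R,\sigma$, hence on $\Omega$) to invoke \Cref{lem:L+barrier}, which is consistent with the standing assumptions once $\Omega$ is fixed.
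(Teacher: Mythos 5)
Your proof is correct and follows essentially the same route as the paper: construct $v=\|u\|_{L^\infty(\O_\eps)}+\tfrac{\rho}{c_\Omega}\Phi_\xi$ with the discrete barrier of \Cref{lem:L+barrier}, verify $\L_{\X_n,\eps}^+v\leq-\rho\leq\L_{\X_n,\eps}^+u$ via the affine scaling of the operator, and conclude with the comparison principle of \Cref{lem:comparison-pple}; your constant $1/c_\Omega=2(R^2+1)^\sigma/\sigma$ matches the paper's. The reduction of the lower bound to the upper bound via $\L_{\X_n,\eps}^-u=-\L_{\X_n,\eps}^+(-u)$ is a slightly tidier phrasing of the paper's "analogous argument".
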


\begin{proof}
Let any $r>0$, fix any $\xi\in\R^N\setminus\Omega$ such that $\dist(\xi,\Omega)\geq 1$ and consider the explicit barrier $\Phi_\xi:\R^N\to[0,1)$ constructed in \Cref{lem:L+barrier} with $\sigma>\sigma_0(1,R)$ and $R=R(\Omega,\xi)$. We define $v\in L^\infty(\X_n)$ as
\begin{equation*}
	v(Z_i)
	=
	\|u\|_{L^\infty(\O_\eps)}+2\rho\,\frac{(R^2+1)^\sigma}{\sigma}\,\Phi_\xi(Z_i)
\end{equation*}
for every $Z_i\in\X_n$. Then by \eqref{L+barrier} we have that
\begin{equation*}
	\L_{\X_n,\eps}^+v
	=
	2\rho\,\frac{(R^2+1)^\sigma}{\sigma}\,\L_{\X_n,\eps}^+\big[\Phi_\xi\big|_{\X_n}\big]
	\leq
	-\rho
	\leq
	\L_{\X_n,\eps}^+u
\end{equation*}
in $\X_n\setminus\O_\eps$, where the last inequality holds by assumption. Moreover, since $\Phi_\xi\geq 0$, we have that
\begin{equation*}
	u
	\leq
	v
\end{equation*}
in $\O_\eps$. Thus, in view of the comparison principle from \Cref{lem:comparison-pple} and since $\Phi_\xi\leq 1$ we get that
\begin{equation*}
	u
	\leq
	v
	\leq
	\|u\|_{L^\infty(\O_\eps)}+2\rho\,\frac{(R^2+1)^\sigma}{\sigma}
\end{equation*}
in $\X_n$. Following an analogous argument, we can show that
\begin{equation*}
	u
	\geq
	-\|u\|_{L^\infty(\O_\eps)}-2\rho\,\frac{(R^2+1)^\sigma}{\sigma}
\end{equation*}
so the proof is concluded.
\end{proof}

\subsection{Asymptotic continuity and Ascoli-Arzel\`a}
\label{sec:boundary-continuity}

We say that a domain $\Omega\subset\R^N$ satisfies the \emph{uniform exterior ball condition} if there is some $r>0$ such that for every $x\in\partial\Omega$ there exists $\xi\in\R^N\setminus\Omega$ for which
\begin{equation*}
	\overline B_r(\xi)\cap\overline\Omega
	=
	\{x\}.
\end{equation*}

\begin{lemma}[Asymptotic boundary continuity]
\label{boundarycontinuity}
Let $\Omega\subset\R^N$ be a bounded domain satisfying the uniform exterior ball condition and $\X_n\subset\Omega$ be a random data cloud. Suppose that the events \eqref{partition-a>0} and \eqref{event-a>0} hold and that $u\in L^\infty(\X_n)$ satisfies
\begin{equation*}
	\begin{dcases}
	\L_{\X_n,\eps}^+u
	\geq
	-\rho,
	\quad
	\L_{\X_n,\eps}^-u
	\leq
	\rho
	& \text{ in } \X_n\setminus\O_\eps,
	\\
	u=g & \text{ in } \O_\eps,
	\end{dcases}
\end{equation*}
for some $\rho>0$, where $g\in C(\Gamma_{\eps_0})$.
For every $\eta>0$ there exists $\delta>0$ such that
\begin{equation*}
	|u(Z_i)-g(x)|
	\leq
	\eta
\end{equation*}
for every $Z_i\in\X_n$ and $x\in\partial\Omega$ with $|Z_i-x|<\delta$.
\end{lemma}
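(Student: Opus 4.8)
The plan is a classical barrier argument, now carried out on the data cloud and powered by the discrete barrier of \Cref{lem:L+barrier} together with the comparison principle of \Cref{lem:comparison-pple}. Fix $\eta>0$ and a boundary point $x_0\in\partial\Omega$. The uniform exterior ball condition provides $\xi=\xi_{x_0}\in\R^N\setminus\Omega$ with $|x_0-\xi|=r$ and $\overline B_r(\xi)\cap\overline\Omega=\{x_0\}$; in particular $\dist(\xi,\Omega)\geq r$ and $R=R(\Omega,\xi)\leq\diam\Omega+r$. Since $r$ is the \emph{uniform} exterior ball radius, the threshold $\sigma_0(r,R)$ from \eqref{sigma-bound-r-R} is dominated by $\sigma_0(r,\diam\Omega+r)$ for every $x_0$, so I may fix one exponent $\sigma$ above it (depending only on $r$, $\diam\Omega$ and universal constants), and correspondingly the constant $\eps_0$ from \Cref{lem:L+barrier} can be taken uniform in $x_0$; from now on $\eps\in(0,\eps_0)$. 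Then $\Phi_\xi(x)=1-(1+|x-\xi|^2)^{-\sigma}$ satisfies $\L_{\X_n,\eps}^+[\Phi_\xi|_{\X_n}]\leq-c_0<0$ in $\X_n\cap\Omega_{-\eps}=\X_n\setminus\O_\eps$, where $c_0=\frac{\sigma}{2}(R^2+1)^{-\sigma}$ is bounded below uniformly in $x_0$; moreover $\Phi_\xi$ attains its minimum over $\overline\Omega$ exactly at $x_0$ (the closest point of $\overline\Omega$ to $\xi$), so $\Phi_\xi\geq\Phi_\xi(x_0)$ on $\overline\Omega$, and $\Phi_\xi$ is globally Lipschitz with a constant $L_\Phi$ depending only on $\sigma$.

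\textbf{The barrier.} First I would build the upper barrier. Let $\omega$ be a modulus of continuity of $g$ on the compact set $\overline{\Gamma_{\eps_0}}$ (which we regard as continuous up to $\partial\Omega$), and pick $\delta_1\in(0,\eps_0)$ with $\omega(\delta_1)\leq\eta/2$. For a constant $A>0$ to be fixed, set $v(Z_i)=g(x_0)+\frac{\eta}{2}+A\big(\Phi_\xi(Z_i)-\Phi_\xi(x_0)\big)$ for $Z_i\in\X_n$. By the affine invariance of $\L^+_{\X_n,\eps}$ (and $\alpha+\beta=1$), $\L^+_{\X_n,\eps}v=A\,\L^+_{\X_n,\eps}[\Phi_\xi|_{\X_n}]\leq-Ac_0\leq-\rho\leq\L^+_{\X_n,\eps}u$ in $\X_n\setminus\O_\eps$ whenever $A\geq\rho/c_0$. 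On $\O_\eps$ one has $u=g$: if $|Z_i-x_0|<\delta_1$ then $u(Z_i)=g(Z_i)\leq g(x_0)+\omega(\delta_1)\leq g(x_0)+\frac{\eta}{2}\leq v(Z_i)$, using $\Phi_\xi\geq\Phi_\xi(x_0)$; if $|Z_i-x_0|\geq\delta_1$ then $\Phi_\xi(Z_i)-\Phi_\xi(x_0)\geq\rho_{\delta_1}:=\inf\{\Phi_\xi(x)-\Phi_\xi(x_0):x\in\overline\Omega,\ |x-x_0|\geq\delta_1\}>0$, so $v(Z_i)\geq g(Z_i)$ provided $A\geq 2\|g\|_{L^\infty(\Gamma_{\eps_0})}/\rho_{\delta_1}$. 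Fixing $A$ as the larger of these two bounds, \Cref{lem:comparison-pple} gives $u\leq v$ in $\X_n$; hence for every $Z_i\in\X_n$ with $|Z_i-x_0|<\delta$ we obtain $u(Z_i)\leq g(x_0)+\frac{\eta}{2}+AL_\Phi\delta$, and choosing $\delta\leq\min\{\delta_1,\ \eta/(2AL_\Phi)\}$ yields $u(Z_i)\leq g(x_0)+\eta$. Running the identical construction for $-u$, which satisfies $\L^+_{\X_n,\eps}(-u)=-\L^-_{\X_n,\eps}u\geq-\rho$, with boundary datum $-g$, gives $u(Z_i)\geq g(x_0)-\eta$, and the two estimates combine to $|u(Z_i)-g(x_0)|\leq\eta$.

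\textbf{Uniformity in $x_0$ and the main obstacle.} It remains to see that $\delta$ can be chosen independent of $x_0\in\partial\Omega$. The exponent $\sigma$, the constant $\eps_0$, the lower bound for $c_0$, the bound $L_\Phi$, the modulus $\omega$ (hence $\delta_1=\delta_1(\eta)$), and $\|g\|_{L^\infty(\Gamma_{\eps_0})}$ are all already uniform in $x_0$. The only delicate quantity is $\rho_{\delta_1}$: for a fixed $x_0$ it is positive because $x\mapsto\Phi_\xi(x)-\Phi_\xi(x_0)$ is continuous and strictly positive on the compact set $\{x\in\overline\Omega:|x-x_0|\geq\delta_1\}$ (indeed $\overline B_r(\xi)\cap\overline\Omega=\{x_0\}$ forces $|x-\xi|>r$ there), and a routine compactness argument — extracting convergent subsequences $x_0^{(k)}\to x_0^*$ and $\xi^{(k)}\to\xi^*$ and using the uniform exterior ball condition together with boundedness of $\Omega$ — upgrades this to a bound $\rho_{\delta_1}\geq\rho_{\delta_1}(r,\diam\Omega)>0$ valid for all $x_0$. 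This uniformity step, rather than the barrier construction itself, is the point needing the most care; everything else is a direct combination of \Cref{lem:L+barrier} and \Cref{lem:comparison-pple}.
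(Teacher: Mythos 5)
Your overall strategy --- the barrier $g(x_0)+\tfrac{\eta}{2}+A\big(\Phi_\xi-\Phi_\xi(x_0)\big)$ built from \Cref{lem:L+barrier}, compared with $u$ via \Cref{lem:comparison-pple}, together with the symmetric lower barrier --- is exactly the paper's. The gap is in the step you yourself flag as delicate: the uniform lower bound $\rho_{\delta_1}>0$ for $\Phi_\xi-\Phi_\xi(x_0)$ on $\{x\in\overline\Omega:\,|x-x_0|\geq\delta_1\}$. Because you center the barrier at the exterior-ball center $\xi$ itself, all you know about a far point $x$ is $|x-\xi|>r=|x_0-\xi|$, with no quantitative gap: the singleton tangency $\overline B_r(\xi)\cap\overline\Omega=\{x_0\}$ does not prevent $\partial B_r(\xi)$ from coming arbitrarily close to $\overline\Omega$ far away from $x_0$. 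Your compactness argument does not close this. If $\rho_{\delta_1}(x_0^{(k)})\to0$, then along a subsequence $\xi^{(k)}\to\xi^*$ one only obtains that the open ball $B_r(\xi^*)$ misses $\overline\Omega$ while $x_0^*$ and a second point $x^*$ with $|x^*-x_0^*|\geq\delta_1$ both lie on $\partial B_r(\xi^*)\cap\overline\Omega$; nothing forces the limit center $\xi^*$ to inherit the singleton-contact property, so no contradiction is reached. Moreover, the asserted bound $\rho_{\delta_1}\geq\rho_{\delta_1}(r,\diam\Omega)>0$ cannot hold: for a $C$-shaped domain whose inner hole is a ball of radius $r+s$ with $s$ small, the exterior ball at an inner boundary point passes within distance $2s$ of the antipodal inner boundary point, so the bound degenerates as $s\to0$ while $r$ and $\diam\Omega$ stay fixed.

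The paper's proof avoids this with a half-radius trick: writing $\tilde\xi$ for the exterior-ball center, it centers the barrier at $\xi=\tfrac{1}{2}(\tilde\xi+x_0)$, so that $\overline B_{r/2}(\xi)$ is internally tangent to $\overline B_r(\tilde\xi)$ at $x_0$ only. Then any $x\in\overline\Omega$ with $|x-x_0|\geq\delta_1$ satisfies $|x-\tilde\xi|\geq r$, and an elementary computation (place $\tilde\xi$ at the origin, $x_0=(r,0,\dots,0)$, and eliminate $x_1$ using $|x-x_0|^2\geq\delta_1^2$) gives $|x-\xi|^2\geq\tfrac{r^2}{4}+\tfrac{\delta_1^2}{2}$, an explicit gap depending only on $r$ and $\delta_1$ and hence uniform in $x_0$. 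With this single modification --- and applying \Cref{lem:L+barrier} with radius $r/2$ in place of $r$, which the lemma permits --- the rest of your argument goes through verbatim, and all the uniformities in $x_0$ that you correctly verified ($\sigma$, $\eps_0$, $c_0$, $L_\Phi$, the modulus of $g$) remain valid.
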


\begin{proof}
Fix $\eta>0$.
Since $g$ is uniformly continuous, there exists $r_0$ such that
\begin{equation}
\label{equicontg}
|g(x)-g(y)|<\frac{\eta}{2}
\end{equation}
for every $|x-y|<r_0$.

Let $x\in\partial\Omega$. By the uniform exterior ball condition, there exists $\tilde \xi\in\R^N\setminus\Omega$ such that
\begin{equation}
\label{inter}
	\overline B_{2r}(\xi)\cap\overline\Omega
	=
	\{x\}.
\end{equation}
We consider $\xi=\frac{\tilde\xi+x}{2}$, so we have $B_r(\xi)\subset B_{2r}(\tilde\xi)$. We construct the barrier functions $v,w\in L^\infty(\X_n)$ given by
\begin{align*}
	v(y)
	=
	~&
	g(x)+\frac{\eta}{2}+\theta\big(\Phi_\xi(y)-\Phi_\xi(x)\big),
	\\
	w(y)
	=
	~&
	g(x)-\frac{\eta}{2}-\theta\big(\Phi_\xi(y)-\Phi_\xi(x)\big),
\end{align*}
where $\Phi_\xi$ is given in \eqref{eq:graph-barrier} and $\theta>0$.

Given \eqref{inter} we have that the distance from $B_r(\xi)$ to $\Omega\setminus B_{r_0}$ is positive so $\Phi_\xi(y)-\Phi_\xi(x)>0$ for every $y\in \Omega\setminus B_{r_0}$ (observe that the bound only depends on $r$ and $r_0$).
So, for $\theta$ large enough we get $v\geq \max g$ in $\Gamma_\eps\setminus B_{r_0}$.
Combining this with \eqref{equicontg} we get $v\geq g$ in $\Gamma_\eps$, and in particular $v\big|_{\X_n}\geq u$ in $\O_\eps$. By \Cref{lem:L+barrier}, it turns that
\begin{equation*}
	\L_{\X_n,\eps}^+\big[v\big|_{\X_n}\big]
	=
	\theta\L_{\X_n,\eps}^+\big[\Phi_\xi\big|_{\X_n}\big]
	\leq
	-\theta\,\frac{\sigma}{2}(R^2+1)^{-\sigma}.
\end{equation*}
Furthermore, for sufficiently large $\theta$ the right hand side can be bounded from above by $-\rho$, which in turn is bounded by $\L_{\X_n,\eps}^+u$. Then by he comparison principle (\Cref{lem:comparison-pple}) we get that $v\big|_{\X_n}\geq u$ in $\X_n$. Repeating the argument with $w$, we similarly get that
\begin{equation*}
	w\big|_{\X_n}
	\leq
	u
	\leq
	v\big|_{\X_n}
\end{equation*}
in $\X_n$. Replacing the definitions of $v$ and $w$ we immediately get that
\begin{equation*}
	|u(Z_i)-g(x)|
	\leq
	\frac{\eta}{2}+\theta\big|\Phi_\xi(Z_i)-\Phi_\xi(x)\big|
\end{equation*}
for every $Z_i\in\X_n$. Then the result follows by selecting small enough $\delta>0$ such that right hand side above is smaller than $\eta$ for $|Z_i-x|<\delta$.
\end{proof}

Next, combining our regularity result (\Cref{thm:main-graph-holder-regularity}) and the regularity near the boundary just obtained, the asymptotic continuity follows.

\begin{lemma}[Asymptotic continuity]
\label{lem:asymptoticcontinuity}
Let $\Omega\subset\R^N$ be a bounded domain satisfying the uniform exterior ball condition and $\X_n\subset\Omega$ be a random data cloud. Suppose that the events \eqref{partition-a>0} and \eqref{event-a>0} hold and that $u\in L^\infty(\X_n)$ satisfies
\begin{equation*}
	\begin{dcases}
	\L_{\X_n,\eps}^+u
	\geq
	-\rho,
	\quad
	\L_{\X_n,\eps}^-u
	\leq
	\rho
	& \text{ in } \X_n\setminus\O_\eps,
	\\
	u=g & \text{ in } \O_\eps,
	\end{dcases}
\end{equation*}
for some $\rho>0$, where $g\in C(\Gamma_{\eps_0})$.
For every $\eta>0$ there exists $\delta>0$ such that
\begin{equation*}
	|u(Z_i)-u(Z_j)|
	\leq
	\eta
\end{equation*}
for every $Z_i,Z_j\in\X_n$  with $|Z_i-Z_j|<\delta$.
\end{lemma}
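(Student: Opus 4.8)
The plan is to patch together the interior H\"older estimate of \Cref{thm:main-graph-holder-regularity} with the boundary estimate of \Cref{boundarycontinuity}, a routine ``three-$\eta$'' argument. Fix $\eta>0$. First I would invoke \Cref{boundarycontinuity} with $\eta/4$ in place of $\eta$ to obtain $\delta_1>0$ such that $|u(Z_i)-g(x)|\leq\eta/4$ whenever $Z_i\in\X_n$, $x\in\partial\Omega$ and $|Z_i-x|<\delta_1$; and I would use the uniform continuity of $g$ (the same property exploited in the proof of \Cref{boundarycontinuity}) to obtain $\delta_2>0$ with $|g(x)-g(y)|\leq\eta/4$ whenever $|x-y|<\delta_2$. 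Next I fix a boundary threshold $d_0\in(0,1]$ with $d_0\leq\min\{\delta_1,\delta_2/4\}$ and set $R=d_0/8$. The key point here is that, by the uniform bound \Cref{lem:uniform-bound}, one has $\|u\|_{L^\infty(\X_n)}\leq\|g\|_{L^\infty(\O_\eps)}+C_\Omega\rho$, so the constant $C$ produced by \Cref{thm:main-graph-holder-regularity} on balls of radius $R$ depends only on $\rho$, $\|g\|_{L^\infty}$ and the universal data, not on $n$. Finally I pick $\delta\in(0,R]$ small enough that $C\delta^\gamma\leq\eta/2$ and also $\delta\leq\delta_2/2$, and I keep $\eps$ small enough that $\eps<R\eps_0$ and $C\eps^\gamma\leq\eta/2$; this last smallness of $\eps$, quantified in terms of $\eta$, is what the word ``asymptotic'' refers to here.

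With these choices in hand, take $Z_i,Z_j\in\X_n$ with $|Z_i-Z_j|<\delta$ and split into two cases. In the first case, at least one of the two points, say $Z_i$, has $\dist(Z_i,\partial\Omega)\geq d_0$. Since $\eps<R\eps_0$, the ball $B_{2R}(Z_i)$ then lies inside $\Omega$ at distance $>\eps$ from $\partial\Omega$, so the assumed Pucci inequalities hold on $\V_{2R}(Z_i)\subset\X_n\setminus\O_\eps$ and \Cref{thm:main-graph-holder-regularity} applies on this ball (translated so that its centre is $Z_i$); as $|Z_i-Z_j|<\delta\leq R$ forces $Z_j\in\V_R(Z_i)$, this gives
\begin{equation*}
	|u(Z_i)-u(Z_j)|
	\leq
	C\bigl(|Z_i-Z_j|^\gamma+\eps^\gamma\bigr)
	\leq
	C\delta^\gamma+C\eps^\gamma
	\leq
	\eta .
\end{equation*}
In the second case, $\dist(Z_i,\partial\Omega)<d_0$ and $\dist(Z_j,\partial\Omega)<d_0$. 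I choose nearest boundary points $x_i,x_j\in\partial\Omega$; then $|Z_i-x_i|<d_0\leq\delta_1$, $|Z_j-x_j|<d_0\leq\delta_1$, and $|x_i-x_j|\leq|x_i-Z_i|+|Z_i-Z_j|+|Z_j-x_j|<2d_0+\delta\leq\delta_2$. Hence \Cref{boundarycontinuity} gives $|u(Z_i)-g(x_i)|\leq\eta/4$ and $|u(Z_j)-g(x_j)|\leq\eta/4$, the uniform continuity of $g$ gives $|g(x_i)-g(x_j)|\leq\eta/4$, and the triangle inequality yields $|u(Z_i)-u(Z_j)|\leq 3\eta/4\leq\eta$. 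The two cases together give the claim.

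The step I expect to require the most care is the geometry in the first case: one must keep the auxiliary radius $R$ a fixed fraction of the boundary threshold $d_0$ and insist that $\eps$ be small relative to $R$, so that the ball on which \Cref{thm:main-graph-holder-regularity} is invoked stays strictly inside the region $\X_n\setminus\O_\eps$ where the Pucci inequalities are available, and so that the constant there remains uniform in $n$ via \Cref{lem:uniform-bound}. Everything else is elementary bookkeeping with the triangle inequality.
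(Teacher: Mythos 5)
Your proof is correct and follows essentially the same route as the paper's: split into a near-boundary case handled by \Cref{boundarycontinuity} and an interior case handled by \Cref{thm:main-graph-holder-regularity}, with $\eps$ taken small in terms of $\eta$. The only cosmetic difference is that in the boundary case the paper uses a single boundary point shared by $Z_i$ and $Z_j$ (avoiding a second appeal to the uniform continuity of $g$), whereas you use two nearest boundary points; your added remark that \Cref{lem:uniform-bound} makes the H\"older constant uniform in $n$ is a useful observation for the later convergence argument but not needed for the lemma itself.
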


\begin{proof}
Given $\eta>0$ we consider $\delta_1>0$ such that \Cref{boundarycontinuity} holds for $\eta/2$.
Then consider $Z_i,Z_j\in\X_n$ with $|Z_i-Z_j|<\delta_1/2$.
Suppose that at least one of them is at distance smaller than $\delta_1/2$ from the boundary, let say it is $Z_i$. Then there exists $x\in\partial\Omega$ such that $|Z_i-x|<\delta_1/2$.
Then $|Z_j-x|<|Z_j-Z_i|+|Z_i-x|<\delta_1$ and we get
\[
|u(Z_i)-u(Z_j)|
	\leq
|u(Z_i)-g(x)|+|g(x)-u(Z_j)|
\leq 
\eta/2+\eta/2=	
	\eta.
\]

Next we consider the case when both $Z_i,Z_j\in\X_n$ are at distance $\delta_1/2$ at least from the boundary.
Then, by \Cref{thm:main-graph-holder-regularity} with a standard covering argument, we get
\[
	|u(Z_i)-u(Z_j)|
	\leq
	C(|Z_i-Z_j|^\gamma+\eps^\gamma).
\]
So the result follows for if we take $\eps_0$ and $\delta<\delta_1$ small enough such that the right hand side is smaller $\eta$.
\end{proof}

Finally to obtain convergence of a subsequence we recall a Ascoli-Arzel\`a type lemma, see Lemma~4.2 in \cite{manfredipr12}.

\begin{lemma}
\label{lem:ascoliarzela}
Let $v_k:\overline{\Omega}\to\R$ be a sequence of functions such that
\begin{enumerate}
\item
there exists $C>0$ such that $|v_k(x)|<C$ for every $k$ and $x\in\overline{\Omega}$,

\item
given $\eta>0$ there exists $r_0$ and $k_0$ such 
\[
|v_k(x)-v_k(y)|<\eta
\] 
for every $k\geq k_0$ and $x,y\in\overline{\Omega}$ with $|x-y|<r_0$.
\end{enumerate}
Then, there exists $v:\overline{\Omega}\to\R$ and a subsequence still denoted by $v_k$ such that
\[
v_k\to v  \quad \text{uniformly in } \overline{\Omega}
\]
as $k\to \infty$.
\end{lemma}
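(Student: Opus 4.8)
The plan is to prove \Cref{lem:ascoliarzela} by the classical Arzel\`a--Ascoli diagonal argument, the only twist being that the equicontinuity in hypothesis~(2) is merely \emph{asymptotic}, valid for indices $k\geq k_0$. Since $\Omega$ is bounded, $\overline\Omega$ is a compact metric space, hence separable; first I would fix a countable dense subset $D=\{x_1,x_2,\dots\}\subset\overline\Omega$.

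Next I would extract, by a diagonal procedure, a subsequence along which pointwise convergence holds on $D$. By hypothesis~(1) the scalar sequence $(v_k(x_1))_k$ is bounded, so it has a convergent subsequence; along that subsequence $(v_k(x_2))_k$ is still bounded, so extract again; iterating over all $x_i$ and passing to the diagonal subsequence yields a subsequence --- still denoted $v_k$ --- for which $\lim_{k\to\infty}v_k(x_i)$ exists for every $i\in\N$. Call this limit $v(x_i)$; at this stage $v$ is defined only on $D$.

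The key step is to upgrade this to uniform convergence on all of $\overline\Omega$, which I would do by showing that $(v_k)$ is uniformly Cauchy. Given $\eta>0$, apply hypothesis~(2) with $\eta/3$ to obtain $r_0>0$ and $k_0\in\N$. By density of $D$ the balls $\{B_{r_0}(x_i)\}_{i\in\N}$ form an open cover of $\overline\Omega$, and by compactness finitely many suffice, say centered at $x_{i_1},\dots,x_{i_m}$. Since $v_k(x_{i_s})\to v(x_{i_s})$ for each $s$, there is $k_1\geq k_0$ with $|v_k(x_{i_s})-v_l(x_{i_s})|<\eta/3$ for all $k,l\geq k_1$ and all $s=1,\dots,m$. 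Then for any $x\in\overline\Omega$, choosing $s$ with $|x-x_{i_s}|<r_0$ and using hypothesis~(2) twice (legitimate since $k,l\geq k_1\geq k_0$),
\[
	|v_k(x)-v_l(x)|
	\leq
	|v_k(x)-v_k(x_{i_s})|+|v_k(x_{i_s})-v_l(x_{i_s})|+|v_l(x_{i_s})-v_l(x)|
	<\eta
\]
for all $k,l\geq k_1$, with $k_1$ independent of $x$. Hence $(v_k)$ is uniformly Cauchy on $\overline\Omega$, so it converges uniformly to a function $v\colon\overline\Omega\to\R$, which necessarily agrees with the pointwise limit already fixed on $D$.

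I do not anticipate any genuine obstacle. The two points requiring a little attention are: (i) the equicontinuity is asymptotic, which is handled simply by always taking the uniform-Cauchy threshold $k_1$ at least as large as $k_0$; and (ii) the finite subcover must be drawn from balls centered at points of $D$, so that the pointwise-convergence information extracted in the diagonal step is available precisely at the centers used in the triangle-inequality estimate.
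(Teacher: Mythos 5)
Your proof is correct, and it is the same diagonal/uniform-Cauchy argument as the standard proof of this lemma (the paper does not prove it itself but cites Lemma~4.2 of \cite{manfredipr12}, which proceeds exactly this way). You handle the only delicate point --- that equicontinuity holds only for $k\geq k_0$ --- correctly by taking the Cauchy threshold $k_1\geq k_0$.
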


Finally, \Cref{lem:uniform-bound} and \Cref{lem:asymptoticcontinuity}
allow us to verify that the sequence of functions satisfies the hypothesis of the previous lemma.

\begin{theorem}
\label{convergence}
Let $\{n_k\}_k$ be a sequence of positive integers and $\{\eps_k>0\}_k$ such that
\[
n_k\to \infty, \quad \eps_k\to 0 \quad \text{and}\quad
\liminf_k\big( 2n_k\exp\{-c_0n_k\eps_k^{3N+4+(N+2)a}\}\big)<1.
\] 
For each $k\in\N$, let $\X_{n_k}^{(k)}\subset\Omega$ be a random data cloud in $\Omega$ and denote $\O_{\eps_k}=\X_{n_k}^{(k)}\cap\Gamma_{\eps_k}$.
Let $u_k\in L^\infty(\X_{n_k}^{(k)})$ be such that
\begin{equation*}
	\begin{dcases}
	\L_{\X_{n_k},\eps_k}^+u_k
	\geq
	-\rho,
	\quad
	\L_{\X_n,\eps_k}^-u_k
	\leq
	\rho
	& \text{ in } \X_{n_k}^{(k)}\setminus\O_{\eps_k},
	\\
	u_k=g & \text{ in } \O_{\eps_k},
	\end{dcases}
\end{equation*}
Then, with probability $1$, there exists $u:\Omega\to \R$ and a subsequence still denoted by $u_k$ such that
\[
u_k \circ T_{\eps_k}\to u  \quad \text{uniformly in } \overline{\Omega}
\]
as $k\to \infty$.
\end{theorem}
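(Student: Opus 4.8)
The plan is to combine the uniform boundedness estimate (\Cref{lem:uniform-bound}), the asymptotic continuity estimate (\Cref{lem:asymptoticcontinuity}), and the Ascoli--Arzel\`a lemma (\Cref{lem:ascoliarzela}), the only genuinely new ingredient being a probabilistic reduction to a subsequence along which the good events \eqref{partition-a>0} and \eqref{event-a>0} hold. Fix once and for all the parameter $a>0$ coming from the improved estimates \eqref{partition-a>0}--\eqref{probability-a>0}, and set $p_k=2n_k\exp\{-c_0n_k\eps_k^{3N+4+(N+2)a}\}$; let $E_k$ be the event that \eqref{partition-a>0} and \eqref{event-a>0} hold for the cloud $\X_{n_k}^{(k)}$, so that \Cref{lem:dens-eps} (applied with $\delta=\eps_k^{3+a}$, $\lambda=\eps_k^{2+a}$) gives $\P(E_k)\geq 1-p_k$. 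First I would use the hypothesis $\liminf_k p_k<1$ to extract a constant $c>0$ and infinitely many indices with $\P(E_k)\geq c$; since the clouds are sampled independently across $k$, the second Borel--Cantelli lemma then yields that, with probability one, $E_k$ occurs for infinitely many $k$. I would then fix an outcome in this probability-one event and pass to the corresponding (random) subsequence, relabelled $\{k\}$, along which \eqref{partition-a>0} and \eqref{event-a>0} --- and hence all the results of Parts~\ref{part1} and~\ref{part2} --- are available.

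On this subsequence the argument becomes deterministic. From \Cref{lem:uniform-bound}, applied with ``boundary'' data $g$ on $\O_{\eps_k}$, I get $\|u_k\|_{L^\infty(\X_{n_k}^{(k)})}\leq\|g\|_{L^\infty(\Gamma_{\eps_0})}+C_\Omega\rho$, a bound independent of $k$; since the extension $u_k\circ T_{\eps_k}$ only takes values attained by $u_k$ on the cloud, the sequence $v_k:=u_k\circ T_{\eps_k}$ is uniformly bounded on $\Omega$, which is hypothesis (1) of \Cref{lem:ascoliarzela}. For hypothesis (2), I would fix $\eta>0$, invoke \Cref{lem:asymptoticcontinuity} (here assuming, as in that lemma, that $\Omega$ satisfies the uniform exterior ball condition) to obtain $\delta_1>0$ and a threshold $k_1$ such that $|u_k(Z_i)-u_k(Z_j)|\leq\eta$ whenever $|Z_i-Z_j|<\delta_1$ and $k\geq k_1$, and then transfer this modulus to $v_k$ using $|T_{\eps_k}(x)-x|\leq\eps_k^{3+a}$: for $|x-y|<\delta_1/2$ and $k$ large enough that $\eps_k^{3+a}<\delta_1/8$ one has $|T_{\eps_k}(x)-T_{\eps_k}(y)|<\delta_1$, hence $|v_k(x)-v_k(y)|\leq\eta$. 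A minor point to dispatch here is that $T_{\eps_k}$ is a priori only defined on $\Omega$; the uniform continuity just obtained lets one extend $v_k$ continuously to $\overline\Omega$ with the same estimate. With (1) and (2) in hand, \Cref{lem:ascoliarzela} delivers a further subsequence and a function $u:\overline\Omega\to\R$ with $u_k\circ T_{\eps_k}\to u$ uniformly on $\overline\Omega$; since everything took place on an event of probability one, the theorem follows.

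The step I expect to be the main obstacle --- or at least the one demanding most care --- is making the reduction to a convergent subsequence genuinely a probability-one statement: the hypothesis only controls $\liminf_k p_k$, so one really must use independence of the clouds together with Borel--Cantelli to upgrade ``positive probability along a subsequence'' to ``probability one that $E_k$ holds infinitely often''. A secondary point is ensuring the modulus of continuity furnished by \Cref{lem:asymptoticcontinuity} is genuinely uniform in $k$ as $\eps_k\to0$; this is fine because the $\eps^\gamma$ term in the interior H\"older estimate \Cref{thm:main-graph-holder-regularity} vanishes in the limit and the boundary modulus from \Cref{boundarycontinuity} is $\eps$-independent once $\eps$ is small, but one must check that the ``$k$ large'' thresholds in these lemmas can be chosen simultaneously --- which is precisely the form in which hypothesis (2) of \Cref{lem:ascoliarzela} is phrased.
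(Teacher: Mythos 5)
Your proposal is correct and follows essentially the same route as the paper's proof: restrict to a subsequence on which the events \eqref{partition-a>0} and \eqref{event-a>0} hold, then combine \Cref{lem:uniform-bound}, \Cref{lem:asymptoticcontinuity} and \Cref{lem:ascoliarzela}, transferring the bounds from $u_k$ to $u_k\circ T_{\eps_k}$ via \eqref{partition-a>0}. The one point where you go beyond the paper is the probabilistic reduction: the paper's proof passes from $\limsup_k\big(1-2n_k\exp\{-c_0n_k\eps_k^{3N+4+(N+2)a}\}\big)>0$ directly to ``the events hold for infinitely many $k$ with probability $1$,'' which without further input only gives probability at least that $\limsup$; your explicit appeal to independence of the clouds across $k$ together with the second Borel--Cantelli lemma is precisely the repair the paper itself only records in the remark following the theorem, so your handling of this step is the more complete one.
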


\begin{proof}
Since
\begin{equation*}
	\limsup_k\big( 1-2n_k\exp\{-c_0n_k\eps_k^{3N+4+(N+2)a}\}\big)>0,
\end{equation*}
by \Cref{lem:dens-eps} we have that \eqref{partition-a>0} and \eqref{event-a>0} hold for infinity many $k$ with probability $1$.
We consider the sequence for those values and apply
\Cref{lem:uniform-bound} and \Cref{lem:asymptoticcontinuity} to them.
Observe that the bound obtained for $u_k$ immediately translates to $u_k \circ T_{\eps_k}$. 
Also the asymptotic continuity translates to $u_k \circ T_{\eps_k}$ by \eqref{partition-a>0}.
Finally, we can apply \Cref{lem:ascoliarzela} to $u_k \circ T_{\eps_k}$ to obtain the convergent subsequence.
\end{proof}

Observe that if the election of the points is independent for each $k$, by Borel-Cantelli lemma it is enough to have
\[
\sum_k 1-2n_k\exp\{-c_0n_k\eps_k^{3N+4+(N+2)a}\}=+\infty.
\]

\section{Convergence to viscosity solutions}
\label{sec:asymptotic}

In this section we establish as an application that one can pass to the limit and that the limit under suitable assumptions satisfies a PDE. 
So far we have assumed that the probability density $\dens$ is a Lipschitz function but in order to talk about the limit PDE we assume that $\dens\in C^1$.
We consider $u_k$ such that
\begin{equation*}
	\begin{dcases}
	\L_{\X_{n_k},\eps_k}^+u_k=f
	& \text{ in } \X_n\setminus\O_{\eps_k},
	\\
	u_k=g & \text{ in } \O_{\eps_k},
	\end{dcases}
\end{equation*}
for a bounded continuous function $f:\Omega\to\R$.
We assume that $n_k$ and $\eps_k$ are such that the hypothesis of \Cref{convergence} are satisfied for $\rho=\|f\|_\infty$.

\begin{lemma}[Asymptotic expansions]\label{lem:asymptotics}
Let $v\in C^2(\Omega)$. Then
\begin{align*}
	\L^+v(x)
	:\,=
	\lim_{\eps\to 0}\L_{\Omega,\eps}^+v(x)
	=
	~&
	\frac{\alpha\Lambda}{2}\lambda_N(D^2v(x))
	+
	\frac{\beta}{2(N+2)}\Delta v(x)
	\\
	~&
	+
	\frac{\alpha\tau}{2}|\nabla v(x)|
	+
	\frac{\beta}{N+2}\nabla v(x)^\top\frac{\nabla\dens(x)}{\dens(x)},
	\\
	\L^-v(x)
	:\,=
	\lim_{\eps\to 0}\L_{\Omega,\eps}^-v(x)
	=
	~&
	\frac{\alpha\Lambda}{2}\lambda_1(D^2v(x))
	+
	\frac{\beta}{2(N+2)}\Delta v(x)
	\\
	~&
	-
	\frac{\alpha\tau}{2}|\nabla v(x)|
	+
	\frac{\beta}{N+2}\nabla v(x)^\top\frac{\nabla\dens(x)}{\dens(x)},
\end{align*}
where $\lambda_1(M)$ and $\lambda_N(M)$ stand for the smallest and the largest eigenvalue of a real symmetric matrix $M$.

\end{lemma}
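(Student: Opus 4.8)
The plan is to compute the limit term by term, treating the $\alpha$-part and the $\beta$-part of $\L_{\Omega,\eps}^+$ separately, and using the $C^2$ regularity of $v$ to Taylor expand. First I would fix $x$, abbreviate $g=\nabla v(x)$ and $M=D^2v(x)$, and write the second-order Taylor expansion $v(x+w)=v(x)+g\cdot w+\tfrac12 w^\top Mw+o(|w|^2)$, valid uniformly for $w$ in a small ball since $v\in C^2$. For the $\beta$-term, $\dashint_{B_\eps(x)}\frac{v(y)-v(x)}{\eps^2}\,d\mu(y)$, I would expand the density as $\dens(y)=\dens(x)+\nabla\dens(x)\cdot(y-x)+o(|y-x|)$ and substitute both expansions; the numerator becomes $\dashint_{B_\eps(x)}(v(y)-v(x))\dens(y)\,dy$ divided by $\mu(B_\eps(x))=\dens(x)|B_\eps|(1+o(1))$. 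Using the standard spherical averages $\dashint_{B_1}y_i\,dy=0$, $\dashint_{B_1}y_iy_j\,dy=\tfrac{1}{N+2}\delta_{ij}$, the linear-times-linear cross term $\frac{1}{\eps^2}\dashint_{B_\eps(x)}(g\cdot(y-x))(\nabla\dens(x)\cdot(y-x))\,dy$ contributes $\tfrac{1}{N+2}\,g^\top\nabla\dens(x)$, the quadratic term $\frac{1}{2\eps^2}\dashint_{B_\eps(x)}(y-x)^\top M(y-x)\,\dens(x)\,dy$ contributes $\tfrac{\dens(x)}{2(N+2)}\Delta v(x)$, and after dividing by $\dens(x)$ we obtain $\tfrac{\beta}{2(N+2)}\Delta v(x)+\tfrac{\beta}{N+2}g^\top\frac{\nabla\dens(x)}{\dens(x)}$, with the $o(\cdot)$ terms vanishing as $\eps\to0$.

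Next I would handle the $\alpha$-term $\alpha\sup_{|z|<\Lambda\eps}\frac{\delta^+_{\Omega,\tau\eps^2}v(x,z)}{2\eps^2}$ where $\delta^+_{\Omega,\tau\eps^2}v(x,z)=v(x+z)+\sup_{|h|<\tau\eps^2}v(x-z+h)-2v(x)$. Since $|h|<\tau\eps^2$ and $v$ is $C^1$, $\sup_{|h|<\tau\eps^2}v(x-z+h)=v(x-z)+\tau\eps^2|\nabla v(x)|+o(\eps^2)$ (the supremum of the linear functional $h\mapsto\nabla v(x-z)\cdot h$ over $|h|<\tau\eps^2$ is $\tau\eps^2|\nabla v(x-z)|$, and $|\nabla v(x-z)|\to|\nabla v(x)|$ as $z\to0$; the second-order correction from $v$ being $C^2$ near $x$ is $O(\eps\cdot\eps^2)=o(\eps^2)$). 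Writing $z=\eps\zeta$ with $|\zeta|<\Lambda$, the symmetric part gives $v(x+\eps\zeta)+v(x-\eps\zeta)-2v(x)=\eps^2\zeta^\top M\zeta+o(\eps^2)$, so $\frac{\delta^+_{\Omega,\tau\eps^2}v(x,\eps\zeta)}{2\eps^2}=\tfrac12\zeta^\top M\zeta+\tfrac{\tau}{2}|\nabla v(x)|+o(1)$, uniformly in $|\zeta|<\Lambda$. Taking the supremum over $|\zeta|<\Lambda$ and using that $\sup_{|\zeta|\le\Lambda}\zeta^\top M\zeta=\Lambda^2\lambda_N(M)$ — and rescaling: one must be careful that $|z|<\Lambda\eps$ means $|\zeta|<\Lambda$, so the supremum is $\Lambda^2\lambda_N(M)$ divided by the appropriate normalization; matching the claimed coefficient $\tfrac{\alpha\Lambda}{2}\lambda_N(M)$ suggests the relevant scaling is $\sup_{|z|<\Lambda\eps}\zeta^\top M\zeta/2$ with $|z|^2$ rather than $|z|$ normalization, which I would reconcile with the exact definitions — one gets $\alpha$ times $\big(\tfrac{\Lambda}{2}\lambda_N(M)+\tfrac{\tau}{2}|\nabla v(x)|\big)$ (up to reconciling whether it is $\Lambda$ or $\Lambda^2$, which I would pin down against \eqref{L-Omega}; the stated lemma uses $\Lambda$, so the normalization must be by $\Lambda\eps^2$-type scaling, consistent with the definition). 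The minimal operator is entirely analogous, replacing $\sup$ by $\inf$, which flips $\lambda_N$ to $\lambda_1$ and the sign of the $|\nabla v|$ term.

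The main obstacle I anticipate is making the error estimates genuinely uniform in the auxiliary parameters $z$ (resp.\ $\zeta$) and $h$ while taking suprema: one needs that the $o(\eps^2)$ remainders in the Taylor expansions of $v$ are uniform over $|z|<\Lambda\eps$, which follows from the uniform continuity of $D^2v$ on a compact neighborhood of $x$, and that the contribution of the $h$-supremum is correctly identified as $\tau\eps^2|\nabla v(x-z)|+o(\eps^2)$ with $o(\eps^2)$ uniform — this uses $v\in C^2$ so that $\nabla v$ is Lipschitz near $x$, giving $|\nabla v(x-z)-\nabla v(x)|=O(\eps)$ and hence $\tau\eps^2|\nabla v(x-z)|=\tau\eps^2|\nabla v(x)|+O(\eps^3)$. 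A secondary technical point is the $\beta$-term: dividing by $\mu(B_\eps(x))$ rather than $|B_\eps|$ introduces the factor $1/\dens(x)$, and one must expand $\mu(B_\eps(x))=\dens(x)|B_\eps|+O(\eps^{N+1})$ carefully so that the error after dividing by $\eps^2$ still vanishes; this is where the $\tfrac{1}{\dens(x)}\nabla\dens(x)$ drift term genuinely comes from, via the cross term between the linear part of $v$ and the linear part of $\dens$. Once these uniformities are in place, the limits follow by letting $\eps\to0$ term by term.
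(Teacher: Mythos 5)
Your proposal follows essentially the same route as the paper's proof: second-order Taylor expansion of $v$, linear expansion of $\dens$, the moment identity $\dashint_{B_1}y_iy_j\,dy=\delta_{ij}/(N+2)$ for the $\beta$-term (with the drift arising exactly from the cross term between the linear part of $v$ and the linear part of $\dens$, after extracting the factor $|B_\eps|/\mu(B_\eps(x))=\dens(x)^{-1}+o(1)$), and separate optimization in $h$ and $z$ for the $\alpha$-term. On the one point where you hedge, your computation is the correct one and you should not contort it to match the statement: the normalization in \eqref{L-Omega} is by $2\eps^2$, not by $2\Lambda\eps^2$, so writing $z=\eps\zeta$ gives $\sup_{|z|<\Lambda\eps}z^\top D^2v(x)\,z/(2\eps^2)=\tfrac12\sup_{|\zeta|<\Lambda}\zeta^\top D^2v(x)\,\zeta=\tfrac{\Lambda^2}{2}\big(\lambda_N(D^2v(x))\big)^+$ (the positive part because the open ball contains $\zeta$ near $0$). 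The paper's own proof is internally inconsistent here — it derives the coefficient $\tfrac12\lambda_N$ with no $\Lambda$ and then states $\tfrac{\Lambda}{2}\lambda_N$ — so the coefficient $\Lambda$ in the lemma should be read as $\Lambda^2$ (they coincide only when $\Lambda=1$). Apart from flagging rather than resolving this discrepancy, your argument, including the uniformity discussion for the remainders and the treatment of the $h$-supremum as $\tau\eps^2|\nabla v(x-z)|+o(\eps^2)$, matches the paper's.
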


\begin{proof}
Let $v\in C^2(\Omega)$ and $x\in\Omega$, then by the second order Taylor's expansion of $v$ we have
\begin{equation*}
	v(x+\eps z+\eps^2h)
	=
	v(x)+\eps\nabla v(x)^\top z+\eps^2\Big(\nabla v(x)^\top h+\frac{1}{2}z^\top D^2v(x)z\Big)+o(\eps^2).
\end{equation*}
Thus,
\begin{align*}
	v(x+\eps z)+v(x-\eps z+\eps^2h)-2v(x)=
	~&
	\eps^2\big(\nabla v(x)^\top h+z^\top D^2v(x)z\big)+o(\eps^2),
\end{align*}
and taking supremum and infimum over $|h|<\tau$ and $|z|<\Lambda$ and dividing by $2\eps^2$ we get
\begin{align*}
	\inf_{|z|<\Lambda\eps}\frac{\delta_{\tau\eps^2}^-v(x,z)}{2\eps^2}
	=
	~&
	\frac{1}{2}\bigg[-\tau|\nabla v(x)|+\lambda_1(D^2v(x))\bigg]+o(\eps^0),
	\\
	\sup_{|z|<\Lambda\eps}\frac{\delta_{\tau\eps^2}^+v(x,z)}{2\eps^2}
	=
	~&
	\frac{1}{2}\bigg[+\tau|\nabla v(x)|+\lambda_N(D^2v(x))\bigg]+o(\eps^0).
\end{align*}

On the other hand, using that $\dens$ is the density function of $\mu$ we can write
\begin{multline*}
	\dashint_{B_\eps(x)}\frac{v(y)-v(x)}{\eps^2}\ d\mu(y)
	=
	\frac{|B_\eps|}{\mu(B_\eps(x))}\bigg(\dashint_{B_\eps(x)}\frac{v(y)-v(x)}{\eps}\,\frac{\dens(y)-\dens(x)}{\eps}\ dy
	\\
	+\dens(x)\dashint_{B_\eps(x)}\frac{v(y)-v(x)}{\eps^2}\ dy\bigg),
\end{multline*}
where
\begin{align*}
	\frac{|B_\eps|}{\mu(B_\eps(x))}
	=
	\bigg(\dashint_{B_\eps(x)}\dens(y)\ dy\bigg)^{-1}
	=
	\frac{1}{\dens(x)}+o(\eps).
\end{align*}
Replacing the asymptotic expansions and computing the integrals we get
\begin{align*}
	\dashint_{B_\eps(x)}&\frac{v(y)-v(x)}{\eps^2}\ d\mu(y)
	\\
	&=
	\frac{1}{\dens(x)}\dashint_{B_1}\nabla v(x)^\top y\,\nabla\dens(x)^\top y\ dy+\frac{1}{2}\dashint_{B_1}y^\top D^2v(x)y\ dy+o(\eps^0)
	\\
	&=
	\trace\bigg\{\Big(\frac{1}{\dens(x)}\nabla v(x)\otimes\nabla\dens(x)+\frac{1}{2}D^2v(x)\Big)\dashint_{B_1}y\otimes y\ dy\bigg\}+o(\eps^0)
	\\
	&=
	\frac{1}{2(N+2)}\Big(2\nabla v(x)^\top\frac{\nabla\dens(x)}{\dens(x)}+\Delta v(x)\Big)+o(\eps^0).
\end{align*}
Thus
\begin{align*}
	\lim_{\eps\to 0}\L_{\Omega,\eps}^+ v(x)
	=
	~&
	\frac{\alpha}{2}\bigg(\tau|\nabla v(x)|+\Lambda\lambda_N(D^2v(x))\bigg)
	\\
	~&
	+
	\frac{\beta}{2(N+2)}\bigg(2\nabla v(x)^\top\frac{\nabla\dens(x)}{\dens(x)}+\Delta v(x)\bigg)
\end{align*}
so the proof is concluded.

\end{proof}

Observe that if $u\in C^3(\Omega)$, since every point has one in the data cloud at distance at most $\eps^{3+a}$ we get that the difference
\[
\max_{Z_j\in\V_{\Lambda\eps}(Z_i)}\max_{Z_k\in\V_{\tau\eps^2}(2Z_i-Z_j)}\frac{u(Z_j)+u(Z_k)}{2}
-\sup_{|z|<\Lambda\eps}\sup_{|h|<\tau\eps^2}\frac{u(Z_i+z)+u(Z_i-z+h)}{2}
\]
is of order $O(\eps^{3})$.
Combining this with \eqref{eq:discrete-to-nonlocal-averages-a}, we get
\begin{equation}\label{difference-L+=o(eps)}
\L_{\Omega,\eps}^+u-\L_{\X_{n},\eps}^+u=O(\eps^a).
\end{equation}
Therefore the asymptotic expansions in \Cref{lem:asymptotics} also hold for $\L_{\X_{n},\eps}^+$.

Next, we recall the definition of a viscosity solution for the limiting operator $\L^+$ in \Cref{lem:asymptotics} (the definition is analogous for $\L^-$).

\begin{definition} \label{def.sol.viscosa}
A continuous function $v:\Omega \to \R$  verifies
$$
\L^+v  = 0
$$
in the viscosity sense if
\begin{enumerate}
\item
for every $\xi\in C^2$ such that $v-\xi$
has a strict minimum at the point $x \in \Omega$
with $v(x)=\xi(x)$,
we have
$$
\L^+ \xi (x) \leq 0.
$$

\item
for every $\psi\in C^2$ such that $v-\psi$
has a strict maximum at the point $x \in \Omega$
with $v(x)=\psi(x)$,
we have
$$
\L^+ \psi (x) \geq 0.
$$
\end{enumerate}
\end{definition}
For more details on viscosity solutions, see for example \cite{koike04} or \cite{katzourakis15}.

\begin{theorem}
\label{thm:visc-sol}
Let $\{n_k\}_k$ be a sequence of positive integers and $\{\eps_k>0\}_k$ such that
\[
n_k\to \infty, \quad \eps_k\to 0 \quad \text{and}\quad
\liminf_k\big( 2n_k\exp\{-c_0n_k\eps_k^{3N+4+(N+2)a}\}\big)<1.
\] 
For each $k\in\N$, let $\X_{n_k}^{(k)}\subset\Omega$ be a random data cloud in $\Omega$ and denote $\O_{\eps_k}=\X_{n_k}^{(k)}\cap\Gamma_{\eps_k}$.
Let $u_k\in L^\infty(\X_{n_k}^{(k)})$ be such that
\begin{equation*}
	\begin{dcases}
	\L_{\X_{n_k}^{(k)},\eps_k}^+u_k=f
	& \text{ in } \X_n\setminus\O_{\eps_k},
	\\
	u_k=g & \text{ in } \O_{\eps_k},
	\end{dcases}
\end{equation*}
Then, with probability 1, there exists $u:\Omega\to \R$ and a subsequence still denoted by $u_k$ such that
\[
u_k \circ T_{\eps_k}\to u  \quad \text{uniformly in } \overline{\Omega}
\]
as $k\to \infty$.
The function $u$ is a  viscosity solution to

\begin{equation*}
	\begin{cases}
	\L^+u=f
	& \text{ in } \Omega,
	\\
	u=g & \text{ in } \partial\Omega.
	\end{cases}
\end{equation*}
\end{theorem}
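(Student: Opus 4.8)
The plan is to combine the convergence statement, which is already available from \Cref{convergence} (applied with $\rho=\|f\|_\infty$), with a standard stability argument for viscosity solutions. First I would invoke \Cref{convergence} to extract, with probability $1$, a subsequence $\{u_k\}$ such that $u_k\circ T_{\eps_k}\to u$ uniformly on $\overline\Omega$ for some continuous $u$. Passing along a further subsequence if necessary, we may also assume that the events \eqref{partition-a>0} and \eqref{event-a>0} hold for every $k$ in the (relabeled) sequence. The boundary condition $u=g$ on $\partial\Omega$ follows from \Cref{boundarycontinuity}: since $u_k=g$ on $\O_{\eps_k}$ and $u_k(Z_i)\to g(x)$ as $Z_i\to x\in\partial\Omega$ uniformly, the uniform limit $u$ must agree with $g$ on $\partial\Omega$.

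The core of the argument is to verify that $u$ solves $\L^+u=f$ in the viscosity sense. I would check the subsolution inequality in \Cref{def.sol.viscosa}; the supersolution inequality is symmetric. Let $\psi\in C^2$ touch $u$ from above at $x_0\in\Omega$ with a strict local maximum, normalized so that $\psi(x_0)=u(x_0)$. By uniform convergence and the strictness of the maximum, there exist points $x_k\to x_0$ at which $u_k\circ T_{\eps_k}-\psi$ attains a local maximum (over a fixed small ball around $x_0$), with $(u_k\circ T_{\eps_k})(x_k)\to u(x_0)=\psi(x_0)$. Writing $Z^{(k)}=T_{\eps_k}(x_k)\in\X_{n_k}^{(k)}$, which also converges to $x_0$, I would use that at such a local maximum $u_k\le \psi + \big((u_k\circ T_{\eps_k})(x_k)-\psi(x_k)\big)$ pointwise near $x_0$, together with the monotonicity of the building blocks of $\L_{\X_{n_k},\eps_k}^+$ (the maximum part and the linear average part are both monotone), to obtain
\begin{align*}
	f(Z^{(k)})
	=
	\L_{\X_{n_k},\eps_k}^+u_k(Z^{(k)})
	\leq
	\L_{\X_{n_k},\eps_k}^+\psi(Z^{(k)})
	+
	\frac{1}{\eps_k^2}\big((u_k\circ T_{\eps_k})(x_k)-\psi(x_k)\big).
\end{align*}
Since $\psi(x_k)-\psi(x_0)=o(1)$ and $(u_k\circ T_{\eps_k})(x_k)-\psi(x_0)\to 0$, the last term is the delicate point: one needs it to be $\leq o(1)$ after division by $\eps_k^2$. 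This is handled exactly as in the classical proofs for tug-of-war games (cf.\ \cite{manfredipr12}): one perturbs $\psi$ by $\pm C|x-x_0|^2$ so that $x_k$ can be taken to be an \emph{interior} maximum point of the perturbed difference on a ball whose radius is fixed (independent of $k$), which forces $(u_k\circ T_{\eps_k})(x_k)-\psi(x_k)\geq (u_k\circ T_{\eps_k})(y)-\psi(y)$ for all $y$ in that ball, in particular for $y=x_k+z$ with $|z|$ up to the step size; evaluating this at the data points appearing in the operator and at $x_k$ itself shows the error term is nonpositive up to $o(\eps_k^2)$, using also that $\psi\in C^2$ controls its own increments. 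Then I would let $k\to\infty$ and use \eqref{difference-L+=o(eps)} together with the asymptotic expansion \Cref{lem:asymptotics} applied to the fixed $C^2$ function $\psi$ at the point $x_0$, yielding $f(x_0)\leq \L^+\psi(x_0)$, which is the desired inequality (the sign convention in \Cref{def.sol.viscosa} being $\L^+\psi(x_0)\geq f(x_0)$ once the equation is written as $\L^+u-f=0$).

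The main obstacle, as indicated above, is the careful bookkeeping of the error term $\eps_k^{-2}\big((u_k\circ T_{\eps_k})(x_k)-\psi(x_k)\big)$ and, relatedly, making sure the test function inequality $u_k\le\psi+\text{const}$ can be used at \emph{all} the points $Z_j,Z_k$ entering the discrete operator $\L_{\X_{n_k},\eps_k}^+$ centered at $Z^{(k)}$ — these points lie within distance $O(\eps_k)$ of $x_0$, so for $k$ large they stay inside the fixed ball where the (perturbed) maximum is attained, which is precisely what the quadratic-perturbation trick guarantees. A secondary technical point is that $\L_{\X_{n_k},\eps_k}^+\psi$ must be compared with $\L_{\Omega,\eps_k}^+\psi$ via \eqref{difference-L+=o(eps)}, which is legitimate since $\psi\in C^2\subset C^3$ can be assumed (or one mollifies $\psi$), so that both the $O(\eps^3)$ comparison of the nonlinear parts and the averaged estimate \eqref{eq:discrete-to-nonlocal-averages-a} apply; then \Cref{lem:asymptotics} gives the limit of $\L_{\Omega,\eps_k}^+\psi(x_0)$. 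Everything else — uniqueness of the limit PDE, the value of the limiting operator — is already packaged in the cited lemmas.
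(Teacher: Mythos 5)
Your proposal is correct and follows essentially the same route as the paper: convergence from \Cref{convergence}, boundary values from \Cref{boundarycontinuity}, and the viscosity property via testing at extremum points of $u_k\circ T_{\eps_k}-\psi$, monotonicity of the discrete operator, the comparison \eqref{difference-L+=o(eps)}, and the expansion of \Cref{lem:asymptotics}. The only cosmetic difference is in handling the error term $\eps_k^{-2}\big((u_k\circ T_{\eps_k})(x_k)-\psi(x_k)\big)$: the paper takes approximate extremum points with tolerance $\eps_k^3$ (so the error is $O(\eps_k)$ after dividing by $\eps_k^2$), whereas you use the equivalent quadratic-perturbation device.
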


\begin{proof}
The convergence follows from \Cref{convergence}, it remains to prove that the limit function is a solution to the PDE. Let $\xi\in C^{2}$ be such that $u-\xi$ has a strict minimum at the point $x \in \Omega$ with $u(x)=\xi(x)$. We want to show that
\[
\L^+\xi (x)\leq 0.
\]
As $\tilde u_k=u_k \circ T_{\eps_k} \to u$ uniformly in $\overline{\Omega}$ we have the existence of a sequence
$x_k$ such that $x_k \to x$ as $k \to \infty$ and 
\[
\tilde u_k(y)-\xi(y) \geq \tilde u_k(x_k)-\xi(x_k)- \eps_k^3
\]
Since $u_k$ is a solution to
\[
\L_{\X_{n_k}^{(k)},\eps_k}^+ u_k(x_k)=f(x_k),
\]
and since the asymptotic expansion holds also for the discrete operator (see \Cref{lem:asymptotics} and \eqref{difference-L+=o(eps)}), we obtain that $\xi$ verifies the inequality
$$
f(x_k) \geq \L^+\xi(x_k) - \eps_k^3,
$$
which in the limit gives us the desired result.
\end{proof}

Similarly, recalling the tug-of-war type graph operators from \Cref{example:tug-of-war}, we obtain that the limit is a viscosity solution to a weighted $p$-Laplace equation with $p>2$,
\begin{align*}
\operatorname{div}(\phi^2\abs{\nabla u}^{p-2}\nabla u)&=\abs{\nabla u}^{p-2}\Big(\operatorname{div}(\phi^2 u)+(p-2)\phi^2\langle D^2 u \frac{\nabla u}{\abs{\nabla u}},\frac{\nabla u}{\abs{\nabla u}} \rangle\Big )\\
&=\abs{\nabla u}^{p-2}\Big(\operatorname{div}(\phi^2 u)+(p-2)\phi^2\Delta^{N}_{\infty}u\Big )=0,
\end{align*}
where we recall that $\phi$ is the $C^1$ probability density of the arriving data points bounded between positive constants. Moreover, $\Delta^{N}_{\infty}u$ is normalized or game theoretic infinity Laplacian and we also recall that we assume exterior sphere condition for $\Om$. The following result extends Calder's result obtained for $p>n$ in \cite[Theorem 1]{calder19b} for all $p>2$. This is due to a fact that the local regularity argument in that paper is the one where $p>n$ appears.  

\begin{theorem}
\label{thm:visc-sol-p}
Under the assumptions of \Cref{thm:visc-sol}, let $u_k\in L^\infty(\X_{n_k}^{(k)})$ satisfy
\begin{equation*}
	\begin{dcases}
	\L_{\X_{n_k}^{(k)},\eps_k}u_k=0
	& \text{ in } \X_n\setminus\O_{\eps_k},
	\\
	u_k=g & \text{ in } \O_{\eps_k},
	\end{dcases}
\end{equation*}
for each $k\in\N$, where $\L_{\X_{n_k}^{(k)},k}$ stands for the tug-of-war operator on the random graph $\X_{n_k}^{(k)}$ (see \Cref{example:tug-of-war}). Then, the limit function $u$ is a viscosity solution to
\begin{equation*}
	\begin{cases}
	\operatorname{div}(\phi^2\abs{\nabla u}^{p-2}\nabla u)=0
	& \text{ in } \Omega,
	\\
	u=g & \text{ in } \partial\Omega,
	\end{cases}
\end{equation*}
with probability $1$.
\end{theorem}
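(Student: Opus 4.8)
The plan is to reduce the statement to \Cref{thm:visc-sol} by identifying the limiting operator $\L^+$ (and $\L^-$) in \Cref{lem:asymptotics} with the game-theoretic $p$-Laplace operator once the parameters $\alpha,\beta,\Lambda,\tau$ are chosen as in \Cref{example:tug-of-war}. First I would rewrite the tug-of-war graph operator from \Cref{example:tug-of-war} in the normalized form $\L_{\X_n,\eps}u(Z_i) = \frac{1}{\eps^2}\big[\frac{\alpha}{2}(\inf_{\V_\eps}u+\sup_{\V_\eps}u) + \beta\,\A u(Z_i) - u(Z_i)\big]$, and observe that this is sandwiched between $\L_{\X_n,\eps}^-$ and $\L_{\X_n,\eps}^+$ with $\Lambda=1$, $\tau$ taken so that the $\sup_{\V_\eps}$ term coincides with $\sup_{|z|<\eps}$ up to the $O(\eps^3)$ error coming from the data cloud. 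Then I would apply the boundedness, equicontinuity and compactness machinery exactly as in \Cref{thm:visc-sol}: \Cref{lem:uniform-bound} gives a uniform bound, \Cref{lem:asymptoticcontinuity} gives asymptotic continuity (both valid since the inf/sup operator satisfies the same comparison principle, \Cref{lem:comparison-pple}, and the barrier bound \Cref{lem:L+barrier} applies to it by sandwiching), and \Cref{convergence} yields a subsequence with $u_k\circ T_{\eps_k}\to u$ uniformly on $\overline\Omega$ with probability $1$.

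Next I would carry out the viscosity identification. Fix $\phi\in C^2$ touching $u$ from below at $x$ with $u-\phi$ having a strict minimum there; as in \Cref{thm:visc-sol}, produce $x_k\to x$ with $(u_k\circ T_{\eps_k})(y)-\phi(y)\ge (u_k\circ T_{\eps_k})(x_k)-\phi(x_k)-\eps_k^3$. Using that $\L_{\X_{n_k}^{(k)},\eps_k}u_k=0$ and that the asymptotic expansion of \Cref{lem:asymptotics} holds for the discrete operator up to $O(\eps^a)$ via \eqref{difference-L+=o(eps)}, I would pass to the limit to get, at a point where $\nabla\phi(x)\ne0$,
\begin{equation*}
	\frac{\alpha}{2}\Delta_\infty^N\phi(x) + \frac{\beta}{2(N+2)}\Big(\Delta\phi(x) + 2\nabla\phi(x)^\top\tfrac{\nabla\dens(x)}{\dens(x)}\Big)\le 0,
\end{equation*}
where $\Delta_\infty^N$ is the normalized infinity Laplacian (here the inf/sup terms converge to $\tfrac12(\lambda_1+\lambda_N)$ of $D^2\phi$ evaluated along... — more precisely to $\Delta_\infty^N\phi$, since $\inf_{|z|<\eps}+\sup_{|z|<\eps}$ of the second-order term picks out $\min$ and $\max$ of $z^\top D^2\phi\,z$, which at a touching point reduce to the $\nabla\phi$-direction). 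Plugging in $\beta=\frac{N+2}{N+p}$, $\alpha=\frac{p-2}{N+p}$, multiplying through by $2(N+p)$ and by $|\nabla\phi|^{p-2}\phi^2$, I would recognize the expression as $|\nabla\phi|^{2-p}\phi^{-2}\operatorname{div}(\phi^2|\nabla\phi|^{p-2}\nabla\phi)$ using the divergence identity displayed just before the theorem. The supersolution inequality is obtained symmetrically. For test functions with $\nabla\phi(x)=0$ one uses the usual device (for $p\ge2$ the $p$-Laplacian is "well behaved" at critical points) together with the fact that $\Delta_\infty^N$ of the second-difference quotient stays bounded, so the inequalities pass to the limit trivially or can be recovered by the standard perturbation $\phi+\delta|x-x_0|^4$ argument.

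The main obstacle I anticipate is the treatment of the $\inf/\sup$ (infinity-Laplacian) term in two respects: (i) justifying that the discrete $\frac1{\eps^2}\big(\frac\alpha2(\inf_{\V_\eps}u_k+\sup_{\V_\eps}u_k)-\alpha u_k\big)$ really converges to $\frac\alpha2\Delta_\infty^N\phi$ along the test function — this is essentially the content of \Cref{lem:asymptotics} combined with \eqref{difference-L+=o(eps)}, but one must be careful that the balls $\V_\eps(Z_i)$ faithfully approximate $B_\eps(Z_i)$, which holds on the event \eqref{partition-a>0}; and (ii) handling the degeneracy at points where $\nabla\phi(x)=0$, where $\Delta_\infty^N$ is not continuous — this is the standard subtlety in proving that limits of $p$-harmonious functions are viscosity $p$-harmonic, and it is resolved exactly as in \cite{manfredipr12} by choosing appropriate test functions and using that the inf/sup increments contribute a term bounded by a multiple of $\max_{|z|<\eps}|z^\top D^2\phi(x) z|/\eps^0$, which vanishes in the relevant regime; I would simply cite \cite{manfredipr12} for this point. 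Everything else is routine bookkeeping once the parameter matching $\beta=\frac{N+2}{N+p}$ is in place.
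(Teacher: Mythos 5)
Your proposal is correct and follows essentially the same route as the paper, which itself only sketches the argument: convergence via the uniform bounds, barriers and Ascoli--Arzel\`a machinery of \Cref{convergence}, then identification of the limit by Taylor expansion of a smooth test function at a touching point, with the degenerate case $\nabla\phi=0$ deferred to the standard $p$-harmonious literature (the paper cites \cite{calder19b}, you cite \cite{manfredipr12}; both resolve the same point). Your parameter matching $\beta=\tfrac{N+2}{N+p}$, $\alpha=\tfrac{p-2}{N+p}$ and the resulting identification with $\dens^{-2}|\nabla\phi|^{2-p}\operatorname{div}(\dens^2|\nabla\phi|^{p-2}\nabla\phi)$ are exactly what the displayed divergence identity before the theorem encodes.
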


The proof for the convergence is exactly the same as in the previous theorem using our regularity result,
apart from using the exterior ball condition and barrier to guarantee the boundary regularity and boundedness. The claim that the limit is a viscosity solution follows in a similar way as in the previous theorem by using Taylor's expansion for a smooth test function touching the limit. For details one can consult \cite[Theorems 5 and 6] {calder19b} by observing that also these results work for all $p>2$.

\appendix
\section{Histogram estimation of the probability density} 
\label{sec:histogram}

We begin by stating the multiplicative Chernoff bound, see Corollary 4.6 in \cite{mitzenmacher}.

\begin{lemma}
\label{HoeffdingBernoulli}
Let $X_i,\dots,X_n$ be i.i.d. Bernoulli random variables with $p=\E[X_i]$.
Consider $S_n = X_1 + \cdots + X_n$.
Then
\[
\P \left(\left |S_n - pn \right | \geq pnt \right) \leq 2\exp \left(-\frac{npt^2}{3} \right)
\]
for every $0<t<1$.
\end{lemma}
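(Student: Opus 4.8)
The plan is to use the classical Cram\'er--Chernoff exponential moment method, handling the upper and lower deviations separately and combining them by a union bound, which produces the factor $2$. (Since the statement is quoted verbatim from \cite{mitzenmacher}, one may alternatively just cite it; what follows is the standard argument.) First I would fix $\lambda>0$ and apply Markov's inequality to the nonnegative random variable $e^{\lambda S_n}$ to get
\[
	\P(S_n\geq(1+t)pn)
	\leq
	e^{-\lambda(1+t)pn}\,\E\big[e^{\lambda S_n}\big]
	=
	e^{-\lambda(1+t)pn}\prod_{i=1}^n\E\big[e^{\lambda X_i}\big],
\]
where the last equality uses the independence of the $X_i$. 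Since each $X_i$ is Bernoulli with mean $p$, one has $\E[e^{\lambda X_i}]=1-p+pe^\lambda=1+p(e^\lambda-1)\leq\exp\!\big(p(e^\lambda-1)\big)$ by the elementary inequality $1+x\leq e^x$, so that $\E[e^{\lambda S_n}]\leq\exp\!\big(np(e^\lambda-1)\big)$ and hence
\[
	\P(S_n\geq(1+t)pn)
	\leq
	\exp\!\big(np(e^\lambda-1)-\lambda(1+t)pn\big).
\]

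Next I would optimize the exponent over $\lambda>0$. The choice $\lambda=\log(1+t)$ (which is positive since $t>0$) minimizes $e^\lambda-1-\lambda(1+t)$ and yields
\[
	\P(S_n\geq(1+t)pn)
	\leq
	\exp\!\Big(-np\big((1+t)\log(1+t)-t\big)\Big).
\]
It then remains to check the scalar inequality $(1+t)\log(1+t)-t\geq t^2/3$ for $0<t<1$, which follows because the difference vanishes together with its first derivative at $t=0$ and its derivative stays nonnegative on $(0,1)$ (equivalently, from the series expansion of $\log(1+t)$). This produces the one-sided bound $\P(S_n\geq(1+t)pn)\leq\exp(-npt^2/3)$.

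For the lower tail the argument is symmetric: applying Markov's inequality to $e^{-\lambda S_n}$ with $\lambda>0$, bounding $\E[e^{-\lambda X_i}]\leq\exp\!\big(p(e^{-\lambda}-1)\big)$, and optimizing with $\lambda=-\log(1-t)>0$ gives
\[
	\P(S_n\leq(1-t)pn)
	\leq
	\exp\!\Big(-np\big((1-t)\log(1-t)+t\big)\Big),
\]
and here $(1-t)\log(1-t)+t\geq t^2/2\geq t^2/3$ for $0<t<1$ by the same kind of elementary check. Adding the two one-sided estimates bounds $\P(|S_n-pn|\geq pnt)$ by $2\exp(-npt^2/3)$, as claimed. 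The only mildly delicate points are the two Legendre-type scalar inequalities for the exponents; everything else is the routine Chernoff computation.
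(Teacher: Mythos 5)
Your proof is correct and is exactly the standard Cram\'er--Chernoff exponential-moment argument; the paper itself offers no proof of this lemma, simply citing \cite[Corollary 4.6]{mitzenmacher}, which is established by precisely this method. The two scalar inequalities you defer to, $(1+t)\log(1+t)-t\geq t^2/3$ and $(1-t)\log(1-t)+t\geq t^2/2$ for $0<t<1$, do hold by the elementary checks you describe, so the union bound of the two one-sided tails gives the stated estimate with the factor $2$.
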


Next we state and prove the lemma from which we deduce \Cref{lem:dens-eps}. We assume a kind of a measure density condition: Let $\Omega\subset \R^N$ be an open set such that there exists $c>0$ such that for every $\delta>0$ there exists $B_1,\dots,B_M$ a partition of $\Omega$ such that $B_i\subset B_\delta(x_i)$ for some $x_i\in B_i$ and $|B_i|\geq c\delta^N$.

The above condition is ensured for example when $\Omega$ satisfies the following interior ball condition: there exists $r>0$ such that for every $x\in\Omega$, there exist a sphere $B_r(y)\subset\Omega$ such that $x\in B_r(y)$. To prove that this interior ball condition implies the preceding hypothesis, we consider the grid $G=c\delta\Z^N$ and the points $X=G\cap \Omega$. We take the Voronoi partition associated to $X$, that is
\[
B_x=\{y\in\Omega: x \text{ is the closest point to } y \text{ in } X \}
\]
Any tie-breaking criterion is acceptable for our purposes. One can check that this partition verifies the condition. Also recall that $\dens$ is a Lipschitz probability measure density such that $\dens_0\leq \dens\leq \dens_1$ for some constants $\dens_1,\dens_0>0$.

\begin{lemma}
\label{lem:apendix}
Let $\delta>0$ and $n$ a positive integer.
Let $X_1,\dots,X_n$ be a sequence of i.i.d. random variable with density $\dens$.
Then for every $0< \lambda\leq 1$, there exist $\dens_\delta$ and a partition $\U_1,\dots,\U_n$ such that 
\begin{equation}
\label{eqUi}
\U_i\subset B_\delta(X_i) \quad \text{and} \quad \int_{\U_i} \dens_\delta\ dx=\frac{1}{n} 
\end{equation}
for every $i=1,\dots,n$ and
\begin{equation}
\label{proba}
\|\dens_\delta-\dens\|_{\Omega}\leq C_0(\lambda+\delta)
\end{equation}
at least with probability 
\[
1-2n\exp \left(-c_0\lambda^2 n\delta^{N} \right)
\]
where $C_0=C_0(\dens)$ and $c_0=c_0(\dens,\Omega)$.
\end{lemma}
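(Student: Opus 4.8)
The plan is to build the approximate density $\dens_\delta$ as a histogram estimator on a fine partition and then refine this partition cell-by-cell into the pieces $\U_1,\dots,\U_n$ so that each piece carries exactly mass $1/n$ and contains exactly one data point in a controlled way. First I would invoke the measure-density hypothesis with the given $\delta$ to obtain a partition $B_1,\dots,B_M$ of $\Omega$ with $B_j\subset B_\delta(x_j)$ and $|B_j|\geq c\delta^N$. On each $B_j$ I would define $\dens_\delta$ to be constant, equal to the empirical frequency divided by the volume,
\begin{equation*}
	\dens_\delta\big|_{B_j}
	=
	\frac{1}{|B_j|}\cdot\frac{\card{\{i:\,X_i\in B_j\}}}{n}.
\end{equation*}
By construction $\int_\Omega\dens_\delta\,dx=1$ and $\int_{B_j}\dens_\delta\,dx$ equals the fraction of data points in $B_j$, which is an integer multiple of $1/n$; this divisibility is exactly what makes the later subdivision into $\U_i$'s possible.

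Next I would prove the bound \eqref{proba}. Fix $j$ and note that $\card{\{i:\,X_i\in B_j\}}$ is a sum of $n$ i.i.d.\ Bernoulli variables with parameter $p_j=\mu(B_j)=\int_{B_j}\dens\,dx$, which satisfies $p_j\geq\dens_0|B_j|\geq\dens_0 c\delta^N$. Applying the multiplicative Chernoff bound (Lemma \ref{HoeffdingBernoulli}) with $t=\lambda$ gives
\begin{equation*}
	\P\Big(\big|\card{\{i:\,X_i\in B_j\}}-np_j\big|\geq np_j\lambda\Big)
	\leq
	2\exp\!\Big(-\tfrac{np_j\lambda^2}{3}\Big)
	\leq
	2\exp\!\Big(-\tfrac{\dens_0 c}{3}\,\lambda^2 n\delta^N\Big).
\end{equation*}
On the complementary event, $\big|\dens_\delta|_{B_j}-\tfrac{p_j}{|B_j|}\big|\leq\lambda\,\tfrac{p_j}{|B_j|}\leq\lambda\dens_1$, while the average $\tfrac{p_j}{|B_j|}=\dashint_{B_j}\dens$ differs from $\dens(x)$ for $x\in B_j$ by at most $\|\nabla\dens\|_\infty\diam B_j\leq 2\|\nabla\dens\|_\infty\delta$ by the Lipschitz property. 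Hence $\|\dens_\delta-\dens\|_{L^\infty(B_j)}\leq C_0(\lambda+\delta)$ with $C_0=C_0(\dens)$. Since $M\leq|\Omega|/(c\delta^N)\leq Cn$ when the probability bound is meaningful — more robustly, one bounds $M$ by a constant times $n$ using that each nonempty cell in the relevant regime carries a comparable share — a union bound over $j=1,\dots,M$ yields \eqref{proba} with the stated probability, after absorbing the factor $M$ into the exponent constant $c_0=c_0(\dens,\Omega)$; here I would use that $M\exp(-c\lambda^2 n\delta^N)\leq n\exp(-c'\lambda^2 n\delta^N)$ for a slightly smaller $c'$ provided $\lambda^2 n\delta^N$ is not too small, which is the only regime in which the statement is nonvacuous.

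Finally I would construct the partition $\{\U_i\}$ on the good event. Within each cell $B_j$ containing, say, $k_j=\card{\{i:\,X_i\in B_j\}}$ data points, the $\dens_\delta$-mass of $B_j$ is exactly $k_j/n$; since $\dens_\delta$ is constant on $B_j$, I can partition $B_j$ into $k_j$ subsets of equal $\dens_\delta$-measure $1/n$, assigning to each subset one of the data points $X_i\in B_j$ and then shrinking/adjusting so that $X_i$ lies in its assigned subset (this is possible because $\dens_\delta|_{B_j}$ is a positive constant, so a measurable subdivision into equal-mass pieces with prescribed representatives exists). Each such subset lies in $B_j\subset B_\delta(x_j)$; since $X_i\in B_j$ too, we get $\U_i\subset B_{2\delta}(X_i)$, and rescaling $\delta$ by a factor of $2$ at the outset gives \eqref{eqUi} as stated. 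The main obstacle I anticipate is the bookkeeping in this last step — ensuring simultaneously that each $\U_i$ has exact mass $1/n$, contains its designated point $X_i$, and is contained in the small ball around $X_i$ — together with handling cells $B_j$ with $k_j=0$, which contribute no $\U_i$ and whose $\dens_\delta$ vanishes there; this forces a mild redistribution of mass (the empty cells carry zero empirical mass, consistent with $\dens_\delta=0$ there) and one must check the $L^\infty$ bound \eqref{proba} still holds on those cells, which it does only because such cells are rare with high probability — precisely the content of the Chernoff estimate applied with the lower bound $p_j\geq\dens_0 c\delta^N$. I would make sure the constant $c_0$ in the final probability absorbs all these union bounds.
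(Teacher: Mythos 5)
Your proposal is correct and follows essentially the same route as the paper: the histogram estimator on the measure-density partition, the multiplicative Chernoff bound with the lower bound $p_j\geq \dens_0 c\delta^N$, a union bound over cells, an equal-measure subdivision of each cell among the data points it contains, and the factor-of-two adjustment (the paper applies the hypothesis with $\delta/2$ at the outset, as you suggest). The one point worth tightening is your treatment of empty cells: on the good event the inequality $|n_j/n-p_j|<\lambda p_j$ with $0<\lambda\leq 1$ and $p_j>0$ forces $n_j>0$ for \emph{every} cell, so no redistribution of mass is needed and no separate check of \eqref{proba} on empty cells arises; also note that \eqref{eqUi} does not require $X_i\in\U_i$, so the bookkeeping you worry about for prescribed representatives is unnecessary at this stage (that adjustment is made later in \Cref{lem:dens-eps}).
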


\begin{proof}
By the hypothesis on $\Omega$ for $\delta/2>0$ we have that
there exists $B_1,\dots,B_M$ a partition of $\Omega$ such that $B_i\subset B_{\delta/2}(x_i)$ for some $x_i\in B_i$ and $|B_i|\geq c2^{-N}\delta^N$.
Let $\dens_\delta$ be the histogram density estimator
\[
\dens_\delta(x)=
\frac{1}{n}\sum_{i=1}^{M}\frac{n_i}{|B_i|}\ind_{B_i}(x),
\]
where $n_i$ is the number of points from $X_1,\dots, X_n$ in $B_i$.
We have
\[
\int_\Omega\dens_\delta\ dx=\frac{1}{n}\sum_{i=1}^{M}n_i=1.
\]

Observe that
\[
n_i=\sum_{j=1}^n\ind_{B_i}(X_j) 
=\card{\X_n\cap B_i}
\]
and $\ind_{B_i}(X_j)$ is a Bernoulli of parameter $p_i=\int_{B_i}\dens\ dx$.
Then, by \Cref{HoeffdingBernoulli}, we have
\[
\P \left(\left |n_i - np_i \right | \geq \lambda np_i \right) \leq 2\exp \left(-\frac{\lambda^2}{3} np_i \right)
\]

We have
\[
p_i=\int_{B_i}\dens\ dx\geq \dens_0|B_i|\geq \dens_0c2^{-N}\delta^N.
\]
So, we get
\[
\P \left(\left |\frac{n_i}{n} - p_i \right | \geq \lambda p_i \right) \leq 
2\exp \left(-\frac{\lambda^2}{3} n\dens_0c2^{-N}\delta^N \right)
\]
By using this estimate for every $i$, we get
\begin{equation}
\label{nibound}
\left|\frac{n_i}{n} - p_i \right | < \lambda p_i
\end{equation}
for all $i=1,\dots,n$ with probability at least
\[
1-2n\exp \left(-\frac{\lambda^2}{3} n\dens_0c2^{-N}\delta^N \right)
=
1-2n\exp \left(-c_0\lambda^2 n\delta^{N} \right)
\]
where $c_0=\frac{1}{3}\dens_0c2^{-N}$.

In particular \eqref{nibound} implies that $n_i>0$, that is, every $B_i$ contains at least one $X_j$.
Let $k_{i,1},\dots, k_{i,n_i}$ denote the indices of
the random variables that fall in $B_i$.
We construct a partition $B_{i,1},\dots , B_{i,n_i}$ of each $B_i$ consisting of sets of equal measure.
We set $\U_{k_{i,j}} = B_{i,j}$.

Let $j\leq n$ and consider $i\leq M$ and $k\leq n_i$ such that $\U_j=B_{i,k}$, we have
\[
\int_{U_j}\dens_\delta\ dx=\frac{1}{n}\frac{n_i}{|B_i|}|B_{i,k}|=\frac{1}{n}
\] 
and since $X_j\in B_i\subset B_{\delta/2}(x_i)$ we get
\[
\U_j\subset B_i\subset B_{\delta/2}(x_i) \subset B_\delta(X_j).
\]
We have proved \eqref{eqUi}.

To prove \eqref{proba} we bound $p_i$ in \eqref{nibound} by
\[
p_i=\int_{B_i}\dens\ dx\leq \dens_1|B_i|.
\]
Then, given $x\in U$ we consider $i$ such that $x\in B_i$, we have
\[
\begin{split}
|\dens(x)-\dens_\delta(x)|
&=\left|\dens(x)-\frac{n_i}{n|B_i|}\right|\\
&=\left|\dens(x)-\int_{B_i}\dens\ dx+\int_{B_i}\dens\ dx-\frac{n_i}{n|B_i|}\right|\\
&\leq\left|\dens(x)-\frac{1}{|B_i|}\int_{B_i}\dens\ dx\right|+\left|\frac{1}{|B_i|}\int_{B_i}\dens\ dx-\frac{n_i}{n|B_i|}\right|\\
&\leq \delta L_{\dens}+\lambda\dens_1
\end{split}
\]
where $L_{\dens}$ is the Lipschitz constant of $\dens$.
We have proved the results for $C_0=\max\{L_{\dens},\dens_1\}$.
\end{proof}

Observe that in \Cref{lem:apendix} we have two factors at play: the precision of the event and the likelihood of its occurrence. On the one hand the result is stronger as $\lambda$ and $\delta$ are smaller, as shown in \eqref{eqUi} and \eqref{proba}. On the other hand the result is meaningless if the probability is not greater than $0$. That is, we want  $\lambda$ and $\delta$ to be small enough for our purposes, but $n$ to be large so that $n\exp(-c_0\lambda^2n\delta^{2N})$ is small.


\def\cprime{$'$} \def\cprime{$'$} \def\cprime{$'$}

\end{document}